\documentclass[11pt]{amsart}

\usepackage[margin=1in]{geometry}

\usepackage{amsmath,amsthm,amsfonts}
\usepackage{mathtools}
\usepackage{mathrsfs}

\usepackage{tikz}
\usepackage{hyperref}

\usepackage{comment}

\usepackage[english]{babel}


\theoremstyle{plain}

\newtheorem{thm}{Theorem}
\newtheorem{cor}[thm]{Corollary}
\newtheorem{prop}[thm]{{\bf Proposition}}
\newtheorem{lem}[thm]{{\bf Lemma}}

\newcounter{hyp_counter}
\setcounter{hyp_counter}{0}
\theoremstyle{definition}
\newtheorem{claim}{Claim}
\newtheorem{defn}[thm]{Definition}

\theoremstyle{remark}
\newtheorem{rem}{Remark}


\newcommand{\CP}{\mathbb{C}\operatorname{P}}
\newcommand{\Det}{\operatorname{Det}}

\newcommand{\End}{\operatorname{End}}

\newcommand{\Id}{\operatorname{Id}}
\newcommand{\inj}{\operatorname{inj}}
\newcommand{\Isom}{\operatorname{Isom}}

\newcommand{\Diff}{\operatorname{Diff}}
\newcommand{\GL}{\operatorname{GL}}

\newcommand{\Gr}{\operatorname{Gr}}

\newcommand{\HP}{\mathbb{H}\operatorname{P}}
\newcommand{\len}{\operatorname{len}}

\newcommand{\N}{\mathbb{N}}
\newcommand{\PSU}{\operatorname{PSU}}
\newcommand{\R}{\mathbb{R}}
\newcommand{\RP}{\mathbb{R}\operatorname{P}}
\newcommand{\SO}{\operatorname{SO}}
\newcommand{\SU}{\operatorname{SU}}
\newcommand{\Sp}{\operatorname{Sp}}
\newcommand{\Spin}{\operatorname{Spin}}
\newcommand{\stab}{\operatorname{stab}}

\newcommand{\Tr}{\operatorname{Tr}}

\newcommand{\vol}{\operatorname{vol}}
\newcommand{\Vect}{\operatorname{Vect}}
\newcommand{\Z}{\mathbb{Z}}

\newcommand{\wt}[1]{\widetilde{#1}}


\newcommand{\abs}[1]{\left| #1\right|}
\newcommand{\mb}[1]{\mathbb{#1}}

\newcommand{\mc}[1]{\mathcal{#1}}
\newcommand{\mf}[1]{\mathfrak{#1}}

\newcommand{\pez}[1]{\left( #1\right)}

\newcommand{\epOne}{\theta}
\newcommand{\epTwo}{\eta}
\newcommand{\epThree}{\kappa}

\makeatletter
\def\blfootnote{\xdef\@thefnmark{}\@footnotetext}
\makeatother

\title{Simultaneous Linearization of Diffeomorphisms of Isotropic Manifolds}
\author{Jonathan DeWitt}
\address{Department of Mathematics, The University of Chicago, Chicago, IL 60637, USA}
\email{dewitt@uchicago.edu}
\date{\today}

\begin{document}

\maketitle

\begin{abstract}Suppose that $M$ is a closed isotropic Riemannian manifold and that $R_1,...,R_m$ generate the isometry group of $M$. Let $f_1,...,f_m$ be smooth perturbations of these isometries. We show that the $f_i$ are simultaneously conjugate to isometries if and only if their associated uniform Bernoulli random walk has all Lyapunov exponents zero. This extends a linearization result of Dolgopyat and Krikorian \cite{dolgopyat2007simultaneous} from $S^n$ to real, complex, and quaternionic projective spaces. In addition, we identify and remedy an oversight in that earlier work.
\end{abstract}

\section{Introduction}

A \blfootnote{This material is based upon work supported by the National Science Foundation Graduate Research Fellowship under Grant No. DGE-1746045.}basic problem in dynamics is determining whether two dynamical systems are equivalent. A standard notion of equivalence is conjugacy: if $f$ and $g$ are two diffeomorphisms of a manifold $M$, then $f$ and $g$ are \emph{conjugate} if there exists a homeomorphism $h$ of $M$ such that $hfh^{-1}=g$. Some classes of dynamical systems are distinguished up to conjugacy by a small amount of dynamical information.  One of the most basic examples of this is Denjoy's theorem: a $C^2$ orientation preserving circle diffeomorphism with irrational rotation number is conjugate to a rotation \cite[\textsection 12.1]{katok1997introduction}. In the case of Denjoy's theorem, the rotation number is all the information needed to determine the topological equivalence class of the diffeomorphism under conjugacy.

Rigidity theory focuses on identifying dynamics that are distinguished up to conjugacy by particular kinds of dynamical information such as the rotation number. There are finer dynamical invariants than rotation number which require a finer notion of equivalence to study. For instance, one obtains a finer notion of equivalence if one insists that the conjugacy be a $C^1$ or even $C^{\infty}$ diffeomorphism. A smoother conjugacy allows one to consider invariants such as Lyapunov exponents, which may not be preserved under conjugacy by homeomorphisms. For a single volume preserving Anosov diffeomorphism, the Lyapunov exponents with respect to volume are invariant under conjugation by $C^1$ volume preserving maps. Consequently, one is naturally led to ask, ``If two volume preserving Anosov diffeomorphisms have the same Lyapunov exponents are the two $C^1$ conjugate?" In some circumstances the answer is ``yes". Such situations where knowledge about Lyapunov exponents implies systems are conjugate by a $C^1$ diffeomorphism are instances of a phenomenon called ``Lyapunov spectrum rigidity". See \cite{gogolev2019rigidity} for examples and discussion of this type of rigidity. For recent examples, see \cite{butler2017characterizing}, \cite{dewitt2019local},\cite{gogolev2019smooth},\cite{gogolev2018local}, and \cite{saghin2019lyapunov}.

In rigidity problems related to isometries, it is often natural to consider a family of isometries. A collection of isometries may have strong rigidity properties even if the individual elements of the collection do not. For example, Fayad and Khanin \cite{fayad2009smooth} proved that a collection of commuting diffeomorphisms of the circle whose rotation numbers satisfy a simultaneous Diophantine condition are smoothly simultaneously conjugated to rotations. Their result is a strengthening of an earlier result of Moser \cite{moser1990commuting}. A single diffeomorphism in such a collection might not satisfy the Diophantine condition on its own.

Although the two types of rigidity described above occur in the dissimilar hyperbolic and elliptic settings, a result of Dolgopyat and Krikorian combines the two. They introduce a notion of a Diophantine set of rotations of a sphere and use this notion to prove that certain random dynamical systems with all Lyapunov exponents zero are conjugated to isometric systems \cite{dolgopyat2007simultaneous}. Our result is a generalization of this result to the setting of isotropic manifolds. We now develop the language to state both precisely.

Let $(f_1,...,f_m)$ be a tuple of diffeomorphisms of a manifold $M$. Let $(\omega_i)_{i\in \mathbb{N}}$ be a sequence of independent and identically distributed random variables with uniform distribution on $\{1,...,m\}$. Given an initial point $x_0\in M$, define $x_n=f_{\omega_n}x_{n-1}$. This defines a Markov process on $M$. We refer to this process as the random dynamical system associated to the tuple $(f_1,...,f_m)$. Let $f_{\omega}^n$ be defined to equal $f_{\omega_n}\circ\cdots \circ  f_{\omega_1}$. We say that a probability measure $\mu$ on $M$ is a \emph{stationary measure} for this process if $m^{-1}\sum_{i=1}^m (f_i)_*\mu=\mu$.  A stationary measure is \emph{ergodic} if it is not a non-trivial convex combination of two distinct stationary measures. Fix an ergodic stationary measure $\mu$. For $\mu$-almost every $x$, almost surely for any $v\in T_xM\setminus\{0\}$, the following limit exists
\begin{equation}\label{eqn:lyapunov_exponent_definition}
\lim_{n\to \infty} \frac{1}{n}\ln \|D_xf^n_\omega v\|
\end{equation}
and takes its value in a fixed finite list of numbers depending only on $\mu$:
\begin{equation}\label{eqn:list_of_lyapunov_exponents}
\lambda_1(\mu)\ge \lambda_2(\mu)\ge \cdots \ge\lambda_{\dim M}(\mu).
\end{equation}
These are the \emph{Lyapunov exponents} with respect to $\mu$. In fact, for almost every $\omega$ and $\mu$-a.e.~$x$ there exists a flag $V_1\subset \cdots \subset V_j$ inside $T_xM$ such that if $v\in V_i\setminus V_{i-1}$ then the limit in \eqref{eqn:list_of_lyapunov_exponents} is equal to $\lambda_{\dim M-\dim V_i}$. The number of times a particular exponent appears in \eqref{eqn:list_of_lyapunov_exponents} is given by $\dim V_i- \dim V_{i-1}$; this number is referred to as the multiplicity of the exponent. For more information, see \cite{kifer1986ergodic}.

Our result holds for isotropic manifolds. By definition, an \emph{isotropic manifold} is a Riemannian manifold whose isometry group acts transitively on its unit tangent bundle. The closed isotropic manifolds are $S^n$, $\RP^n$, $\CP^n$, $\HP^n$, and the Cayley projective plane. In the following we write $G^{\circ}$ for the identity component of a Lie group $G$.

\begin{thm}\label{thm:KAM_converges}
Let $M^d$ be a closed isotropic Riemannian manifold other than $S^1$. There exists $k_0$ such that if $(R_1,...,R_m)$ is a tuple of isometries of $M$ such that the subgroup of $\Isom(M)$ generated by this tuple contains $\Isom(M)^{\circ}$, then there exists $\epsilon_{k_0}>0$ such that the following holds. Let $(f_1,...,f_m)$ be a tuple of $C^{\infty}$ diffeomorphisms satisfying $\max_{i} d_{C^{k_0}}(f_i,R_i)<\epsilon_{k_0}$. Suppose that there exists a sequence of ergodic stationary measures $\mu_n$ for the random dynamical system generated by $(f_1,...,f_m)$ such that $\abs{\lambda_d(\mu_n)}\to 0$, then there exists $\psi\in \Diff^{\infty}(M)$ such that for each $i$ the map $\psi f_i\psi^{-1}$ is an isometry of $M$ and lies in the subgroup of $\Isom(M)$ generated by $(R_1,\ldots,R_m)$.
\end{thm}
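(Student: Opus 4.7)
The plan is to carry out a KAM-style iteration in the spirit of Dolgopyat--Krikorian \cite{dolgopyat2007simultaneous}, adapted to all closed isotropic manifolds rather than only $S^n$. Starting from the tuple $(f_1, \ldots, f_m)$, with each $f_i$ sufficiently $C^{k_0}$-close to the isometry $R_i$, I would inductively construct diffeomorphisms $\psi_n \in \Diff^{\infty}(M)$ and isometric tuples $(R_1^{(n)}, \ldots, R_m^{(n)})$ so that the conjugated tuple $(\psi_n f_i \psi_n^{-1})$ agrees with $(R_i^{(n)})$ up to progressively smaller error in progressively higher norms. The final conjugacy $\psi$ will be the $C^\infty$-limit of the $\psi_n$, produced via the usual quadratic-convergence-plus-tame-smoothing scheme of Nash--Moser type.

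At the $n$-th step, writing $\psi_n f_i \psi_n^{-1} = R_i^{(n)} \circ \exp(X_i^{(n)})$ for a small vector field $X_i^{(n)}$, I look for an infinitesimal conjugacy $Y$ on $M$ solving the simultaneous cohomological equation
\begin{equation*}
(DR_i^{(n)}) \cdot Y - Y \circ R_i^{(n)} = X_i^{(n)}, \qquad i = 1, \ldots, m.
\end{equation*}
Because $M$ is a compact two-point homogeneous space, one decomposes vector fields into $\Isom(M)$-isotypic components via Peter--Weyl, reducing the system on each non-trivial irreducible summand to a finite-dimensional linear inversion. The crucial input is a uniform quantitative lower bound for the simultaneous action of $(R_1,\ldots,R_m)$ across all non-trivial irreducibles --- a ``Diophantine'' condition for generating tuples of isometries --- with at most polynomial loss in the dimension of the irreducible. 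Since $(R_1,\ldots,R_m)$ generates the compact group $\Isom(M)$, such an estimate should follow from a case-by-case analysis of $\RP^n$, $\CP^n$, $\HP^n$, and the Cayley plane, using the explicit structure of their unitary duals.

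The Lyapunov-exponent hypothesis is what supplies the dynamical input that closes the KAM loop. My plan is to argue, essentially by contrapositive, that if after finitely many steps the residual error $X_i^{(n)}$ refuses to be small enough to continue the iteration, then the non-isometric part propagates exponentially along the random walk, forcing every sufficiently nearby ergodic stationary measure $\mu$ to satisfy $|\lambda_d(\mu)| \geq c > 0$ with a uniform lower bound. Since the hypothesis provides a sequence $\mu_n$ with $|\lambda_d(\mu_n)| \to 0$, this obstruction cannot occur and the scheme proceeds indefinitely. Concretely, I would use Kingman's subadditive ergodic theorem to relate $\lambda_d(\mu_n)$ to the minimal singular value of $D_x f_\omega^k$, pass to a weak-$*$ limit of the $\mu_n$, and extract the $C^1$ smallness needed on large subsets of $M$; a smoothing step on the conjugacy then promotes this to the $C^{k_0}$ smallness demanded by the inductive hypothesis.

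The main obstacle, and plausibly the neighborhood of the oversight flagged in the abstract, is precisely this translation between the probabilistic input (Lyapunov exponents of stationary measures, which need not be absolutely continuous and can vary with the perturbation) and the deterministic high-regularity bounds required by the inductive step. Bridging it rigorously will require a careful invariance argument for the Lyapunov data along the KAM iterates and tame control of the derivative cocycle in the appropriate Banach scale. A secondary difficulty is organizing the spectral-gap and representation-theoretic estimates uniformly across the four families of isotropic manifolds, so that a single inductive framework handles all of them at once rather than splintering into a separate argument per space.
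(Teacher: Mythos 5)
Your high-level architecture (KAM iteration with smoothing cutoffs, a Diophantine input for the cohomological equation, and the Lyapunov hypothesis closing the loop) matches the paper's, but there are two substantive gaps, one of which is fatal as written.

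First, the simultaneous cohomological system $(DR_i^{(n)})\cdot Y - Y\circ R_i^{(n)} = X_i^{(n)}$ for all $i$ is \emph{overdetermined}: a single vector field $Y$ cannot solve $m$ independent equations unless the $X_i^{(n)}$ satisfy compatibility relations, which they will not. The paper solves only the \emph{averaged} equation $(I-\mc{L})V = -\frac{1}{m}\sum_i \mc{T}_\lambda Y_i$ (equation \eqref{eqn:V_defn} in Lemma \ref{lem:KAM_step}), where $\mc{L}$ is the averaging operator. This conjugacy does not kill the error fields; it only kills their average (hence the term $\mc{U}(\psi)$). The residual error must be controlled by a completely different mechanism, which brings us to the real gap.

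Second, and this is the essential missing idea, your ``contrapositive'' step --- that a persistent non-isometric residue forces $|\lambda_d(\mu)| \geq c > 0$ uniformly, argued via Kingman and weak-$*$ limits --- is precisely the step that has no proof in your sketch and is where the oversight in \cite{dolgopyat2007simultaneous} lives. Lyapunov exponents of stationary measures do not directly give $C^0$, let alone $C^k$, smallness of the conjugated tuple: a diffeomorphism can have zero exponents for every stationary measure while its differential still contains a large skew-symmetric (``isometric'') part that a Lyapunov bound cannot see. The paper's remedy is the strain tensor: Proposition \ref{prop:taylor_expansion_lambda_k} gives a Taylor expansion of $\Lambda_r(\mu)$ whose leading terms are $\int \|E_C^{f_i}\|^2$ and $\int \|E_{NC}^{f_i}\|^2$, plus the term $\mc{U}(\psi)$ which the averaged coboundary solve makes vanish. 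Thus small $|\lambda_d|$ and small $|\lambda_1|$ force small $L^2$ strain, a quantity that is genuinely geometric. The separate geometric input --- Theorem \ref{thm:near_to_identity} and Proposition \ref{lem:H_0_nearby_isometry}, proved by an intersecting-spheres argument and a curvature-tensor/stabilizer argument on symmetric spaces --- converts small strain into $C^\ell$-proximity to an honest isometry, which is what the iteration actually needs. Without this chain (exponents $\to$ strain $\to$ nearby isometry), there is no way to extract the next isometric tuple $(R_i^{(n+1)})$, and the iteration does not close.

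Finally, a minor point: your plan for the Diophantine estimate via ``case-by-case analysis of the unitary duals'' is workable in principle, but the paper obtains a uniform estimate for all compact connected semisimple $G$ via the Solovay--Kitaev algorithm (Theorem \ref{thm:real_solovay_kitaev}) and a Lipschitz bound in terms of the Casimir eigenvalue (Lemma \ref{lem:lipschitz_rep}), avoiding any case analysis. This is a cleaner route and, importantly, the stability of the $(C,\alpha)$ constants under perturbation of the $R_i$ (Proposition \ref{prop:dense_implies_diophantine}) is needed to keep the constants uniform along the KAM iteration, which your sketch does not address.
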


\noindent Dolgopyat and Krikorian proved Theorem \ref{thm:KAM_converges} in the case of $S^n$ \cite[Thm. 1]{dolgopyat2007simultaneous}. 

Dolgopyat and Krikorian also obtained a Taylor expansion of the Lyapunov exponents of the stationary measure of the perturbed system \cite[Thm. 2]{dolgopyat2007simultaneous}. 
Fix $(R_1,\ldots,R_m)$ generating $\Isom(S^n)^{\circ}$. Let $(f_1,\ldots, f_m)$ be a $C^{k_0}$ small perturbation of $(R_1,\ldots,R_m)$ and $\mu$ be any ergodic stationary measure for $(f_1,\ldots,f_m)$. Let $\Lambda_r=\lambda_1+\cdots+\lambda_r$ denote the sum of the top $r$ Lyapunov exponents.
In \cite[Thm. 2]{dolgopyat2007simultaneous}, it is shown that the Lyapunov exponents of $\mu$ satisfy
\begin{equation}\label{eqn:DK_taylor_expansion}
\lambda_r(\mu)=\frac{\Lambda_d}{d}+\frac{d-2r+1}{d-1}\left(\lambda_1-\frac{\Lambda_d}{d}
\right)+o(1)|\lambda_d(\mu)|,
\end{equation}
where $o(1)$ goes to zero as $\max_i d_{C^{k_0}}(f_i,R_i)\to 0$. Using this formula Dolgopyat and Krikorian obtain an even stronger dichotomy for systems on even dimensional spheres: either $(f_1,\ldots,f_m)$ is simultaneously conjugated to isometries or the Lyapunov exponents of every ergodic stationary measure of the perturbation are uniformly bounded away from zero. By using this result they show if $(R_1,\ldots,R_m)$ generates $\Isom(S^{2n})^{\circ}$ and $(f_1,\ldots,f_m)$ is a $C^{k_0}$ small perturbation such that each $f_i$ preserves volume, then volume is an ergodic stationary measure for $(f_1,\ldots, f_m)$ \cite[Cor. 2]{dolgopyat2007simultaneous}.

It is natural to ask if a similar Taylor expansion can be obtained in the setting of isotropic manifolds. Proposition \ref{prop:taylor_expansion_lambda_k} shows that $\Lambda_r$ may be Taylor expanded assuming that $(R_1,...,R_m)$ generates $\Isom(M)^{\circ}$ and the induced action of $\Isom(M)^{\circ}$ on $\Gr_r(M)$, the Grassmanian bundle of $r$-planes in $TM$, is transitive.

In Theorem \ref{thm:taylor_expansion_of_top_and_bottom}, we give a Taylor expansion relating $\lambda_1$ and $\lambda_d$ which holds for isotropic manifolds.
However, we cannot Taylor expand every Lyapunov exponent as in equation \eqref{eqn:DK_taylor_expansion} because if a manifold does not have constant curvature then its isometry group cannot act transitively on the two-planes in its tangent spaces. The argument of Dolgopyat and Krikorian requires that the isometry group act transitively on the space of $k$-planes in $TM$ for $0\le k\le d$. 

It is natural to ask why the proof of Theorem \ref{thm:KAM_converges} does not work in the case of $S^1$ even though $S^1$ is isotropic. 
As Proposition \ref{prop:decay_estimate} shows, for a tuple $(R_1,\ldots,R_m)$ as in the theorem, uniformly small perturbations of $(R_1,\ldots, R_m)$ are uniformly Diophantine in a sense explained below. This uniformity is used crucially in the proof when we change the tuple of isometries that we are working with. The same uniformity of Diophantineness does not hold for tuples of isometries of $S^1$: a small perturbation may lose all Diophantine properties. The reason that the proof of Proposition \ref{prop:decay_estimate} does not work for $S^1$ is that the isometry group of $S^1$ is not semi-simple.

There are not many other results like Theorem \ref{thm:KAM_converges}. In addition to the aforementioned result of Dolgopyat and Krikorian, there are some results of Malicet. In \cite{malicet2012simultaneous}, a similar linearization result is obtained that applies to a particular type of map of $\mathbb{T}^2$ that fibers over a rotation on $S^1$. In a recent work, Malicet  obtained a Taylor expansion of the Lyapunov exponent for a perturbation of a Diophantine random dynamical system on the circle \cite{malicet2020lyapunov}.

\

\noindent{{\bf Acknowledgements.}} The author thanks Aaron Brown and Amie Wilkinson for their critical comments during all parts of this project. The author also thanks Dmitry Dolgopyat for his generosity in answering the author's questions about \cite{dolgopyat2007simultaneous}. The author is also grateful to the anonymous referees for carefully reading the manuscript and providing many useful comments and suggestions.

\subsection{Outline}

The proof of Theorem \ref{thm:KAM_converges} follows the general argument of \cite{dolgopyat2007simultaneous}. For readability, the argument in this paper is self-contained. While a number of the results below appear in \cite{dolgopyat2007simultaneous}, we have substantially reformulated many of them and in many places offer a different proof. Doing so is not merely a courtesy to the reader: the results in \cite{dolgopyat2007simultaneous} are stated in too narrow a setting for us to use. Simply stating more general reformulations would unduly burden the reader's trust. In addition, as will be discussed below, there are some oversights in \cite{dolgopyat2007simultaneous} which we explain in subsection \ref{subsec:oversight_and_remedy} and that we have remedied in Section \ref{sec:diffs_of_small_strain}. We have also stated intermediate results and lemmas in more generality than is needed for the proof of Theorem \ref{thm:KAM_converges} so that they may be used by others. Below we sketch the general argument of the paper and emphasize some differences with the approach in \cite{dolgopyat2007simultaneous}.

The proof of Theorem \ref{thm:KAM_converges} is by an iterative KAM convergence scheme. Fix a closed isotropic manifold $M$. We start with a tuple of diffeomorphisms $(f_1,\ldots,f_m)$ nearby to a tuple of isometries $(R_1,\ldots,R_m)$.  We must find some smooth diffeomorphism $\psi$ such that $\wt{f}_i\coloneqq \psi f_i\psi^{-1}\in \Isom(M)$. To do this we produce a conjugacy $\psi$ that brings each $f_i$ closer to being an isometry. To judge the distance from being an isometry, we define a strain tensor that vanishes precisely when a diffeomorphism is an isometry. By solving a particular coboundary equation and using that the Lyapunov exponents are zero,  we can construct $\psi$ so that $\wt{f}_i$ has small strain tensor. In our setting, a diffeomorphism with small strain is near to an isometry, so $(\wt{f}_1,\ldots,\wt{f}_m)$ is near to a tuple of isometries $(R_1',\ldots,R_m')$. We then repeat the procedure using these new tuples as our starting point. The results of performing a single step of this procedure comprise Lemma \ref{lem:KAM_step}. Once Lemma \ref{lem:KAM_step} is proved, the rest of the proof of Theorem \ref{thm:KAM_converges} is bookkeeping that checks that the procedure converges. Most of the paper is in service of the proof Lemma \ref{lem:KAM_step}, which gives the result of a single step in the convergence scheme. 

 Proofs of technical and basic facts are relegated to a significant number of appendices. This has been done to focus the main exposition on the important ideas in the proof of Theorem \ref{thm:KAM_converges} and not on the technical details. The appendices that might be most beneficial to look at before they are referenced in the text are appendices \ref{sec:ck_calculus} and \ref{sec:interpolation}. These appendices concern $C^k$ calculus and interpolation inequalities. Both contain estimates that are common in KAM arguments. The organization of the main body of the paper reflects the order of the steps in the proof of Lemma \ref{lem:KAM_step}. There are several important results in the proof of Lemma \ref{lem:KAM_step}, which we now describe.

 The first part of the proof of Lemma \ref{lem:KAM_step} requires that a particular coboundary equation can be tamely solved. The solution to this equation is one of the main subjects of Section \ref{sec:diophantine}. The equation is solved in Proposition \ref{prop:tame_coboundaries}. This proposition is essential in the work of Dolgopyat and Krikorian \cite{dolgopyat2007simultaneous} and its proof follows from the appendix to \cite{dolgopyat2002mixing}; it relies on a Diophantine property of the tuple of isometries $(R_1,\ldots, R_m)$. This property is formulated in subsection \ref{subsec:diophantine_sets}. The stability of this property under perturbations is crucial in the proof and an essential feature of our setting. In addition, the argument in Section \ref{sec:diophantine} is different from Dolgopyat's earlier argument because we we use the Solovay-Kitaev algorithm (Theorem \ref{thm:real_solovay_kitaev}), which is more efficient than the procedure used in the appendix to \cite{dolgopyat2002mixing}.

Section \ref{sec:stationary_measures} considers stationary measures for perturbations of $(R_1,\ldots,R_m)$. Suppose $M$ is a quotient of its isometry group, its isometry group is semisimple, and  $(R_1,\ldots,R_m)$ is a Diophantine subset of $\Isom(M)$. Suppose $(f_1,\ldots,f_m)$ is a small smooth perturbation of $(R_1,\ldots,R_m)$. There is a relation between a stationary measure $\mu$ for the perturbed system and the Haar measure.
Proposition \ref{prop:taylor_expansion_of_haar} relates integration against $\mu$ with integration against the Haar measure. Lyapunov exponents are calculated by integrating the $\log $ Jacobian against a stationary measure of an extended dynamical system on a Grassmannian bundle over $M$. Consequently, this proposition relates stationary measures and their Lyapunov exponents to the volume on a Grassmannian bundle.

The relationship between Lyapunov exponents and stationary measures is explained in Section \ref{sec:strain_and_Lyapunov_exponents}. Proposition \ref{prop:taylor_expansion_lambda_k} provides a Taylor expansion of the sum of the top $r$ Lyapunov exponents of a stationary measure $\mu$. Three terms appear in the Taylor expansion.  The first two terms have a direct geometric meaning, which we interpret in terms of strain tensors introduced in subsection \ref{subsec:strain}.  The final term in the Taylor expansion depends on a quantity $\mc{U}(\psi)$. This quantity does not have a direct geometric interpretation. However, in the proof of Lemma \ref{lem:KAM_step}, we show that by solving the coboundary equation from Proposition \ref{prop:tame_coboundaries} the quantity $\mc{U}(\psi)$ can be made to vanish. Once $\mc{U}(\psi)$ vanishes, then we have an equation directly relating Lyapunov exponents to the strain. This equation then allows us to conclude that a diffeomorphism with small Lyapunov exponents also has small strain. We reformulate in a Riemannian geometric setting some arguments of \cite{dolgopyat2007simultaneous} by using the strain tensor. This gives coordinate-free expressions that are easier to interpret. 

Section \ref{sec:diffs_of_small_strain} contains the most important connection between the strain tensor and isometries: diffeomorphisms of small strain on isotropic manifolds are near to isometries. The basic geometric fact proved in Section \ref{sec:diffs_of_small_strain} is Theorem \ref{thm:near_to_identity}, which is true on any manifold. Theorem \ref{thm:near_to_identity} is then used to prove Proposition \ref{lem:H_0_nearby_isometry}, which is a more technical result adapted for use in the KAM scheme. Proposition \ref{lem:H_0_nearby_isometry} then allows us to prove that our conjugated tuple is near to a new tuple of isometries, which allows us to repeat the process. 

All of the previous sections combine in Section \ref{sec:KAM_scheme} to prove Lemma \ref{lem:KAM_step}. We then obtain the main theorem, Theorem \ref{thm:KAM_converges}, and prove an additional theorem that relates the top and bottom Lyapunov exponents of a perturbation, Theorem \ref{thm:taylor_expansion_of_top_and_bottom}.

\subsection{An oversight and its remedy}\label{subsec:oversight_and_remedy}

Section \ref{sec:diffs_of_small_strain} is entirely new and different from anything appearing in \cite{dolgopyat2007simultaneous}. Consequently, the reader may wonder why it is needed. Section \ref{sec:diffs_of_small_strain} provides a method of finding a tuple of isometries $(R_1',\ldots,R_m')$ near to the tuple $(\wt{f}_1,\ldots,\wt{f}_m)$ of diffeomorphisms. In \cite{dolgopyat2007simultaneous}, the new diffeomorphisms $R_m$ are found in the following manner. As in equation \eqref{eqn:linearized_error}, one may find vector fields $Y_i$ such that 
\[
\exp_{R_i(x)}Y_i(x)=f_i(x).
\]
If $Z$ is a vector field on $M$, we define $\psi_Z$, as in equation \eqref{eq:psi_defn} to be the map $x\mapsto \exp_xZ(x)$. 
There is a certain operator, the Casimir Laplacian, which acts on vector fields. This operator is defined and discussed in more detail in subsection \ref{subsec:diophantine_sets}. Dolgopyat and Krikorian then project the vector fields $Y_i$ onto the kernel of the Casimir Laplacian, to obtain a vector field $Y_i'$. They then define $R_i'$ to equal $\psi_{Y_i'}\circ R_i$.  This happens in the line immediately below equation (19) in \cite{dolgopyat2007simultaneous}. 

One difficulty is establishing that the maps $(R_1',\ldots,R_m')$ are close to the $(\wt{f_i},\ldots,\wt{f_m})$. 
The argument for their nearness hinges on part (d) of Proposition 3 in \cite{dolgopyat2007simultaneous},  which essentially says that, up to a third order error, the magnitude of the smallest Lyapunov exponent is a bound on the distance. As written, the argument in \cite{dolgopyat2007simultaneous} suggests that part (d) is an easy consequence of part (c) of \cite[Prop. 3]{dolgopyat2007simultaneous}. However, part (d) does not follow. Here is a simplification of the problem. Suppose that $f\colon \R^n\to \R^n$ is a diffeomorphism. Pick a point $x\in \R^n$ and write $D_xf=A+B+C$, where $A$ is a multiple of the identity, $B$ is symmetric with trace zero, and $C$ is skew-symmetric. The results in part (c) imply that $A$ and $B$ are small, but they offer no information about $C$.\footnote{For those comparing with the original paper, $A$ and $B$ correspond to the terms $q_1$ and $q_2$, respectively, which appear in part (c) of \cite[Prop. 3]{dolgopyat2007simultaneous}.} Concluding that the norm of $Df$ is small requires that $C$ be small as well. As $C$ is skew-symmetric it is natural to think of it as the germ of an isometry. Our modification to the argument is designed to accommodate the term $C$ by recognizing it as the ``isometric" part of the differential. Pursuing this perspective leads to the strain tensor and our Proposition \ref{lem:H_0_nearby_isometry}.  Conversation with Dmitry Dolgopyat confirmed that there is a problem in the paper on this point and that part (d) of Proposition 3 does not follow from part (c).

\section{A Diophantine Property and Spectral Gap}\label{sec:diophantine}

Fix a compact connected semisimple Lie group $G$ and let $\mf{g}$ denote its Lie algebra. Endow $G$ with the bi-invariant metric arising from the negative of the Killing form on $\mf{g}$. We denote this metric on $G$ by $d$. We endow a subgroup $H$ of $G$ with the pullback of the Riemannian metric from $G$ and denote the distance on $H$ with respect to the pullback metric by $d_H$. We use the manifold topology on $G$ unless explicitly stated otherwise. Consequently, whenever we say that a subset of $G$ is dense, we mean this with respect to the manifold topology on $G$. We say that a subset $S$ of $G$ \emph{generates} $G$ if the smallest closed subgroup of $G$ containing $S$ is $G$. In other words, if $\langle S\rangle$ denotes the smallest subgroup of $G$ containing $S$, then $S$ generates if $\overline{\langle S\rangle}=G$.

Suppose that $S\subset G$ generates $G$. We begin this section by discussing how long a word in the elements of $S$ is needed to approximate an element of $G$. Then using this approximation we obtain quantitative estimates for the spectral gap of certain operators associated to $S$. Finally, those spectral gap estimates allow us to obtain a ``tameness" estimate for a particular operator that arises from $S$. This final estimate, Proposition \ref{prop:tame_coboundaries}, will be crucial in the KAM scheme that we use to prove Theorem \ref{thm:KAM_converges}.

The content of this section is broadly analogous to Appendix A in \cite{dolgopyat2002mixing}. However, our development follows a different approach and in some places we are able to obtain stronger estimates.

\subsection{The Solovay-Kitaev algorithm}

Suppose that $S$ is a subset of $G$. We say that $S$ is \emph{symmetric} if $s\in S$ implies $s^{-1}\in S$. For a natural number $n$, let $S^n$ denote the $n$-fold product of $S$ with itself. Let $S^{-1}$ be $\{s^{-1}: s\in S\}$. For $n<0$, define $S^n$ to equal $(S^{-1})^{-n}$. 
 The following theorem says that any sufficiently dense symmetric subset $S$ of a compact semisimple Lie group is a generating set. More importantly, it also gives an estimate on how long a word in the generating set $S$ is needed to approximate an element of $G$ to within error $\epsilon$. If $w=s_1\cdots s_n$ is a word in the elements of the set $S$, then we say that $w$ is \emph{balanced} if for each $s\in S$, $s$ appears the same number of times in $w$ as $s^{-1}$ does.

\begin{thm}\label{thm:real_solovay_kitaev}
\cite[Thm. 1]{dawson2006solovay}(Solovay-Kitaev Algorithm)
Suppose that $G$ is a compact semisimple Lie group. There exists $\epsilon_0(G)>0$ and $\alpha>0$ and $C>0$ such that if $S$ is any symmetric $\epsilon_0$-dense subset of $G$ then the following holds. For any $g\in G$ and any $\epsilon>0$, there exists a natural number $l_{\epsilon}$ such that $d(g,S^{l_{\epsilon}})<\epsilon$. Moreover, $l_{\epsilon}\le C\log^\alpha (1/\epsilon)$. Further, there is a balanced word of length $l_{\epsilon}$ within distance $\epsilon$ of $g$.
\end{thm}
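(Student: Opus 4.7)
The plan is an iterated refinement, building a sequence of finite nets $S^{l_n} \subset G$ whose mesh $\epsilon_n$ decreases super-exponentially while the word length $l_n$ grows only geometrically. Inverting this relationship then yields a polylogarithmic bound on $l_\epsilon$. The base of the recursion is essentially free: by hypothesis $S$ is $\epsilon_0$-dense in $G$, so any $g \in G$ lies within $\epsilon_0$ of some $s \in S^1$, where $\epsilon_0$ is chosen small enough that the local analysis around the identity described below applies.

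The heart of the argument is the inductive step. Assume every $g \in G$ can be approximated to within $\epsilon_n$ by a balanced word of length $l_n$ in $S$. Given a target $g$, choose $s \in S^{l_n}$ with $d(g,s)<\epsilon_n$ and set $W := gs^{-1}$, which lies in a ball of radius $\epsilon_n$ around $e$. The crucial geometric lemma---and the main obstacle of the proof---is a \emph{commutator inverse}: for $W$ sufficiently close to $e$, one can find $A, B \in G$ with $d(A,e),\, d(B,e) \le C\sqrt{\epsilon_n}$ such that $W = [A,B] = ABA^{-1}B^{-1}$. To establish this I would work in the Lie algebra $\mf{g}$ via the exponential map. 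The Baker--Campbell--Hausdorff formula gives $\log[\exp X, \exp Y] = [X,Y] + O(\|X\|\|Y\|(\|X\|+\|Y\|))$, and since $G$ is semisimple, $[\mf{g},\mf{g}] = \mf{g}$. An implicit function theorem argument then shows that after restricting $X, Y$ to norm $\sim \sqrt{\epsilon_n}$, the map $(X,Y) \mapsto \log[\exp X,\exp Y]$ is a submersion onto a neighborhood of $0$ of size $\sim \epsilon_n$, so a suitable $(A, B)$ exists.

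Once the commutator inverse is in hand, apply the inductive hypothesis to $A$ and $B$ to obtain balanced words $a, b \in S^{l_n}$ with $d(a,A),\, d(b,B) \le \epsilon_n$. Using bi-invariance of the metric and the fact that the commutator map is Lipschitz in each variable with Lipschitz constant proportional to the other variable's distance from $e$, a short calculation gives $d([a,b],[A,B]) \le C \sqrt{\epsilon_n}\cdot \epsilon_n = C\epsilon_n^{3/2}$. Hence $[a,b]\cdot s$ approximates $g = Ws$ to within $C\epsilon_n^{3/2}$, producing the recursion $\epsilon_{n+1} = C\epsilon_n^{3/2}$ with word length $l_{n+1} = 5\, l_n$ (four factors from the commutator and one factor of $s$). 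Solving these recursions, $\epsilon_n \lesssim \epsilon_0^{(3/2)^n}$, so achieving accuracy $\epsilon$ requires $n = O(\log\log(1/\epsilon))$ iterations and total word length $l_n = 5^n = O(\log^{\alpha}(1/\epsilon))$ for $\alpha = \log 5 / \log(3/2)$.

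Finally, the balancedness assertion propagates through the recursion. If $s$, $a$, $b$ are balanced words in $S$, then $aba^{-1}b^{-1}s$ contains each occurrence of every letter of $a$ paired with a corresponding inverse from $a^{-1}$, and similarly for $b$, while $s$ is balanced by assumption. An easy induction on $n$ completes the proof. The only genuinely nontrivial input is the commutator-inverse lemma, which is where the semisimplicity hypothesis on $G$ is essential.
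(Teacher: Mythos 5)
The paper cites this theorem from Dawson and Nielsen~\cite{dawson2006solovay} and does not prove it, so there is no internal proof to compare against. Your argument follows the standard Solovay--Kitaev recursion and is sound in outline, but the balancedness assertion contains a genuine gap. A balanced word necessarily has even length, since the occurrences of each letter pair off with occurrences of its inverse. Your base case $S_0(g)$ is a single letter of $S$ (a word of length one), which is generically not balanced. The recursion $S_{n+1}(g)=a\,b\,a^{-1}\,b^{-1}\,S_n(g)$ produces a word that is balanced precisely when $S_n(g)$ is (the commutator block is balanced on its own), so the failure at depth zero persists: the final word has odd length $5^n$ and therefore can never be balanced. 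Your last paragraph asserts that ``$s$ is balanced by assumption,'' but that assumption is never satisfied at the base of the induction.

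The fix requires one additional input: start the recursion from a balanced word. Since $G$ is compact, connected, and semisimple, the group commutator map $G\times G\to G$ is surjective (Got\^{o}), so one may write $g=[A,B]$ and, using $\epsilon_0$-density together with the Lipschitz continuity of the commutator, approximate $g$ to accuracy $O(\epsilon_0)$ by the balanced length-four word $a\,b\,a^{-1}\,b^{-1}$ with $a,b\in S$. Starting from that base case, every word in the recursion is balanced and has even length $4\cdot5^n$. Separately, your appeal to the implicit function theorem for the commutator-inverse lemma is not literal, because the derivative of $(X,Y)\mapsto\log[\exp X,\exp Y]$ vanishes identically at the origin. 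The correct argument instead uses that the Lie bracket $\mf{g}\times\mf{g}\to\mf{g}$ is surjective for $\mf{g}$ semisimple, so the image of a fixed compact ball contains a fixed neighbourhood of $0$, and then the degree-two homogeneity $[tX,tY]=t^2[X,Y]$ to shrink $\|X\|,\|Y\|$ to $O(\sqrt{\epsilon})$; the BCH correction then contributes exactly the $O(\epsilon^{3/2})$ error you identified.
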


Later, we use a version of this result that does not require that the set $S$ be symmetric. Using a non-symmetric generating set  significantly increases the word length obtained in the conclusion of the theorem. It is unknown if there exists a version of the Solovay-Kitaev algorithm that does not require a symmetric generating set and keeps the $O(\log^\alpha(1/\epsilon))$ word length. See \cite{bouland2018trading} for a partial result in this direction.

\begin{prop}\label{prop:solovay-kitaev_without_inverses}
Suppose that $G$ is a compact semisimple Lie group endowed with a bi-invariant metric. There exists $\epsilon_0(G)>0$, $\alpha>0$, and $C\ge 0$  such that if $S$ is any $\epsilon_0$-dense subset of $G$ then the following holds. For any $g\in G$ and any $\epsilon>0$, there exists a natural number $l_{\epsilon}$ such that $d(g,S^{l_{\epsilon}})<\epsilon$. Moreover, $l_{\epsilon}\le C\epsilon^{-\alpha}$.
\end{prop}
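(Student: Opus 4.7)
The plan is to reduce Proposition \ref{prop:solovay-kitaev_without_inverses} to the symmetric Solovay–Kitaev algorithm (Theorem \ref{thm:real_solovay_kitaev}) by simulating the inverses $s^{-1}$ of generators $s\in S$ using words in $S$ itself. Since $G$ is compact, for each $s$ some power $s^n$ is close to the identity, so $s^{n-1}$ approximates $s^{-1}$; substituting such approximations into a balanced word produced by the symmetric Solovay–Kitaev algorithm yields a word in $S$ alone. The cost is that the approximate inverses have lengths polynomial in the required precision, which is what degrades the bound from $\log^\alpha(1/\epsilon)$ to $\epsilon^{-\alpha}$.

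To make the approximation quantitative, I would fix $d=\dim G$ and use a standard covering/pigeonhole argument: a $\delta$-net in $G$ has cardinality $N\le C_1\delta^{-d}$, so among $e,s,s^2,\dots,s^N$ two iterates lie in a common $\delta$-ball, producing an exponent $1\le n_s\le N$ with $d(s^{n_s},e)<2\delta$. Since the Killing metric is bi-invariant, right translation by $s^{-1}$ is an isometry, so $d(s^{n_s-1},s^{-1})<2\delta$. Choose $\epsilon_0$ to be the constant from Theorem \ref{thm:real_solovay_kitaev}; then $S\cup S^{-1}$ is a symmetric $\epsilon_0$-dense subset of $G$, and for any $g\in G$ and target precision $\epsilon'>0$ the symmetric Solovay–Kitaev algorithm produces a word $w=\sigma_1\cdots\sigma_\ell$ with $\sigma_j\in S\cup S^{-1}$, $\ell\le C_2\log^\alpha(1/\epsilon')$, and $d(g,w)<\epsilon'$. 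Replacing each occurrence of $s^{-1}$ in $w$ by $s^{n_s-1}\in S^{n_s-1}$ gives a word $w'$ built from letters of $S$ only. Using bi-invariance and the triangle inequality letter by letter, one obtains $d(w,w')\le 2\ell\delta$, so $d(g,w')<\epsilon'+2\ell\delta$.

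It remains to balance parameters and count letters. Setting $\epsilon'=\epsilon/2$ and $\delta=\epsilon/(4\ell)$ yields $d(g,w')<\epsilon$, and the total length of $w'$ is at most
\[
\ell\cdot N \;\le\; C_2\log^\alpha(1/\epsilon)\,\cdot\, C_1\bigl(4\ell/\epsilon\bigr)^d \;=\; O\!\bigl(\log^{\alpha(d+1)}(1/\epsilon)\,\epsilon^{-d}\bigr),
\]
which is dominated by $\epsilon^{-\alpha'}$ for any $\alpha'>d$, giving the required bound after renaming the exponent. The main technical obstacle is the pigeonhole step for the recurrence time of $s^n$: the estimate $n_s\le C\delta^{-d}$ is essentially optimal (it is saturated by elements whose orbit closure is a maximal torus), so the polynomial loss is intrinsic to this substitution strategy and explains why the theorem cannot maintain the polylogarithmic length of the symmetric case without an additional idea. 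Everything else is bookkeeping: checking bi-invariance of the metric to control the multi-substitution error, verifying that the constant $\epsilon_0$ can be chosen uniformly from Theorem \ref{thm:real_solovay_kitaev}, and absorbing logarithmic factors into a slightly larger polynomial exponent.
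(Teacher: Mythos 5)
Your strategy---symmetrize, run Solovay--Kitaev, then replace each $s^{-1}$ by a positive power of $s$ found by pigeonhole---is the same high-level route as the paper's proof, but there is one genuine difference worth flagging. The paper applies the pigeonhole argument not to each generator separately but to the single element $(s_1,\dots,s_m)\in G\times\cdots\times G$ via Lemma \ref{lem:approximate_inverse}, which produces \emph{one} exponent $j$ that works simultaneously for every $s\in S$; combined with the balancedness clause of Theorem \ref{thm:real_solovay_kitaev} (which you do not invoke), the substituted word always has the same length $jl_{\epsilon/2}/2+l_{\epsilon/2}/2$, so a single power $S^{l_\epsilon}$ is $\epsilon$-dense. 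Your per-generator exponents $n_s$ give substituted words whose total length depends on which inverses occur in $w$ and how often, so you only obtain that each $g$ is $\epsilon$-close to \emph{some} word in $S$ of length at most $C\epsilon^{-\alpha}$. That suffices for the literal reading of the proposition, but it is weaker than the density statement the paper's proof actually establishes and the one that makes the result a true analogue of Theorem \ref{thm:real_solovay_kitaev}. The trade-off cuts the other way on efficiency: pigeonholing in $G$ of dimension $d$ rather than in $G^m$ of dimension $dm$ yields $\epsilon^{-d}$ rather than the paper's $\epsilon^{-dm}$.

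One small technical point needs repair: your pigeonhole permits $n_s=1$, in which case $s^{n_s-1}=e$ is the empty word, which lies in no $S^k$ with $k\ge1$, so the substitution is ill-defined. The paper's Lemma \ref{lem:approximate_inverse} sidesteps this by iterating $h^2$ rather than $h$, guaranteeing the exponent $2n_j-2n_i-1\ge1$. You could do the same, or observe that $n_s=1$ forces $d(s,e)<2\delta$, hence by bi-invariance $d(s,s^{-1})<4\delta$, so $s$ itself is a length-one substitute for $s^{-1}$ with a slightly larger error constant.
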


Our weakened version of the Solovay-Kitaev algorithm relies on the following lemma, which allows us to approximate the inverse of an element $h$ by some positive power of $h$.

\begin{lem}\label{lem:approximate_inverse}
Suppose that $G$ is a compact $d$-dimensional Lie group with a fixed bi-invariant metric. Then there exists a constant $C$ such that for all $\epsilon>0$ and any $h\in G$ there exists a natural number $n<C/\epsilon^d$ such that $d(h^{-1},h^n)<\epsilon$.
\end{lem}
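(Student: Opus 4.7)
The plan is a pigeonhole argument applied to the iterates of $h$ in the compact group $G$. Since $G$ is a compact $d$-dimensional Riemannian manifold and the bi-invariance of the metric makes the volume of any metric ball depend only on its radius, there is a constant $C_0 = C_0(G)$ such that $G$ can be covered by at most $C_0\delta^{-d}$ open metric balls of radius $\delta$ whenever $\delta \in (0, \diam G]$. This covering estimate is uniform in $h$ and in $\delta$.

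Given $\epsilon > 0$, I would set $N = \lceil C_0(\epsilon/2)^{-d}\rceil + 1$ and consider the $N$ elements $h, h^2, \dots, h^N \in G$. By the covering estimate and pigeonhole, there exist indices $1 \le i < j \le N$ whose images lie in the same ball of radius $\epsilon/2$, so that $d(h^i, h^j) < \epsilon$. Left-invariance of $d$ then yields $d(e, h^{j-i}) < \epsilon$; set $m = j - i$, so $1 \le m \le N-1$.

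To pass from $h^m \approx e$ to an approximation of $h^{-1}$, I would use bi-invariance as follows. Provided $m \ge 2$, set $n = m - 1 \ge 1$; then $d(h^{-1}, h^n) = d(h^{-1}, h^{-1}h^m) = d(e, h^m) < \epsilon$. In the exceptional case $m = 1$ we already have $d(e, h) < \epsilon$, so taking $n = 1$ gives $d(h^{-1}, h) \le d(h^{-1}, e) + d(e, h) = 2d(e, h) < 2\epsilon$ by the triangle inequality. Either way $n \le N$, so $n \le C'/\epsilon^d$ for a constant $C'$ absorbing the factors of $2$ and the additive term; a final rescaling of $\epsilon$ by a fixed factor yields the form stated in the lemma.

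I expect the main, rather minor, obstacle is arranging the covering estimate with a constant depending only on $G$ and not on $h$. Bi-invariance of the metric handles this cleanly: balls of a given radius $\delta$ all have the same volume as $B_\delta(e)$, whose volume decays like $\delta^d$ as $\delta \to 0$. Hence the covering number of $G$ by balls of radius $\delta$ is at most $\vol(G)/\vol(B_{\delta/2}(e))$, which is bounded by a constant multiple of $\delta^{-d}$ depending only on $G$. All remaining steps are elementary applications of the triangle inequality and the bi-invariance of the metric.
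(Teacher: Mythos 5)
Your proof is correct and follows essentially the same pigeonhole argument as the paper. The one difference worth noting: the paper iterates even powers $h^2, h^4, \ldots$, which guarantees the exponent $2n_j-2n_i-1$ obtained at the end is already $\geq 1$ and eliminates the need for a special case; you iterate consecutive powers $h, h^2, \ldots$, so you must treat $j-i=1$ separately via the triangle inequality and then absorb the resulting factor of $2$ by a rescaling of $\epsilon$ — both handle the edge case validly, though the paper's device is slightly slicker. (You also implicitly use the conventional version of the statement in which the distance bound is $\epsilon$ rather than $1/\epsilon$, which is the interpretation consistent with how the lemma is applied in Proposition \ref{prop:solovay-kitaev_without_inverses}.)
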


\begin{proof}
This follows from a straightforward pigeonhole argument. We cover $G$ with sets of diameter $\epsilon$. There exists a constant $C$ so that we can cover $G$ with at most $C\vol(G)/\epsilon^d$ such sets, where $d$ is the dimension of $G$.  Consider now the first $\lceil C\vol(G)/\epsilon^d\rceil $ iterates of $h^2$.  By the pigeonhole principle, two of these must fall into the same set in the covering, and so there exist natural numbers $n_i$ and $n_j$ such that  $0<n_i<n_j<\lceil C\vol(G)/(\epsilon^d)\rceil$ and $h^{2n_i}$ and $h^{2n_j}$ lie in the same set in the covering. Thus $d(h^{2n_i},h^{2n_j})<\epsilon$.  As $h$ is an isometry it follows that $d(e,h^{2n_j-2n_i})<\epsilon$ and hence $d(h^{-1},h^{2n_j-2n_i-1})<\epsilon$ as well. This finishes the proof.
\end{proof}

We now prove the proposition.

\begin{proof}[Proof of Proposition \ref{prop:solovay-kitaev_without_inverses}]
Let $\hat{S}=S\cup S^{-1}$. Note that as $\hat{S}$ is a symmetric generating set of $G$ that by Theorem \ref{thm:real_solovay_kitaev} for any $\epsilon>0$, there exists a number $l_{\epsilon/2}=O(\log^\alpha(1/\epsilon))$ such that for any $g\in G$ there exists an element $h$ in $\hat{S}^{l_{\epsilon/2}}$ such that $d(h,g)<\epsilon/2$. Further, by the statement of Theorem \ref{thm:real_solovay_kitaev}, we know that $h$ is represented by a balanced word $w$ in $\hat{S}^{l_{\epsilon/2}}$. 

To finish the proof, we replace each element of $w$ that is in $S^{-1}$ by a word in $S^j$ for some uniform $j>0$. To do this we show that there exists a fixed $j$ so that the elements of $S^j$ approximate well the inverses of the elements of $S$. 
Write $S=\{s_1,\cdots,s_m\}$ and consider the element $(s_1,...,s_m)$ in the group $G\times \cdots\times G$, where there are $m$ terms in the product. By applying Lemma \ref{lem:approximate_inverse} to the group $G\times \cdots \times G$ and the element $(s_1,...,s_m)$, we obtain that there exists a uniform constant $C'$ and $j<C'2^{dm}l_{\epsilon/2}^{dm}/\epsilon^{dm}$ such that any $s\in S^{-1}$ may be approximated to distance $\epsilon/(2l_{\epsilon/2})$ by an element in $S^j$.

We now replace each element of $S^{-1}$ appearing in $w$ with a word in $S^j$ that is at distance $\epsilon/(2l_{\epsilon/2})$ away from it. Call this new word $w'$. Because $w$ is balanced, we replace exactly half of the terms in $w$. Thus $w'$ is a word of length $jl_{\epsilon/2}/2+l_{\epsilon/2}/2$ as we have replaced half the entries of $w$, which has length $l_{\epsilon/2}$, with words of length $j$. Let $h'$ be the element of $G$ obtained by multiplying together the terms in $w'$.

Note that multiplication of any number of elements of $G$ is $1$-Lipschitz in each argument. Hence as we have modified the expression for $h$ in exactly $l_{\epsilon/2}/2$ terms and each modification is of size $\epsilon /(2l_{\epsilon/2})$, $h'$ is distance at most $\epsilon/2$ from $h$ and hence at most distance $\epsilon$ from $g$. Thus $S^{jl_{\epsilon/2}/2+l_{\epsilon/2}/2}$ is  $\epsilon$ dense in $G$ and 
\[
jl_{\epsilon/2}/2+l_{\epsilon/2}/2<C''l_{\epsilon/2}^{dm+1}/\epsilon^{dm}=O(\log^{(dm+1)\alpha}(1/\epsilon)\epsilon^{-dm}), 
\]
which establishes the proposition as $m$ depends only on $\abs{S}$.
\end{proof}

We record one final result that asserts that if $S\subseteq G$ generates, then the powers of $S$ individually become dense in $G$.

\begin{prop}\label{prop:generating_set_gets_dense}
Suppose that $G$ is a compact connected Lie group. Suppose that $S\subseteq G$ generates $G$. Then for all $\epsilon>0$ there exists a natural number $n_{\epsilon}$ such that $S^{n_{\epsilon}}$ is $\epsilon$-dense in $G$.
\end{prop}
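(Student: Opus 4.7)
The plan is to first show that the ascending union $\bigcup_{n \geq 1} S^n$ is dense in $G$, and then to promote that to denseness of a single $S^{n_\epsilon}$.

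For the first step, I would show that $H := \overline{\bigcup_{n \geq 1} S^n}$ equals $G$. Since $S^n \cdot S^m \subseteq S^{n+m}$, the set $H$ is a closed sub-semigroup of the compact group $G$, hence automatically a closed subgroup: for any $g \in H$, compactness of $G$ yields a subsequence $g^{k_j} \to h \in H$ which after thinning satisfies $k_{j+1} - k_j \to \infty$, so that $g^{k_{j+1} - k_j} = g^{k_{j+1}}(g^{k_j})^{-1} \to e$ and $g^{k_{j+1} - k_j - 1} \to g^{-1}$, placing both $e$ and $g^{-1}$ in $H$. Thus $H$ is a closed subgroup containing $S$, so $H \supseteq \overline{\langle S \rangle} = G$.

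For the second step, let $M_\delta := \{n \geq 1 : S^n \cap B_\delta(e) \neq \emptyset\}$. By the first step $M_\delta$ is nonempty, and choosing $g \in S^m$ with $d(g,e) < \delta/k$ shows $jm \in M_\delta$ for $j = 1, \ldots, k$ (by bi-invariance), so $M_\delta$ is infinite. The key point is to show $\gcd M_\delta = 1$ for small enough $\delta$: if $d := \gcd M_\delta > 1$, then for each $g \in G$ the ``length modulo $d$'' of a positive-word approximation is well-defined, since given two approximations $h_j \in S^{n_j}$ of $g$, one compares them by appending a common positive approximation $\tilde h \in S^k$ of $h_2^{-1}$, and both $h_2 \tilde h \in S^{n_2+k}$ and $h_1 \tilde h \in S^{n_1+k}$ are close to $e$, forcing $n_1 + k \equiv n_2 + k \equiv 0 \pmod d$. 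This produces a continuous homomorphism $\psi \colon G \to \mathbb{Z}/d\mathbb{Z}$ with $\psi(s) = 1$ for each $s \in S$, contradicting the connectedness of $G$. Once $\gcd M_\delta = 1$, a Sylvester--Frobenius-style argument gives $L_0 = L_0(\delta)$ with $[L_0, \infty) \subseteq M_\delta$. To conclude, cover $G$ by finitely many balls $B(g_1, \epsilon/2), \ldots, B(g_N, \epsilon/2)$ and pick positive approximations $h_i \in S^{m_i}$ of each $g_i$ within $\epsilon/2$; setting $n_\epsilon := \max_i m_i + L_0(\epsilon/2)$, one has $e_i \in S^{n_\epsilon - m_i}$ with $d(e_i, e) < \epsilon/2$, so $h_i e_i \in S^{n_\epsilon}$ lies within $\epsilon$ of $g_i$.

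The main obstacle is the Sylvester--Frobenius step: $M_\delta$ is only approximately closed under addition (the triangle inequality costs a factor in $\delta$ with each sum), so one must geometrically decay the approximation radius through the argument. The connectedness of $G$ plays an essential role, both in constructing $\psi$ and in forcing it to be trivial.
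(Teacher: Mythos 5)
Your overall skeleton matches the paper's: first establish that the positive semigroup $\bigcup_n S^n$ is already dense (not just the group $\langle S\rangle$), and then establish that for all sufficiently large $n$ the single power $S^n$ contains an element near $e$, so that one can ``pad'' a short word approximating a given $g_i$ up to a common length. Your treatment of the first step is in fact \emph{more} careful than the paper's, which jumps straight to approximating each $g_i$ by a positive word without justifying that positive words suffice; the closed-sub-semigroup-of-a-compact-group-is-a-group argument you give is the correct way to fill this. Where you depart from the paper is in the second step: the paper attributes the claim that $S^n\cap B_\epsilon(e)\neq\emptyset$ for all large $n$ to ``a pigeonhole argument similar to the proof of Lemma~\ref{lem:approximate_inverse},'' while you run the $\gcd M_\delta=1$ plus Sylvester--Frobenius argument, with the $\gcd=1$ claim coming from a continuous homomorphism $\psi\colon G\to\Z/d\Z$ and connectedness.

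The obstacle you flagged at the end is, however, a genuine gap, and not one that can be repaired by merely tuning radii. In a representation $n=\sum_i c_i a_i$ with $a_i\in M_{\delta_i}$, the total coefficient $\sum_i c_i$ necessarily grows linearly in $n$ (it is at least $n/\max_i a_i$), so the accumulated error $\sum_i c_i\,\delta_i$ cannot be kept below a fixed $\epsilon$ for all large $n$ using fixed $\delta_i$'s, no matter how fast they decay. This is not merely a technical nuisance: the statement being proved is actually false in the generality stated. Take $G=S^1=\R/\Z$ and $S=\{\alpha\}$ with $\alpha$ irrational; then $S$ generates $G$ but $S^n=\{n\alpha\}$ is a single point and is never $\epsilon$-dense for $\epsilon<\tfrac12$. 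Or take $S=\{\alpha,\alpha+\tfrac12\}$; then $S^n$ consists of only two points $\{n\alpha,\ n\alpha+\tfrac12\}$. In both cases your argument would still conclude $\gcd M_\delta=1$ (and it does -- the connectedness argument uses nothing that distinguishes $S^1$ from a semisimple group), which shows that $\gcd M_\delta=1$ is genuinely insufficient and the Sylvester--Frobenius step cannot be made to go through from that hypothesis alone. The paper's terse pigeonhole claim is subject to the same objection.

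What saves the proposition in the setting where the paper actually uses it (Propositions~\ref{prop:dense_implies_diophantine} and~\ref{prop:decay_estimate}) is the additional hypothesis that $G$ is \emph{semisimple}, and that hypothesis has to be used in a way that your $\psi\colon G\to\Z/d\Z$ argument does not. One clean route: for each nontrivial irreducible unitary $\pi$, the averaging operator $\mc L_\pi=\frac1m\sum_i\pi(g_i)$ has spectral radius strictly less than $1$. If not, an eigenvector $v$ with $|\lambda|=1$ forces, by the equality case of the triangle inequality, $\pi(g_i)v=\lambda v$ for all $i$; then $g\mapsto\chi(g)$ with $\pi(g)v=\chi(g)v$ extends by density and continuity to a nontrivial character $\chi\colon G\to S^1$, impossible for semisimple $G$. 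Hence $\mc L_\pi^n\to 0$, so the word-length-$n$ measures $\mu_n=\nu^{*n}$ (with $\nu=\frac1m\sum\delta_{g_i}$) converge weakly to Haar, and weak convergence to Haar forces $\supp\mu_n=S^n$ to eventually meet each ball of a finite $\epsilon/2$-cover. This is where the obstruction lives: on a torus, $\chi$ exists and the conclusion fails, exactly as in the counterexamples above.
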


\begin{proof}
 Let $\{g_1,...,g_m\}$ be an $\epsilon/2$-dense subset of $G$. Because $S$ generates, for each $g_i$ there exists $n_i$ and $w_i\in S^{n_i}$ such that $d(g_i,w_i)<\epsilon/2$.
By a pigeonhole argument similar to the proof of Lemma \ref{lem:approximate_inverse}, it holds that for all $\epsilon>0$ there exists a natural number $N$ such that for all $n\ge N$, $d(S^n,e)<\epsilon$.  Thus there exists $N$ such that for all $n\ge N$, $S^n$ contains elements within distance $\epsilon/2$ of the identity. Thus $S^{N+\max_i\{n_i\}}$ is $\epsilon$-dense in $G$.
\end{proof}

\subsection{Diophantine Sets}\label{subsec:diophantine_sets}

We will now introduce a notion of a Diophantine subset of a compact connected semisimple Lie group $G$.  Write $\mf{g}$ for the Lie algebra of $G$. We recall the definition of the standard quadratic Casimir inside of $U(\mf{g})$, the universal enveloping algebra of $\mf{g}$. Write $B$ for the Killing form on $\mf{g}$ and let $X_i$ be an orthonormal basis for $\mf{g}$ with respect to $B$. We will also denote the inner product arising from the Killing form by $\langle \cdot,\cdot\rangle$. Then the \emph{Casimir}, $\Omega$, is the element of $U(\mf{g})$ defined by
\[
\Omega=\sum_{i} X_i^2.
\]
The element $\Omega$ is well-defined and central in $U(\mf{g})$. Elements of $U(\mf{g})$ act on the smooth vectors of representations of $G$. Consequently, as $\Omega$ is central and every vector in an irreducible representation $(\pi,V)$ is smooth, $\pi(\Omega)$ acts by a multiple of the identity. Given an irreducible unitary representation $(\pi,V)$, Define $c(\pi)$ by 
\begin{equation}\label{eqn:defn_of_cpi}
c(\pi)\Id=-\pi(\Omega).
\end{equation}
The quantity $c(\pi)$ is positive in non-trivial representations. Further, as $\pi$ ranges over all non-trivial representations, $c(\pi)$ is uniformly bounded away from $0$. For further information see \cite[{}5.6]{wallach2018harmonic}.

\begin{defn}\label{defn:defn_of_diophantine}
Let $G$ be a compact, connected, semisimple Lie group. We say that a subset $S\subset G$ is $(C,\alpha)$-\emph{Diophantine} if the following holds for each non-trivial, irreducible, finite dimensional unitary representation $(\pi, V)$ of $G$. For all non-zero $v\in V$ there exists $g\in S$ such that
\[
\|v-\pi(g)v\|\ge Cc(\pi)^{-\alpha}\|v\|,
\]
where $c(\pi)$ is defined in \eqref{eqn:defn_of_cpi}. We say that $S$ is \emph{Diophantine} if $S$ is $(C,\alpha)$-Diophantine for some $C,\alpha>0$. If $(g_1,...,g_m)$ is a tuple of elements of $G$, the we say that this tuple is $(C,\alpha)$-Diophantine if the underlying set is $(C,\alpha)$-Diophantine.
\end{defn}

Our formulation of Diophantine is slightly different from the definition in \cite{dolgopyat2002mixing} as we refer directly to irreducible representations. We choose this formulation because it allows for a unified analysis of the action of $\Omega$ in diverse representations of $G$.

It is useful to compare Definition \ref{defn:defn_of_diophantine} with the simultaneous Diophantine condition used when studying translations on tori, such as is considered in \cite{damjanovic2019local} or \cite{petkovic2021classification}. The condition for tori is a generalization of the simultaneous Diophantine condition considered considered by Moser \cite{moser1990commuting} for circle diffeomorphisms.  Denote by $\langle \cdot,\cdot\rangle$ denote the standard inner product in $\R^d$. A tuple of vectors $(\theta_1,\ldots,\theta_m)$ in $\R^d$ defines a tuple of translations of $\mathbb{T}^d$. We say that this tuple is $(C,\alpha)$-Diophantine if for every non-zero $k\in \Z^d$,
\begin{equation}\label{eqn:diophantine_torus_defn}
\max_{1\le i\le m }\min_{l\in \Z}\abs{\langle \theta_i,k\rangle-l} \ge \frac{C}{\|k\|^{\alpha}}.
\end{equation}
One can see the relationship between this definition and the one for compact semisimple groups when we think of  $\Z^d$ as indexing the unitary representations of $\mathbb{T}^d$. Although these definitions apply to different types of groups, one can check that the estimates at their core are equivalent: for a given unitary representation defined by $k\in \Z^d$,  use the $\theta_i$ that achieves the maximum in \eqref{eqn:diophantine_torus_defn}  to act on the representation defined by $k$.

We now give a useful characterization of Diophantine subsets of compact semisimple groups.
\begin{prop}\label{prop:diophantine_equivalence}
\cite[Thm. A.3]{dolgopyat2002mixing} Suppose that $S$ is a finite subset of a compact connected semisimple Lie group $G$. Then $S$ is Diophantine if and only if $\overline{\langle S\rangle}=G$. Moreover, there exists $\epsilon_0(G)$ such that any $\epsilon_0$-dense subset of $G$ is Diophantine.
\end{prop}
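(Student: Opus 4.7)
The plan is to prove the three assertions in order: first ``Diophantine $\Rightarrow$ generating,'' then the ``moreover'' claim, then ``generating $\Rightarrow$ Diophantine.'' Throughout I will use that every irreducible unitary representation of the compact group $G$ is finite dimensional.

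For ``Diophantine $\Rightarrow$ generating,'' I would argue by contrapositive. Let $H := \overline{\langle S \rangle}$ and suppose $H \neq G$. Consider the quasi-regular unitary representation of $G$ on $L^2(G/H)$. Because $G/H$ has more than one point, $\dim L^2(G/H) \ge 2$, and Peter-Weyl --- which identifies the multiplicity of each irreducible $(\pi, V)$ in $L^2(G/H)$ with $\dim V^H$ --- forces some non-trivial irreducible $(\pi, V)$ to satisfy $V^H \neq 0$. Picking a non-zero $v \in V^H$, we have $\pi(s) v = v$ for every $s \in S \subseteq H$, so $\|v - \pi(s) v\| = 0$ for all $s \in S$, contradicting any Diophantine inequality.

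For the ``moreover'' claim, I would combine Solovay-Kitaev with a Casimir-based Lipschitz estimate. Fix a basis $X_1, \ldots, X_k$ of $\mathfrak{g}$ orthonormal with respect to $-B$. Since each $\pi(X_i)$ is skew-Hermitian, the identity $c(\pi) \operatorname{Id} = -\pi(\Omega) = \sum_i \pi(X_i)^* \pi(X_i)$ gives $\sum_i \|\pi(X_i) v\|^2 = c(\pi)\|v\|^2$, and Cauchy-Schwarz then yields $\|\pi(X) v\| \le \sqrt{c(\pi)}\|v\|$ for every unit $X$. Integrating along geodesics shows $\pi : G \to U(V)$ is $\sqrt{c(\pi)}$-Lipschitz in the bi-invariant metric. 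Now let $\epsilon_0$ come from Proposition \ref{prop:solovay-kitaev_without_inverses} and suppose $S$ is $\epsilon_0$-dense. Fix a non-trivial irreducible $(\pi, V)$, a unit vector $v$, and set $\delta = \max_{s \in S} \|v - \pi(s) v\|$. Telescoping via unitarity gives $\|v - \pi(g) v\| \le N \delta$ for all $g \in S^N$, and combining with the Lipschitz bound and the $\epsilon$-density of $S^N$ supplied by Proposition \ref{prop:solovay-kitaev_without_inverses} yields $\|v - \pi(g) v\| \le N\delta + \sqrt{c(\pi)}\,\epsilon$ for every $g \in G$. On the other hand $\int_G \pi(g)\, dg = 0$ in a non-trivial irreducible, so $\int_G \|v - \pi(g) v\|^2\, dg = 2$ and some $g$ satisfies $\|v - \pi(g) v\| \ge \sqrt{2}$. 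Choosing $\epsilon$ of order $c(\pi)^{-1/2}$ so that $\sqrt{c(\pi)}\,\epsilon \le 1/\sqrt{2}$ forces $N\delta \ge 1/\sqrt{2}$, and Proposition \ref{prop:solovay-kitaev_without_inverses} bounds $N = O(c(\pi)^{\alpha/2})$. Hence $\delta \ge c_1\, c(\pi)^{-\alpha/2}$ for an absolute constant $c_1$, so $S$ is $(c_1, \alpha/2)$-Diophantine.

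Finally, for ``generating $\Rightarrow$ Diophantine,'' Proposition \ref{prop:generating_set_gets_dense} supplies $n$ with $S^n$ being $\epsilon_0$-dense, hence $(C', \alpha')$-Diophantine by the step above. For any non-trivial irreducible $(\pi, V)$ and unit $v$, the Diophantine bound for $S^n$ produces $g = s_1 \cdots s_n \in S^n$ with $\|v - \pi(g) v\| \ge C' c(\pi)^{-\alpha'}$; telescoping via unitarity yields some $s_i \in S$ with $\|v - \pi(s_i) v\| \ge (C'/n)\, c(\pi)^{-\alpha'}\|v\|$, so $S$ is $(C'/n, \alpha')$-Diophantine. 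The main technical obstacle will be establishing the Casimir-Lipschitz estimate cleanly and tracking how the exponent $\alpha$ from Proposition \ref{prop:solovay-kitaev_without_inverses} propagates through the optimization in $\epsilon$; the remaining ingredients (Peter-Weyl, the $L^2$-averaging identity, and telescoping in a unitary representation) are routine.
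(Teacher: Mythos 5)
Your proof is correct, and it follows the same high-level blueprint as the paper (the $L^2(G/H)$ obstruction for ``Diophantine $\Rightarrow$ generating,'' a Solovay--Kitaev plus Casimir-Lipschitz argument for the density direction, and telescoping via Proposition \ref{prop:generating_set_gets_dense} for ``generating $\Rightarrow$ Diophantine''). The genuine divergence is in the ``moreover'' step. You invoke Proposition \ref{prop:solovay-kitaev_without_inverses}, the non-symmetric Solovay--Kitaev variant with polynomial word length $l_\epsilon = O(\epsilon^{-\alpha})$, which bypasses any symmetry assumption from the start but lands you at a polynomial lower bound $\delta \gtrsim c(\pi)^{-\alpha/2}$. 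The paper instead routes Proposition \ref{prop:dense_implies_diophantine} through Theorem \ref{thm:real_solovay_kitaev} (the symmetric version with polylogarithmic word length) and handles the lack of symmetry after the fact, by allowing $\sigma_i \in \{\pm 1\}$ in the word $w = g_1^{\sigma_1}\cdots g_{n_0 l_\epsilon}^{\sigma_{n_0 l_\epsilon}}$ and using that $\|v - \pi(g^{-1})v\| = \|v - \pi(g)v\|$ by unitarity. This yields the strictly stronger bound $\delta \gtrsim 1/\log^{\alpha}(c(\pi))$, which is recorded separately as Corollary \ref{cor:diophantine_characterization} and is what actually feeds into the spectral gap estimate of Proposition \ref{prop:decay_estimate}. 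For the proposition as literally stated your weaker polynomial bound suffices, but be aware you would still need the stronger logarithmic version for the downstream arguments. Your $L^2$-averaging step ($\int_G \pi(g)\,dg = 0$ forces some $g$ with $\|v - \pi(g)v\|\ge\sqrt{2}$) is a clean and essentially equivalent substitute for the paper's ``there exists $g$ with $\langle \pi(g)v, v\rangle = 0$.''
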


Before proceeding to the proof we will show two preliminary results.

\begin{lem}\label{lem:lipschitz_rep}
Suppose that $G$ is a compact connected semisimple Lie group. Suppose that $(\pi,V)$ is an irreducible unitary representation of $G$. Then for any $v\in V$ of unit length, any $X\in \mf{g}$ of unit length, and $t\ge 0$,
\[
\|\pi(\exp(tX))v-v\|\le t\sqrt{c(\pi)}.
\]
\end{lem}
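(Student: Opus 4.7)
My plan is to reduce the estimate to a statement about the infinitesimal generator $\pi(X)$ and then bound $\|\pi(X)v\|$ using the basis-independence of the Casimir.

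For the first step, note that because $\pi$ is unitary, $\pi(X)$ is skew-Hermitian on $V$, so $\pi(\exp(sX)) = \exp(s\pi(X))$ is unitary for all $s$ and commutes with $\pi(X)$. The curve $s \mapsto \pi(\exp(sX))v$ is smooth with derivative $\pi(X)\pi(\exp(sX))v$, so by the fundamental theorem of calculus and unitarity,
\[
\|\pi(\exp(tX))v - v\| \leq \int_0^t \|\pi(X)\pi(\exp(sX))v\|\,ds = \int_0^t \|\pi(X)v\|\,ds = t\,\|\pi(X)v\|.
\]
Hence it suffices to establish $\|\pi(X)v\| \leq \sqrt{c(\pi)}$ whenever $X \in \mathfrak{g}$ and $v \in V$ are unit vectors.

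For the second step, I would use that the quadratic Casimir $\Omega = \sum_i X_i^2$ is independent of the choice of orthonormal basis of $\mathfrak{g}$, a standard consequence of the $\Ad$-invariance of the Killing form. Extending $X$ to an orthonormal basis $Y_1 = X, Y_2, \ldots, Y_d$ of $\mathfrak{g}$ and using the skew-Hermiticity of each $\pi(Y_j)$,
\[
c(\pi)\,\|v\|^2 = -\langle \pi(\Omega)v, v\rangle = \sum_{j=1}^d -\langle \pi(Y_j)^2 v, v\rangle = \sum_{j=1}^d \|\pi(Y_j) v\|^2.
\]
Dropping every term but $j=1$ gives $\|\pi(X)v\|^2 \leq c(\pi)$, and combining with the first step yields the lemma.

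There is no serious obstacle; the only point of care is bookkeeping of sign conventions, since the Killing form is negative definite on a compact semisimple Lie algebra and the inner product on $\mathfrak{g}$ used to define ``orthonormal'' must match the one that makes $-\pi(\Omega)$ positive. As long as one works with the positive definite inner product $-B(\cdot,\cdot)$ throughout, both $\pi(X_i)^* = -\pi(X_i)$ and the formula $c(\pi)\Id = -\pi(\Omega) = \sum_i \pi(X_i)^*\pi(X_i)$ follow cleanly.
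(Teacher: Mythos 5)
Your proof is correct. The key inequality $\|d\pi(X)v\|\le\sqrt{c(\pi)}\|v\|$ is established in essentially the same way as in the paper: skew-symmetry of $d\pi(X)$ and the fact that $-\pi(\Omega)=\sum_j (-d\pi(Y_j)^2)$ dominates the single summand $-d\pi(X)^2$. Where you diverge is the final step. The paper first Taylor-expands $\pi(\exp(tX))v$ to get a bound with an $O(t^2)$ error, then partitions $[0,t]$ into $n$ pieces, telescopes using unitarity, and takes a limit as $n\to\infty$ to kill the error. Your route applies the fundamental theorem of calculus directly to the smooth curve $s\mapsto \pi(\exp(sX))v$, noting that $\frac{d}{ds}\pi(\exp(sX))v = \pi(\exp(sX))\,d\pi(X)v$ has constant norm $\|d\pi(X)v\|$ by unitarity, yielding the bound in one step with no error term to manage. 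This is cleaner and a bit more economical than the subdivision argument, though both reduce to the same Casimir estimate. (The only cosmetic point: you write $\pi(X)$ for the derived representation, which the paper denotes $d\pi(X)$; this is a harmless abuse of notation, and your observation that $\pi(X)$ and $\pi(\exp(sX))$ commute is correct.)
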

\begin{proof}
A similar argument to the following appears in \cite[{}5.7.13]{wallach2018harmonic}.
	There exists an orthonormal basis $\{X_1,...,X_n\}$ of $\mf{g}$ such that $X_1=X$. Observe that
\[
\pi(\exp(tX))v-v=td\pi(X)v+O(t^2).
\]
	The transformation $d\pi(X)$ is skew symmetric with respect to the inner product. Thus $d\pi(X)^2$ is positive semidefinite. Consequently:
\[
\langle d\pi(X)v,d\pi(X)v\rangle=-\langle d\pi(X)^2v,v\rangle \le -\langle \pi(\Omega) v,v\rangle=c(\pi)\|v\|^2.
\]
 Hence
\[
\|\pi(\exp(tX)v)-v\|\le t\sqrt{c(\pi)}+O(t^2).
\]
For $0\le i\le n$, let $t_i=\frac{i}{n}t$. Then
\begin{align*}
\|\pi(\exp(tX))v-v\|&\le \sum_{i=1}^n \|\pi(\exp(t_{i}X))v-\pi(\exp(t_{i-1}X))v\|\\
&\le \sum_{i=1}^n \|\pi(\exp(tX/n))v-v\|\\
&\le n\pez{\frac{t}{n}\sqrt{c(\pi)}+O((t/n)^2)}.
\end{align*}
Taking the liminf of the right hand side as $n\to \infty$ gives the result.
\end{proof}

The following lemma will be of use in the proof of Proposition \ref{prop:dense_implies_diophantine}.
\begin{lem}\label{lem:irrep_orthogonal}
Suppose that $(\pi,V)$ is a non-trivial, irreducible, finite dimensional, unitary representation of a compact, connected, semisimple group $G$. Then for any $v\in V$, there exists $g$ such that $\langle \pi(g)v,v\rangle =0$.
\end{lem}

\begin{proof}
If such a $g$ does not exist, then for all $g\in G$, $\pi(g)v$ lies in the same half-space as $v$. But then $\int_G\pi(g)v\,dg\neq 0$ and is a $G$ invariant vector, which contradicts the irreducibility of $\pi$.
\end{proof}

\begin{prop}\label{prop:dense_implies_diophantine}
Suppose that $G$ is a compact connected semisimple Lie group. Then there exist $\epsilon_0,C,\alpha>0$ such that any $\epsilon_0$-dense subset of $G$ is $(C,\alpha)$-Diophantine. If $S$ is a subset of $G$ such that $S^{n_0}$ is $\epsilon_0$-dense in $G$, then $S$ is $(C/n_0,\alpha)$ Diophantine.
\end{prop}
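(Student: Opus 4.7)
The plan is to prove the first assertion by contradiction. Fix a non-trivial irreducible unitary representation $(\pi,V)$ and a unit vector $v \in V$, and suppose every $s \in S$ satisfies $\|\pi(s)v - v\| < \delta$. The representation-theoretic input is that, since $\pi$ is non-trivial irreducible, the averaged vector $\bar v \coloneqq \int_G \pi(g) v\, dg$ vanishes; hence $\|v\| = \|v - \bar v\| \le \sup_{g \in G}\|v - \pi(g)v\|$, and so there is some $g_\star \in G$ with $\|v - \pi(g_\star)v\| \ge 1$. I want to estimate this quantity from above in terms of $\delta$, $c(\pi)$, and Solovay--Kitaev data, and then force a contradiction if $\delta$ is too small.

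Apply Proposition \ref{prop:solovay-kitaev_without_inverses} to approximate $g_\star$: for any $\epsilon>0$, there is a word $w = s_1\cdots s_\ell \in S^\ell$ with $\ell \le C_1 \epsilon^{-\alpha_1}$ and $d(g_\star, w) < \epsilon$. By unitarity and a telescoping expansion,
\[
\|v - \pi(w)v\| \le \sum_{i=1}^\ell \|\pi(s_1\cdots s_{i-1})(\pi(s_i)v - v)\| = \sum_{i=1}^\ell \|\pi(s_i)v - v\| \le \ell\delta.
\]
Writing $w = g_\star \exp(X)$ with $\|X\| < \epsilon$ (the exponential is surjective on a compact connected group and realizes the bi-invariant distance) and applying Lemma \ref{lem:lipschitz_rep} together with unitarity gives $\|\pi(w)v - \pi(g_\star)v\| \le \epsilon\sqrt{c(\pi)}$. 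Combining these,
\[
1 \le \|v - \pi(g_\star)v\| \le C_1 \epsilon^{-\alpha_1}\delta + \epsilon\sqrt{c(\pi)}.
\]
Setting $\epsilon = (C_1\delta/\sqrt{c(\pi)})^{1/(\alpha_1+1)}$ balances the two terms and yields $1 \le 2C_1^{1/(\alpha_1+1)} \delta^{1/(\alpha_1+1)} c(\pi)^{\alpha_1/(2(\alpha_1+1))}$, which rearranges to $\delta \ge C c(\pi)^{-\alpha_1/2}$ for an explicit constant $C$. Taking $\alpha = \alpha_1/2$ establishes the first assertion, with $\epsilon_0$ inherited from Proposition \ref{prop:solovay-kitaev_without_inverses}.

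For the second claim, suppose $S^{n_0}$ is $\epsilon_0$-dense in $G$, so $S^{n_0}$ is $(C,\alpha)$-Diophantine by the first part. Given $v \in V\setminus\{0\}$, there is a word $w = s_1\cdots s_{n_0} \in S^{n_0}$ with $\|v - \pi(w)v\| \ge Cc(\pi)^{-\alpha}\|v\|$. The same telescoping estimate as above gives $\|v - \pi(w)v\| \le n_0 \max_{s \in S}\|\pi(s)v - v\|$, so some $s \in S$ satisfies $\|\pi(s)v - v\| \ge (C/n_0)c(\pi)^{-\alpha}\|v\|$, establishing the $(C/n_0,\alpha)$-Diophantine property. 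The main obstacle in the whole argument is the calibration of exponents through the optimization step; there is no deeper difficulty once Solovay--Kitaev and the Lipschitz estimate are in hand.
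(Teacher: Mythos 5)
Your argument is correct, and it establishes the proposition as stated, but it takes a genuinely different route from the paper's proof, so a comparison is worthwhile. The paper applies the \emph{symmetric} Solovay--Kitaev algorithm (Theorem \ref{thm:real_solovay_kitaev}, applied to $S\cup S^{-1}$), obtains the polylogarithmic word length $l_\epsilon\le C\log^\alpha(1/\epsilon)$, and simply plugs in the ad hoc choice $\epsilon=1/(100\sqrt{c(\pi)})$; the telescoping then produces a word in $S\cup S^{-1}$, and the unitary identity $\|v-\pi(g)v\|=\|v-\pi(g^{-1})v\|$ lets one land back in $S$. You instead invoke Proposition \ref{prop:solovay-kitaev_without_inverses} (the inverse-free version with polynomial word length $\ell\le C_1\epsilon^{-\alpha_1}$), which keeps you in $S$ throughout and avoids the inversion step entirely, and you then optimize $\epsilon$ to balance the telescoping error $\ell\delta$ against the Lipschitz error $\epsilon\sqrt{c(\pi)}$. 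Your derivation of a point $g_\star$ with $\|v-\pi(g_\star)v\|\ge 1$ (via vanishing of the Haar average in a non-trivial irreducible) is also a clean alternative to the paper's choice of $g$ with $\langle\pi(g)v,v\rangle$ non-positive. The second claim by reduction to the first applied to $S^{n_0}$, followed by telescoping, is fine.

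The trade-off you should be aware of: because Proposition \ref{prop:solovay-kitaev_without_inverses} only gives polynomial (not polylogarithmic) word length, your optimization yields $\delta\gtrsim c(\pi)^{-\alpha_1/2}$, which is exactly what the definition of $(C,\alpha)$-Diophantine requires, but is strictly weaker than the bound \eqref{eqn:stronger_diophantineness_estimate} that the paper's proof actually establishes, namely $\|v-\pi(g)v\|\gtrsim 1/\log^\alpha(c(\pi))$. That polylogarithmic estimate is the content of Corollary \ref{cor:diophantine_characterization}, and it is used later in the proof of Proposition \ref{prop:decay_estimate} to obtain the $1/\log^\alpha$ spectral gap rate (rather than a $c(\pi)^{-\alpha}$ rate, which would still be workable but quantitatively worse). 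So your proof is a correct proof of the stated proposition, but if one wanted to reproduce the stronger downstream estimates one would still need the symmetric Solovay--Kitaev argument. One other minor point worth flagging: the constant $C_1$ in Proposition \ref{prop:solovay-kitaev_without_inverses} is, per its proof, not actually independent of $|S|$ (the bound on $j$ involves $m=|S|$), so if you are careful about uniformity of $(C,\alpha)$ over all $\epsilon_0$-dense $S$, you are relying on the stated form of that proposition rather than what its proof literally delivers; the paper's route through Theorem \ref{thm:real_solovay_kitaev} does not have this issue.
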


\begin{proof}
Let $\epsilon_0$ equal the $\epsilon_0(G)$ in Theorem \ref{thm:real_solovay_kitaev}, the Solovay-Kitaev algorithm. In the case that $S$ is already $\epsilon_0$-dense, let $n_0=1$. By Theorem \ref{thm:real_solovay_kitaev}, there exist $C$ and $\alpha$ such that for each $\epsilon$ there exists $l_{\epsilon}\le C\log^\alpha(\epsilon^{-1})$ such that $S^{n_0l_{\epsilon}}$ is $\epsilon$-dense in $G$. Suppose that $(\pi, V)$ is a non-trivial irreducible unitary representation of $G$ and suppose that $v\in V$ is a unit vector. By Lemma \ref{lem:irrep_orthogonal} there exists $g\in G$ such that $\langle \pi(g)v,v\rangle=0$.  
Now fix $\epsilon=1/(100\sqrt{c(\pi)})$. Then there exists an element $w\in S^{n_0l_{\epsilon}}$ such that $d(g,w)<\epsilon$. Thus by Lemma \ref{lem:lipschitz_rep},
\[
\|\pi(g)v-\pi(w)v\|\le \epsilon \sqrt{c(\pi)}<\frac{1}{100}.
\]
By the triangle inequality, this implies that 
\[
\|\pi(w)v -v\|\ge 1.
\]
Write $w=g_1^{\sigma_1}\cdots g_{n_0l_{\epsilon}}^{\sigma_{n_0l_{\epsilon}}}$ where each $\sigma_i\in \{\pm 1\}$ and each $g_i\in S$. Let $w_i=g_1^{\sigma_1}\cdots g_i^{\sigma_i}$. 
Let $w_0=e$. By applying the triangle inequality $n_0l_{\epsilon}$ times, we see that 
\[
\sum_{i=0}^{n_0l_{\epsilon}-1} \|\pi(w_i)v-\pi(w_{i+1})v\|\ge\|v-\pi(w)v\| \ge 1.
\]
Thus there exists some $i$ such that 
\[
\|\pi(w_i)v-\pi(w_{i+1})v\|\ge \frac{1}{n_0l_{\epsilon}}.
\]
Applying $\pi(w_i^{-1})$ and noting by our choice of $\epsilon$ that $l_{\epsilon}\le C\log^\alpha(c(\pi))$, we obtain that
\begin{equation}\label{eqn:stronger_diophantineness_estimate}
\|v-\pi(g_i^{\sigma_i})v\|\ge \frac{1}{n_0C'\log^\alpha(c(\pi))}.
\end{equation}
Thus we are done as we have obtained an estimate that is stronger than the required lower bound of $C/c(\pi)^{\alpha}$.
\end{proof}

We now prove the equivalence of the Diophantine property appearing in Proposition \ref{prop:dense_implies_diophantine} with that in Definition \ref{defn:defn_of_diophantine}.

\begin{proof}[Proof of Proposition \ref{prop:diophantine_equivalence}.]
To begin, suppose that $S$ is Diophantine. For the sake of contradiction, suppose that $H\coloneqq\overline{\langle S\rangle}\neq G$. Consider the action of $G$ on $L^2(G/H)$ by left translation. Note that $H$ acts trivially. However, $L^2(G/H)$ contains non-trivial representations of $G$. Thus $S\subset H$ cannot be Diophantine, which is a contradiction.

For the other direction, suppose that $\overline{\langle S\rangle}=G$. Then by Proposition \ref{prop:generating_set_gets_dense} there exists $n$ such that $S^n$ is $\epsilon_0(G)$-dense and, hence $S$ is Diophantine by Proposition \ref{prop:dense_implies_diophantine}. 
\end{proof}

The stronger bound in equation \eqref{eqn:stronger_diophantineness_estimate} gives an equivalent characterization of Diophantineness.

\begin{cor}\label{cor:diophantine_characterization}
Let $G$ be a compact, connected, semisimple Lie group. A subset $S$ of $G$ is Diophantine if and only if there exist $C,\alpha>0$ such that the following holds for each non-trivial, irreducible, finite dimensional, unitary representation $(\pi, V)$ of $G$. For all $v\in V$  there exists $g\in S$ such that 
\[
\|v-\pi(g)v\|\ge \frac{\|v\|}{C\log^\alpha(c(\pi))}.
\]
\end{cor}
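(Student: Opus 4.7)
My plan is to recognize the corollary as a direct repackaging of the stronger bound---equation \eqref{eqn:stronger_diophantineness_estimate}---already extracted in the proof of Proposition \ref{prop:dense_implies_diophantine}. In other words, the corollary just observes that the argument in that proof happens to furnish a logarithmic estimate, which is in fact equivalent to the polynomial estimate in the definition of Diophantineness.

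For the forward implication (Diophantine $\Rightarrow$ logarithmic bound), I would first invoke Proposition \ref{prop:diophantine_equivalence} to conclude that $\overline{\langle S\rangle}=G$, and then apply Proposition \ref{prop:generating_set_gets_dense} to produce some $n_0$ for which $S^{n_0}$ is $\epsilon_0(G)$-dense. The argument of Proposition \ref{prop:dense_implies_diophantine} with this $n_0$ then terminates in exactly equation \eqref{eqn:stronger_diophantineness_estimate}, which is the desired logarithmic bound once the factor $1/n_0$ is absorbed into the constant $C$.

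For the reverse implication, a growth-rate comparison suffices. Since $\log^\alpha x = o(x^{\alpha'})$ as $x \to \infty$ for any $\alpha' > 0$, the bound $\|v\|/(C\log^\alpha c(\pi))$ dominates a polynomial bound $c(\pi)^{-\alpha'}\|v\|/C'$ once $c(\pi)$ is large. The Casimir eigenvalue is uniformly bounded below on non-trivial irreducibles, and only finitely many irreducibles have $c(\pi)$ below any fixed threshold; adjusting constants handles these exceptional representations (in particular those where $\log c(\pi) \le 0$), yielding the polynomial bound from the definition of Diophantine. The only genuine obstacle is cosmetic---accounting for these finitely many low-Casimir representations and correctly tracking how constants depend on $n_0$---since the substantive content, namely extracting a logarithmic rather than polynomial estimate via the Solovay--Kitaev algorithm, was already accomplished in Proposition \ref{prop:dense_implies_diophantine}.
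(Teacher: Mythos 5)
Your proposal is correct and matches the paper's (entirely implicit) proof: the paper offers no argument for the corollary beyond the remark preceding it, namely that equation \eqref{eqn:stronger_diophantineness_estimate} already delivers the logarithmic lower bound, and you have correctly identified the forward direction as precisely that extraction (via Propositions \ref{prop:diophantine_equivalence}, \ref{prop:generating_set_gets_dense}, and \ref{prop:dense_implies_diophantine}), and the reverse direction as the routine growth comparison $\log^{\alpha}x = o(x^{\alpha'})$ together with the finiteness of low-Casimir irreducibles. Your parenthetical caveat about representations with $\log c(\pi)\le 0$ is a real (if cosmetic) wrinkle in the corollary's statement that the paper itself glosses over; a slightly cleaner route for the reverse implication, avoiding it entirely, is to note that the logarithmic bound forces $\overline{\langle S\rangle}=G$ (since a proper closed subgroup $H$ would give nontrivial irreducibles inside $L^2(G/H)$ with arbitrarily large $c(\pi)$ on which $S$ fixes a vector), and then invoke Proposition \ref{prop:diophantine_equivalence}.
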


Diophantine subsets of a group are typical in the following sense.

\begin{prop}
Suppose that $G$ is a compact connected semisimple Lie group. Let $U\subset G\times G$ be the set of ordered pairs $(u_1,u_2)$ such that $\{u_1,u_2\}$ is a Diophantine subset of $G$. Then $U$ is Zariski open and hence open and dense in the manifold topology on $G\times G$.
\end{prop}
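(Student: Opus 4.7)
The plan is to translate the Diophantine condition into topological generation using Proposition \ref{prop:diophantine_equivalence}: $(u_1,u_2) \in U$ if and only if $\overline{\langle u_1, u_2\rangle} = G$. Hence $U^c$ consists of pairs both of whose entries lie in some common proper closed subgroup of $G$. I would deduce that $U^c$ is Zariski closed by combining two classical facts: Cartan's closed subgroup theorem, which ensures each closed subgroup is a real algebraic subvariety of $G$, and the Borel--de Siebenthal classification (together with the finiteness of the relevant component groups), which shows that $G$ has only finitely many conjugacy classes $[H_1], \ldots, [H_k]$ of maximal proper closed subgroups.

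With these in hand one writes
\[
U^c = \bigcup_{i=1}^{k} \mathrm{Im}(\Phi_i), \qquad \Phi_i \colon G \times H_i \times H_i \to G \times G, \qquad \Phi_i(g, h_1, h_2) = (g h_1 g^{-1}, g h_2 g^{-1}).
\]
Each $\Phi_i$ is a morphism of real algebraic varieties whose domain is compact, so its image is a closed semi-algebraic subset of $G \times G$; it is in fact Zariski closed because it is invariant under the diagonal $G$-conjugation action and is cut out by the $G$-invariant ideal defining $H_i \times H_i$ inside $G \times G$. Hence $U^c$ is Zariski closed.

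For non-emptiness of $U$, I would invoke the classical fact that any compact connected semisimple Lie group is topologically generated by two elements, producing at least one pair in $U$. Since $G \times G$ is a connected smooth manifold and $U^c$ is a proper Zariski closed subset, $U^c$ has empty interior, and $U$ is Zariski open as well as open and dense in the manifold topology. The main obstacle to executing this plan rigorously is verifying that each $\mathrm{Im}(\Phi_i)$ is genuinely Zariski closed rather than merely closed as a semi-algebraic set; this can be sidestepped by describing $U^c$ directly via polynomial equations in matrix coefficients of a finite list of irreducible representations of $G$ that jointly detect each conjugacy class of maximal proper closed subgroup.
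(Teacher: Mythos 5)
Your first step --- reducing the Diophantine condition to topological generation via Proposition \ref{prop:diophantine_equivalence} --- matches the paper exactly. But the paper then simply cites Theorem 1.1 of \cite{field1999generating}, which asserts directly that the set of topologically generating pairs in $G\times G$ is Zariski open and non-empty. You instead attempt to reprove this fact, and the sketch has a genuine gap at precisely the point you flag. The image of a morphism of real algebraic varieties is semi-algebraic (Tarski--Seidenberg) and, when the source is compact, Euclidean-closed, but this does not make it Zariski closed (the closed unit disk in $\R^2$ is compact and semi-algebraic but not an algebraic subvariety). Your proposed fix --- that $\mathrm{Im}(\Phi_i)$ is cut out by the $G$-invariant part $I^G$ of the ideal of $H_i\times H_i$ --- yields only the containment $\mathrm{Im}(\Phi_i)\subseteq V(I^G)$; to get equality one would need the invariants in $I^G$ to separate any point outside $\mathrm{Im}(\Phi_i)$ from $\mathrm{Im}(\Phi_i)$, which presupposes a Zariski-closedness statement of the same flavor as the one being proved, so the argument is circular. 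The suggested alternative (``polynomial equations in matrix coefficients of a finite list of representations that jointly detect each conjugacy class'') is a reasonable heuristic for how such a proof might be structured, but it is not itself an argument.

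There are also reference inaccuracies in the supporting claims. Cartan's closed subgroup theorem gives that closed subgroups are embedded Lie subgroups, not that they are real algebraic subvarieties; the algebraicity of closed subgroups of a compact Lie group is Chevalley's theorem on compact groups as real algebraic groups. And Borel--de Siebenthal classifies maximal \emph{connected} subgroups of maximal rank; the finiteness of conjugacy classes of arbitrary maximal proper closed subgroups --- which must account for disconnected and even finite examples such as $A_5<\SO(3)$ --- is a distinct and less elementary result requiring its own justification. Since the proposition is established in the paper by a single citation, closing these gaps would amount to giving an independent proof of Field's theorem, which is substantially more than the statement demands.
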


\begin{proof}
Let $U\subset G\times G$ be the set of points $(u_1,u_2)$ such that $\{u_1,u_2\}$ generates a dense subset of $G$. Theorem 1.1 in \cite{field1999generating} gives that $U$ is Zariski open and non-empty. By Proposition \ref{prop:diophantine_equivalence}, this implies that $\{u_1,u_2\}$ is Diophantine. As $U$ is non-empty, the final claim follows.
\end{proof}

\subsection{Polylogarithmic spectral gap}

In this subsection, we study spectral properties of an averaging operator associated to a tuple of elements of $G$. Consider a tuple $(g_1,...,g_m)$ of elements of $G$. Let $\R[G]$ denote the group ring of $G$ over $\R$. From this tuple we form $\mc{L}\coloneqq (g_1+\cdots+g_m)/m\in \R[G]$. The element $\mc{L}$ acts in representations of $G$ in the natural way. If $(\pi,V)$ is a representation of $G$, then we write $\mc{L}_{\pi}$ for the action of $\mc{L}$ on $V$. The main result of this subsection is the following proposition, which gives some spectral properties of $\mc{L}_{\pi}$ under the assumption that $\{g_1,...,g_m\}$ is Diophantine.

\begin{prop}\label{prop:decay_estimate}
Suppose that $G$ is a compact connected semisimple Lie group, $(g_1,...,g_m)$ is a tuple of elements of $G$, and that $\{g_1,...,g_m\}$ generates $G$. Then there exists a neighborhood $N$ of $(g_1,...,g_m)$ in $G\times \cdots \times G$ and constants $D_1,D_2,\alpha>0$ such that if $(g_1',....,g_m')\in N$, then $\{g_1',\ldots,g_m'\}$ is Diophantine and its associated averaging operator $\mc{L}$ satisfies  
\[
\|\mc{L}_\pi^n\|\le D_1\pez{1-\frac{1}{D_2\log^{\alpha}(c(\pi))}}^n,
\]
for each non-trivial irreducible unitary representation $(\pi,V)$.
\end{prop}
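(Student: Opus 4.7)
The plan is to establish a single-step contraction $\|\mc{L}_\pi^{N_0}\|\le 1-\delta_\pi$ for a word length $N_0$ depending only on $G$ and the neighborhood, with $\delta_\pi=\Theta(1/\log^{2\alpha} c(\pi))$, and then iterate. Begin with a stability step: since $S=\{g_1,\ldots,g_m\}$ generates $G$, Proposition \ref{prop:generating_set_gets_dense} furnishes $N_0$ with $S^{N_0}$ being $\epsilon_0/4$-dense, where $\epsilon_0$ is the constant in Proposition \ref{prop:dense_implies_diophantine}. Uniform continuity of iterated multiplication $G^{N_0}\to G$ then implies that $(S')^{N_0}$ is $\epsilon_0/2$-dense for every $(g_1',\ldots,g_m')$ in a sufficiently small neighborhood $N$; in particular each such $S'$ is itself Diophantine with uniform constants by Proposition \ref{prop:dense_implies_diophantine}. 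The crucial additional observation is that the difference set $(S')^{-N_0}(S')^{N_0}$ is automatically symmetric and is also $\epsilon_0/2$-dense: given $g\in G$, choose $\tilde g\in (S')^{N_0}$ within $\epsilon_0/2$ of $e$ and $\tilde h\in (S')^{N_0}$ within $\epsilon_0/2$ of $\tilde g g$; left bi-invariance then gives $d(\tilde g^{-1}\tilde h,g)=d(\tilde h,\tilde g g)\le \epsilon_0/2$. Consequently, by Corollary \ref{cor:diophantine_characterization}, $(S')^{-N_0}(S')^{N_0}$ satisfies the sharp logarithmic Diophantine bound with uniform constants $\tilde C,\alpha>0$.

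Fix a non-trivial irreducible unitary representation $(\pi,V)$ and a unit vector $v\in V$. Expanding the inner product and using unitarity of each $\pi(g_i')$ yields the identity
\[
\|\mc{L}_\pi^{N_0}v\|^2 = 1-\frac{1}{2m^{2N_0}}\sum_{I,J}\|\pi(w_I)v-\pi(w_J)v\|^2,
\]
where $I,J$ range over $\{1,\ldots,m\}^{N_0}$ and $w_I=g_{i_{N_0}}'\cdots g_{i_1}'$. By Corollary \ref{cor:diophantine_characterization} applied to $(S')^{-N_0}(S')^{N_0}$, there exist $w_{I_0},w_{J_0}\in (S')^{N_0}$ with $\|v-\pi(w_{I_0}^{-1}w_{J_0})v\|\ge \eta_\pi:=1/(\tilde C\log^\alpha c(\pi))$. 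Applying the unitary $\pi(w_{I_0})$ gives $\|\pi(w_{I_0})v-\pi(w_{J_0})v\|\ge \eta_\pi$, so the single term indexed by $(I_0,J_0)$ already contributes at least $\eta_\pi^2$ to the sum. Hence $\|\mc{L}_\pi^{N_0}v\|^2\le 1-\eta_\pi^2/(2m^{2N_0})$, and since $v$ was arbitrary, $\|\mc{L}_\pi^{N_0}\|\le 1-\delta_\pi$ with $\delta_\pi:=\eta_\pi^2/(4m^{2N_0})$.

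For arbitrary $n$, write $n=kN_0+r$ with $0\le r<N_0$. Using $\|\mc{L}_\pi\|\le 1$, we get $\|\mc{L}_\pi^n\|\le \|\mc{L}_\pi^{N_0}\|^k\le (1-\delta_\pi)^k$. Concavity of $x\mapsto (1-x)^{1/N_0}$ gives $(1-\delta_\pi)^{1/N_0}\le 1-\delta_\pi/N_0$, so $(1-\delta_\pi)^k\le D_1(1-\delta_\pi/N_0)^n$, where $D_1$ absorbs the uniformly bounded factor $(1-\delta_\pi/N_0)^{-r}$. Collecting constants and relabeling $2\alpha$ as $\alpha$ produces the stated bound. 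The main obstacle in the argument is that the Diophantineness of $S'$ itself controls only single terms $\pi(g_i')v-v$, whereas the expansion of $\|\mc{L}_\pi^{N_0}v\|^2$ produces difference terms involving elements $w_I^{-1}w_J$ that need not lie in $S'$; passing to the $N_0$-th power and working with the symmetric, $\epsilon_0/2$-dense difference set $(S')^{-N_0}(S')^{N_0}$ is precisely the device that converts the original Diophantine hypothesis into the needed control.
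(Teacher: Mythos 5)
Your proof is correct and follows the same overall strategy as the paper's: use generation to obtain a uniform word length $N_0$ making $(S')^{N_0}$ dense (stably in $S'$), pass to the symmetric difference set, invoke the log-Diophantine bound from Proposition \ref{prop:dense_implies_diophantine} and Corollary \ref{cor:diophantine_characterization} to find a good pair of words, and convert this into a spectral gap for $\mc{L}_\pi^{N_0}$ before iterating.

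The one genuine difference is how the spectral gap for a single block of length $N_0$ is extracted. The paper isolates two words $w_1,w_2$ with $\|\pi(w_1)v-\pi(w_2)v\|\ge\eta_\pi$ and feeds this into the sharpened triangle inequality (Lemma \ref{lem:sharpened_triangle_inequality}), which delivers $\|\pi(w_1)v+\pi(w_2)v\|\le(2-\eta_\pi^2/10)\|v\|$; this bound is then combined with the ordinary triangle inequality over the remaining $m^{N_0}-2$ words. You instead expand $\|\mc{L}_\pi^{N_0}v\|^2$ exactly, using unitarity, as $1-\frac{1}{2m^{2N_0}}\sum_{I,J}\|\pi(w_I)v-\pi(w_J)v\|^2$, and read off the gap from a single term of the sum. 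Your route avoids introducing the auxiliary inequality entirely and replaces a one-sided estimate with an identity, which is a modest simplification; the constants it produces have the same shape. A second small improvement is in the density argument for $(S')^{-N_0}(S')^{N_0}$: by choosing $\tilde h$ close to $\tilde g g$ and using bi-invariance you get $\epsilon_0/2$-density directly, rather than the factor-of-two loss from the naive triangle-inequality argument the paper implicitly uses. Neither change affects the conclusion, and the final interpolation step is the standard one.
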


The proof of Proposition \ref{prop:decay_estimate} uses the following lemma, which is a sharpening the triangle inequality for vectors that are not colinear.
\begin{lem}\label{lem:sharpened_triangle_inequality}
Suppose that $v,w$ are two vectors in an inner product space. Suppose that $\|v\|\le \|w\|$ and let $\hat{v}=v/\|v\|$ and $\hat{w}=w/\|w\|$. If
\[
\|\hat{v}-\hat{w}\|\ge \epsilon,
\]
then 
\[
\|v+w\|\le (1-\epsilon^2/10)\|v\|+\|w\|.
\]
\end{lem}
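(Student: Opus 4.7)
The plan is to square both sides and reduce the statement to an elementary algebraic inequality in $a=\|v\|$ and $b=\|w\|$, using the assumption $a\le b$ in a crucial way at the end.

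First I would translate the hypothesis $\|\hat v-\hat w\|\ge \epsilon$ into an inner product bound. Expanding $\|\hat v-\hat w\|^2=2-2\langle \hat v,\hat w\rangle$ gives $\langle \hat v,\hat w\rangle\le 1-\epsilon^2/2$. Then
\[
\|v+w\|^2=a^2+2ab\langle \hat v,\hat w\rangle+b^2\le a^2+2ab(1-\epsilon^2/2)+b^2=(a+b)^2-\epsilon^2 ab.
\]

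Next I would compare this with the square of the right-hand side. Expanding,
\[
\bigl((1-\epsilon^2/10)a+b\bigr)^2=(a+b)^2-\tfrac{\epsilon^2}{5}a(a+b)+\tfrac{\epsilon^4}{100}a^2,
\]
so it suffices to show
\[
(a+b)^2-\epsilon^2 ab\le (a+b)^2-\tfrac{\epsilon^2}{5}a(a+b)+\tfrac{\epsilon^4}{100}a^2,
\]
which, after dividing by $\epsilon^2 a$ (the case $a=0$ is trivial), is equivalent to
\[
\tfrac{1}{5}(a+b)\le b+\tfrac{\epsilon^2}{100}a.
\]

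The final step is where the hypothesis $a\le b$ enters: since $a\le b$, we have $\tfrac{1}{5}(a+b)\le \tfrac{2}{5}b\le b$, which clearly implies the displayed inequality (even without the $\epsilon^2 a/100$ slack). Taking square roots then yields $\|v+w\|\le (1-\epsilon^2/10)a+b$, as desired.

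There is no real obstacle here; the only mild subtlety is choosing the constant $1/10$ on the left so that the resulting algebraic inequality has enough slack to be absorbed by $a\le b$. The proof is essentially the observation that the law of cosines gives a quadratic defect of order $\epsilon^2 ab$ in the triangle inequality, and that when $a\le b$ a fixed fraction of this defect can be charged to $a$ alone.
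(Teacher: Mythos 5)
The paper states Lemma~\ref{lem:sharpened_triangle_inequality} without proof (the text moves directly from the lemma statement to the proof of Proposition~\ref{prop:decay_estimate}), so there is no internal argument to compare against. Your proof is correct and supplies the missing verification. The chain of reductions is sound: $\langle \hat v,\hat w\rangle\le 1-\epsilon^2/2$ gives $\|v+w\|^2\le (a+b)^2-\epsilon^2 ab$; expanding $((1-\epsilon^2/10)a+b)^2$ and cancelling reduces the claim to $\tfrac{1}{5}(a+b)\le b+\tfrac{\epsilon^2}{100}a$, which follows from $a\le b$ via $\tfrac{1}{5}(a+b)\le\tfrac{2}{5}b\le b$. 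One small point worth making explicit: passing from the squared inequality back to the unsquared one requires the right-hand side $(1-\epsilon^2/10)a+b$ to be nonnegative, and this holds because $\epsilon=\|\hat v-\hat w\|\le 2$ forces $\epsilon^2/10\le 2/5<1$. You also correctly note that $a>0$ can be assumed, since $\hat v$ is only defined for $v\neq 0$.
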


\begin{proof}
We begin by considering the following estimate for unit vectors.

\begin{claim} Suppose that the angle between two unit vectors $\hat{v}$ and $\hat{w}$ is $\theta\in [0,\pi]$, then 
\[
\|\hat{v}+{w}\|\le \|\hat{v}\|+(1-\theta^2/10)\|\hat{w}\|.
\]
\end{claim}
\begin{proof}
It suffices to consider the two vectors $\hat{v}=(1,0)$ and $\hat{w}=(\cos \theta,\sin \theta)$ in $\R^2$. It then suffices to show:
\[
\|\hat{v}+\hat{w}\|^2\le \left(\|\hat{v}\|+\left(1-\frac{\theta^2}{10}\right)\|\hat{w}\|\right)^2.
\]
From the definitions,
\[
\|\hat{v}+\hat{w}\|^2=2+2\cos\theta
\]
and
\[
\left(\|\hat{v}\|+\left(1-\frac{\theta^2}{10}\right)\|\hat{w}\|\right)^2 = 4-4\frac{\theta^2}{10}+\frac{\theta^4}{100}\ge 4-4\frac{\theta^2}{10}.
\]
Thus it suffices to show for $\theta\in [0,\pi]$ that
\[
2+2\cos \theta\le 4-4\frac{\theta^2}{10},
\]
which follows because for $\theta\in [0,\pi]$ we have the estimate $\cos\theta\le 1-\theta^2/5$.
\end{proof}

We may prove the lemma once we have one more observation. Note that if $\hat{v}$ and $\hat{w}$ are two unit vectors, then $\|\hat{v}-\hat{w}\|=\epsilon$ is less than the angle $\theta$ between $\hat{v}$ and $\hat{w}$ because the distance between $\hat{v}$ and $\hat{w}$ along a unit circle they lie on is precisely $\theta$. Thus we see that $\epsilon\le \theta$ for $0\le \theta\le \pi$.

We now compute. Note that without loss of generality we may assume that $\|w\|=1$, which we do in the following. By the triangle inequality,
\begin{align*}
\|v+w\|\le \|v\|\|\hat{v}+\hat{w}\|+(1-\|v\|)\|\hat{w}\|.
\end{align*}
By the claim it then follows that
\[
\|v+w\|\le \|v\|((1-\theta^2)\|\hat{v}\|+\|\hat{w}\|)+(1-\|v\|)\|\hat{w}\|.
\]
Noting from before that $0\le \epsilon \le \theta$ for $\theta\in [0,\pi]$, we then conclude:
\[
\|v+w\|\le \|v\|((1-\epsilon^2/10)\|\hat{v}\|+\|\hat{w}\|)+(1-\|v\|)\|\hat{w}\|=(1-\epsilon^2/10)\|v\|+\|w\|.
\]
\end{proof}

\begin{proof}[Proof of Proposition \ref{prop:decay_estimate}.]
For convenience, let $W=(g_1,...,g_m)$ and let $S=\{g_1,...,g_m\}$. Let $\epsilon_0(G)$ be as in Proposition \ref{prop:dense_implies_diophantine}. By Proposition \ref{prop:generating_set_gets_dense}, because $\overline{\langle S\rangle}=G$ there exists some $n_0$ such that $S^{n_0}$ is $\epsilon_0/2$-dense in $G$. Then let $N$ be the neighborhood of $(g_1,...,g_m)$ in $G\times\cdots \times G$ such that if $p=(g_1',...,g_m')\in N$ then $\{g_1',...,g_m'\}^{n_0}$ is at least $\epsilon_0$-dense in $G$. It now suffices to obtain the given estimate for the set $W=(g_1,...,g_m)$ using only the assumption that $S^{n_0}$ is $\epsilon_0$-dense. Below, $W^{n_0}$ is the tuple of the $m^{n_0}$ words of length $n_0$ with entries in $W$.

 By Proposition \ref{prop:dense_implies_diophantine}, there exist $(C,\alpha)$ such that any $\epsilon_0$-dense set is $(C,\alpha)$-Diophantine. As $S^{n_0}$ is $\epsilon_0$-dense, so is $S^{n_0}S^{-n_0}$, and hence  $S^{n_0}S^{-n_0}$ is $(C,\alpha)$-Diophantine. 

Consider now a non-trivial irreducible finite dimensional unitary representation $(\pi, V)$ of $G$.
Since $S^{n_0}S^{-n_0}$ is $(C,\alpha)$-Diophantine, Corollary \ref{cor:diophantine_characterization} implies that for any unit length $v\in V$ there exist $w_1,w_2\in S^{n_0}$ such that 
\[
\|v-\pi(w_1^{-1}w_2)v\|\ge \frac{1}{C\log^\alpha(c(\pi))},
\]
and so
\[
\|\pi(w_1)v-\pi(w_2)v\|\ge \frac{1}{C\log^\alpha(c(\pi))}.
\]
Hence by Lemma \ref{lem:sharpened_triangle_inequality}, since $\pi$ is unitary
\[
\|\pi(w_1)v+\pi(w_2)v\|\le \pez{1-\frac{1}{10C^2\log^{2\alpha}(c(\pi))}}\|\pi(w_1)v\| +\|\pi(w_2)v\|\le \pez{2-\frac{1}{10C^2\log^{2\alpha}(c(\pi))}}\|v\|.
\]
Then by the triangle inequality:
\begin{align*}
\|\mc{L}_{\pi}^{n_0}v\|&=\left\|\frac{1}{\abs{W}^{n_0}}\sum_{w\in W^{n_0}} \pi(w)v\right\|\\
&\le \frac{1}{\abs{W}^{n_0}}\left(\|\pi(w_1)v+\pi(w_2)v\|+\sum_{w\in W^{n_0}\setminus \{w_1,w_2\}}\|\pi(w)v\|\right)\\
&\le \frac{1}{\abs{W}^{n_0}}\pez{2-\frac{1}{10C^2\log^{2\alpha}(c(\pi))}}\|v\| +\frac{\abs{W}^{n_0}-2}{\abs{W}^{n_0}}\|v\|\\
&\le \pez{1-\frac{1}{10C^2|W^{n_0}|\log^{2\alpha}(c(\pi))}}\|v\|.
\end{align*}

Interpolating gives that for all $n\ge 0$,
\[
\|\mc{L}_{\pi}^n\|\le \pez{1-\frac{1}{10C^2|W^{n_0}|\log^{2\alpha}(c(\pi))}}^{-1}\pez{1-\frac{1}{10C^2|W^{n_0}|\log^{2\alpha}(c(\pi))}}^{n/n_0}.
\]
As $(\pi,V)$ ranges over all non-trivial representations, $c(\pi)$ is uniformly bounded away from $0$; see \cite[{}5.6.7]{wallach2018harmonic}. This implies that the first term above is uniformly bounded by some $D>0$ independent of $\pi$. Applying the estimate $(1+x)^{\epsilon}\le 1+\epsilon x$ to the second term then gives the proposition.
\end{proof}

Notice that in Proposition \ref{prop:decay_estimate} that we obtain an entire neighborhood of our initial set $S$ on which we have the same estimates for $\mc{L}_{\pi}$. Consequently, because these estimates remain true under small perturbations, we think of them as being stable. We will use the term ``stable" in the following precise sense.

\begin{defn}\label{defn:stable}
Suppose that $T$ is some property of a tuple $W=(g_1,...,g_m)$ with elements in a Lie group $G$.
We say that $T$ is \emph{stable} at $W=(g_1,...,g_m)$ if there exists a neighborhood $N$ of $(g_1,...,g_m)$ in $G\times \cdots \times G$ such that if $(g_1',...,g_m')\in N$ then $T$ holds for $(g_1',...,g_m')$. We will also say that $T$ is \emph{stable} without reference to a subset when the relevant tuples that $T$ is stable on are evident.
\end{defn}

A crucial aspect of the Diophantine property in compact semisimple Lie groups is that by Proposition \ref{prop:dense_implies_diophantine} there is a stable lower bound on $(C,\alpha)$. This stability will be essential during the KAM scheme.

\subsection{Diophantine sets and tameness}\label{subsec:diophantine_sets_tameness}

Consider a smooth vector bundle $E$ over a closed manifold $M$. We may consider the space $C^{\infty}(M,E)$ of smooth sections of $E$. Consider a linear map $L\colon C^{\infty}(M,E)\to C^{\infty}(M,E)$. We say that $L$ is \emph{tame} if there exists $\alpha$ such that for all $k$ there exists $C_k$, such that for all $s\in C^{\infty}(M,E)$,
\[
\|Ls\|_{C^k}\le C_k\|s\|_{C^{k+\alpha}}.
\]
See \cite[{}II.2.1]{hamilton1982inverse} for more about tameness.
The main result of this section is to show such estimates for certain operators related to $\mc{L}$.

Though $\mc{L}$ acts in any representation of $G$, we are most interested in the action of $G$ on the sections of certain vector bundles, which we now describe. Suppose that $K$ is a closed subgroup of $G$ and that $E$ is a smooth vector bundle over $G/K$.
 We say that $E$ is a \emph{homogeneous vector bundle} over $G/K$ if $G$ acts on $E$ by bundle maps and this action projects to the action of $G$ on $G/K$ by left translation. We now give an explicit description of all homogeneous vector bundles over $G/K$ via the Borel construction.  See \cite[Ch. 5]{wallach2018harmonic} for more details about this topic and what follows. 
 Suppose that $(\tau, E_0)$ is a finite dimensional unitary representation of $K$. Form the trivial bundle $G\times E_0$. Then $K$ acts on this bundle by $(g,v)\mapsto (gk,\tau(k)^{-1}v)$. Then $(G\times E_0)/K$ is a vector bundle over $G/K$ that we denote by $G\times_{\tau} E_0$.  Note, for instance, that $C^{\infty}(G,\R)$ is the space of sections of the homogeneous vector bundle obtained from the trivial representation of $\{e\}<G$. The left action of $G$ on $G\times E_0$ descends to $G\times_{\tau} E_0$, and hence this is a homogeneous vector bundle.

In order to do analysis in a homogeneous vector bundle, we must introduce some additional structures. Suppose that $E=G\times_{\tau}E_0$ is a homogeneous vector bundle. The base $G/K$ comes equipped with the projection of the Haar measure on $G$. As the action of $K$ on $G\times E_0$ is isometric on fibers, the fibers of $E$ are naturally endowed with an inner product. We may then consider the space $L^2(E)$, the space of all $L^2$ sections of $E$. In addition, we will write $C^{\infty}(E)$ for the space of all smooth sections of $E$. The action of $G$ on $E$ preserves $L^2(E)$ and $C^{\infty}(E)$. 

We recall briefly how one may do harmonic analysis on sections of such bundles.
As before, let $\Omega$ be the Casimir operator, which is an element of $U(\mf{g})$. Then $\Omega$ acts on the $C^{\infty}$ vectors of any representation of $G$. Denote by $\Delta$ the differential operator obtained by the action of $-\Omega$ on $C^{\infty}(E)$. Then $\Delta$ is a hypoelliptic differential operator on $E$. We then use the spectrum of $\Delta$ to define for any $s\ge 0$ the Sobolev norm $H^s$ in the following manner. $L^2(E)$ may be decomposed as the Hilbert space direct sum of finite dimensional irreducible unitary representations $V_{\pi}$.  Write $\phi=\sum_{\pi} \phi_\pi$ for the decomposition of an element $\phi\in L^2(E)$. Then the $s$-Sobolev norm is defined by 
\[
\|\phi\|_{H^s}^2=\sum_{\pi} (1+c(\pi))^s \|\phi_{\pi}\|_{L^2}^2.
\]
We write $\|f\|_{C^s}$ for the usual $C^s$ norm of a function or section of a vector bundle.  It is not always necessary to work with the decomposition of $L^2(E)$ into irreducible subspaces, but instead use a coarser decomposition as follows. We let $H_{\lambda}$ denote the subspace of $L^2(E)$ on which $\Delta$ acts by multiplication by $\lambda>0$. There are countably many such subspaces $H_{\lambda}$ and each is finite dimensional. In the sequel, those functions that are orthogonal to the trivial representations in $L^2(E)$ will be of particular importance. We denote by $L^2_0(E)$ the orthogonal complement of the trivial representations in $L^2(E)$, and $C^{\infty}_0(E)$ the subspace $L^2_0(E)\cap C^{\infty}(E)$.

We now consider the action of $\mc{L}$ on the sections of a homogeneous vector bundle.
\begin{prop}\label{prop:tame_coboundaries}
\cite[Prop 1.]{dolgopyat2007simultaneous} (Tameness) Suppose that $(g_1,...,g_m)$ is a Diophantine tuple with elements in a compact connected semisimple Lie group $G$. Suppose that $E$ is a homogeneous vector bundle that $G$ acts on. Then there exist constants $C_1,\alpha_1,\alpha_2>0$ such that for any $s\ge 0$ there exists $C_s$ such that for any nonzero $\phi\in C^{\infty}_0(G/K,E)$ the following holds:
\[
\|(I-\mc{L})^{-1}\phi\|_{H^s}\le C_1\|\phi\|_{H^{s+\alpha_1}}
\]
and
\[
\|(I-\mc{L})^{-1}\phi\|_{C^s}\le C_s\|\phi\|_{C^{s+\alpha_2}}.
\]
Moreover, these estimates are stable.
\end{prop}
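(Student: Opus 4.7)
The idea is to invert $I - \mc{L}$ separately on each $G$-isotypic component of $L^2_0(E)$, estimate that inverse by the Neumann series using the polylogarithmic spectral gap of Proposition \ref{prop:decay_estimate}, and then absorb the resulting $\log^\alpha c(\pi)$ factor into an arbitrarily small loss of Sobolev derivatives. Converting the $H^s$ estimate to a $C^s$ estimate uses standard Sobolev embedding on the compact manifold $G/K$.

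By Peter--Weyl and Frobenius reciprocity, $L^2(E) \cong \bigoplus_\pi V_\pi \otimes \mathrm{Hom}_K(V_\pi, E_0)$ as a $G$-representation, with the $G$-action on the first tensor factor only. Since the Casimir is central in $U(\mf{g})$, the operator $\Delta = -\Omega$ acts on the $\pi$-isotypic summand by the scalar $c(\pi)$. Since $\mc{L}$ is a convex combination of left translations, it commutes with the $G$-action and therefore with $\Delta$; it preserves each isotypic summand, acting there as $\mc{L}_\pi \otimes \Id$. Consequently $(I - \mc{L})^{-1}$ is diagonal with respect to both the Sobolev and isotypic decompositions.

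By Proposition \ref{prop:decay_estimate}, there are constants $D_1, D_2, \alpha > 0$, stable under perturbation of $(g_1, \ldots, g_m)$, such that $\|\mc{L}_\pi^n\| \le D_1(1 - (D_2 \log^\alpha c(\pi))^{-1})^n$ for every non-trivial irreducible $\pi$. Summing the Neumann series gives $\|(I - \mc{L}_\pi)^{-1}\| \le D_1 D_2 \log^\alpha c(\pi)$. Because $c(\pi)$ is bounded below by a positive constant on non-trivial irreducibles \cite[{}5.6.7]{wallach2018harmonic}, for every $\beta > 0$ there exists $C_\beta$ with $\log^\alpha c(\pi) \le C_\beta(1 + c(\pi))^\beta$. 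Fixing a small $\beta > 0$ and writing $\phi = \sum_\pi \phi_\pi$, orthogonality of isotypic components yields
$$\|(I - \mc{L})^{-1}\phi\|_{H^s}^2 = \sum_\pi (1 + c(\pi))^s \|(I - \mc{L}_\pi)^{-1}\phi_\pi\|_{L^2}^2 \le C_1^2 \sum_\pi (1 + c(\pi))^{s+2\beta}\|\phi_\pi\|_{L^2}^2 = C_1^2 \|\phi\|_{H^{s+2\beta}}^2,$$
which gives the first inequality with $\alpha_1 = 2\beta$.

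For the $C^s$ estimate, Sobolev embedding on the compact manifold $G/K$ provides a fixed $N$, depending only on $\dim G/K$ and the rank of $E$, with $\|u\|_{C^s} \le A_s \|u\|_{H^{s+N}}$ for every $s \ge 0$, and conversely $\|u\|_{H^t} \le B_t \|u\|_{C^{\lceil t \rceil}}$ for every $t \ge 0$. Chaining with the $H^s$ bound above produces $\|(I - \mc{L})^{-1}\phi\|_{C^s} \le C_s \|\phi\|_{C^{s + \alpha_2}}$ for a single fixed $\alpha_2$ (depending on $N$ and $\alpha_1$ but not on $s$), with $C_s$ dependent on $s$. Stability of every constant is inherited from the stability of the neighborhood in Proposition \ref{prop:decay_estimate}. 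The main obstacle is precisely that the spectral gap is polylogarithmic rather than polynomial in $c(\pi)$: a bound of the form $1 - c(\pi)^{-\gamma}$ with $\gamma > 0$ would force a derivative loss $\sim c(\pi)^\gamma$ that could not be absorbed into an arbitrarily small Sobolev exponent, and the resulting operator would not be tame.
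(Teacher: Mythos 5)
Your proposal is correct and follows essentially the same route as the paper's proof: decompose $L^2_0(E)$ along the spectrum of the Casimir Laplacian, invert $I-\mc{L}$ blockwise via the Neumann series using Proposition \ref{prop:decay_estimate}, absorb the polylogarithmic factor into an arbitrarily small Sobolev loss, and pass to $C^s$ by Sobolev embedding. The only cosmetic difference is that the paper groups terms by eigenvalue $\lambda$ of $\Delta$ rather than by irreducible $\pi$, but since $\Delta$ acts on the $\pi$-isotypic piece by the scalar $c(\pi)$, these are the same decomposition.
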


\begin{proof}
As before, let $H_{\lambda}$ be the $\lambda$-eigenspace of $\Delta$ acting on sections of $E$. Let $\mc{L}_{\lambda}$ denote the action of $\mc{L}$ on $H_{\lambda}$. From Proposition \ref{prop:decay_estimate}, we see that there exist $D_1,D_2$ and $\alpha_3$ such that for all $\lambda>0$, $\|\mc{L}_{\lambda}^n\|_{H^0}\le D_1(1-1/(D_2\log^{\alpha_3}(\lambda))^n$. Thus there exists $C_3$ such that  $\|(I-\mc{L}_{\lambda})^{-1}\|_{H^0}\le C_3\log^{\alpha_3}(\lambda)$.  Now observe, that in the following sum that $\lambda\neq 0$ by our assumption that $\phi$ is orthogonal to the trivial representations contained in $L^2(E)$:
\begin{align*}
\|(I-\mc{L})^{-1}\phi\|_{H^s}^2&=\sum_{\lambda>0} (1+\lambda)^s\|(I-\mc{L}_{\lambda})^{-1}\phi_{\lambda}\|^2_{L^2}\\
&\le \sum_{\lambda>0} (1+\lambda)^s\|(I-\mc{L}_{\lambda})^{-1}\|^2\|\phi_{\lambda}\|^2_{L^2}\\
&\le \sum_{\lambda>0}C_3^2\log^{2\alpha_3}(\lambda)(1+\lambda)^s\|\phi_{\lambda}\|^2_{L^2}\\
&\le \sum_{\lambda>0}C_4^2(1+\lambda)^{s+\alpha_1}\|\phi_{\lambda}\|^2_{L^2}\\
&\le C_4^2\|\phi\|_{H^{s+\alpha_1}}^2,
\end{align*}
for any $\alpha_1>0$ and sufficiently large $C_4$. The second estimate in the proposition then follows from the first by applying the Sobolev embedding theorem.
\end{proof}

\subsection{Application to isotropic manifolds}\label{subsec:isotropic_manifolds}

We now introduce the class of isotropic manifolds, which are the subject of this paper and whose isometry groups may be studied along the above lines. We say that $M$ is \emph{isotropic} if $\Isom(M)$ acts transitively on the unit tangent bundle of $M$, $T^1M$. This is equivalent to $\Isom(M)^{\circ}$ acting transitively on $T^1M$. There are not many isotropic manifolds.  In fact, all are globally symmetric spaces. The following is the complete list of all compact isotropic manifolds:
\begin{enumerate}
\item
$S^n=\SO(n+1)/\SO(n)$, sphere,
\item
$\RP^n=\SO(n+1)/O(n)$, real projective space,
\item
$\CP^n=\SU(n+1)/U(n)$, complex projective space,
\item
$\HP^n=\Sp(n+1)/(\Sp(n)\times \Sp(1))$, quaternionic projective space,
\item
$F_4/\Spin(9)$, Cayley projective plane.
\end{enumerate}
A proof of this classification may be found in \cite[Thm. 8.12.2]{wolf1972spaces}. 

Though $S^1$ is an isotropic manifold, we will exclude it in all future statements because its isometry group is not semisimple. The reason that we study isotropic manifolds is that if $M$ is an isotropic manifold, $\Isom(M)$ is semisimple.
\begin{lem}\label{lem:isotropic_has_diophantine}
Suppose that $M$ is a compact connected isotropic manifold other than $S^1$, then $\Isom(M)$ is semisimple. The same is true for $\Isom^0(M)$, the connected component of the identity.
\end{lem}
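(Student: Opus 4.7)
The plan is to proceed case-by-case, using the explicit classification of compact connected isotropic Riemannian manifolds that was stated immediately before the lemma. A basic observation simplifies things considerably: a Lie group is semisimple if and only if its Lie algebra is, and $\Isom(M)$ and $\Isom^0(M)$ share the same Lie algebra because $\Isom^0(M)$ is open in $\Isom(M)$. Thus it suffices to verify semisimplicity of $\Isom^0(M)$ for each entry in the list.

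Next, I would write down the identity component of the isometry group for each isotropic manifold using the coset descriptions already recorded in subsection \ref{subsec:isotropic_manifolds}: namely $\SO(n+1)$ for $S^n$, a finite quotient of $\SO(n+1)$ for $\RP^n$, $\PSU(n+1)$ for $\CP^n$, a finite central quotient of $\Sp(n+1)$ for $\HP^n$, and the simple exceptional group $F_4$ for the Cayley projective plane. In every case $\Isom^0(M)$ is either a classical compact simple Lie group or a quotient of one by a finite central subgroup, and passing to such a quotient does not alter the Lie algebra. I would then cite the standard classification of compact simple Lie algebras to conclude that the Lie algebras $\mathfrak{so}(n+1)$ for $n+1\ge 3$, $\mathfrak{su}(n+1)$ for $n\ge 1$, $\mathfrak{sp}(n+1)$ for $n\ge 1$, and $\mathfrak{f}_4$ are semisimple (each is in fact simple, with the minor exception $\mathfrak{so}(4)\cong \mathfrak{so}(3)\oplus \mathfrak{so}(3)$ which is still semisimple).

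Finally, I would point out why $S^1$ has to be excluded: there $\Isom^0(S^1)=\SO(2)$ is abelian and one-dimensional, so its Lie algebra is not semisimple. Every other isotropic manifold in the classification has dimension $\ge 2$, which forces the corresponding classical group to have rank in the semisimple range (so $\SO(n+1)$ with $n+1\ge 3$, etc.). There is no real obstacle here; the ``hard part,'' such as it is, is just to be careful in identifying $\Isom^0(M)$ for $\RP^n$ and $\HP^n$, where one must divide by the appropriate finite central subgroup, and to confirm that this quotienting does not affect semisimplicity because semisimplicity is a Lie algebra property.
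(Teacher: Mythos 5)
Your argument is correct and takes essentially the same route as the paper, which simply cites Shankar's explicit computation of the isometry groups of these spaces and notes their Lie algebras; you have spelled out the case-by-case check that the paper delegates to that reference. One small remark in your favor: the paper asserts these isometry groups ``all have simple Lie algebras,'' which is not literally accurate for $S^3$ and $\RP^3$, where $\mathfrak{so}(4)\cong\mathfrak{so}(3)\oplus\mathfrak{so}(3)$ is semisimple but not simple; your parenthetical noting this exception is the correct reading, and semisimplicity is all the lemma actually needs.
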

For a proof of this Lemma, see \cite{shankar2001isometry}, which computes the isometry groups for each of these spaces explicitly. In fact, these isometry groups all have simple Lie algebras.

One minor issue with applying what we have developed so far to isotropic manifolds is that $\Isom(M)$ need not be connected. Even in the case of $S^2$, $\Isom(M)$ is disconnected. In fact, Dolgopyat and Krikorian assume that the isometries in their theorem all lie in the identity component of $\Isom(M)$ and hence are rotations. Here, we consider the full isometry group. Hence Theorem \ref{thm:KAM_converges} is a generalization even in the case of $S^n$. That said, the generalization is minor: the identity component is index $2$ in the full isometry group.

 Although connectedness of $\Isom(M)$ has not been the crux of previous arguments, if $\Isom(M)\neq \Isom(M)^{\circ}$, then there are ``extra" representations of $\Isom(M)$ that appear in the definition of Diophantineness that would need to be dealt with slightly differently. For this reason we give the following definition, which is adapted to the case where $\Isom(M)$ is not connected.
\begin{defn}
We say that a tuple $(g_1,\ldots,g_m)$ with each $g_i\in \Isom(M)$ is \emph{Diophantine} if there exists $n$ such that if $S=\{g_1,\ldots,g_m\}$ then $S^{n}\cap \Isom(M)^{\circ}$ is $(C,\alpha)$-Diophantine for some $C,\alpha>0$. We say that such a tuple is $(C,\alpha,n)$-Diophantine.
\end{defn}

It follows from Proposition \ref{prop:diophantine_equivalence} that if a tuple is Diophantine, then there exists a neighborhood of that tuple such that the constants $C,\alpha,n$ may be taken to be uniform over that neighborhood. Thus Diophantineness in this more general sense is a stable property. The following analogue of Proposition \ref{prop:diophantine_equivalence2} is then immediate.
\begin{prop}\label{prop:diophantine_equivalence2}
Let $M$ be a closed isotropic manifold of dimension at least $2$ and $S$ be a finite subset of $\Isom(M)$. The set $S$ is Diophantine if and only if $\Isom(M)^{\circ}\subseteq \overline{\langle S\rangle}$. Moreover, there exists $\epsilon_0(M),C,\alpha,n>0$ such that any subset of $\Isom(M)$ that is $\epsilon_0$-dense in $\Isom(M)^{\circ}$ is stably $(C,\alpha,n)$-Diophantine.
\end{prop}

We will show a tameness result in this setting.
The important point is that $\Isom(M)^{\circ}$ is a semisimple connected Lie group and $TM$ is a homogeneous vector bundle that $\Isom(M)^{\circ}$ acts on. Further, due to $M$ being isotropic $L^2(M,TM)$ contains no trivial representations of $\Isom(M)^{\circ}$.  Thus we are almost in a position where we can apply Proposition \ref{prop:tame_coboundaries}. There is one small issue: there may be representations of $\Isom(M)$ that are trivial on $\Isom(M)^{\circ}$ and hence the previous arguments do not apply directly to these representations. However, for the purpose of studying sections of $TM$, studying representations of $\Isom(M)^{\circ}$ suffices.  The following Proposition explains how one may get around this issue to recover the appropriate analog of Proposition \ref{prop:decay_estimate}. It is important to note that there are many choices of a ``Laplacian" acting on vector fields over a manifold, and they may not all be the same. In our case, we are choosing to work with the Casimir Laplacian, which arises from viewing $TM$ as a homogeneous vector bundle. Given a tuple $(g_1,\ldots,g_m)$ of isometries of $M$, the associated operator $\mc{L}$ that acts on $L^2(M,TM)$ is defined for a vector field $V$ by $V\mapsto m^{-1}\sum_{i=1}^m (Dg_i)_*V$.

\begin{prop}\label{prop:decay_lemma2}
Suppose that $M$ is a closed isotropic manifold with $\dim M\ge 2$. Suppose that $(g_1,\ldots,g_m)$ is a Diophantine tuple with elements in $\Isom(M)$. There exists a neighborhood $\mc{N}$ of $(g_1,\ldots,g_m)$ in $\Isom(M)\times \cdots \times \Isom(M)$ and constants $D_1,D_2,\alpha>0$ such that if $(g_1',\ldots,g_m')\in \mc{N}$, then $\{g_1',\ldots,g_m'\}$ is Diophantine. Let $H_{\lambda}$ denote the $\lambda$-eigenspace of $\Delta$ acting on sections of $TM$. For any tuple in this neighborhood, the associated operator $\mc{L}$ acts on $L^2(M,TM)$ and preserves the $H_{\lambda}$-eigenspaces. In fact, writing $\mc{L}_{\lambda}$ for this induced action we have that:
\[
\|\mc{L}^n_{\lambda}\|\le D_1\left(1-\frac{1}{D_2\log^{\alpha}(\lambda)}\right)^n.
\]
The same holds for the eigenspaces $H_{\lambda}$ of $\Delta$ acting on other bundles over $M$ assuming that $\Isom(M)$ acts isometrically on the space of sections of those bundles. In cases where there is a trivial representation, we must also assume $\lambda>0$. Examples of such bundles are $L^2(M,\R)$ as well as $L^2(\Gr_r(M),\R)$ in the case that $\Isom(M)^{\circ}$ acts transitively on the $r$-planes in $TM$.
\end{prop}

\begin{proof}
The key steps in the proof are substantially similar to those in Proposition \ref{prop:decay_estimate}, once we show that the elements of $\Isom(M)$ all preserve the spaces $H_{\lambda}$. Let $\Gamma$ be a bundle as in the statement of the proposition that $\Isom(M)$ acts on isometrically.

\begin{claim}
Suppose that $V\subset \Gamma$ is an irreducible representation of $\Isom(M)^{\circ}$ isomorphic to $(\pi, W)$.  Then for any $k\in \Isom(M)^{\circ}$, $kV$ is an irreducible representation of $V$ isomorphic to $(\pi\circ \alpha, W)$ for some automorphism $\alpha$ of $\Isom(M)^{\circ}$. In particular, $c(\pi\circ \alpha)=c(\pi)$.
\end{claim}
\begin{proof}

Let $g^k=k^{-1}gk$ as usual. We claim that for any $k\in \Isom(M)$ that $kV$ is a representation of $\Isom(M)^{\circ}$. To see this note that for $v\in V$, that $gkv=kg^kv$, but $g^k\in \Isom(M)^{\circ}$, so $kg^kv\in kV$. Moreover, it is straightforward to see that the representation of $\Isom(M)^{\circ}$ on $kV$ is isomorphic to the representation $(\pi\circ \alpha,W)$ where $\alpha$ is the automorphism $g\mapsto g^k$.

We now claim that $c(\pi\circ \alpha)=c(\pi)$. Because $\alpha$ is an automorphism, it preserves the Killing form, and hence we see that we can write the Casimir element as $\sum_i(d\alpha^{-1}(X_i))^2$. Now note that if one traces through the computation of what the value $c(\pi\circ \alpha)$ for the representation $\pi\circ \alpha$, that the $\alpha^{-1}$ we have introduced cancels with the $\alpha$. Thus the computation reduces to the computation of $c(\pi)$ with the original expression $\sum_i X_i^2$. Hence $c(\pi\circ \alpha)=c(\pi)$.
\end{proof}

To conclude from this point, one does the same argument as in Proposition \ref{prop:decay_estimate}, except we start with the set $S^{n_0}$ and only make use of the elements in $S^{n_0}\cap \Isom(M)^{\circ}$. No issues arise because any terms that do not lie in $\Isom(M)^{\circ}$ are isometries of $H_{\lambda}$ as we have now shown.
\end{proof}

Having established the previous proposition the following is immediate and may be shown by repeating the argument of Proposition \ref{prop:tame_coboundaries}. 
\begin{prop}\label{prop:tame_coboundaries2}
Suppose that $M$ is a closed isotropic manifold with $\dim M\ge 2$. Suppose that $(g_1,\ldots,g_m)$ is a Diophantine tuple with elements in $\Isom(M)$. There exist constants $C_1,\alpha_1,\alpha_2>0$ such that for any $s\ge 0$ there exists $C_s$ such that for any $\phi\in C^{\infty}(M,TM)$ the following holds:
\[
\|(I-\mc{L})^{-1}\phi\|_{H^s}\le C_1\|\phi\|_{H^{s+\alpha_1}},
\]
and
\[
\|(I-\mc{L})^{-1}\phi\|_{C^s}\le C_s\|\phi\|_{C^{s+\alpha_2}}.
\]
Moreover these estimates are stable. The same holds for the action of $\mc{L}$ on any of the sections of any of the bundles that Proposition \ref{prop:decay_lemma2} applies to.
\end{prop}

\section{Approximation of Stationary Measures}\label{sec:stationary_measures}

In this section, we introduce the notion of a stationary measure associated to a random dynamical system. We consider stationary measures of certain random dynamical systems associated to a Diophantine subset of a compact semisimple Lie group as well as perturbations of these systems. We begin by introducing these systems and some associated transfer operators. In Proposition \ref{prop:taylor_expansion_of_haar}, we give an asymptotic expansion of the stationary measures of a perturbation.

\subsection{Random dynamical systems and their transfer operators}\label{subsec:rds_defns}

We now give some basic definitions concerning random dynamical systems. For general treatments of random dynamical systems and their basic properties, see \cite{kifer1986ergodic} or \cite{arnold2013random}. If $(f_1,...,f_m)$ is a tuple of maps of a standard Borel space $M$, then these maps generate a uniform Bernoulli random dynamical system on $M$. This dynamical system is given by choosing an index $1\le i\le m$ uniformly at random and then applying the function $f_i$ to $M$. To iterate the system further, one chooses additional independent uniformly distributed indices and repeats. We always use the words \emph{random dynamical system} to mean uniform Bernoulli random dynamical system in the sense just described. 

Associated to this random dynamical system are two operators. The first operator is called the \emph{averaged Koopman operator}. It acts on functions and is defined by
\begin{equation}
\mc{M}\phi\coloneqq \frac{1}{m}\sum_{i=1}^m \phi\circ f_i.
\end{equation}
The second operator is called the \emph{averaged transfer operator}. It acts on measures and is defined by
\begin{equation}
\mc{M}^*\mu\coloneqq \frac{1}{m}\sum_{i=1}^m (f_i)_*\mu.
\end{equation}
Depending on the space $M$, we may restrict the domains of these operators to a suitable subset of the spaces of functions and measures on $M$. We say that a measure is \emph{stationary} if $\mc{M}^*\mu=\mu$. We assume that stationary measures have unit mass.

In this paper, we take $M$ to be a compact homogeneous space $G/K$. If $g\in G$, then left translation by $g$ gives an isometry of $G/K$ that we also call $g$. As before, a tuple $(g_1,...,g_m)$ with each $g_i\in G$ generates a random dynamical system on $G/K$. We will also consider perturbations of this random dynamical system. Consider a tuple $(f_1,...,f_m)$ where each $f_i\in\Diff^{\infty}(G/K)$. This collection also generates a random dynamical system on $G/K$.  The indices $1,...,m$ give a natural way to compare the two systems. We refer to the initial system as homogeneous or linear and to the latter system as non-homogeneous or non-linear. 

We will simultaneously work with a homogeneous and non-homogeneous systems, so we now introduce notation to distinguish the transfer operators of each.
We write $\mc{M}$ for the averaged Koopman operator associated to the system generated by the tuple $(g_1,...,g_m)$ and we write $\mc{M}_{\epsilon}$ for the averaged Koopman operator associated to the tuple $(f_1,...,f_m)$. Analogously we use the notation $\mc{M}^*$ and $\mc{M}_{\epsilon}^*$.

Later we will compare the homogeneous system given by a tuple $(g_1,...,g_m)$ and a non-homogeneous perturbation $(f_1,...,f_m)$.  We thus introduce the notation
\begin{equation}
\varepsilon_k\coloneqq\max_{i} d_{C^k}(f_i,g_i),
\end{equation}
for describing how large a perturbation is.
In addition, it will be useful to have a linearization of the difference between $f_i$ and $g_i$. The standard way to do this is via a chart on the Fréchet manifold $\Diff^{\infty}(G/K)$.  If $d_{C^0}(f_i,g_i)<\inj G/K$, then we associate $f_i$ with the vector field $Y_i$ defined at $g_i(x)\in G/K$ by
\begin{equation}\label{eqn:linearized_error}
Y_i(g_i(x))\coloneqq \exp_{g_i(x)}^{-1} f_i(x),
\end{equation}
where we choose the minimum length preimage of $f_i(x)$ in $T_{g_i(x)}G/K$ under the map $\exp^{-1}_{g_i(x)}$. In addition, if $Y$ is a vector field on $M$, then we define $\psi_Y\colon M\to M$ to be the map that sends 
\begin{equation}\label{eq:psi_defn}
\psi_Y:x\mapsto \exp_x(Y(x)).
\end{equation}

The following theorem asserts the existence of Lyapunov exponents for random dynamical systems.

\begin{thm}
\cite[Ch. 3, Thm. 1.1]{kifer1986ergodic}. Suppose that $E$ is measurable vector bundle over a Borel space $M$. Suppose that $F_1,F_2,...$ is a sequence of independent and identically distributed bundle maps of $E$ with common distribution $\nu$ and suppose that $\nu$ has finite support. Suppose that $\mu$ is an ergodic  $\nu$-stationary measure on $M$ for the random dynamics on $M$ induced by those on $E$.

Then there exists a list of numbers, the Lyapunov exponents,
\[
-\infty<\lambda^s<\lambda^{s-1}<\cdots<\lambda^1<\infty,
\]
such that for $\mu$ a.e.~$x\in M$ and almost every realization of the sequence, there exists a filtration of linear subspaces 
\[
0\subset V^s\subset \cdots \subset V^1\subset E_x
\]
such that, for that particular realization of the sequence, if $\xi\in V^{i+1}\setminus V^{i}$, where $V^i\equiv \{0\}$ for $i>s$, then
\[
\lim_{n\to \infty} \frac{1}{n} \log \|F^n\circ \cdots \circ F^1 \xi\|=\lambda^i.
\]
\end{thm}

\subsection{Approximation of stationary measures}

Let $dm$ denote the push-forward of Haar measure to $G/K$. Note that Haar measure is stationary for the homogeneous random dynamical system given by $(g_1,...,g_m)$.
The following proposition compares the integral against a stationary measure $\mu$ for a perturbation $(f_1,...,f_m)$ and the Haar measure. Up to higher order terms, the difference between integrating against Haar and against $\mu$ is given by the integral of a particular function $\mc{U}(\phi)$. We obtain an explicit expression for $\mc{U}(\phi)$, which is useful because we can tell when $\mc{U}(\phi)$ vanishes and thus when $\mu$ is near to Haar. Compare the following with \cite[Prop. 2]{dolgopyat2007simultaneous}.
\begin{prop}\label{prop:taylor_expansion_of_haar}
Suppose that $S=(g_1,...,g_m)$ is a Diophantine tuple with elements in a compact connected semisimple group $G$ or elements in $\Isom(M)$ for an isotropic manifold $M$ with $\dim M\ge 2$. Let $G/K$ be a quotient of $G$ in the former case or a space $\Isom(M)^{\circ}$ acts transitively on in the latter. There exist constants $k$ and $C$ such that if $(f_1,...,f_m)$ is a tuple with elements in $\Diff^{\infty}(G/K)$ with $\varepsilon_0=\max_i d_{C^0}(f_i,g_i)<\inj G/K$, then the following holds for each stationary measure $\mu$ for the uniform Bernoulli random dynamical system generated by the $f_i$. Let $Y_i=\exp^{-1}_{g_i(x)}f_i(x)$. Then for any $\phi\in C^{\infty}(G/K)$, we have
\begin{equation}\label{eq:taylor_expansion_of_haar_1}
\int_{G/K}\phi\,d\mu=\int_{G/K}\phi\,dm+\int_{G/K}\mc{U}(\phi)\,dm+O(\varepsilon_k^2\|\phi\|_{C^{k}}),
\end{equation}
where $dm$ denotes the normalized push-forward of Haar measure to $G/K$ and 
\begin{equation}
\mc{U}(\phi)\coloneqq\frac{1}{m}\sum_{i=1}^m\nabla_{Y_i}(I-\mc{M})^{-1}(\phi-\smallint\phi\,dm).
\end{equation}
Moreover,
\begin{equation}\label{eqn:error_term_control_estimate}
\abs{\int \mc{U}(\phi)\,dm}\le C\|\phi\|_{C^k}\left\|\sum_{i=1}^m Y_i\right\|_{C^k},
\end{equation}
and the constants, including the constant in the big-$O$ in equation \eqref{eq:taylor_expansion_of_haar_1}, are stable in $S$. 
\end{prop}

\begin{proof} The proof is similar to the proof of \cite[Prop.\ 4]{malicet2012simultaneous}. We write the proof for the connected group $G$; the proof for $\Isom(M)$ is identical with us using Proposition \ref{prop:tame_coboundaries2} instead of Proposition \ref{prop:tame_coboundaries}. 

Note that a smooth real valued function defined on $G/K$ is naturally viewed as a section of the trivial bundle over $G/K$. If we view the averaged Koopman operator $\mc{M}$ associated to $(g_1,...,g_m)$ as acting on the sections of the trivial bundle $G/K\times \R$, then $\mc{M}$ satisfies the hypotheses of Proposition \ref{prop:tame_coboundaries}. Thus there exists $\alpha$ and constants $C_s$ such that for any $\phi\in C^{\infty}_0(G/K)$, the space of integral $0$ smooth functions on $G/K$, 
\begin{equation}\label{eq:tameness1}
\|(I-\mc{M})^{-1}\phi\|_{C^s}\le C_s\|\phi\|_{C^{s+\alpha}}.
\end{equation}
Observe that for any $i$:
\[
\abs{\phi\circ f_i(x)-\phi\circ g_i(x)}\le \varepsilon_0\|\phi\|_{C^1}.
\]
Since $\mu$ is $\mc{M}_{\epsilon}^*$ invariant, this implies that
\[
\abs{\int \phi-\mc{M}\phi\,d\mu}=\abs{\int \mc{M}_{\varepsilon}\phi-\mc{M}\phi\,d\mu}\le \varepsilon_0\|\phi\|_{C^1}.
\]
Substituting $(I-\mc{M})^{-1}(\phi-\int \phi\,dm)$ for the function $\phi$ in the previous line and using equation \eqref{eq:tameness1} yields a first order approximation:
\begin{equation}\label{eq:first_order_haar_est}
\abs{\int \phi\,d\mu-\int\phi\,dm}\le \varepsilon_0 C_1\|\phi\|_{C^{1+\alpha}}.
\end{equation}

We now use this first order approximation to obtain a better estimate. Note the Taylor expansion:
\[
\phi\circ f_i(x)-\phi\circ g_i(x)=(\nabla_{Y_i} \phi)(g_i(x))+O(\varepsilon_0^2\|\phi\|_{C^2}).
\]
Integrating against $\mu$ yields 
\[
\int \phi-\mc{M}\phi\,d\mu=\int \mc{M}_{\varepsilon} \phi-\mc{M}\phi\,d\mu=\int \frac{1}{m}\sum_{i=1}^m \nabla_{Y_i} \phi(g_i(x))\,d\mu+O(\varepsilon_0^2\|\phi\|_{C^2}).
\]
We now plug in $(I-\mc{M})^{-1}(\phi-\smallint \phi\,dm)$ for $\phi$ in the previous line and use the estimate in equation \eqref{eq:tameness1} to obtain:
\[
\int \phi\,d\mu-\int\phi\,dm=\int \frac{1}{m}\sum_{i=1}^m\pez{\nabla_{Y_i}(I-\mc{M})^{-1}(\phi-\smallint \phi\,dm)}(g_i(x))\,d\mu+O(\varepsilon_0^2\|\phi\|_{C^{2+\alpha}}).
\]
Using equation \eqref{eq:first_order_haar_est} on the first term on the right hand side above yields
\begin{align}\label{eq:prop_approx_by_haar_halfway}
\int \phi\,d\mu-\int\phi\,dm=&\int \frac{1}{m}\sum_{i=1}^m\pez{\nabla_{Y_i}(I-\mc{M})^{-1}(\phi-\smallint \phi\,dm)}(g_i(x))\,dm\\&+O\pez{\varepsilon_0\left\|\sum_{i=1}^m \nabla_{Y_i}(I-\mc{M})^{-1}\phi\right\|_{C^{1+\alpha}}}+O(\varepsilon_0^2\|\phi\|_{C^{2+\alpha}}).\nonumber
\end{align}
Note that
\[
\left\|\sum_{i=1}^m \nabla_{Y_i}(I-\mc{M})^{-1}\phi\right\|_{C^{1+\alpha}}
=O(\varepsilon_{2+\alpha}\|(I-\mc{M})^{-1}\phi\|_{C^{2+\alpha}}).
\]
The application of equation \eqref{eq:tameness1} to $\|(I-\mc{M})^{-1}\phi\|_{C^{2+\alpha}}$then gives that the first big $O$-term in \eqref{eq:prop_approx_by_haar_halfway} is $O(\varepsilon_0\varepsilon_{2+\alpha}\|\phi\|_{C^{2+2\alpha}})$. Thus,  
\[
\int \phi\,d\mu-\int\phi\,dm=\int \frac{1}{m}\sum_{i=1}^m\pez{\nabla_{Y_i}(I-\mc{M})^{-1}(\phi-\smallint \phi\,dm)}(g_i(x))\,dm+O(\varepsilon_{2+\alpha}^2\|\phi\|_{C^{2+2\alpha}}).
\]
Now, by translation invariance of the Haar measure we may remove the $g_i$'s:
\[
\int \phi\,d\mu-\int\phi\,dm=\int \frac{1}{m}\sum_{i=1}^m\nabla_{Y_i}(I-\mc{M})^{-1}(\phi-\smallint \phi\,dm)\,dm+O(\varepsilon_{2+\alpha}^2\|\phi\|_{C^{2+2\alpha}}).
\]
This proves everything except equation \eqref{eqn:error_term_control_estimate}.

We now estimate the integral of
\begin{align*}
\mc{U}(\phi)&=\frac{1}{m}\sum_{i=1}^m\nabla_{Y_i}(I-\mc{M})^{-1}(\phi-\smallint\phi\,dm),\\
&=\nabla_{\frac{1}{m}\sum_{i=1}^m Y_i}(I-\mc{M})^{-1}(\phi-\smallint\phi\,dm),\\
\end{align*}
against Haar.
By equation \eqref{eq:tameness1} there exists $C_1$ such that  
\[
\left\|(I-\mc{M})^{-1}(\phi-\int\phi\,dm)\right\|_{C^1}\le C_1\|\phi\|_{C^{1+\alpha}},
\]
which establishes equation \eqref{eqn:error_term_control_estimate} by a similar argument to the estimate of the big-O term occurring in the previous part of this proof.
\end{proof}

\section{Strain and Lyapunov Exponents}\label{sec:strain_and_Lyapunov_exponents}

In this section we study the Lyapunov exponents of perturbations of isometric systems. The main result is Proposition \ref{prop:taylor_expansion_lambda_k}, which gives a Taylor expansion of the Lyapunov exponents of a perturbation. The terms appearing in the Taylor expansion have a particular geometric meaning. We explain this meaning in terms of two ``strain" tensors associated to a diffeomorphism. These tensors measure how far a diffeomorphism is from being an isometry. After introducing these tensors, we prove Proposition \ref{prop:taylor_expansion_lambda_k}. The Lyapunov exponents of a random dynamical system may be calculated by integrating against a stationary measure of a certain extension of the original system. By using Proposition \ref{prop:taylor_expansion_of_haar}, we are able to approximate such stationary measures by the Haar measure and thereby obtain a Taylor expansion.

\subsection{Norms on Tensors}\label{subsec:tensor_norms}

Throughout this paper we use the pointwise $L^2$ norm on tensors, which we now describe. For a more detailed discussion, see the discussion surrounding \cite[Prop. 2.40]{lee2018introduction}. If $V$ is an inner product space with orthonormal basis $[e_1,\ldots,e_n]$, then $V^{\otimes k}$ has a basis of tensors of the form
\[
e_{i_1}\otimes \cdots \otimes e_{i_k}
\]
where $1\le i_j\le n$ for each $1\le j \le k$. We declare the vectors of this basis to be orthonormal for the inner product on $V^{\otimes k}$. This norm is independent of the choice of orthonormal basis. For a continuous tensor field $T$ on a closed Riemannian manifold $M$, we write $\|T\|$ for $\max_{x\in M}\|T(x)\|$.  If $T$ is a tensor on a Riemannian manifold $M$, we then define its $L^2$ norm in the expected way by integrating the norm of $T(x)$ as a tensor on $T_xM$ over all points $x\in M$, i.e.
\[
\|T\|_{L^2}=\left(\int_M \|T(x)\|^2\,d\vol(x)\right)^{1/2}.
\]

\subsection{Strain}\label{subsec:strain}

If a diffeomorphism of a Riemannian manifold is an isometry, then it pulls back the metric tensor to itself. Consequently, if we are interested in how near a diffeomorphism is to being an isometry, it is natural to consider the difference between the metric tensor and the pullback of the metric tensor. This leads us to the following definition.

\begin{defn}
Suppose that $f$ is a diffeomorphism of a Riemannian manifold $(M,g)$. We define the \emph{Lagrangian strain tensor} associated to $f$ to be 
\[
E^f\coloneqq \frac{1}{2}\left(f^*g-g\right).
\]
\end{defn}
This definition is consonant with the definition of the Lagrangian strain tensor that appears in continuum mechanics, c.f. \cite{lai2009introduction}. 

The strain tensor will be useful for two reasons. First, it naturally appears in the Taylor expansion in Proposition \ref{prop:taylor_expansion_lambda_k}, which will allow us to conclude that a random dynamical system with small Lyapunov exponents has small strain. Secondly, we prove in Theorem \ref{thm:near_to_identity} that for certain manifolds that a diffeomorphism with small strain is near to an isometry. The combination of these two things will be essential in the proof of our main linearization result, Theorem \ref{thm:KAM_converges}, which shows that perturbations with all Lyapunov exponents zero are conjugate to isometric systems.

We now introduce two refinements of the strain tensor that will appear in the Taylor expansion in Proposition \ref{prop:taylor_expansion_lambda_k}. Note that $E^f$ is a $(0,2)$-tensor. Consequently, we may take its trace with respect to the ambient metric $g$.

\begin{defn}
Suppose that $f$ is a diffeomorphism of a Riemannian manifold $(M,g)$. We define the \emph{conformal strain tensor}  by
\[
E_C^f\coloneqq \frac{\Tr(f^*g-g)}{2d}g.
\]
We define the \emph{nonconformal strain tensor} by
\[
E_{NC}^f\coloneqq E^f-E_C^f=\frac{1}{2}\pez{f^*g-g-\frac{\Tr(f^*g-g)}{d}g}.
\]
\end{defn}

\subsection{Taylor expansion of Lyapunov exponents}

Suppose that $M$ is a manifold and that $f$ is a diffeomorphism of $M$. Let $\Gr_r(M)$ denote the Grassmannian bundle comprised of $r$-planes in $TM$. When working with $\Gr_r(M)$ we write a subspace of $T_xM$ as $E_x$ to emphasize the basepoint. Then $f$ naturally induces a map $F\colon \Gr_r(M)\to \Gr_r(M)$ by sending a subspace $E_x\in \Gr_r(T_xM)$ to $D_xfE_x\in \Gr_r(T_{f(x)}M)$. If we have a random dynamical system on $M$, then by this construction we naturally obtain a random dynamical system on $\Gr_r(M)$.  The following Proposition should be compared with \cite[Prop. 3]{dolgopyat2007simultaneous}.

\begin{prop}\label{prop:taylor_expansion_lambda_k}
Suppose that $M$ is a compact connected Riemannian manifold such that $\Isom(M)$ is semisimple and that $\Isom(M)^{\circ}$ acts transitively on $\Gr_r(M)$. Suppose that $S=(g_1,...,g_m)$ is a Diophantine tuple of elements of $\Isom(M)$. Then there exists $\epsilon>0$ and $k>0$ such that if $(f_1,...,f_m)$ is a tuple with elements in $\Diff^{\infty}(M)$ such that $d_{C^k}(f_i,g_i)<\epsilon$, then the following holds. Suppose that $\mu$ is an ergodic stationary measure for the random dynamical system obtained from the $(f_1,...,f_m)$. Let $\Lambda_r$ be the sum of the top $r$ Lyapunov exponents of $\mu$. Then
\begin{align}\label{eqn:lambda_r_taylor_expansion}
\Lambda_r(\mu)=&-\frac{r}{2dm}\sum_{i=1}^m \int_M \|E_C^{f_i}\|^2\,d\vol+\frac{r(d-r)}{(d+2)(d-1)m}\sum_{i=1}^m \int_M \|E_{NC}^{f_i}\|^2\,d\vol\\
&+\int_{\Gr_r(M)}\mc{U}(\psi)\,d\vol+O(\varepsilon_k^3).\nonumber
\end{align}
where $\psi=\frac{1}{m}\sum_{i=1}^m \ln \det(Df_i\mid E_x)$, $\varepsilon_k=\max_i\{d_{C^k}(f_i,g_i)\}$, $\mc{U}$ is defined as in Proposition \ref{prop:taylor_expansion_of_haar}, and $\det$ is defined in Appendix \ref{sec:determinants}.

\end{prop}

\begin{proof}
Given the random dynamical system on $M$ generated by the tuple $(f_1,...,f_m)$, there is the induced random dynamical system on $\Gr_r(M)$ generated by the tuple $(F_1,...,F_m)$. The Lyapunov exponents of the system on $M$ may be obtained from the system on $\Gr_r(M)$ in the following way. By \cite[Ch. III, Thm 1.2]{kifer1986ergodic}, given an ergodic stationary measure $\mu$ on $M$, there exists a stationary measure $\overline{\mu}$ on $\Gr_r(M)$ such that
\[
\Lambda_r(\mu)=\frac{1}{m}\sum_{i=1}^m \int_{\Gr_r(M)} \ln \det(Df_i\mid E_x)\,d\overline{\mu}(E_x).
\]
Reversing the order of summation, this is equal to
\begin{equation}\label{eq:int_against_mu_bar}
\int_{\Gr_r(M)}\frac{1}{m}\sum_{i=1}^m  \ln \det(Df_i\mid E_x)\,d\overline{\mu}(E_x).
\end{equation}
As $\Isom(M)$ acts transitively on $\Gr_r(M)$, $\Gr_r(M)$ is a homogeneous space of $\Isom(M)$. Thus as $(g_1,...,g_m)$ is Diophantine, we may apply Proposition \ref{prop:taylor_expansion_of_haar} to approximate the integral in equation \eqref{eq:int_against_mu_bar}.  Letting $\mc{U}$ be as in that proposition, there exists $k$ such that 
\begin{align}\label{eq:lambda_r_first}
\Lambda_r(\mu)=&\int_{\Gr_r(M)} \frac{1}{m}\sum_{i=1}^m  \ln \det(Df_i\mid E_x)\,d\vol(E_x)+\int_{\Gr_r(M)}\mc{U}\pez{\frac{1}{m}\sum_{i=1}^m  \ln \det(Df_i\mid E_x)}\,d\vol\\
&+O\left((\max_i\{d_{C^k}(F_i,G_i)\})^2\left\|\sum_{i=1}^m \ln\det(Df_i\mid E_x)\right\|_{C^k}\right)\nonumber
\end{align}
We now estimate the error term. The following two estimates follow by working in a chart on $\Gr_r(M)$. If $f,g$ are two maps of $M$ and $F,G$ are the induced maps on $\Gr_r(M)$, then $d_{C^k}(F,G)=O(d_{C^{k+1}}(f,g))$. In addition, by Lemma \ref{lem:log_det_on_grassmannian}
we have that
\begin{equation}\label{eqn:est_on_ck_log_det}
\left\|\sum_{i=1}^m \ln\det(Df_i\mid E_x)\right\|_{C^k}=O(\varepsilon_{k+1}).
\end{equation}

Thus the error term in \eqref{eq:lambda_r_first} is small enough to conclude \eqref{eqn:lambda_r_taylor_expansion}.

To finish, we apply the Taylor expansion in Proposition \ref{prop:taylor_expansion_of_log_jacobian}, which is in Appendix \ref{sec:taylor_expansions}, to 
\[
\int_{\Gr_r(M)} \ln \det(Df_i\mid E_x)\,d\vol(E_x),
\]
which gives precisely the first two terms on the right hand side of equation \eqref{eqn:lambda_r_taylor_expansion} and error that is $O(\varepsilon_1^3)$.
\end{proof}

\section{Diffeomorphisms of Small Strain: Extracting an Isometry in the KAM Scheme}\label{sec:diffs_of_small_strain}

In this section we prove Proposition \ref{lem:H_0_nearby_isometry}, which gives that a diffeomorphism of small strain on an isotropic manifold is near to an isometry. In the KAM scheme, we will see that diffeomorphisms with small Lyapunov exponents are low strain and hence conclude by Proposition \ref{lem:H_0_nearby_isometry} that they are near to isometries. 
Proposition \ref{lem:H_0_nearby_isometry} follows from Theorem \ref{thm:near_to_identity}, which shows that certain diffeomorphisms with small strain of a closed Riemannian manifold are $C^0$ close to the identity.

\begin{thm}\label{thm:near_to_identity}
Suppose that $(M,g)$ is a closed Riemannian manifold. Then there exists $1>r>0$ and $C>0$ such that if $f\in \Diff^{2}(M)$ and
\begin{enumerate}
\item
there exists $x\in M$ such that $f(x)=x$ and $\|D_xf-\Id\|=\epOne<r,$
\item
$\|f^*g-g\|=\epTwo<r$, and
\item
$d_{C^2}(f,\Id)=\epThree<r$,
\end{enumerate}
then for all $\gamma\in (0,r)$,
\[
d_{C^0}(f,\Id)\le C(\epOne+\epThree\gamma+\epTwo\gamma^{-1}).
\]
\end{thm}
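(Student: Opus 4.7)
My plan is to bound $d(f(y),y)$ for each $y\in M$ by propagating the smallness at the fixed point $x$ along a minimizing geodesic. Concretely, I fix $y\in M$, choose a unit-speed minimizing geodesic $\alpha\colon[0,L]\to M$ with $\alpha(0)=x$ and $\alpha(L)=y$ (note $L\leq \diam(M)$, a constant absorbed into $C$), and analyze the curve $\beta:=f\circ\alpha$. Because $f$ fixes $x$, we have $\beta(0)=x$ and $\beta(L)=f(y)$, so the task reduces to controlling $d(\beta(L),\alpha(L))$.

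The key observation is that $\beta$ is an \emph{approximate geodesic}: because $f$ is an approximate isometry, $\beta$ has near-unit speed and small geodesic curvature. I would first record $|\beta'(t)|^2 = (f^*g)(\dot\alpha,\dot\alpha) = 1 + 2E^f(\dot\alpha,\dot\alpha)$, so $\big||\beta'(t)|-1\big|=O(\eta)$; and the initial velocity mismatch $\beta'(0)-\dot\alpha(0)=(D_x f-\Id)\dot\alpha(0)$ has norm at most $\theta$ by hypothesis~(1). Next, I would compute $\nabla_{\beta'}\beta'$ using the Koszul formula applied to $f^*g=g+2E^f$: it vanishes whenever $f$ is an isometry, and in general it is expressible pointwise as a tensor linear in $\nabla E^f$ evaluated at $\dot\alpha$, together with quadratic corrections of size $O(\kappa^2)$ in $\kappa$.

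The parameter $\gamma$ enters through an interpolation bound for $\|\nabla E^f\|_{C^0}$. By hypotheses~(2) and~(3) respectively, $\|E^f\|_{C^0}\lesssim \eta$ and $\|E^f\|_{C^1}\lesssim \kappa$ (since $E^f$ is built from first derivatives of $f$). Mollifying $E^f$ at scale $\gamma$ converts the $C^1$ bound into a $C^2$ bound with a factor of $\gamma^{-1}$, after which the standard interpolation $\|\nabla u\|_{C^0}\lesssim \|u\|_{C^0}/\gamma+\gamma\|u\|_{C^2}$ yields $\|\nabla E^f\|_{C^0}\lesssim \eta/\gamma+\kappa\gamma$ once the residual $E^f-\tilde E^f$ is absorbed. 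Consequently $|\nabla_{\beta'}\beta'|\lesssim \eta\gamma^{-1}+\kappa\gamma$ along $\alpha$. I then introduce the true geodesic $\tilde\alpha(t):=\exp_x(t\beta'(0))$ from $x$ with initial velocity $\beta'(0)$: integrating the acceleration bound twice (using $L\leq\diam(M)$) yields $d(\beta(L),\tilde\alpha(L))\lesssim \eta\gamma^{-1}+\kappa\gamma$, while a Jacobi-field estimate on the compact $M$ gives $d(\tilde\alpha(L),\alpha(L))\lesssim \theta$ because $\tilde\alpha$ and $\alpha$ are geodesics from $x$ whose initial velocities differ by at most $\theta$. Combining these two comparisons gives $d(f(y),y)\leq C(\theta+\kappa\gamma+\eta\gamma^{-1})$, uniformly in $y\in M$.

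The main obstacle is the interpolation step: because $f$ is only $C^2$, the strain tensor $E^f$ is only $C^1$, so the standard $C^2$-to-$C^0$ interpolation does not apply to $\|\nabla E^f\|_{C^0}$ directly. Carefully matching the mollification scale to the interpolation scale $\gamma$, handling the mollification residual in the integration along $\alpha$, and justifying the Koszul-formula comparison intrinsically (rather than in a chart) are where the bulk of the technical work should lie; it is in this step that the precise $\gamma$- and $\gamma^{-1}$-weights on $\kappa$ and $\eta$ are pinned down.
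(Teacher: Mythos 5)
Your proposal has a genuine gap at its central step: the claimed bound $|\nabla_{\beta'}\beta'|\lesssim \eta\gamma^{-1}+\kappa\gamma$ does not hold, and no mollification--interpolation manipulation can produce it, because the pointwise acceleration of $\beta=f\circ\alpha$ is genuinely of size $\kappa$ and is not improved by knowing the strain. Concretely, take $M=\mathbb{T}^2$ flat and $f(z_1,z_2)=(z_1,\,z_2+\epsilon\sin(2\pi kz_1))$ with $k$ large. Choose $x$ with $\cos(2\pi k x_1)=0$ so that $D_xf=\Id$ and $\epOne=0$. Then $\epTwo=\|f^*g-g\|_{C^0}\sim\epsilon k$, $\epThree=d_{C^2}(f,\Id)\sim\epsilon k^2$, and along $\alpha(t)=(t,0)$ one has $\nabla_{\beta'}\beta'=\beta''=(0,-(2\pi k)^2\epsilon\sin(2\pi kt))$, so $\|\nabla_{\beta'}\beta'\|_{C^0}\sim\epsilon k^2\sim\epThree$. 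On the other hand $\min_{\gamma}(\epTwo\gamma^{-1}+\epThree\gamma)\sim\epsilon k^{3/2}\ll\epsilon k^2$, and for a fixed not-too-small $\gamma$ (say $\gamma=\tfrac12$, $k$ large) one even has $\epTwo\gamma^{-1}+\epThree\gamma\ll\epThree$. So the acceleration is not bounded by $\epTwo\gamma^{-1}+\epThree\gamma$ in any regime where this would help. There is also an arithmetic slip in the interpolation paragraph: with a mollification scale $\gamma$ one gets $\|\tilde E^f\|_{C^2}\lesssim\epThree/\gamma$, and then $\gamma\|\tilde E^f\|_{C^2}\lesssim\epThree$, not $\epThree\gamma$; and the residual $E^f-\tilde E^f$ is only $C^1$-small of order $\epThree$, so $\nabla(E^f-\tilde E^f)$ cannot be absorbed. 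Carried through honestly, your propagation argument only yields $d_{C^0}(f,\Id)\lesssim\epOne+\epThree$, which is not the theorem.

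The paper's route is structurally different and does not estimate any acceleration. It first shows (Lemma \ref{lem:fixed_neighborhood}) that points $y$ in the $\gamma$-ball about the fixed point $x$ satisfy $d(y,f(y))\lesssim\gamma\epOne+\gamma^2\epThree$; the $\gamma$-weights arise here because one Taylor-expands $f$ in an exponential chart centered at the fixed point, not from any gradient bound on $E^f$. It then uses the near-isometry only at the $C^0$ level (Lemma \ref{lem:lipschitz_constant}: distances change by a factor $1\pm O(\epTwo)$), and propagates smallness away from the ball by a sphere-intersection argument built on the second variation of arclength (Lemma \ref{lem:geometric_lemma}): if $y$ is at distance roughly $\inj(M)/3$ from $x$, intersecting the sphere $S^{d(x,y)}(x)$ with the sphere $S^{d(x,y)}(y)$ traps $f(y)$, and pushing a pair of points on these spheres apart by order $\gamma\cdot d(y,\widehat{f(y)})$ yields $d(y,f(y))\lesssim\epOne+\epThree\gamma+\epTwo\gamma^{-1}$; the $\gamma^{-1}$ loss comes from this sphere argument, not from interpolation. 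Finally, a finite chain of such steps covers the manifold (Lemma \ref{lem:o1neighborhoodimpliesclose}). If you want to salvage a geodesic-propagation argument, you would have to replace the pointwise acceleration bound by something that remembers the quadratic smallness near $x$, and there is no obvious way to do that along a single geodesic; the sphere-intersection trick is precisely the mechanism that converts the $\gamma$-ball estimate into a global one.
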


Theorem \ref{thm:near_to_identity} is the main ingredient in the proof of our central technical result.

\begin{prop}\label{lem:H_0_nearby_isometry}
Suppose that $(M,g)$ is a closed isotropic Riemannian manifold. Then for all $\sigma>0$ and all integers $\ell>0$, there exist $k$ and $C,r>0$ such that for every $f\in \Diff^k(M)$, if there exists an isometry $I\in \Isom(M)$ such that
\begin{enumerate}
\item
$d_{C^k}(I,f)<r$, and
\item
$\|f^*g-g\|_{H^0}<r$,
\end{enumerate}
then there exists an isometry $R\in \Isom(M)$ such that 
\begin{align}
d_{C^0}(R,I)&<C(d_{C^2}(f,I)+\|f^*g-g\|_{H^0}^{1-\sigma}),\text{ and}\label{eqn:H_0_lemma_low_reg_est} \\
d_{C^{\ell}}(f,R)&<C(\|f^*g-g\|_{H^0}^{1/2-\sigma}d_{C^2}(f,I)^{1/2-\sigma}).\label{eqn:H_0_lemma_high_reg_est}
\end{align}
\end{prop}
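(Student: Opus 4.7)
The plan is to extract from $f$ an isometry $R$ whose $1$-jet at a chosen base-point $x\in M$ matches the $1$-jet of $f$ as closely as the metric constraints permit, and then to apply Theorem \ref{thm:near_to_identity} to $h\coloneqq R^{-1}\circ f$, thereby transferring the strain bound on $f$ into a $C^0$ closeness between $f$ and $R$. Because $M$ is isotropic, the evaluation $R\mapsto (R(x),D_xR)$ identifies $\Isom(M)$ with a compact submanifold of an orthonormal frame bundle over $M$. Fix any $x\in M$: the point $(I(x),D_xI)$ lies in this submanifold and is within $d_{C^1}(f,I)$ of $(f(x),D_xf)$, so nearest-point projection onto the submanifold produces a well-defined isometry $R$ with $R(x)=f(x)$ and $D_xR$ the ``isometric approximation'' of $D_xf$.

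First I convert the $H^0$ bound on $f^*g - g$ into a pointwise bound. Since $I^*g = g$, the $C^k$ hypothesis gives $\|f^*g-g\|_{C^{k-1}}\le Cr$. A standard interpolation inequality between $H^0$ and $C^{k-1}$ (as in Appendix \ref{sec:interpolation}) then yields, for $k$ sufficiently large in terms of any chosen $\sigma'>0$ and $d=\dim M$,
\[
\|f^*g-g\|_{C^0} \le C\,\|f^*g-g\|_{H^0}^{1-\sigma'}.
\]
Writing the polar decomposition $D_xf = A_x P_x$, the identity $P_x^2-\Id = (D_xf)^*(D_xf)-\Id$ coincides with the strain tensor at $x$ up to a factor, so $\|P_x-\Id\|$ obeys the same bound. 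Since the $C^0$ distance between two isometries is bi-Lipschitz equivalent to the distance between their $1$-jets at $x$ (by compactness of $\Isom(M)$ and the smoothness of its embedding in $\Diff^\infty(M)$), one obtains
\[
d_{C^0}(R,I)\le C\bigl(d_{C^1}(f,I)+\|f^*g-g\|_{H^0}^{1-\sigma'}\bigr),
\]
which yields \eqref{eqn:H_0_lemma_low_reg_est} upon taking $\sigma'=\sigma$ and bounding $d_{C^1}(f,I)\le d_{C^2}(f,\Id)$ via the uniform $C^2$ boundedness of elements of $\Isom(M)$.

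Next I feed $h = R^{-1}f$ into Theorem \ref{thm:near_to_identity}. By construction $h(x) = x$; by isometricity of $R$, $h^*g-g = f^*g-g$, giving $\epTwo\le C\|f^*g-g\|_{H^0}^{1-\sigma'}$; and $D_xh$ differs from the identity by the residue of the nearest-point projection of $D_xf$ onto the isometric jets, which is controlled by $\|P_x-\Id\|$, giving $\epOne\le C\|f^*g-g\|_{H^0}^{1-\sigma'}$. Finally $\epThree = d_{C^2}(R^{-1}f,\Id)\le C\,d_{C^2}(f,\Id)$. Optimizing the free parameter $\gamma = \sqrt{\epTwo/\epThree}$ in the conclusion $d_{C^0}(h,\Id)\le C(\epOne + \epThree\gamma + \epTwo\gamma^{-1})$ of Theorem \ref{thm:near_to_identity} produces
\[
d_{C^0}(f,R) = d_{C^0}(h,\Id) \le C\,\|f^*g-g\|_{H^0}^{(1-\sigma')/2}\,d_{C^2}(f,\Id)^{1/2}.
\]

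Finally I upgrade to $C^\ell$ via the Hadamard convexity inequality $d_{C^\ell}(f,R)\le C\,d_{C^0}(f,R)^{1-\ell/k}\,d_{C^k}(f,R)^{\ell/k}$, where the crude bound $d_{C^k}(f,R)=O(1)$ is immediate from the $C^k$ hypothesis on $f$ and the first conclusion. Choosing $\sigma'$ small enough and $k=k(\ell,\sigma)$ large enough makes the cumulative loss of the two interpolation steps at most $\sigma$ in each exponent, producing \eqref{eqn:H_0_lemma_high_reg_est}. The main obstacle is the opening pointwise/$L^2$ conversion: the strain is controlled only in $H^0$, yet Theorem \ref{thm:near_to_identity} requires pointwise data at $x$. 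This trade imposes the $\sigma$-loss in both conclusions, and the choice of $\gamma$ inside Theorem \ref{thm:near_to_identity} is precisely what distributes the factors $\|f^*g-g\|_{H^0}$ and $d_{C^2}(f,\Id)$ in the ratio $1/2:1/2$ required by \eqref{eqn:H_0_lemma_high_reg_est}.
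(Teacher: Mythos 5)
Your approach reaches Theorem \ref{thm:near_to_identity} in essentially the same way as the paper (extract a fixed point, build an isometry matching the $1$-jet, feed the residual into the theorem, then interpolate). But there is a genuine gap in the step where you produce $R$. You assert that the residue of the nearest-point projection of $(f(x),D_xf)$ onto the submanifold of isometric $1$-jets is ``controlled by $\|P_x-\Id\|$,'' i.e.\ by the strain at $x$. That would be true if the submanifold in question were the full orthonormal frame bundle, for then the distance from $D_xf$ to the nearest orthogonal map is $\approx\|P_x-\Id\|$. But for $\CP^n$, $\HP^n$ and the Cayley plane the isotropy group is a proper subgroup of $O(T_xM)$ (namely $U(n)$, $\Sp(n)\Sp(1)$, $\Spin(9)$), so a matrix can be $O(\eta)$-close to orthogonal while remaining a bounded distance away from every isometric jet. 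Smallness of the strain alone does not control the distance to the isometric jets, and your argument yields only the trivial bound $\epOne \le d_{C^1}(f,I)$, which is not the bound you need to feed into Theorem \ref{thm:near_to_identity} to obtain \eqref{eqn:H_0_lemma_high_reg_est}.

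What distinguishes the isotropy group inside $O(T_xM)$ is preservation of the curvature tensor, which is the content of Lemma \ref{lem:helgason_lemma}. The paper therefore shows that $D_xh$ is close \emph{both} to $O(T_xM)$ \emph{and} to $\operatorname{stab}(R_x)$, combines these via Lemmas \ref{lem:near_to_curvature_preserving} and \ref{lem:subspace_distances}, and only then invokes Lemma \ref{lem:helgason_lemma}. To show approximate curvature preservation one needs two derivatives of the strain at $x$, which is why the paper interpolates the $H^0$ bound up to $C^3$ (via Lemma \ref{lem:bundle_section_interpolation}); your interpolation only to $C^0$ is insufficient for this purpose. Once you replace the bare nearest-point projection with the curvature-preservation argument and strengthen the interpolation target from $C^0$ to $C^3$, the rest of your proof (choosing $\gamma$, then interpolating $C^0$ against $C^k$) aligns with the paper's, though you should note that the exponent on $d_{C^2}(f,\Id)$ in \eqref{eqn:H_0_lemma_high_reg_est} ends up as $1/2-\sigma$ rather than the $1/2$ you claim, since the term $\epOne$ need not be dominated by $\sqrt{\epTwo\epThree}$ without paying a small loss in the $\varepsilon_2$-exponent.
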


Though the statement of Proposition \ref{lem:H_0_nearby_isometry} is technical, its use in the proof of Theorem \ref{thm:KAM_converges} is fairly transparent: the proposition produces an isometry near to a diffeomorphism  with small strain, which is the essence of iterative step in the KAM scheme. This remedies the gap in \cite{dolgopyat2007simultaneous}.

\subsection{Low strain diffeomorphisms on a general manifold: proof of Theorem \ref{thm:near_to_identity}}

The main geometric idea in the proof of Theorem \ref{thm:near_to_identity} is to study distances by intersecting spheres. In order to show that a diffeomorphism $f$ is close to the identity, we must show that it does not move points far. As we shall show, a diffeomorphism of small strain distorts distances very little. Consequently, a diffeomorphism of small strain nearly carries spheres to spheres. If we have two points $x$ and $y$ that are fixed by $f$, then the unit spheres centered at $x$ and $y$ are carried near to themselves by $f$. Consequently, the intersection of those spheres will be nearly fixed by $f$. By considering the intersection of spheres in this way, we may take a small set on which $f$ nearly fixes points and enlarge that set until it fills the whole manifold.

Before the proof of the theorem we prove several lemmas.

\begin{lem}\label{lem:lipschitz_constant}
Let $M$ be a closed Riemannian manifold. There exists $C>0$ such that the following holds. If $f\in \Diff^1(M)$ and $\|f^*g-g\|\le \eta$ then for all $x,y\in M$, 
\[
(1-C\eta)d(x,y)\le d(f(x),f(y))\le (1+C\eta)d(x,y).
\]
\end{lem}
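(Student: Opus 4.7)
The plan is to translate the pointwise tensor bound $\|f^*g-g\|<\eta$ into a pointwise bound on the operator norm of $Df$, and then integrate along minimizing geodesics. Since $M$ is closed, the constants hiding in the passage from whatever norm is used on symmetric $(0,2)$-tensors to pointwise evaluation on unit vectors are uniformly bounded, so I may assume there is a constant $C_1$ with
\[
\bigl| \|D_x f(v)\|^2 - \|v\|^2 \bigr| = |(f^*g - g)_x(v,v)| \le C_1 \eta \|v\|^2
\]
for every $x \in M$ and every $v \in T_x M$. Taking square roots and linearizing $\sqrt{1\pm C_1\eta} = 1 \pm \tfrac{C_1}{2}\eta + O(\eta^2)$, for $\eta$ smaller than some fixed threshold this yields a uniform pointwise estimate
\[
(1-C_2\eta)\|v\| \le \|D_xf(v)\| \le (1+C_2\eta)\|v\|;
\]
in particular $D_xf$ is invertible with $\|D_{f(x)} f^{-1}\| \le (1-C_2\eta)^{-1}$.

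For the upper bound, I take a unit-speed minimizing geodesic $\gamma\colon[0,L]\to M$ from $x$ to $y$ with $L=d(x,y)$. Since $f\circ\gamma$ is a smooth curve from $f(x)$ to $f(y)$,
\[
d(f(x),f(y)) \le \len(f\circ\gamma) = \int_0^L \|D f(\gamma'(t))\|\,dt \le (1+C_2\eta)L = (1+C_2\eta)\,d(x,y).
\]
For the lower bound I apply the same argument to $f^{-1}$: let $\sigma$ be a minimizing geodesic from $f(x)$ to $f(y)$; pushing it back by $f^{-1}$ and using the pointwise bound on $\|Df^{-1}\|$ gives
\[
d(x,y) \le \len(f^{-1}\circ \sigma) \le (1-C_2\eta)^{-1} d(f(x),f(y)),
\]
and rearranging yields $d(f(x),f(y)) \ge (1-C_2\eta)\,d(x,y)$. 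Taking $C$ to be a suitable multiple of $C_2$ (and enlarging it trivially so that the estimate also holds vacuously for $\eta$ above the smallness threshold) gives the claimed two-sided bound.

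There is no real obstacle here: the argument is entirely routine once the pointwise bound on $Df$ is in hand. The only bookkeeping point is confirming that the norm on $f^*g-g$ appearing in the hypothesis controls pointwise values on unit vectors, which is automatic if it is the $C^0$ norm on symmetric $(0,2)$-tensors (the convention that fits the way this lemma will be invoked within Theorem \ref{thm:near_to_identity}).
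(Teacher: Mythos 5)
Your proof is correct and follows essentially the same route as the paper's: translate the pointwise strain bound into a length-distortion estimate along minimizing geodesics, for the upper bound by pushing a geodesic forward under $f$, and for the lower bound by pulling a geodesic back under $f^{-1}$. The only cosmetic difference is that the paper works directly with $\sqrt{1+[f^*g-g](\dot\gamma,\dot\gamma)}$ and uses the elementary inequalities $\sqrt{1+x}\le 1+x$ (for $x\ge 0$) and $1+x\le\sqrt{1+x}$ (for $-1\le x\le 0$), which sidesteps the smallness-of-$\eta$ caveat and the need for a separate vacuous case that you handle at the end.
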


\begin{proof}

If $\gamma$ is a path between $x$ and $y$ parametrized by arc length, then $f\circ \gamma$ is a path between $f(x)$ and $f(y)$. The length of $f\circ \gamma$ is equal to 
\begin{align*}
\len(f\circ \gamma)&=\int_0^{\len(\gamma)}\sqrt{g(Df\dot \gamma,Df\dot\gamma)}\,dt\\
&=\int_0^{\len(\gamma)} \sqrt{f^*g(\dot\gamma,\dot\gamma)}\,dt\\
&=\int_0^{\len(\gamma)} \sqrt{g(\dot\gamma,\dot\gamma)+[f^*g-g](\dot\gamma,\dot\gamma)}\,dt\\
&=\int_0^{\len(\gamma)} \sqrt{1+[f^*g-g](\dot\gamma,\dot\gamma)}\,dt.
\end{align*}
By our assumption on the norm of $f^*g-g$, there exists $C$ such that $\abs{[f^*g-g](\dot\gamma,\dot\gamma)}\le C\eta$. Then using that $\sqrt{1+x}\le 1+x$ for $x\ge 0$, we see that 
\[
\len(f\circ \gamma)\le \int_0^{\len(\gamma)} 1+\abs{[f^*g-g](\dot\gamma,\dot\gamma)}\,dt\le \len(\gamma)+C\eta \len(\gamma).
\] 
The lower bound follows similarly by using that $1+x\le \sqrt{1+x}$ for $-1\le x\le 0$.
\end{proof}

\begin{lem}\label{lem:fixed_neighborhood}
Let $M$ be a closed Riemannian manifold. Then there exist $r,C>0$ such that for all $f\in \Diff^2(M)$, if
\begin{enumerate}
\item
there exists $x\in M$ such that $f(x)=x$ and $\|D_xf-\Id\|=\epOne<r$, and
\item
$d_{C^2}(f,\Id)= \epThree<r$,
\end{enumerate}
then for all $0<\gamma<r$ and $y$ such that $d(x,y)<\gamma$ 
\[
d(y,f(y))\le C(\gamma \epOne+\gamma^2\epThree).
\]
\end{lem}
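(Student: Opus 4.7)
The plan is to pass to exponential coordinates centered at the fixed point $x$ and Taylor expand. Since $M$ is closed, there is an injectivity radius bound, so I may choose $r$ small enough that $\exp_x \colon B_r(0) \subset T_xM \to M$ is a diffeomorphism onto its image with bi-Lipschitz constant in, say, $[1/2, 2]$ (this constant depending only on $(M,g)$). Because $f(x)=x$ and $d_{C^2}(f,\Id)<r$ is small, the map $\tilde f \coloneqq \exp_x^{-1} \circ f \circ \exp_x$ is well-defined on a neighborhood of $0$ in $T_xM$, satisfies $\tilde f(0)=0$, and under the canonical identification $T_0(T_xM) \cong T_xM$ has $D_0\tilde f = D_xf$. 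Moreover, by a standard chart estimate there is a constant $C_1=C_1(M)$ such that $\|\tilde f\|_{C^2} \le C_1 d_{C^2}(f,\Id) = C_1 \epThree$ on $B_r(0)$ (after possibly shrinking $r$).

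Next I would apply Taylor's theorem to $\tilde f$ at $0$. For any $v \in T_xM$ with $|v|\le \gamma$,
\[
\tilde f(v) = D_0\tilde f\cdot v + R(v), \qquad |R(v)| \le \tfrac{1}{2}\|\tilde f\|_{C^2}|v|^2 \le \tfrac{C_1}{2}\epThree \gamma^2.
\]
Hence
\[
|\tilde f(v)-v| \le \|D_xf-\Id\|\,|v| + |R(v)| \le \epOne\gamma + \tfrac{C_1}{2}\epThree\gamma^2.
\]

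Finally I would translate back to the Riemannian distance on $M$. Writing $y=\exp_x(v)$ for some $v$ with $|v|=d(x,y)<\gamma$, we have $f(y)=\exp_x(\tilde f(v))$. Using the bi-Lipschitz bound on $\exp_x$, and noting that both $v$ and $\tilde f(v)$ stay in a ball of radius $O(\gamma)<r$ (because $|\tilde f(v)| \le |v| + |\tilde f(v)-v|$ is small), we obtain
\[
d(y,f(y)) = d(\exp_x v, \exp_x \tilde f(v)) \le 2|\tilde f(v)-v| \le 2\epOne\gamma + C_1\epThree\gamma^2,
\]
which is the desired bound with $C = \max(2, C_1)$.

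The argument is essentially routine Taylor expansion in a chart; the only mild obstacle is keeping track of the fact that the $C^2$ norm in exponential coordinates is controlled by $d_{C^2}(f,\Id)$ uniformly in the basepoint $x$, which follows from compactness of $M$ and smoothness of the exponential map. No deep ingredient is required beyond Lemma \ref{lem:lipschitz_constant}'s spirit of moving between intrinsic and chart-based quantities.
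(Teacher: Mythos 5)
Your proof is correct and takes essentially the same route as the paper: pass to an exponential chart at the fixed point $x$, Taylor expand $\tilde f$ about $0$ with the $C^2$ hypothesis controlling the quadratic remainder, and transfer back through the uniform bi-Lipschitz equivalence of chart and intrinsic distances. The only minor imprecision is that $\|\tilde f\|_{C^2}\le C_1\epThree$ should be read as a bound on the second-derivative seminorm (equivalently on $\|\tilde f-\Id\|_{C^2}$), since $\tilde f$ itself has order-one $C^0$ and $C^1$ size; that is the quantity Taylor's theorem actually uses, so the argument goes through unchanged.
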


\begin{proof}
Let $r=\inj M/2$. We work in a fixed exponential chart centered at $x$, so that $x$ is represented by $0$ in the chart. Write
\[
f(y)=0+D_0fy+R(y)=y+(D_0f-\Id)y+R(y).
\]
As the $C^2$ distance between $f$ and the identity is at most $\epThree$, by Taylor's Theorem $R(y)$ is bounded in size by $C\epThree\abs{y}^2$ for a uniform constant $C$. Thus 
\[
\abs{f(y)-y}\le \epOne\abs{y}+C\epThree\abs{y}^2.
\]
In particular, for all $y$ such that $\abs{y}\le \gamma < r$,
\[
\abs{f(y)-y}\le C'(\gamma \epOne+\gamma^2\epThree).
\]
But the distance in such a chart is uniformly bi-Lipschitz with respect to the metric on $M$, so the lemma follows.
\end{proof}

The following geometric lemma produces points on two spheres in a Riemannian manifold that are further apart than the centers of the spheres. 

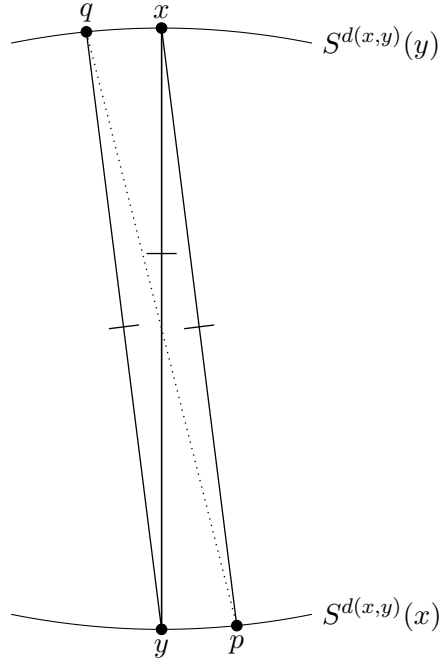
\begin{figure}\label{fig:points}
\begin{center}
\begin{tikzpicture}
\draw [line width = .5pt] (0,0) -- (0,8);
\draw [line width = .5pt] (0,0) -- (0,8);
\draw [line width = .5pt] (0,8) -- (1,1/20);
\draw [line width = .5pt] (-1,8-1/20) -- (0,0);
\draw [line width = .5pt, dotted] (-1,8-1/20) -- (1,1/20);
\draw [line width = .5pt] (-.2,5) -- (.2,5);
\draw [line width = .5pt] (.3,4+1/40-.2/8) -- (.7,4+1/40+.2/8);
\draw [line width = .5pt] (-.7,4+1/40-.2/8) -- (-.3,4+1/40+.2/8);

\draw[scale=1, domain=-2:2, smooth, variable=\x, black] plot ({\x}, {\x*\x/20});
\draw[scale=1, domain=-2:2, smooth, variable=\x, black] plot ({\x}, {8-\x*\x/20});

\filldraw (0,0) circle (2pt) node[align=left,   below] {$y$};
\filldraw (0,8) circle (2pt) node[align=left,   above] {$x$};
\filldraw (1,1/20) circle (2pt) node[align=left,   below] {$p$};
\filldraw (-1,8-1/20) circle (2pt) node[align=left,   above] {$q$};

\filldraw (2,4/20) node[align=right,   right]{$S^{d(x,y)}(x)$};
\filldraw (2,8-4/20) node[align=right,   right]{$S^{d(x,y)}(y)$};
\end{tikzpicture}
\caption{The four points $x,y,p,q$ appearing in Lemma \ref{lem:geometric_lemma}. Given $x,y,p$, the lemma produces the point $q$ and gives an estimate on the length of the dotted line,  which is longer than $d(x,y)$.}
\end{center}
\end{figure}

\begin{lem}\label{lem:geometric_lemma}
Let $M$ be a closed Riemannian manifold. There exist $C,r>0$ such that for all $\beta\in (0,r)$, if $x,y\in M$ satisfy $\frac{\inj M}{3}<d(x,y)<\frac{\inj M}{2}$, and
 there is a fixed $p\in M$ such that $d(x,p)=d(y,x)$ and $d(p,y)<r$, then there exists $q\in M$ depending on $p$ such that:
\begin{enumerate}
\item
$d(q,y)=d(y,x)$,
\item
$d(q,x)<\beta$, and
\item
$d(q,p)\ge d(x,y)+Cd(y,p)\beta$.
\end{enumerate}

\end{lem}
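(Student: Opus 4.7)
The plan is to construct $q$ as a point on the metric sphere of radius $d$ around $y$, obtained by moving from $x$ along this sphere in a direction ``anti-parallel'' to $p$'s tangential displacement off the $xy$-axis. Properties (1) and (2) follow automatically from the Gauss lemma and bi-Lipschitz control of $\exp_y$ on the injectivity ball; property (3) reduces to a first-variation-of-arc-length calculation, paired with a second-variation estimate that exploits a crucial cancellation at scale $d$.

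\textbf{Setup and construction.} Let $d = d(x,y)$ and $\delta = d(y,p)$. Work in normal coordinates at $y$: put $v := \exp_y^{-1}(x)$ (so $|v| = d$) and $w := \exp_y^{-1}(p)$ (so $|w| = \delta$). The hypothesis $d(x,p) = d$, together with a Jacobi field/law-of-cosines comparison in the chart, forces the component $w_\perp$ of $w$ perpendicular to $v$ to satisfy $|w_\perp| = \delta(1 + O(\delta^2/d^2))$; in particular $|w_\perp|\ge\delta/2$ once $r$ is small. Let $n := -w_\perp/|w_\perp|$, and for $s \geq 0$ set $\tilde q_s := \cos(s/d)\,v + d\sin(s/d)\,n$ (a unit-speed great circle on $\{|\xi|=d\}\subset T_yM$) and $q_s := \exp_y(\tilde q_s)$. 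Since $|\tilde q_s| = d$, the Gauss lemma gives $d(q_s, y) = d$, so any $q = q_{s_0}$ satisfies property (1). Moreover, $\exp_y$ restricted to $\{|\xi|=d\}$ is uniformly bi-Lipschitz onto $S_d^M(y)$ with some constant $\kappa = \kappa(M)$, and $|\tilde q_s - v| = 2d\sin(s/(2d))$; choosing $s_0$ so that $2\kappa d\sin(s_0/(2d)) = \beta$ gives $d(q,x) \leq \beta$ (property (2)) and $s_0 \asymp \beta$.

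\textbf{Property (3).} Set $L(s) := d(q_s, p)$, so $L(0) = d$. The first variation of arc length gives
\[
L'(0) = -\langle \dot q_0, u/d\rangle,
\]
where $u := \exp_x^{-1}(p)$, $|u|=d$. A Jacobi-field comparison identifies $\dot q_0 = (d\exp_y)_v(n)$ with (a nonzero multiple of) the tangential projection at $x$ of the direction toward $p$, which by the law-of-cosines computation in the triangle $xyp$ has magnitude at least $\delta/(2d)$. Hence $L'(0) \geq c_1\,\delta/d$ for a uniform $c_1 > 0$. The second derivative decomposes as $L''(s) = \langle \ddot q_s, \nabla d_p(q_s)\rangle + \mathrm{Hess}(d_p)(q_s)(\dot q_s, \dot q_s)$; at $s=0$ the two $O(1/d)$ contributions (geodesic curvature of the sphere $S_d^M(y)$ against the transverse Hessian of $d(\cdot, p)$) cancel to leading order, leaving $|L''(s)| = O(\delta)$ on $s \in [0, s_0]$. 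Therefore
\[
L(s_0) \geq d + c_1 s_0\delta/d - C s_0^2 \delta \geq d + (c_1/2)\beta\delta/d
\]
once $r$ is small, which yields property (3) with $C = c_1/(2\kappa d)$.

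\textbf{Main obstacle.} The chief difficulty is controlling $L''$ at the bounded-below scale $d \geq \inj M/3$: a naive bound $|L''| = O(1/d)$ would only deliver (3) in the regime $\beta \ll \delta$. The way through is the second-variation cancellation, which mirrors the exact statement in Euclidean geometry that $L$ is linear in $s$ to second order. In the Riemannian setting this cancellation is inexact but the residual is of order $\delta$, small enough to preserve the $\beta\delta/d$ signal coming from the first variation.
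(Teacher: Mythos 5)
Your proposal is correct in spirit and takes a genuinely different route from the paper. The paper constructs a two-parameter family of geodesics $\gamma_{s,t}$ (moving both endpoints, one toward $p$ and one toward the desired $q$) and applies the second variation of length formula to obtain the exact identity $\frac{\partial^2}{\partial s\partial t}\len(\gamma_{s,t})\big|_{(0,0)}=1$, together with vanishing pure second derivatives, so that $\len(\gamma_{s,t}) = d(x,y) + st + O(s^3,t^3)$ with no error to estimate. You instead fix $p$, move only $q$ along a great circle on the sphere $S_d(y)$, and control $L(s)=d(q_s,p)$ by a first-variation lower bound and a second-derivative estimate. Both yield the lemma; your approach avoids the two-parameter second-variation machinery, while the paper's has the advantage that the cross term is computed exactly by Jacobi fields rather than estimated.

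Two steps in your argument deserve to be fleshed out. First, the claimed cancellation giving $|L''(s)|=O(\delta)$ is asserted via a decomposition into geodesic curvature against the transverse Hessian; the cleanest rigorous route (which you gesture at but do not quite state) is to observe that when $p=y$ the curve $q_s$ stays on the level set $\{d_y=d\}$ so $L(s)\equiv d$ and $L''\equiv 0$, and then invoke smooth dependence of $L''$ on $p$ (uniform over the compact family of admissible $x,y,s$ by compactness of $M$ and the injectivity-radius bound) to get $|L''|\le C\,d(y,p)$. Second, the lower bound $L'(0)\ge c_1\delta/d$ is plausible from the flat model but in the Riemannian setting it rests on a Jacobi-field identification: you must show that $(d\exp_y)_v(n)$ is uniformly non-orthogonal to (and of the right sign against) the component of $\exp_x^{-1}(p)/d$ tangent to $S_d(y)$. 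This is true, again by comparison and compactness, but your sketch ("a Jacobi-field comparison identifies $\dot q_0$ with a nonzero multiple of the tangential projection") conflates the two base points $x$ and $y$ without exhibiting the Jacobi field that transports the information. With those two points filled in, your proof is a valid and arguably more elementary alternative to the paper's argument.
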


In order to prove Lemma \ref{lem:geometric_lemma}, we recall the following form of the second variation of length formula. For a proof of this and related discussion, see \cite[Ch.\ 1,§6]{cheeger1975comparison}.
\begin{lem}\label{lem:second_variation_of_length}
Let $M$ be a Riemannian manifold and $\gamma$ be a unit speed geodesic. Let $\gamma_{v,w}$ be a two parameter family of constant speed geodesics parametrized by $\gamma_{v,w}\colon [a,b]\times (-\epsilon,\epsilon)\times (-\epsilon,\epsilon)\to M$ such that $\gamma_{0,0}=\gamma$. Suppose that $\frac{\partial\gamma_{v,w}}{\partial v}=V$ and $\frac{\partial \gamma_{v,w}}{\partial w}=W$ are both normal to $\dot{\gamma}_{0,0}$, which we denote by $T$.  Then
\[
\frac{\partial^2 \len(\gamma_{v,w})}{\partial v\partial w}=\langle \nabla_W V,T\rangle \vert_{a}^b+\langle V,\nabla_T W\rangle \vert_a^b.
\]
\end{lem}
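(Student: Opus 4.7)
The plan is to write $L(v,w) = \len(\gamma_{v,w}) = \int_a^b \|T\| \, dt$ with $T = \partial_t \gamma_{v,w}$, differentiate once in $v$ and once in $w$, and evaluate at $(v,w) = (0,0)$. Four structural facts will do all the work: (i) $V, W, T$ are coordinate vector fields on the variation, so $[V,T] = [W,T] = [V,W] = 0$, whence by torsion-freeness $\nabla_V T = \nabla_T V$, $\nabla_W T = \nabla_T W$, and $\nabla_W \nabla_T V - \nabla_T \nabla_W V = R(W,T) V$; (ii) each $\gamma_{v,w}(\cdot)$ is a geodesic, so $\nabla_T T \equiv 0$ on the whole variation, not only along $\gamma_{0,0}$; (iii) combining (ii) with $V \perp T$ and $W \perp T$ pointwise along $\gamma_{0,0}$ gives $\langle \nabla_T V, T\rangle = \langle \nabla_T W, T\rangle = 0$ along $\gamma_{0,0}$; (iv) differentiating $\nabla_T T = 0$ transversely produces the Jacobi equation along $\gamma_{0,0}$, namely $\nabla_T \nabla_T W = -R(W,T) T$.

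By (i), the first derivative is $\partial_v L = \int_a^b \|T\|^{-1} \langle \nabla_T V, T\rangle \, dt$. Differentiating again in $w$ and then setting $v = w = 0$ (where $\|T\| = 1$ by unit-speed),
\[
\left. \frac{\partial^2 L}{\partial v \partial w}\right|_{0} = \int_a^b \Bigl[ \langle \nabla_W \nabla_T V, T \rangle + \langle \nabla_T V, \nabla_T W\rangle - \langle \nabla_T V, T\rangle \langle \nabla_T W, T\rangle\Bigr] dt.
\]
By (iii) the final product term vanishes pointwise. Using (i) I rewrite $\langle \nabla_W \nabla_T V, T\rangle = \langle \nabla_T \nabla_W V, T\rangle + \langle R(W,T) V, T\rangle$; since $\nabla_T T = 0$ by (ii), $\partial_t \langle \nabla_W V, T\rangle = \langle \nabla_T \nabla_W V, T\rangle$, so this piece integrates to $\langle \nabla_W V, T\rangle\big|_a^b$, which is the first boundary term in the conclusion.

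What remains is $I := \int_a^b [\langle R(W,T) V, T\rangle + \langle \nabla_T V, \nabla_T W\rangle]\, dt$. I plan to handle this in three small moves: apply the curvature symmetry $\langle R(W,T) V, T\rangle = -\langle V, R(W,T) T\rangle$; integrate by parts using $\partial_t \langle V, \nabla_T W\rangle = \langle \nabla_T V, \nabla_T W\rangle + \langle V, \nabla_T \nabla_T W\rangle$; then invoke the Jacobi equation $\nabla_T \nabla_T W = -R(W,T) T$ from (iv). The two curvature contributions cancel exactly, leaving $I = \langle V, \nabla_T W\rangle\big|_a^b$, which matches the second boundary term.

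The main conceptual subtlety is this final cancellation between the curvature term produced by commuting $\nabla_W$ past $\nabla_T$ and the curvature term delivered by the Jacobi equation. It is precisely this cancellation — absent in Synge's general second-variation formula, which carries an extra $\int [\|\nabla_T V\|^2 - \langle R(V,T) T, V\rangle]\, dt$ — that makes the variation-through-geodesics formula purely a sum of boundary terms. Everything else is routine bookkeeping with coordinate commutativity, torsion-freeness, and integration by parts.
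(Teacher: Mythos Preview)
Your proof is correct. The paper does not actually prove this lemma; it merely cites \cite[Ch.~1, \S 6]{cheeger1975comparison} for the argument, and your derivation is essentially the standard one found there: differentiate the length functional, commute covariant derivatives via torsion-freeness, and exploit that the variation is through geodesics so that the Jacobi equation cancels the curvature term arising from the commutator, leaving only boundary contributions.
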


\begin{proof}[Proof of Lemma \ref{lem:geometric_lemma}.]

We will give a geometric construction using the points $x$ and $y$ and then explain how this construction may be applied to the particular point $p$ to produce a point $q$.

Let $Q$ be a unit tangent vector based at $y$ that is tangent to $S^{d(x,y)}(x)$, the sphere of radius $d(x,y)$ centered at $x$. Let $\gamma_t\colon [a,b]\to M$ be a one-parameter family of geodesics parametrized by arc length so that $\gamma_{0}$ is the unit speed geodesic from $x$ to $y$, $\partial_t\gamma_t(b)\vert_{t=0}=Q$, $\gamma_t(b)$ is a path in $S^{d(x,y)}(x)$, and $\gamma_t(a)=x$ for all $t$. The variation $\gamma_t$ gives rise to a Jacobi field $Y$. Note that $Y(a)=0$, $Y(b)=Q$, and $Y$ is a normal Jacobi field.

Next, let $X$ be the Jacobi field along $\gamma_0$ defined by $X(b)=0$ and $\nabla_T X\vert_b=Y(b)$, where $T$ denotes $\dot{\gamma_0}$, i.e. the tangent to the curve $\gamma_0$. Such a field exists and has uniformly bounded norms because $\gamma_0$ is shorter than the injectivity radius. Let $\eta_t\colon [a,b]\to M$ be a one-parameter family of geodesics tangent to the field $X$ such that $\eta_t(b)=y$, $\eta_t$ is arc length parametrized, and $\eta_0=\gamma_0$. Note that each $\eta_t$ has length $d(x,y)$. Let $T$ now denote $\dot{\gamma}_{s,t}$, which give the tangent direction to each curve $\gamma_{s,t}$ in the variation.

Define $\gamma_{s,t}\colon [a,b]\to M$ to be the arc length parametrized geodesic between $\eta_s(a)$ and $\gamma_t(b)$. The variation $\gamma_{s,t}$ is a two parameter variation satisfying the hypotheses of Lemma \ref{lem:second_variation_of_length}. Consequently, we see that 
\begin{equation}\label{eqn:second_deriv_length}
\frac{d^2 \len(\gamma_{s,t})}{dsdt}=\langle \nabla_X Y,T\rangle\vert_a^b +\langle Y,\nabla_T X\rangle\vert_a^b.
\end{equation}

The first term may be rewritten as 
\begin{equation}\label{eqn:second_variation_term1}
\langle \nabla_X Y,T\rangle\vert_a^b=\nabla_X\langle Y,T\rangle\vert_a^b -\langle Y,\nabla_X T\rangle\vert_a^b.
\end{equation}
As $Y(a)=0$ and $X(b)=0$, the second term in \eqref{eqn:second_variation_term1} is zero. Similarly $\nabla_X\langle Y,T\rangle\vert_b=0$. We claim that $\nabla_X\langle Y,T\rangle\vert_a=0$ as well. To see this we claim that $Y=\partial_t\gamma_{s,t}\vert_{a}=0$ for all $s$. This is the case because $\gamma_{s,t}(a)$ is constant in $t$ as $\gamma_{s,t}(a)$ depends only on $s$. Thus $\langle Y,T\rangle\vert_a=0$. When we differentiate by $X$, we are differentiating along the path $\gamma_{s,0}(a)$. Thus $\nabla_X\langle Y,T\rangle\vert_a=0$ as $\langle Y,T\rangle$ is $0$ along this path. Thus $\langle \nabla_XY,T\rangle\vert_a^b=0$. Noting in addition that $Y(a)=0$, equation \eqref{eqn:second_deriv_length} simplifies to
\[
\frac{d^2 \len(\gamma_{s,t})}{dsdt}=\langle Y,\nabla_T X\rangle \vert_b.
\]
Hence as we defined $X$ so that $\nabla_T X\vert_b=Y(b)$,
\[
\frac{d^2 \len(\gamma_{s,t})}{dsdt}=\langle Y(b),Y(b)\rangle=\|Q\|=1.
\]
Note next that $\frac{d^2}{ds^2}\len(\gamma_{s,t})=0$ because the geodesics $\gamma_{s,0}$ all have the same length. Similarly, $\frac{d^2}{dt^2}\len(\gamma_{s,t})=0$.
Thus we have the Taylor expansion
\begin{equation}
\frac{d^2}{dsdt}\len(\gamma_{s,t})=d(x,y)+st+O(s^3,t^3).
\end{equation}
There exist $r_0>0$ and $C>0$ such that  for all $0\le s,t<r_0$,
\begin{equation}\label{eqn:taylor_expansion_of_len_done}
\len(\gamma_{s,t})\ge d(x,y)+Cst.
\end{equation}
Consider now the pairs of points $\gamma_{s,0}(a)$ and $\gamma_{0,t}(b)$.
We claim that if $p$ is of the form $p=\gamma_{0,t}(b)$ for some small $t$ then we may take $q=\gamma_{s,0}(a)$, where the choice of $s$ will be dictated by $\beta$. 

Note that
\[
d(\gamma_{s,0}(a),x)=s\|X(a)\|+O(s^2) \text{ and }d(\gamma_{0,t}(b),y)=t\|Y(b)\|+O(t^2).
\]
Hence there exists $s_0$ such that for $0<s,t<s_0$,
\begin{equation}\label{eqn:dgammatless2tXa}
d(\gamma_{s,0}(a),x)<2s\|X(a)\| \text{ and }d(\gamma_{0,t}(b),y)< 2t\|Y(b)\|.
\end{equation}
For any $\beta<\min\{2s_0\|X(a)\|,2r_0\|X(a)\|\}$, by \eqref{eqn:taylor_expansion_of_len_done} taking $s=\beta/2\|X(a)\|$ we obtain 
\[
d(\gamma_{s,0}(a),\gamma_{0,t}(b))\ge d(x,y)+t\beta C/2\|X(a)\|,
\]
which by \eqref{eqn:dgammatless2tXa} implies
\[
d(\gamma_{s,0}(a),\gamma_{0,t}(b))\ge d(x,y)+\frac{C}{4\|X(a)\|\|Y(b)\|}\beta d(\gamma_{0,t}(b),y).
\]
By \eqref{eqn:dgammatless2tXa}  and our choice of $s$
\[
d(\gamma_{s,0}(a),x)<\beta.
\]
Finally, $d(\gamma_{s,0}(a),y)=d(x,y)$ by the construction of the variation. Thus the conclusion of the lemma holds for the points $p=\gamma_{0,t}(b)$ and $q=\gamma_{s,0}(a)$.

We claim that this gives the full result. First, note that for all pairs of points $x$ and $y$ and choices of vectors $Q$ in our construction that $\|X(a)\|$ and $\|Y(b)\|$ are bounded above and below. This is because the distance minimizing geodesic from $X$ to $Y$ does not cross the cut locus. Similarly, the constants $C$, $r_0$, and $s_0$ may be uniformly bounded below over all such choices of $x$ and $y$ by compactness. Thus as all these constants are uniformly bounded independent of $x,y$ and $Q$, the above argument shows that for any pair $x$ and $y$ that there is a neighborhood $N$ of $y$ in $S^{d(x,y)}$ of uniformly bounded size, such that for any $p\in N$ there exists $q$ satisfying the conclusion of the lemma. This gives the result as any $p$ sufficiently close to $y$ such that $d(x,p)=d(x,y)$ lies in such a neighborhood $N$.
\end{proof}

The following lemma shows that if a diffeomorphism with small strain nearly fixes a large region, then that diffeomorphism is close to the identity.

\begin{lem}\label{lem:o1neighborhoodimpliesclose}
Let $(M,g)$ be a closed Riemannian manifold. Then there exists $r_0\in (0,1)$ such that for any $r',\beta\in (0,r_0)$, there exists $C>0$ such that if $f\in \Diff^1(M)$ and
\begin{enumerate}
\item
$d_{C^0}(f,\Id)\le r_0$,
\item
there exists a point $x\in M$ such that all $y$ with $d(x,y)<r'$ satisfy $d(y,f(y))\le \beta\le r_0$, and
\item
$\|f^*g-g\|=\eta\le r_0$,

\end{enumerate}
then 
\begin{equation}\label{eqn:dc0fIdbetaepTwo}
d_{C^0}(f,\Id)<C(\beta+\epTwo).
\end{equation}
\end{lem}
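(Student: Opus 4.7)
The plan is to propagate the near-fixed property from $B(x_0,r')$ outward to cover the whole manifold, using Lemma \ref{lem:geometric_lemma} to control the growth of the displacement error at each step.

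The first observation, via Lemma \ref{lem:lipschitz_constant}, is that $f$ distorts distances by at most a factor of $1+C\eta$. Hence for any reference point $y$ with $d(y,f(y))\le \alpha$ and any $z\in M$, one has $|d(f(z),y)-d(z,y)|\le \alpha+C\eta\,\diam(M)$, so $f(z)$ lies approximately on the sphere $S(y,d(z,y))$. If sufficiently many reference points $y$ are available in generic position, this family of sphere constraints, combined with the global hypothesis $d_{C^0}(f,\Id)\le r_0$ (which rules out $f(z)$ landing on a topologically distant component of the constraint set), should pin $f(z)$ near $z$.

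I will make this quantitative by induction on a growing ball $B(x_0,R)$. Setting $A_\alpha=\{z\in M:d(z,f(z))\le\alpha\}$, I aim to show that $B(x_0,R)\subset A_\alpha$ implies $B(x_0,R+\delta)\subset A_{C(\alpha+\eta)}$ for a fixed geometric $\delta>0$ and $C>0$, provided $R$ is large enough that $B(x_0,R)$ contains pairs at mutual distance in $(\inj(M)/3,\inj(M)/2)$. For the inductive step, take $p\in B(x_0,R+\delta)\setminus B(x_0,R)$ and choose $x,y\in B(x_0,R)$ with $d(x,y)=\rho\in(\inj(M)/3,\inj(M)/2)$, $d(x,p)=\rho$, and $d(y,p)<\delta$ (achieved by selecting $y$ on the geodesic from $x_0$ to $p$ just inside $B(x_0,R)$, and $x$ suitably on the perpendicular bisector of $yp$ inside $B(x_0,R)$). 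Near-isometry plus near-fixedness of $x,y$ forces $f(p)$ to lie in an $O(\alpha+\eta)$ neighborhood of $S(x,\rho)\cap S(y,d(y,p))$. Lemma \ref{lem:geometric_lemma}, applied to the pair $(x,y)$ and the point $p$, supplies a quantitative second-order separation: any point on $S(y,\rho)$ displaced a distance $\beta$ from $x$ lies at distance at least $\rho+Cd(y,p)\beta$ from $p$. Coupled with the $C^0$-smallness of $f$ and the hypothesis $\|f^*g-g\|\le\eta$, this second-order estimate excludes $f(p)$ from lying far from $p$ along the sphere-intersection locus and yields the desired bound $d(p,f(p))\le C(\alpha+\eta)$.

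The preliminary obstacle is that $r'$ may be smaller than $\inj(M)/3$, so $B(x_0,r')$ does not contain a pair $(x,y)$ at the intermediate distance required by Lemma \ref{lem:geometric_lemma}. This is handled by a bootstrap step: using only the $C^0$-smallness of $f$ and near-isometry (and Lemma \ref{lem:fixed_neighborhood}-style estimates if needed), one first enlarges the near-fixed region from $B(x_0,r')$ to a ball of radius exceeding $\inj(M)/2$, at the cost of a multiplicative factor depending on $r'$. After this bootstrap the inductive scheme above proceeds with constants depending only on $M$, and terminates after at most $\lceil\diam(M)/\delta\rceil$ iterations, yielding $d_{C^0}(f,\Id)\le C(\beta+\eta)$ with $C$ depending on $r'$ and the geometry of $M$. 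The main difficulty lies in the careful use of Lemma \ref{lem:geometric_lemma}: the second-order separation estimate must be balanced against the near-isometry constraints to trap $f(p)$ within a neighborhood of size $C(\alpha+\eta)$ rather than something weaker, and the compounding of constants across iterations must be bounded by a geometric factor rather than by $\alpha$ or $\beta$ themselves, so that the total error remains linear in $\beta+\eta$.
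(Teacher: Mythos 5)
You correctly identify the key tools---Lemma~\ref{lem:lipschitz_constant} for near-isometry, Lemma~\ref{lem:geometric_lemma} for the second-order separation, and a finite chain of basepoints to cover $M$ by compactness---and the broad plan of propagating near-fixedness outward is the same as the paper's. However, there is a genuine gap stemming from a misreading of how Lemma~\ref{lem:geometric_lemma} interfaces with the small ball. You treat it as an obstacle that $B(x_0,r')$ does not contain a pair at intermediate mutual distance, and you propose to overcome this with a bootstrap that enlarges the near-fixed region to a ball of radius exceeding $\inj(M)/2$ before Lemma~\ref{lem:geometric_lemma} can be applied. But no such enlargement is needed: Lemma~\ref{lem:geometric_lemma} requires $x$ and $y$ at intermediate mutual distance, not both within the near-fixed region. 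The paper applies the lemma with $x=x_0$ (the center of the small near-fixed ball), with $y$ a target point at distance in $(\inj(M)/3,\inj(M)/2)$ from $x_0$, with the lemma's ``$p$'' taken to be the radial projection $\widehat{f(y)}$ of $f(y)$ onto $S^{d(x,y)}(x)$, and---crucially---with the lemma's parameter $\beta$ set equal to $r'$. The output point $q$ then satisfies $d(q,x)<r'$, so it lies inside the given near-fixed ball $B(x_0,r')$, and the separation $d(q,p)\ge d(x,y)+Cd(y,p)\,r'$ is compared against the near-isometry estimates to bound $d(y,f(y))$. This is exactly why the constant $C$ in the lemma's conclusion depends on $r'$: one ends up dividing by $r'$. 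There is no separate bootstrap.

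The bootstrap you propose would also not work by the means you cite: Lemma~\ref{lem:fixed_neighborhood} requires an honest fixed point, a bound on $\|D_xf-\Id\|$, and a $C^2$ bound, none of which are hypotheses of Lemma~\ref{lem:o1neighborhoodimpliesclose}, where only $f\in\Diff^1$ and a near-fixed region are assumed. And $C^0$-smallness plus near-isometry alone do not propagate near-fixedness: they constrain $f(z)$ to an $O(\alpha+\eta)$-neighborhood of a sphere about a near-fixed reference point, but a sphere is codimension one, and the $C^0$ bound only localizes $f(z)$ to a ball of radius $r_0$, not of radius $O(\alpha+\eta)$. The two-reference-point intersection picture in your inductive step has the same difficulty in disguised form: with $d(y,p)<\delta$, the locus $S(x,\rho)\cap S(y,d(y,p))$ near $p$ has diameter of order $\delta$, so the sphere-intersection constraint alone only localizes $f(p)$ to within $O(\delta)+O(\alpha+\eta)$ of $p$, which does not close the induction. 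What actually does the pinning is a single application of Lemma~\ref{lem:geometric_lemma} with one reference point; the second reference point is superfluous, and the paper's proof is organized to use the lemma directly in this way.
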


\begin{proof}
Let $r_1,C_1$ denote the $r$ and $C$ in Lemma \ref{lem:geometric_lemma}. Let $C_2$ be the constant in Lemma \ref{lem:lipschitz_constant}. There exists a constant $r_2$ such that for any $x,y\in M$ with $\inj(M)/3<d(y,x)<\inj(M)/2$ and any $z$ such that $d(y,z)<r_2$, then $d(y,\hat{z})<r_1$, where $\hat{z}$ is the radial projection of $z$ onto $S^{d(x,y)}(x)$. Let $r_0=\min\{r_1,r_2,\inj(M)/24\}$.

Suppose that $x\in M$ has the property that $d(x,z)<r$ implies $d(z,f(z))\le \beta$. Suppose that $y$ is a point such that $\inj(M)/3<d(y,x)<\inj(M)/2$. Let $\widehat{f(y)}$ be the radial projection of $f(y)$ onto $S^{d(x,y)}(x)$. 

By choice of $r_0\le r_2$, $d(y,f(y))<r_2$ and so $d(y,\widehat{f(y)})\le r_1$. Hence we may apply Lemma \ref{lem:geometric_lemma} with $\beta=r'$, $x=x$, $y=y$ and $p=\widehat{f(y)}$ to conclude that there exists a point $q\in M$ such that 
\begin{align}
d(q,y)&=d(x,y),\\
d(q,x)&<r',\\\label{eqn:distqhatfy}
d(q,\widehat{f(y)})&\ge d(x,y)+C_1d(y,\widehat{f(y)})r'.
\end{align}

Using the triangle inequality, we bound the left hand side of \eqref{eqn:distqhatfy} to find
\begin{equation}\label{eqn:triangle_bound}
 d(q,f(q))+d(f(q),f(y))+d(f(y),\widehat{f(y)})\ge d(q,\widehat{f(y)})\ge d(x,y)+C_1d(y,\widehat{f(y)})r'.
\end{equation}

First, as $d(q,x)<r'$ and points within $r'$ of $x$ do not move more than $\beta$,
\[
d(q,f(q))\le \beta.
\]
Second, by Lemma \ref{lem:lipschitz_constant}, as the distance between $q$ and $y$ is bounded above by $\inj(M)/2$, there exists $C_3$ such that
\[
d(f(q),f(y))\le d(q,y)(1+C_2\epTwo)=d(x,y)+C_3\epTwo.
\]
Similarly, as $\inj(M)/3<d(x,y)<\inj(M)/2$, Lemma \ref{lem:lipschitz_constant} implies the following two bounds
\begin{equation}\label{eqn:distxfyupper}
d(x,f(y))\le d(x,f(x))+d(f(x),f(y))\le \beta + d(x,y)+C_3\eta
\end{equation}
and similarly
\begin{equation}\label{eqn:distxfylower}
d(x,f(y))\ge d(x,y)-\beta-C_3\eta.
\end{equation}

For $w$ sufficiently close to $S^{d(x,y)}(x)$ we claim that the radial projection $\hat{w}$ is the point in $S^{d(x,y)}(x)$ that minimizes the distance to $w$. 
To see this we use that below the injectivity radius geodesics are the unique distance minimizing path between two points. 
There are two cases: if $d(x,w)>d(x,y)$ and there is some other point $w'\in S^{d(x,y)}(x)$ with $d(w',w)\le d(\hat{w},w)$, then the path from $x$ to $w'$ to $w$ along geodesics must be strictly longer than the geodesic path from $x$ directly to $\hat{w}$. 
If $d(x,w)<d(x,y)$ and $\hat{w}\neq w'\in S^{d(x,y)}(x)$, then one obtains two distance minimizing paths from $x$ to $S^{d(x,y)}(x)$ passing through $w$: the first along a single geodesic and the second from $x$ to $w$ and then from $w$ to $w'$. By the uniqueness of distance minimizing geodesics, the latter path must have length greater than $d(x,y)$ because it is not a geodesic. Thus $d(w,w')>d(w,\hat{w})$; a contradiction.

The estimates \eqref{eqn:distxfyupper} and \eqref{eqn:distxfylower} imply that $\abs{d(f(y),x)-d(x,y)}\le \beta+C_3\eta$. Thus the distance from $f(y)$ to $S^{d(x,y)}(x)$ is at most $\beta+C_3\eta$. By the previous paragraph, $\widehat{f(y)}$ is the point in $S^{d(x,y)}(x)$ that minimizes distance to $f(y)$. Thus
\begin{equation}\label{eqn:f_hat_dist}
d(f(y),\widehat{f(y)})\le \beta+C_3\epTwo.
\end{equation}
Thus, we obtain from equation \eqref{eqn:triangle_bound} 
\[
\beta+d(x,y)+C_3\epTwo+\beta+C_3\epTwo\ge d(x,y)+C_1d(y,\widehat{f(y)})r'.
\]
Thus 
\[
\frac{2\beta+2C_3\epTwo}{C_1r'}\ge d(y,\widehat{f(y)}).
\]
Hence
\[
d(y,f(y))\le 
d(f(y),\widehat{f(y)})+d(y,\widehat{f(y)})\le
\frac{2\beta+2C_3\epTwo}{C_1r'}+\beta+C_3\eta.
\]

Thus by introducing a new constant $C_4\ge 1$, we see that for any $y$ satisfying $\inj(M)/3<d(y,x)<\inj(M)/2$, that
\[
d(y,f(y))\le C_4(\beta+\epTwo).
\]
Note that the constant $C_4$ depends only on $r'$ and $(M,g)$. 

Consider a point $y$ where $(1/3+1/24)\inj(M)<d(x,y)<(1/2-1/24)\inj(M)$. Because  $r'<\inj(M)/24$ such a point $y$ has a neighborhood of size $r'$ on which points are moved at most distance $C_4(\beta+\epTwo)$ by $f$. Hence we may repeat the procedure taking $y$ as the new basepoint. Let $x$ be the given point in the statement of the lemma. Any point $q\in M$ may be connected to $x$ via a finite sequence of points $x=x_0,\ldots,x_n=q$ such that each consecutive pair of points in the sequence are at a distance between $(1/3+1/24)\inj(M)$ and $(1/2-1/24)\inj(M)$ apart. As $M$ is compact there is a uniform upper bound on the length of the shortest such sequence. If $N$ is a uniform upper bound on the length of such a sequence, the above argument shows that for all $q\in M$
\[
d(q,f(q))\le NC_4^N(\beta+\epTwo),
\]
which gives the result.
\end{proof}

The proof of Theorem \ref{thm:near_to_identity} consists of two steps. First a disk of uniform radius is produced on which $f$ nearly fixes points. Then Lemma \ref{lem:o1neighborhoodimpliesclose} is applied to this disk to conclude that $f$ is near to the identity.

\begin{proof}[Proof of Theorem \ref{thm:near_to_identity}.]
Let $r_1,C_1$ be denote the $r$ and $C$ in Lemma \ref{lem:fixed_neighborhood}, and let $r_2,C_2$ denote the $r$ and $c$ in Lemma \ref{lem:geometric_lemma}. 
There will be a constant $r_3>0$ introduced later when it is needed. Let $r_4$ denote the constant $r_0$ appearing in Lemma \ref{lem:o1neighborhoodimpliesclose}. We let $r=\min\{1,r_1,r_2,r_3,r_4,\inj(M)/24\}$. Let $C_3$ be the constant in Lemma \ref{lem:lipschitz_constant}. Let $\gamma\in (0,r)$ be given.

By Lemma \ref{lem:fixed_neighborhood}, for all $z$ such that $d(x,z)<\gamma$, 
\begin{equation}\label{eq:tight_initial_neighborhood}
d(z,f(z))\le C_1(\epOne\gamma +\gamma^2\epThree).
\end{equation}

Suppose that $y$ satisfies $\inj(M)/3<d(x,y)<\inj(M)/2$. Let $\widehat{f(y)}$ be the radial projection of $f(y)$ onto the sphere $S^{d(x,y)}(x)$. 

By Lemma \ref{lem:lipschitz_constant},
\[
d(x,y)(1-C_3\eta)\le d(f(x),f(y))\le d(x,y)(1+C_3\eta).
\]
As $f(x)=x$, this implies
\[
d(x,y)(1-C_3\eta)\le d(x,f(y))\le d(x,y)(1+C_3\eta).
\]
Hence as $d(x,y)$ is uniformly bounded above and below, there exists $C_4$ such that
\begin{equation}\label{eq:est11}
d(f(y),\widehat{f(y)})<C_4\epTwo.
\end{equation}

There exists $r_3>0$ such that if $\epTwo<r_3$, then $C_4\epTwo<r_2$. Hence by our choice of $r$, $d(y,\widehat{f(y)})<r_2$ and we may apply Lemma \ref{lem:geometric_lemma} with $\beta=\gamma$, $x=x$, $y=y$, $p=\widehat{f(y)}$ to deduce that there exists $q$ such that
\begin{align}
d(q,y)&=d(x,y),\\
d(q,x)&<\gamma,\\
d(q,\widehat{f(y)})&\ge d(x,y)+C_2d(y,\widehat{f(y)})\gamma.\label{eq:est14}
\end{align}

By Lemma \ref{lem:lipschitz_constant}, and using that $d(x,y)$ is bounded by $\inj(M)/2$, there exists $C_5$ such that
\begin{equation}\label{eq:est12}
d(f(q),f(y))\le d(q,y)(1+C_3\epTwo)\le d(x,y)+C_5\epTwo.
\end{equation}
By equation \eqref{eq:tight_initial_neighborhood},  as $d(q,x)<\gamma$,
\begin{equation}\label{eq:est13}
d(q,f(q))<C_1(\epOne\gamma+\epThree\gamma^2).
\end{equation}
Using the triangle inequality with \eqref{eq:est11}, \eqref{eq:est12}, \eqref{eq:est13}, to bound the left hand side of equation \eqref{eq:est14},  we obtain that 
\[
C_1(\epOne\gamma+\epThree\gamma^2)+d(x,y)+C_5\epTwo+C_4\epTwo\ge d(q,f(q))+d(f(q),f(y))+d(f(y),\widehat{f(y)})\ge d(x,y)+C_2d(y,\widehat{f(y)})\gamma.
\]
Moreover \eqref{eq:est11} gives the lower bound $d(y,\widehat{f(y)})>d(y,f(y))-C_4\epTwo$. We then obtain that 
\[
C_1(\epOne\gamma+\epThree\gamma^2)+C_5\epTwo+C_4\epTwo\ge C_2d(y,f(y))\gamma-C_2C_4\epTwo\gamma,
\]
and so
\[
\frac{C_1(\epOne\gamma+\epThree\gamma^2)+C_5\epTwo+C_4\epTwo +C_2C_4\epTwo\gamma}{C_2\gamma}\ge d(y,f(y)).
\]
The constants $C_1,\ldots,C_5$ are uniform over all $y$ satisfying $\inj(M)/3<d(x,y)<\inj(M)/2$. Thus there exists $C_6>0$ such that for all such $y$,
\begin{equation}\label{eqn:estimate_points_near_z}
C_6(\epTwo\gamma^{-1}+\epOne+\epThree\gamma)\ge d(y,f(y)).
\end{equation}
Suppose that $y$ is a point at distance $\frac{5}{12}\inj(M)$ from $x$. The above argument shows if $z$ satisfies $d(y,z)<\inj(M)/12$ then \eqref{eqn:estimate_points_near_z} holds with $y$ replaced by $z$, i.e.
\[
C_6(\epTwo\gamma^{-1}+\epOne+\epThree\gamma)\ge d(z,f(z)).
\]

 Define $\alpha$ by 
\begin{equation}
\alpha=C_6(\epTwo\gamma^{-1}+\epOne+\epThree\gamma).
\end{equation}
Assuming that $\alpha<r_4$, $z$ satisfies the second numbered hypothesis of Lemma \ref{lem:o1neighborhoodimpliesclose} with $\beta=\alpha$ and any $r'\le \inj(M)/12$.

There are then two cases depending on whether $\alpha>r_4$ or $\alpha\le r_4$. In the case that $\alpha\le r_4$, we apply Lemma \ref{lem:o1neighborhoodimpliesclose} with $x_0=z$, $r'=r/2$, and $\beta=\alpha$. This gives that there exists a $C_7$ depending only on $r/2$ such that 
\[
d_{C^0}(f,\Id)\le C_7(\epTwo\gamma^{-1}+\epOne+\epThree\gamma).
\]
If $\alpha>r_4$, then as $\kappa\le r_4$,
\[
d_{C^0}(f,\Id)\le \kappa \le r_4 \le \alpha = C_6(\epTwo\gamma^{-1}+\epOne+\epThree\gamma).
\]
Thus letting $C_8=\max\{C_6,C_7\}$, we have that
\[
d_{C^0}(f,\Id)\le C_8(\epTwo\gamma^{-1}+\epOne+\epThree\gamma),
\]
which gives the result.
\end{proof}

\subsection{Application to isotropic spaces: proof of Proposition \ref{lem:H_0_nearby_isometry} }\label{subsec:isotropic_app}

We now prove Proposition \ref{lem:H_0_nearby_isometry}, which is an application of Theorem \ref{thm:near_to_identity} to isotropic spaces.  
The idea of the proof is geometric.
We consider the diffeomorphism $I^{-1}f$. This diffeomorphism is small in $C^0$ norm, so there is an isometry $R_1$ that is close to the identity such that $R_1^{-1}I^{-1}f$ has a fixed point $x$. The differential of $R_1^{-1}I^{-1}f$ at $x$ is very close to preserving both the metric tensor and curvature tensor at $x$. We then use the following lemma to obtain an isometry $R_2$ that is nearby to $R_1^{-1}I^{-1}f$.

\begin{lem}\label{lem:helgason_lemma}
\cite[Ch. IV Ex. A.6]{helgason2001differential} Let $M$ be a simply connected Riemannian globally symmetric space or $\RP^n$. Then if $x\in M$ and $L\colon T_xM\to T_xM$ is a linear map preserving both the metric tensor at $x$ and the curvature tensor at $x$, then there exists $R\in \Isom(M)$ such that $R(x)=x$ and $D_xR=L$.
\end{lem}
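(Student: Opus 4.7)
The strategy is to reduce everything to the Cartan--Ambrose--Hicks theorem (see e.g.\ \cite[Ch.\ 1]{cheeger1975comparison}), which asserts that if $M$ and $N$ are complete simply connected Riemannian manifolds, $x\in M$, $y\in N$, and $L\colon T_xM\to T_yN$ is a linear isometry such that, for every broken geodesic starting at $x$, the parallel transports of $L$ along it and its $L$-image carry the curvature tensor at the endpoint of one to the curvature tensor at the endpoint of the other, then $L$ extends to a global isometry $\phi\colon M\to N$ with $\phi(x)=y$ and $D_x\phi=L$.

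Step 1, the simply connected case. A Riemannian globally symmetric space is automatically geodesically complete and has parallel curvature tensor, $\nabla R\equiv 0$. Consequently, parallel transport carries $R_x$ to $R_y$ along every smooth curve from $x$ to $y$. If $L\colon T_xM\to T_xM$ is a linear isometry with $L^\ast R_x=R_x$, then conjugating $L$ by any parallel transport produces a linear isometry at the new basepoint that still preserves the curvature tensor there, since both sides match $L^\ast R_x=R_x$ after pulling back by the parallel transport. The hypothesis of Cartan--Ambrose--Hicks is therefore automatic, and we get $R\in\Isom(M)$ with $R(x)=x$ and $D_xR=L$.

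Step 2, the case $M=\RP^n$. Work on the universal Riemannian cover $\pi\colon S^n\to\RP^n$ and pick $\widetilde{x}\in\pi^{-1}(x)$. Since $\pi$ is a local isometry, $D_{\widetilde{x}}\pi$ identifies $T_{\widetilde{x}}S^n$ with $T_xM$ and intertwines the curvature tensors at the two basepoints. Transport $L$ across this identification to get a linear isometry $\widetilde{L}$ of $T_{\widetilde{x}}S^n$ preserving $R_{\widetilde{x}}$. By Step 1 applied to the simply connected symmetric space $S^n$, there exists $\widetilde{R}\in\Isom(S^n)=O(n+1)$ with $\widetilde{R}(\widetilde{x})=\widetilde{x}$ and $D_{\widetilde{x}}\widetilde{R}=\widetilde{L}$. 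The deck group $\{\pm I\}$ lies in the center of $O(n+1)$, so $\widetilde{R}$ commutes with the antipodal involution and descends to an isometry $R\in\Isom(\RP^n)$ satisfying $R(x)=x$ and $D_xR=L$.

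The only genuine input is Cartan--Ambrose--Hicks; everything else is bookkeeping. The main obstacle, if one insisted on a self-contained treatment, would be proving that theorem from scratch via a Jacobi-field argument along radial geodesics. Granting it, the proof is routine: the symmetric space case exploits only the defining identity $\nabla R=0$, and the $\RP^n$ case uses only the elementary fact that $-I$ is central in $O(n+1)$.
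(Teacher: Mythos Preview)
Your proof is correct. The paper does not supply its own argument for this lemma; it simply cites Helgason \cite[Ch.~IV, Ex.~A.6]{helgason2001differential}. Your route through Cartan--Ambrose--Hicks, using $\nabla R\equiv 0$ to verify the curvature-matching hypothesis automatically, is the standard way to solve that exercise, and your reduction of the $\RP^n$ case to $S^n$ via the centrality of $-I$ in $O(n+1)$ is clean and correct.
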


We take the diffeomorphism in the conclusion of Proposition \ref{lem:H_0_nearby_isometry} to equal $IR_1R_2$. We then apply Theorem \ref{thm:near_to_identity} to deduce that $R_2^{-1}R_1^{-1}I^{-1}f$ is near the identity diffeomorphism.  It follows that $IR_1R_2$ is near to $f$.
Before beginning the proof, we state some additional lemmas.

\begin{lem}\label{lem:subspace_distances}
Suppose that $V_1$ and $V_2$ are two subspaces of a finite dimensional inner product space $W$. Then there exists $C>0$ such that if $x\in W$, then 
\[
d(x,V_1\cap V_2)<C(d(x,V_1)+d(x,V_2)).
\]
\end{lem}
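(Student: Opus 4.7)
The plan is to reduce the inequality to a strict positivity statement on a compact set by exploiting that everything is happening in finite dimensions. Concretely, I will decompose $W$ orthogonally as $W = (V_1 \cap V_2) \oplus (V_1 \cap V_2)^\perp$ and show that the two functions $x \mapsto d(x,V_i)$ essentially only see the second summand, after which compactness of the unit sphere in $(V_1\cap V_2)^\perp$ produces the constant $C$.

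First I would make the key preliminary observation: if $x = x_0 + x_1$ with $x_0 \in V_1 \cap V_2$ and $x_1 \in (V_1\cap V_2)^\perp$, then since $x_0 \in V_i$ one has $P_{V_i} x = x_0 + P_{V_i} x_1$, so
\[
d(x, V_i) = \|x - P_{V_i} x\| = \|x_1 - P_{V_i} x_1\| = d(x_1, V_i),
\]
for $i = 1, 2$, and also $d(x, V_1 \cap V_2) = \|x_1\|$. So the inequality reduces to the case $x \in (V_1 \cap V_2)^\perp$, where it takes the form
\[
\|x\| \le C\bigl(d(x,V_1) + d(x,V_2)\bigr).
\]

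Next, I would define $\phi : (V_1 \cap V_2)^\perp \to \mathbb{R}$ by $\phi(x) = d(x,V_1) + d(x,V_2)$, which is continuous and positively homogeneous of degree one. The main point is that $\phi$ is strictly positive on the unit sphere of $(V_1\cap V_2)^\perp$: if $\phi(x) = 0$ and $x \in (V_1\cap V_2)^\perp$ then $x \in V_1 \cap V_2 \cap (V_1\cap V_2)^\perp = \{0\}$, contradicting $\|x\|=1$. By compactness of the unit sphere, $\phi$ attains a positive minimum $c > 0$ there, and by homogeneity $\phi(x) \ge c \|x\|$ for all $x \in (V_1 \cap V_2)^\perp$. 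Taking $C = 1/c$ and combining with the reduction above finishes the proof.

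No step here is really an obstacle, since the argument is pure linear algebra in finite dimensions; the only thing to be careful about is the orthogonal decomposition step so that the $V_i$-distances survive the reduction. As a sanity check, one can reinterpret the result as saying that the induced linear map $W / (V_1 \cap V_2) \to V_1^\perp \oplus V_2^\perp$, $x \mapsto (P_{V_1^\perp}x, P_{V_2^\perp}x)$, is injective, and therefore has a bounded inverse on its image by finite-dimensionality; this yields the same constant $C$.
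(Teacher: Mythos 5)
Your proof is correct and complete; the paper simply states this lemma without proof, treating it as a standard fact, so there is no paper argument to compare against. Your reduction to $x\in (V_1\cap V_2)^{\perp}$ via the observation $P_{V_i}x_0=x_0$ is exactly right, and the compactness/homogeneity step then gives the constant $C$ cleanly. The only thing worth flagging is that the paper's inequality is written with a strict ``$<$'', which fails trivially at $x=0$ (or indeed any $x\in V_1\cap V_2$); your argument correctly produces the $\le$ version, which is what is actually used.
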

\begin{lem}\label{lem:near_to_curvature_preserving}
Suppose that $R$ is a tensor on $\R^n$. Let $\stab(R)$ be the subgroup of $\GL(\R^n)$ that stabilizes $R$ under pullback. Then there exist $C,D>0$ such that if $L\colon \R^n\to \R^n$ is an invertible linear map and $\|L-\Id\|<D$, then 
\[
d_{\GL(\R^n)}(L,\stab(R))\le C\|L^*R-R\|.
\]
\end{lem}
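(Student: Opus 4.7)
The plan is to apply the inverse function theorem to split $\GL(\R^n)$ near the identity as a product of $\stab(R)$ and a transverse slice, and then observe that the size of the slice component is comparable to $\|L^*R-R\|$.

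First I would consider the smooth map $\Psi\colon \End(\R^n)\to T$ defined in a neighborhood $U$ of $0$ by $\Psi(A)=(\Id+A)^*R-R$, where $T$ is the vector space of tensors of the same type as $R$. Its derivative $D\Psi(0)$ is the infinitesimal action of $\End(\R^n)$ on $R$. Since $\stab(R)$ is the stabilizer of a tensor under a linear action, it is an algebraic, hence closed, Lie subgroup of $\GL(\R^n)$, and its Lie algebra is precisely $K\coloneqq \ker D\Psi(0)$. Choose a linear complement $V$ of $K$ in $\End(\R^n)$, so that $\End(\R^n)=K\oplus V$.

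Next I would introduce the map $F\colon \stab(R)\times V\to \GL(\R^n)$ given by $F(S,B)=S(\Id+B)$. Its differential at $(\Id,0)$ is the linear isomorphism $(\xi,B)\mapsto \xi+B$ from $K\oplus V$ to $\End(\R^n)$, so $F$ is a local diffeomorphism onto a neighborhood of $\Id$. Shrinking $D>0$ if necessary, every $L$ with $\|L-\Id\|<D$ admits a unique factorization $L=S(\Id+B)$ with $S\in\stab(R)$ and $B\in V$ small, and the inverse function theorem gives a uniform bound $\|S-\Id\|+\|B\|\le C_1\|L-\Id\|$. Consequently
\[
d_{\GL(\R^n)}(L,\stab(R))\le \|L-S\|=\|S B\|\le 2\|B\|
\]
for $D$ sufficiently small.

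It then remains to control $\|B\|$ by $\|L^*R-R\|$. Since $S\in\stab(R)$, one has $L^*R-R=(\Id+B)^*S^*R-R=\Psi(B)$. Because $D\Psi(0)|_V$ is injective (by the choice of $V$ as a complement of $\ker D\Psi(0)$), it is a linear isomorphism onto its image and hence has a bounded one-sided inverse; combined with the Taylor expansion $\Psi(B)=D\Psi(0)B+O(\|B\|^2)$ and further shrinking of $D$, this yields $\|B\|\le C_2\|\Psi(B)\|=C_2\|L^*R-R\|$. Chaining the estimates gives the claim with $C=2C_2$. The only genuine point of care is the first step, identifying the Lie algebra of $\stab(R)$ with $\ker D\Psi(0)$; everything else is a routine application of the inverse function theorem together with the injectivity of $D\Psi(0)$ on the complement $V$.
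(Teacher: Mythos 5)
Your proof is correct and follows essentially the same strategy as the paper: identify the Lie algebra of $\stab(R)$ with the kernel of the linearized action $D\Psi(0)$, choose a complementary subspace, and show that the action is nondegenerate transverse to the stabilizer. The one place where the two proofs genuinely diverge is the identification of $\operatorname{Lie}(\stab(R))$ with $\ker D\Psi(0)$. You appeal to the standard fact that the Lie algebra of the stabilizer of a point under a smooth action equals the kernel of the infinitesimal action (which follows, e.g., from uniqueness of ODE solutions). The paper instead proves this by hand: it supposes $D\Psi(0)v^\perp=0$ with $v^\perp\in\mf{s}^\perp$ nonzero, deduces $\exp(tv^\perp)^*R-R=O(t^2)$, and runs a telescoping sum over $n$ subdivisions of $[0,1]$ to show $\exp(v^\perp)^*R=R$, contradicting $v^\perp\notin\mf{s}$. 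Your route is slicker and relies on a well-known lemma; the paper's route is self-contained. On the other hand, you are more explicit than the paper about the final step: you carry out the factorization $L=S(\Id+B)$ via the inverse function theorem applied to $F(S,B)=S(\Id+B)$ and track the constants, whereas the paper compresses this into ``which gives the result.'' Both fill genuine gaps relative to the other; neither has an error.
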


\begin{proof}
Let $\mf{s}$ be the Lie algebra to $\stab(R)$. Then consider the map $\phi$ from  $\mf{gl}$ to the tensor algebra on $\R^n$ given by
\[
w\mapsto \exp(w)^*R-R.
\]
We may write $w=v+v^{\perp}$, where $v\in \mf{s}$ and $v\in \mf{s}^{\perp}$.
Because $\phi$ is smooth it has a Taylor expansion of the form
\begin{equation}\label{eqn:taylor_exp_phi}
\phi(tv+tv^{\perp})=0+tAv+tBv^{\perp}+O(t^2).
\end{equation}
Note that $A$ is zero because $v\in \mf{s}$. We claim that $B$ is injective. For the sake of contradiction, suppose $Bv^{\perp}=0$ for some $v^{\perp}\in \mf{s}^{\perp}$. Then $\exp(tv^{\perp})^*R-R=O(t^2)$. But then
\begin{align*}
\exp(v^{\perp})^*R-R&=\sum_{i=0}^{n-1} \exp((i+1)v^{\perp}/n)^*R-\exp(i v^{\perp}/n)R\\
&=\sum_{i=0}^{n-1} \exp(i v^{\perp}/n)^*(\exp(v^{\perp}/n)^*R-R)\\
&=O(1/n).
\end{align*}
And hence $\exp(v^{\perp})^*R-R=0$, which contradicts $v^{\perp}\notin \mf{s}$. Thus $B$ is an injection and hence by Taylor's theorem for small $v^{\perp}$ there exists $C_1$ such that
\begin{equation}\label{eqn:perp_lower_bound_on_tensor_movement}
\|\exp(v^{\perp})^*R-R\|\ge C_1\|v^{\perp}\|.
\end{equation}
By using the Taylor expansion \eqref{eqn:taylor_exp_phi} and noting that $A=0$ there, we obtain from equation \eqref{eqn:perp_lower_bound_on_tensor_movement} that there exists $C_2>0$ such that 
\begin{equation}\label{eqn:lower_bound_on_tensor_movement2}
\|\exp(w)^*R-R\|\ge C_2\|v^{\perp}\|.
\end{equation}
It then follows there exists a neighborhood $N$ of $\Id\in \GL(\R^n)$ such that $\stab(R)\cap N$ is the image of a disc $D\subset \mf{s}$ under $\exp$. Write $\mf{gl}=\mf{s} \oplus \mf{s}^{\perp}$ as a vector space. Thus as $\exp$ is bilipschitz in a neighborhood of  $0\in \mf{gl}$ there exists $C_3$ such that if we write $w\in D$ as $w=v+v^{\perp}$, where $v\in \mf{s}$ and $v^{\perp}\in \mf{s}^{\perp}$, then 
\begin{equation}\label{eqn:nearness_to_stab_exp}
C_3^{-1}\|v^{\perp}\|\le d_{\GL(\R^n)}(\exp(w),\exp(D))\le C_3\|v^{\perp}\|.
\end{equation}
As $\stab(R)\cap N=\exp(D)$, for all $w$ in a smaller neighborhood $D'\subset D$, the middle term above is comparable to $d_{GL(\R^n)}(\exp(w),\stab(R))$.

Thus combining \eqref{eqn:nearness_to_stab_exp} with \eqref{eqn:lower_bound_on_tensor_movement2}, we obtain
\[
d_{\GL(\R^n)}(\exp(w),\stab(R))\le C_2^{-1}C_3\|\exp(w)^*R-R\|.
\]
This gives the result as $\exp$ is a surjection onto a neighborhood of $\Id\in \GL(\R^n)$.
\end{proof}

The following lemma is immediate from \cite[Thm. IV.3.3]{helgason2001differential}, which explicitly describes the isometries of globally symmetric spaces.

\begin{lem}\label{lem:small_distances_small_isometries}
Suppose that $M$ is a closed globally symmetric space. There exists $C>0$ such that if $x,y\in M$, then there exists an isometry $I\in \Isom(M)^{\circ}$ such that $I(x)=y$ and $d_{C^0}(I,\Id)\le Cd(x,y)$. As $\Isom(M)^{\circ}$ is compact, it follows that for each $k$ there exists a constant $C_k$ such that one choose $I$ with $d_{C^k}(I,\Id)\le C_kd(x,y)$.
\end{lem}

We also use the following lemma, which is the specialization of Lemma  \ref{lem:near_to_curvature_preserving} to the metric tensor.

\begin{lem}\label{lem:nearly_isom_implies_near_SO}
Suppose that $V$ is a finite dimensional inner product space with metric $g$ of dimension $d$. There exists a neighborhood $U$ of $\Id\in \GL(V)$ and a constant $C$ such that if $L\in U$  then 
\[
d_{\GL(V)}(L,\SO(V))\le C\|L^*g-g\|,
\]
where $\GL(V)$ is endowed with the right-invariant Riemannian metric it inherits from the inner product space $V$.
\end{lem}

We now prove the proposition.

\begin{proof}[Proof of Proposition \ref{lem:H_0_nearby_isometry}.]

Pick $0<\lambda<1$ and a small $\tau$ such that 
\begin{equation}\label{eqn:choice_of_lambda_and_tau}
\frac{\lambda}{2}-\lambda\tau>\frac{1}{2}-\sigma \text{ and } \sigma>\tau>0.
\end{equation}
We also assume without loss of generality that $\ell\ge 3$.
 By Lemma \ref{lem:bundle_section_interpolation} there exist $k_0$ and $\epsilon_0>0$ such that if $s$ is a smooth section of the bundle of symmetric $2$-tensors over $M$, $\|s\|_{C^{k_0}}\le 4$, and $\|s\|_{H^0}\le \epsilon_0$, then $\|s\|_{C^{\ell}}\le \|s\|_{H^0}^{1-\tau}$. 
Choose $k$ such that 
\begin{equation}\label{eqn:choice_of_k}
k>\max\{k_0,\frac{\ell}{1-\lambda}\}.
\end{equation}
In addition, there are positive numbers $\epsilon_{1},\ldots,\epsilon_7$ that will be introduced when needed in the proof below. We define
\[
r=\min\{\epsilon_0,\epsilon_{1}^{1/(1-\tau)},\epsilon_2,\ldots,\epsilon_7,1\}.
\]

Let $\epsilon_{1}>0$ be small enough that for any $x\in M$, if $L\colon T_xM\to T_xM$ is invertible and $\|L^*g-g\|\le \epsilon_{1}$, then the conclusion of Lemma \ref{lem:nearly_isom_implies_near_SO} holds for $L$.

Let $\eta=\|f^*g-g\|_{H^0}$ and $\varepsilon_2=d_{C^2}(f,I)$. Consider the norm $\|f^*g-g\|_{C^{k_0}}$. As $d_{C^k}(I,f)$ is uniformly bounded, we see that $\|f^*g-g\|_{C^{k-1}}$ is uniformly bounded. In fact, there exists $\epsilon_2>0$ such that if $d_{C^k}(I,f)<\epsilon_2$, then $\|f^*g-g\|_{C^{k-1}}\le 4$. As $r<\epsilon_0$, the discussion in the first paragraph of the proof implies that 
\begin{equation}\label{eqn:f_pullsback_metric_close}
\|f^*g-g\|_{C^3}\le \eta^{1-\tau}.
\end{equation}
Note that this is less than $\epsilon_{1}$ by the choice of $r$.

For $x\in M$, we may consider the Lie group $\GL(T_xM)$ as well as its Lie algebra $\mf{gl}$. There exists $\epsilon_3>0$ such that restricted to the ball of radius $\epsilon_3$ about $0\in \mf{gl}$, the Lie exponential, which we denote by $\exp$, is bilipschitz with constant $2$.

Let $x\in M$ be a point that is moved the maximum distance by $I^{-1}f$. 
By Lemma 
\ref{lem:small_distances_small_isometries}, there exists a constant $D_k>0$ independent of $x$ and an isometry $R_1$ such that $R_1(x)=I^{-1}f(x)$ and $d_{C^k}(R_1,\Id)<D_kd(x,I^{-1}f(x))$. Let $h=R_1^{-1}I^{-1}f$ and note that $h$ fixes $x$. Note that there exists $\epsilon_4>0$ such that if $d_{C^k}(f,I)<\epsilon_4$, then by the previous sentence $R_1$ can be chosen so that $d_{C^k}(R_1,\Id)$ is small enough that 
\begin{equation}\label{eqn:Dxh_nearid}
\|D_xh-\Id\|\le C_0\varepsilon_2.
\end{equation}

We claim that $D_xh$ is near a linear map of $T_xM$ that preserves both the metric tensor and the curvature tensor. Let $\SO(T_xM)$ be the group of linear maps preserving the metric tensor on $T_xM$ and let $G$ be the group of linear maps preserving the curvature tensor on $T_xM$. Both of these are subgroups of $\GL(T_xM)$.
By the sentence after equation \eqref{eqn:f_pullsback_metric_close}, $D_xh$ pulls back the metric on $T_xM$ to be within $\epsilon_{1}$ of itself. 
Thus by Lemma \ref{lem:small_distances_small_isometries}, there exists a uniform constant $C_1$ such that $D_xh$ is within distance $C_1\eta^{1-\tau}$ of $\SO(T_xM)$. Again by equation \eqref{eqn:f_pullsback_metric_close}, we have that $\|h^*g-g\|_{C^3}\le \eta^{1-\tau}$. 
In particular, as the curvature tensor is defined by the second derivatives of the metric, this implies by Lemma \ref{lem:near_to_curvature_preserving} that there exists a constant $C_2$ such that $D_xh$ is within distance $C_2\eta^{1-\tau}$ of $G$.

The previous paragraph shows that there exists $C_3$ such that $D_xh$ is within distance $C_3\eta^{1-\tau}$ of both $\SO(T_xM)$ and $G$. Consider now the exponential map of $\GL(T_xM)$. As before, let $\mathfrak{gl}$ denote the Lie algebra of $\GL(T_xM)$. Let $H=\exp^{-1}(D_xh)\in \GL(T_xM)$.
 Note that this preimage is defined as $D_xh$ is near to the identity. Let $\mathfrak{so}$ be the Lie algebra to $\SO(T_xM)$ and let $\mf{g}$ be the Lie algebra to $G$. As both $SO(T_xM)$ and $G$ are closed subgroups and $\exp$ is bilipschitz we conclude that the distance both between $H$ and each of $\mathfrak{so}$ and $\mathfrak{g}$ is bounded above by $2C_3\eta^{1-\tau}$. Thus by Lemma \ref{lem:subspace_distances}, there exists $C_4$ such that $H$ is at most distance $C_4\eta^{1-\tau}$ from $\mathfrak{g}\cap \mathfrak{so}$. Let $X\in \GL(T_xM)$ be an element of $\mathfrak{g}\cap \mathfrak{so}$ minimizing the distance from $H$ to $\mf{g}\cap \mf{so}$. There exists $\epsilon_5>0$ such that if $\eta\le \epsilon_5$ then $C_4\eta^{1-\tau}<\epsilon_3$.  Hence as $r<\epsilon_5$, the same bilipschitz estimate on the Lie exponential gives
\begin{equation}\label{eqn:dexpX_D_xh}
d(\exp(X),D_xh)\le 2C_4\eta^{1-\tau}.
\end{equation}
 Note that $\exp(X)\in SO(T_xM)\cap G$. By Lemma \ref{lem:helgason_lemma}, there exists an isometry $R_2$ of $M$ such that $R_2$ fixes $x$ and $D_xR_2=\exp(X)$. In fact, because of equation \eqref{eqn:Dxh_nearid} and because $X$ is within distance $C_4\eta^{1-\tau}$ of $H$, we may bound the norm of $X$ and hence deduce that there exists $C_5$ such that
\begin{equation}\label{eqn:sizes_of_R2}
d_{C^k}(R_2,\Id)\le C_5(\varepsilon_2+\eta^{1-\tau}).
\end{equation}

 The map $R$ in the conclusion of the proposition will be $IR_1R_2$. We must now check that $R=IR_1R_2$ satisfies estimates \eqref{eqn:H_0_lemma_low_reg_est} and \eqref{eqn:H_0_lemma_high_reg_est}. The former is straightforward: \eqref{eqn:H_0_lemma_low_reg_est} follows from \eqref{eqn:sizes_of_R2} combined with knowing that $R_1$ was constructed so that $d(R_1,\Id)\le D'\varepsilon_2$ for some uniform $D'>0$.

 Let $h_2=R_2^{-1}h$. The map $h_2$ has $x$ as a fixed point. 
 There exists $C_6>0$ such that the following four estimates hold:
\begin{align}
\|D_xh_2-\Id\|&\le C_6\eta^{1-\tau},\label{eqn:block1}\\
\|h_2^*g-g\|_{C^3}&\le \eta^{1-\tau},\label{eqn:block2}\\
d_{C^2}(h_2,\Id)&\le C_6(\varepsilon_2+\eta^{1-\tau}),\label{eqn:block3}\\
d_{C^k}(h_2,\Id)&\le C_6(\eta^{1-\tau}+d_{C^k}(I,f)).\label{eqn:block4}
\end{align}
The first two estimates above are immediate from equations \eqref{eqn:dexpX_D_xh} and \eqref{eqn:f_pullsback_metric_close}, respectively. The third and fourth follow from an estimate on $C^k$ compositions, Lemma \ref{lem:C^k_composition_estimate}, and equation \eqref{eqn:sizes_of_R2}.

Let $r_0$ be the cutoff $r$ appearing in Theorem \ref{thm:near_to_identity}. Note that there exists $\epsilon_6>0$ such that if $d_{C^k}(f,I)<\epsilon_6$ and $\eta<\epsilon_6$, then the right hand side of each of inequalities \eqref{eqn:block1} through \eqref{eqn:block4} is bounded above by $r_0$.
Hence as $r<\epsilon_6$ we apply Theorem \ref{thm:near_to_identity} to $h_2$ to conclude that there exists $C_7$ such that for all $0<\gamma<r_0$,
\[
d_{C^0}(\Id,h_2)<C_7(\eta^{1-\tau}+C_6(\varepsilon_2+\eta^{1-\tau})\gamma+\eta^{1-\tau}\gamma^{-1}).
\]
But $h_2=R_2^{-1}R_1^{-1}I^{-1}f$, so 
\begin{equation}\label{eq:C0_estimate}
d_{C^0}(R,f)<C_8(\eta^{1-\tau}+C_6(\varepsilon_2+\eta^{1-\tau})\gamma+\eta^{1-\tau}\gamma^{-1}).
\end{equation}

We now obtain the high regularity estimate, equation \eqref{eqn:H_0_lemma_high_reg_est}, via interpolation. By similarly moving the isometries from one slot to the other, \eqref{eqn:block4} gives that
\begin{equation}\label{eq:C_k_construction_estimate}
d_{C^k}(R,f)<C_9(\eta^{1-\tau}+d_{C^k}(I,f)).
\end{equation}
There exists $\epsilon_7>0$ such that if $d_{C^k}(I,f)<\epsilon_7$ and $\eta<\epsilon_7$, then the right hand side of equation \eqref{eq:C_k_construction_estimate} is at most $1$.

We now apply the interpolation inequality in Lemma \ref{lem:ck_interpolation_inequality} and interpolate between the $C^0$ and $C^k$ distance to estimate $d_{C^{\ell}}(R,f)$. Write $\ell=(1-\lambda')k$ for some $\lambda'$ and note that $1>\lambda'>\lambda$ by \eqref{eqn:choice_of_k}. We use the estimate in equation \eqref{eq:C0_estimate} to estimate the $C^0$ norm and use $1$ to estimate the $C^k$ norm, which we may do because $r<\epsilon_7$. Thus there exists $C_{10}$ such that for $0<\gamma<r_0$,
\begin{equation}\label{eqn:c0_ck_interpolated}
d_{C^\ell}(R,f)<C_{10}(\eta^{1-\tau}\gamma^{-1}+\varepsilon_2\gamma)^{\lambda'}.
\end{equation} 

 Note that there exists $C_{11}>0$ such that $\|f^*g-g\|_{H^0}\le C_{11}\varepsilon_2$. Consequently, there exists a constant $C_{13}$ such that $C_{12}\sqrt{\eta/\varepsilon_2}$ is less than the cutoff $r_0$. We take $\gamma$ to equal $C_{12}\sqrt{\eta/\varepsilon_2}$ in equation \eqref{eqn:c0_ck_interpolated}, which gives
\begin{equation}
d_{C^\ell}(R,f)<C_{13}(\eta^{1/2-\tau}\varepsilon_2^{1/2}+\eta^{1/2}\varepsilon_2^{1/2})^{\lambda'}<C_{14}(\eta^{\lambda/2-\lambda\tau}\varepsilon_2^{\lambda/2}+\eta^{\lambda/2}\varepsilon_2^{\lambda/2}).
\end{equation} 
Hence by our choice of $\lambda$ and $\tau$ in equation \eqref{eqn:choice_of_lambda_and_tau} and because $\eta<r<1$,
\begin{equation}
d_{C^\ell}(R,f)<C_{15}\eta^{1/2-\sigma}\varepsilon_2^{1/2-\sigma},
\end{equation} 
which establishes equation \eqref{eqn:H_0_lemma_high_reg_est} and finishes the proof.
\end{proof}

\section{KAM Scheme}\label{sec:KAM_scheme}

In this section we develop the KAM scheme and prove that it converges. A KAM scheme is an iterative approach to constructing a conjugacy between two systems in the $C^{\infty}$ setting. We begin by discussing the smoothing operators that will be used in the scheme. Then we state a lemma, Lemma \ref{lem:KAM_step}, that summarizes the results of performing a step in the scheme. We then prove in Theorem \ref{thm:KAM_converges} that by iterating the single KAM step that we obtain the convergence needed for this theorem. We conclude the section with a final corollary of the KAM scheme which gives an asymptotic relationship between the top exponent, the bottom exponent, and the sum of all the exponents.

\subsection{One step in the KAM scheme}

In the KAM scheme, we begin with a tuple of isometries $(R_1,...,R_m)$ and a nearby tuple of diffeomorphisms $(f_1,...,f_m)$. We want to find a diffeomorphism $\phi$ such that for all $i$, $\phi^{-1}f_i\phi=R_i$. However, such a $\phi$ may not exist.

We will then attempt construct a conjugacy, $\phi$ that has the following property. 
Let $\wt{f}_i$ equal $\phi^{-1}f_i\phi$. 
If we consider the tuple $(\wt{f}_1,...,\wt{f}_m)$ and $(R_1,....,R_m)$, we can arrange that the error term, $\mc{U}$, in Proposition \ref{prop:taylor_expansion_lambda_k}, is small. Once we know that the error term is small, the estimate in Proposition \ref{prop:taylor_expansion_lambda_k} shows that small Lyapunov exponents imply that each $\wt{f}_i$ has small strain. Then using Proposition \ref{lem:H_0_nearby_isometry}, small strain implies that there exist $R_i'$ that each $\wt{f}_i$ is near to that $R_i'$. We then apply the same process to the tuples $(\wt{f}_1,...,\wt{f}_m)$ and $(R_1',\ldots,R_m')$.

The previous paragraph contains the core idea of the KAM scheme. Following this scheme, one encounters a common technical difficulty inherent in KAM arguments: regularity. In our case, this problem is most crucial when we construct the conjugacy $\phi$. There is not a single choice of $\phi$, but rather a family depending on a parameter $\lambda$. The parameter $\lambda$ controls how smooth $\phi$ is. Larger values of $\lambda$ give less regular conjugacies. We refer to this as a \emph{conjugation of cutoff} $\lambda$; the formal construction of the \emph{conjugation of cutoff} $\lambda$ appears in the proof in Lemma \ref{lem:KAM_step} which also gives estimates following from this construction. The $n$th time we iterate this procedure we will use a particular value $\lambda_n$ as our cutoff. The proof of Theorem \ref{thm:KAM_converges} shows how to pick the sequence $\lambda_n$ so that the procedure converges.

We now introduce the smoothing operators. Suppose that $M$ is a closed Riemannian manifold. As before, let $\Delta$ denote the Casimir Laplacian on $M$ as in subsection \ref{subsec:diophantine_sets_tameness}.
As $\Delta$ is self adjoint, it decomposes the space of $L^2$ vector fields into subspaces depending on the particular eigenvalue associated to that subspace. We call these subspaces $H_{\lambda}$. For a vector field $X$, we may write $X=\sum_{\lambda} X_{\lambda}$, where $X_{\lambda}\in H_{\lambda}$ is the projection of $X$ onto the $\lambda$ eigenspace of $\Delta$. All of the eigenvalues of $\Delta$ are positive. By removing the components of $X$ that lie in high eigenvalue subspaces, we are able to smooth $X$. Let $\mc{T}_{\lambda} X=\sum_{\lambda'<\lambda} X_{\lambda'}$ equal the projection onto the modes strictly less than $\lambda$ in magnitude. Let $\mc{R}_{\lambda} X=\sum_{\lambda'\ge \lambda} X_{\lambda'}$ be the projection onto the modes of magnitude greater than or equal to $\lambda$. Then $X=\mc{T}_{\lambda}X+\mc{R}_{\lambda} X$.

We record two standard estimates which may be obtained by application of the Sobolev embedding theorem. For $s\ge 0$, there exists a constant $C_s>0$ such that for any $\overline{s}\ge s$ and any $C^{\infty}$ vector field $X$ on $M$,
\begin{equation}\label{eq:lower_spectrum_regularity_estimate}
\|\mc{T}_{\lambda}X\|_{C^{\overline{s}}}\le C_s\lambda^{k_3+(\overline{s}-s)/2}\|X\|_{C^s},
\end{equation}
\begin{equation}\label{eq:upper_spectrum_regularity_estimate}
\|\mc{R}_{\lambda} X\|_{C^s}\le C_s\lambda^{k_3-(\overline{s}-s)/2}\|X\|_{C^{\overline{s}}}.
\end{equation}

The smoothing operators and the above estimates on them are useful because without smoothing certain estimates appearing in the KAM scheme become unusable. One may see this by considering what happens in the proof of Lemma \ref{lem:KAM_step} if one removes the smoothing operator $\mc{T}_{\lambda}$ from equation \eqref{eqn:V_defn}.

The proof of the following lemma should be compared with \cite[Sec. 3.4]{dolgopyat2007simultaneous}

\begin{lem}\label{lem:KAM_step}
Suppose that $(M^d,g)$ is a closed isotropic Riemannian manifold other than $S^1$.  There exists a natural number $l_0$ such that for $\ell>l_0$ and any $(C,\alpha,n_0)$ the following holds. For any sufficiently small $\sigma>0$, there exist a constant $r_\ell>0$ and numbers $k_0,k_1,k_2$ such that for any $s>\ell$ and any $m$ there exist constants $C_{s,\ell},r_{s,\ell}>0$ such that the following holds. Suppose that $(R_1,...,R_m)$ is a $(C,\alpha,n_0)$-Diophantine tuple with entries in $\Isom(M)$ and $(f_1,...,f_m)$ is a collection of $C^{\infty}$ diffeomorphisms of $M$. Suppose that the random dynamical system generated by $(f_1,...,f_m)$ has stationary measures with arbitrarily small in magnitude bottom exponent. Write $\varepsilon_k$ for $\max_i d_{C^k}(f_i,R_i)$. If $\lambda\ge 1$ is a number such that 
\begin{equation}\label{eq:KAM_step_assumption_2}
\lambda^{k_{0}}\varepsilon_{l_0}\le r_{\ell}
\end{equation}
and
\begin{equation}\label{eq:KAM_step_assumption_1}
\lambda^{k_1-s/4}\varepsilon_s+\varepsilon_{l_0}^{3/2}<r_{s,\ell},
\end{equation}
then there exists a smooth diffeomorphism $\phi$ and a new tuple $(R_1',...,R_m')$ of isometries of $M$ such that for all $i$ setting $\wt{f}_i=\phi f_i\phi^{-1}$, we have
\begin{align}
d_{C^{\ell}}(\wt{f}_i,R_i')&\le C_{s,\ell}(\lambda^{k_1-s/10}\varepsilon_s^{1-\sigma}+\varepsilon_{l_0}^{9/8}),\label{eqn:low_norm_KAM_estimate}\\
d_{C^0}(R_i,R_i')&\le C_{s,\ell}(\varepsilon_{l_0}+(\lambda^{k_1-s/4}\varepsilon_s+\varepsilon_{l_0}^{3/2})^{1-\sigma}),\label{eqn:distance_to_new_rotations}\\
d_{C^s}(\wt{f}_i,R_i')&\le C_{s,\ell}\lambda^{k_2}\varepsilon_s, \text{ and} \label{eqn:high_norm_KAM_estimate}\\
d_{C^s}(\phi,\Id)&\le C_{s,\ell}\lambda^{k_2}\varepsilon_s. \label{eqn:size_of_conj_estimates}
\end{align}

The diffeomorphism $\phi$ is called a conjugation of cutoff $\lambda$.
\end{lem}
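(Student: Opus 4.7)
The plan is to execute one step of the KAM iteration as sketched in the introduction: construct a conjugacy $\phi$ that eliminates the ``anomalous'' term $\mathcal{U}$ in the Taylor expansion of Proposition \ref{prop:taylor_expansion_lambda_k}, so that the existence of stationary measures $\mu_n$ with $|\lambda_d(\mu_n)|\to 0$ forces the strain tensors of the conjugated maps $\widetilde{f}_i := \phi f_i \phi^{-1}$ to be small in $H^0$, at which point Proposition \ref{lem:H_0_nearby_isometry} supplies the new isometries $R_i'$.

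Concretely, first linearize $f_i = \psi_{Y_i}\circ R_i$ with $Y_i(R_i(x)) := \exp^{-1}_{R_i(x)} f_i(x)$, which is well defined by \eqref{eq:KAM_step_assumption_2}. A first-order expansion shows that if $\phi = \psi_V$, then $\widetilde{f}_i = \psi_{\widetilde{Y}_i}\circ R_i$ with $\widetilde{Y}_i = Y_i + V - (R_i)_* V$ modulo quadratic errors, so $\frac{1}{m}\sum_i \widetilde{Y}_i$ equals $\overline{Y} + (I-\mathcal{L})V$ at leading order, where $\overline{Y} := \frac{1}{m}\sum_i Y_i$. I therefore solve the tame coboundary equation $(I-\mathcal{L})V = -\overline{Y}$ (after projecting out the finite-dimensional kernel of Killing fields) using Proposition \ref{prop:tame_coboundaries}; the Diophantine hypothesis and its perturbative stability enter here. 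To preserve $C^\infty$ regularity, replace $V$ by its spectral truncation $V_\lambda := \mathcal{T}_\lambda V$ and set $\phi := \psi_{V_\lambda}$. Estimate \eqref{eq:lower_spectrum_regularity_estimate} combined with the tameness bound yields \eqref{eqn:size_of_conj_estimates} directly, and a $C^k$-composition argument (Lemma \ref{lem:C^k_composition_estimate}) propagates this through $\widetilde{f}_i = \phi f_i \phi^{-1}$ to give \eqref{eqn:high_norm_KAM_estimate}.

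The crucial step is controlling the residual $\frac{1}{m}\sum_i \widetilde{Y}_i$ left over after truncation and quadratic errors. By \eqref{eq:upper_spectrum_regularity_estimate} and a direct estimate of the quadratic contributions, one obtains $\|\sum_i \widetilde{Y}_i\|_{C^k} \le C(\lambda^{k_1-s/4}\varepsilon_s + \varepsilon_{l_0}^{3/2})$, matching \eqref{eq:KAM_step_assumption_1}. Inserting this bound into \eqref{eqn:error_term_control_estimate} renders $\mathcal{U}(\psi)$ negligible in Proposition \ref{prop:taylor_expansion_lambda_k} applied to $(\widetilde{f}_1,\ldots,\widetilde{f}_m)$ at $r = d$ (where $\Gr_d(M) = M$ and the non-conformal term vanishes). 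Since $d\,\lambda_d(\mu_n) \le \Lambda_d(\mu_n)$, the sign-definite conformal strain term must absorb $|\lambda_d(\mu_n)|$, yielding in the limit $n\to\infty$ an $H^0$ bound on $E_C^{\widetilde{f}_i}$. The non-conformal piece of $\widetilde{f}_i^*g - g$ is controlled by the parallel application at $r = 1$, where $\Gr_1(M)$ is $\Isom(M)$-homogeneous by isotropy, using the a priori bound $|\lambda_1(\mu_n)| = O(\varepsilon_{l_0})$ coming from $f_i$ being a small perturbation of an isometry.

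Finally, feed this $H^0$-strain bound together with the already-established $C^k$ closeness of $\widetilde{f}_i$ to $R_i$ into Proposition \ref{lem:H_0_nearby_isometry} with $I := R_i$ and $f := \widetilde{f}_i$; its conclusions \eqref{eqn:H_0_lemma_low_reg_est} and \eqref{eqn:H_0_lemma_high_reg_est} translate directly into \eqref{eqn:distance_to_new_rotations} and \eqref{eqn:low_norm_KAM_estimate}, with the $1-\sigma$ exponent and the $9/8$ ratio in the latter arising from the built-in $\sigma$ interpolation of that proposition together with Young's inequality applied to the two-term strain bound. The main obstacle will be the bookkeeping in the third paragraph: verifying that the coboundary equation \emph{precisely} annihilates the first-order contribution to $\mathcal{U}(\psi)$ (not merely some approximation of it), and simultaneously tuning the cutoff $\lambda$ so that the truncation loss $\lambda^{k_1-s/4}\varepsilon_s$ remains dominated by $\varepsilon_{l_0}^{3/2}$, since it is this $\varepsilon_{l_0}^{3/2}$ that will ultimately power the superlinear convergence required in Theorem \ref{thm:KAM_converges}.
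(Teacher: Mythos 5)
Your overall architecture matches the paper's: solve the truncated coboundary equation to kill the anomalous $\mathcal{U}$ term, invoke Proposition \ref{prop:taylor_expansion_lambda_k} to convert the Lyapunov hypothesis into an $H^0$ strain bound, then apply Proposition \ref{lem:H_0_nearby_isometry} to extract the new isometries, and propagate $C^s$ bounds by Lemma \ref{lem:C^k_composition_estimate}. The step where you go wrong, however, is not the bookkeeping you flag at the end — it is the way you try to extract the non-conformal strain bound in the third paragraph, and the gap is fatal to the quantitative structure of the whole scheme.

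You propose to bound $\int\|E^{\widetilde f_i}_{NC}\|^2$ by applying Proposition \ref{prop:taylor_expansion_lambda_k} at $r=1$ and using the a priori estimate $|\lambda_1(\mu_n)|=O(\varepsilon_{l_0})$. But at $r=1$ the coefficients of the conformal and non-conformal strain integrals in $\Lambda_1(\mu)$ have \emph{opposite} signs, so $\lambda_1$ small does not by itself constrain $\int\|E_{NC}\|^2$; even after subtracting the $E_C$ contribution (which you control from the $r=d$ expansion), what survives is an estimate of the form $\int\|E^{\widetilde f_i}_{NC}\|^2 \le O(|\lambda_1|)+O(\text{error})$. The hypothesis of the lemma only forces $|\lambda_d(\mu_n)|\to 0$, \emph{not} $|\lambda_1(\mu_n)|\to 0$, so the best you get this way is $\int\|E_{NC}\|^2 = O(\varepsilon_{l_0})$, hence $\|\widetilde f_i^{\,*}g-g\|_{H^0}=O(\varepsilon_{l_0}^{1/2})$. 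Feeding that into the interpolated estimate \eqref{eqn:H_0_lemma_high_reg_est} of Proposition \ref{lem:H_0_nearby_isometry} yields $d_{C^\ell}(\widetilde f_i,R_i')\lesssim \varepsilon_{l_0}^{1/4-\sigma/2}\varepsilon_{l_0}^{1/2-\sigma}\approx\varepsilon_{l_0}^{3/4}$, which is \emph{worse} than linear in $\varepsilon_{l_0}$; the iteration would diverge rather than converge superlinearly. The target $\varepsilon_{l_0}^{9/8}$ in \eqref{eqn:low_norm_KAM_estimate} is precisely what the convergence argument of Theorem \ref{thm:KAM_converges} needs, and it comes from having $\|\widetilde f_i^{\,*}g-g\|_{H^0}=O(\varepsilon_{l_0}^{3/2})$, not $O(\varepsilon_{l_0}^{1/2})$.

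The paper avoids this by never treating the two strain terms through separate values of $r$: it forms the \emph{difference} $\Lambda_d(\mu)-\Lambda_{d-1}(\mu)=\lambda_d(\mu)$ (equation \eqref{eqn:lyap_est_1}), in which the conformal and non-conformal strain integrals appear with coefficients of the \emph{same} sign. Consequently $|\lambda_d(\mu_n)|\to 0$, together with the third-order error control of \eqref{eqn:intermediate_nonconformality_estimate}, simultaneously bounds both $\int\|E_C\|^2$ and $\int\|E_{NC}\|^2$ by $O(\lambda^{k_3-s/2}\varepsilon_s\varepsilon_{\ell_5}+\varepsilon_{\ell_5}^3)$, and taking square roots gives the required $H^0$ bound with exponent $3/2$. (The remark following Lemma \ref{lem:KAM_step} points out that $\Lambda_1-\Lambda_d$ works equally well, again because the signs agree in the difference — but that requires the stronger hypothesis that both $\lambda_1$ and $\Lambda_d$ become small, not just $\lambda_d$.) To repair your argument you should replace the separate applications at $r=d$ and $r=1$ by a single application to a sign-coherent combination such as $\Lambda_d-\Lambda_{d-1}$, whose value is controlled directly by the hypothesis $|\lambda_d(\mu_n)|\to 0$.
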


\begin{proof}
As in equation \eqref{eqn:linearized_error}, let $Y_i$ be the smallest vector field on $Y_i$ satisfying $\exp_{R(x)}Y_i(x)=f_i(x)$. Let $\mc{L}$ be the operator on vectors fields defined by $\mc{L}(Z)=m^{-1}\sum_{i=1}^m (R_i)_*Z$ as in Proposition \ref{prop:tame_coboundaries2}.
Let
\begin{equation}\label{eqn:V_defn}
V\coloneqq  -(1-\mc{L})^{-1}\left(\frac{1}{m}\sum_i\mc{T}_{\lambda} Y_i\right)
\end{equation}
and let  $\wt{f}_i=\psi_{V}f_i\psi_V^{-1}$. Let $\wt{\varepsilon}_k=\max_{i}d_{C^k}(\wt{f}_i,R_i)$ and let $\wt{Y}_i$ be the pointwise smallest vector field such that $\exp_{R(x)} \wt{Y}_i(x)=\wt{f}_i(x)$. By Proposition \ref{prop:conj_error_field},  for a $C^1$ small vector field $V$,
\begin{equation}\label{eq:conj_error_field}
\wt{Y}_i= Y_i+V-R_iV+Q(Y_i,V),
\end{equation}
where $Q$ is quadratic in the sense of Definition \ref{defn:quadratic}. By Proposition \ref{prop:tame_coboundaries}, we see that $\|V\|_{C^k}\le C_k\varepsilon_{k+\alpha}$ for some fixed $\alpha$. There exist $\beta,D_1$ such that $\|Q(Y_i,V)\|_{C^k}\le D_k\varepsilon_{k+\beta}^2$. 
By estimating the terms in equation \eqref{eq:conj_error_field}, it follows that for each $k>0$ if $\varepsilon_{k+\alpha+\beta}<1$ then there exists a constant $D_{2,k}$ such that 
\begin{equation}\label{eq:wt_f_R_error_Ck}
d_{C^k}(\wt{f}_i,R_i)<D_{2,k}\varepsilon_{k+\alpha+\beta}.
\end{equation}

Let $\mu$ be an ergodic stationary measure on $M$ for the tuple $(\wt{f}_1,...,\wt{f}_m)$ as in the statement of the lemma. We now apply Proposition \ref{prop:taylor_expansion_lambda_k} with $r=d-1,d$ and recall why the hypotheses of that proposition are satisfied. First, by our assumption that $M$ is isotropic, $\Isom(M)^{\circ}$ acts transitively on $M$ and $\Gr_1(M)$. We have also assumed the tuple $(R_1,\ldots,R_m)$ is Diophantine. The nearness of $(\wt{f}_1,\ldots,\wt{f}_m)$ to $(R_1,\ldots,R_m)$ is guaranteed by equation \eqref{eq:wt_f_R_error_Ck}, a sufficiently small choice of $r_{\ell}$, and sufficiently large choice of $l_0$ by equation \eqref{eq:KAM_step_assumption_2} as $\lambda\ge 1$.
Thus by applying Proposition \ref{prop:taylor_expansion_lambda_k} to the conjugated system, there exists $k_1$ such that, in the language of that proposition:
\[
\Lambda_r(\mu)=\frac{-r}{2dm}\sum_{i=1}^m \int_M \|E_C^{\wt{f}_i}\|^2+\frac{r(d-r)}{(d+2)(d-1)m}\sum_{i=1}^m \int_M\|E_{NC}^{\wt{f}_i}\|^2\,d\vol+\int_{G_r(M)}\mc{U}(\psi_r)d\vol+O(\|\wt{Y}\|_{C^{k_{1}}}^3),
\]
where $\psi_r(x)=\frac{1}{m}\sum_{i=1}^m \ln \det (D_x\wt{f}_i\mid E_x)$ and $\mc{U}$ is defined in Proposition \ref{prop:taylor_expansion_of_haar}.

Pick a sequence of ergodic stationary measures $\mu_n$ so that $\abs{\lambda_d(\mu_n)}\to 0$.
Subtracting the expression for $\Lambda_{d-1}(\mu_n)$ from the expression for $\Lambda_d(\mu_n)$, we obtain that 
\begin{align}\label{eqn:lyap_est_1}
\begin{split}
\lambda_d(\mu_n)=\Lambda_{d}(\mu_n)-\Lambda_{d-1}(\mu_n)=&\frac{-1}{2dm}\sum_{i=1}^{m} \int_M \|E_C^{\wt{f}_i}\|^2\,d\vol+\frac{-(d-1)}{(d+2)(d-1)m}\sum_{i=1}^m \int_M \|E_{NC}^{\wt{f}_i}\|^2\,d\vol \\ &-\int_{\Gr_{d-1}(M)} \mc{U}(\psi_{d-1})\,d\vol+\int_{\Gr_d(M)} \mc{U}(\psi_d)\,d\vol+O(\|\wt{Y}\|^3_{C^{k_{1}}}).
\end{split}
\end{align}
Write $\Gr_r(R)$ for the map on $\Gr_r(M)$ induced by $R$. Write $\mathbf{Y}_i$ for the shortest vector field on $\Gr_r(M)$ such that $\exp_{\Gr_r(R_i)(x)} \mathbf{Y}_i=\Gr_r(\wt{f}_i)(x)$. By Lemma \ref{lem:error_on_lifts}, for each $k$ there exists $C_{1,k}$ such that 
\[
\left \|\sum_{i=1}^m \mathbf{Y}_i\right\|_{C^k}\le C_{1,k}\left(\left\|\sum_{i=1}^m \wt{Y}_i\right\|_{C^{k+1}}+\wt{\varepsilon}_{k+1}^2\right).
\]
Hence by the above line and the final estimate in Proposition \ref{prop:taylor_expansion_of_haar}  there exists $k_2$ such that
\begin{equation}\label{eqn:u_psi_r_estimate}
\abs{\int_{\Gr_r(M)} \mc{U}(\psi_r)\,d\vol}\le C_2\|\psi_r\|_{C^{k_2}}\left(\left\|\frac{1}{m}\sum_{i=1}^m \wt{Y}_i\right\|_{C^{k_{2}}}+\|\wt{Y}_i\|_{C^{k_2}}^2\right).
\end{equation}
The term $\|\psi_r\|_{C^{k_2}}$ is bounded by a constant times $\wt{\varepsilon}_{k_2}$. By using equation \eqref{eq:conj_error_field} we may rewrite the second term appearing in the product in equation \eqref{eqn:u_psi_r_estimate}.
\begin{align*}
\frac{1}{m}\sum_{i=1}^m \wt{Y}_i&=\frac{1}{m}\sum_i Y_i +-(1-\mc{L})^{-1}(\frac{1}{m}\sum_{i} \mc{T}_{\lambda} Y_i)-\frac{1}{m}\sum_i (R_i)_*(-(1-\mc{L})^{-1})(\mc{T}_{\lambda}Y_i)+\frac{1}{m}\sum_i Q(Y_i,V)\\
&=\frac{1}{m}\sum_i \mc{R}_{\lambda}Y_i+\frac{1}{m}\sum_i \mc{T}_{\lambda}Y_i - (1-\mc{L})(1-\mc{L})^{-1}(\frac{1}{m}\sum_i \mc{T}_{\lambda} Y_i)+\frac{1}{m}\sum_{i} Q(Y_i,V)\\
&=\frac{1}{m}\sum_i \mc{R}_{\lambda}Y_i+\frac{1}{m}\sum_i \mc{T}_{\lambda}Y_i -\frac{1}{m}\sum_i \mc{T}_{\lambda}Y_i+\frac{1}{m}\sum_i Q(Y_i,V)\\
&=\frac{1}{m}\sum_i \mc{R}_{\lambda}Y_i+\frac{1}{m}\sum_i Q(Y_i,V)\\
\end{align*}
By equation \eqref{eq:upper_spectrum_regularity_estimate}, there exists $k_3$ such that for all $s\ge 0$:
\[
\|R_{\lambda}Y_i\|_{C^1}\le C_{3,s}\lambda^{k_3-s/2}\|Y_i\|_{C^s}.
\]
As the $Q$ term is quadratic, there exist $\ell_2$, $C_4$ such that
\[
\|Q(Y_i,V)\|_{C^{k_2}}\le C_4\|Y_i\|_{C^{\ell_2}}\|V\|_{C^{\ell_2}}=C_4\|Y_i\|_{C^{\ell_2}}\|(1-\mc{L})^{-1}(\mc{T}_{\lambda} Y_i)\|_{C^{\ell_2}}\le C_5\varepsilon_{\ell_3}^2
\]
for some $\ell_3$ by Proposition \ref{prop:tame_coboundaries2}. Thus
\[
\left\|\frac{1}{m}\sum_i \wt{Y}_i\right\|_{C^{k_2}}\le C_{6,s}(\lambda^{k_3-s/2}\varepsilon_s+\varepsilon_{\ell_3}^2).
\]
Finally, by equation \eqref{eq:wt_f_R_error_Ck} we have that $\|\wt{Y}_i\|_{C^{k_2}}\le C_7 \varepsilon_{\ell_3}$ as before. Let $\ell_4=\max\{\ell_3,k_2+\alpha+\beta\}$.
Applying all of these estimates to \eqref{eqn:u_psi_r_estimate} gives
\begin{equation}\label{eqn:U_estimate}
\abs{\int_{\Gr_r(M)} \mc{U}(\psi_r)\,d\vol} 
\le C_{8,s}\varepsilon_{k_2}(\lambda^{k_3-s/2}\varepsilon_s+\varepsilon_{\ell_4}^2).
\end{equation}
By taking $\ell_5>\max\{k_1+\alpha+\beta,k_2,\ell_4\}$, using that $\lambda_d(\mu_n)\to 0$,\footnote{Note that we did not need $\lambda(\mu_n)\to 0$ in order to conclude equation \eqref{eqn:intermediate_nonconformality_estimate}. It suffices to know that there $\mu$ such that $\lambda_d(\mu)$ is comparable to the right hand side of \eqref{eqn:U_estimate}. This observation is the essence of the proof of Theorem \ref{thm:taylor_expansion_of_top_and_bottom}.}  and combining equations \eqref{eqn:U_estimate} and \eqref{eqn:lyap_est_1} we obtain for $s\ge 0$ that there exists $C_{9,s}$ such that
\begin{equation}\label{eqn:intermediate_nonconformality_estimate}
C_{9,s}(\lambda^{k_3-s/2}\varepsilon_s\varepsilon_{\ell_5}+\varepsilon_{\ell_5}^3)\ge\frac{1}{2dm}\sum_{i=1}^{m} \int_M \|E_C^{\wt{f}_i}\|^2\,d\vol+\frac{(d-1)}{(d+2)(d-1)m}\sum_{i=1}^m \int_M \|E_{NC}^{\wt{f}_i}\|^2\,d\vol.
\end{equation}
Note that the coefficients on each of the strain terms are positive. If $s>\ell_5$, then by taking square roots, we see that there exist constants $C_{10,s}$ such that for each $i$
\begin{equation}
C_{10,s}(\lambda^{k_3/2-s/4}\varepsilon_s+\varepsilon_{\ell_5}^{3/2})\ge \|\wt{f}_i^*g-g\|_{H^0}.
\end{equation}

We now give a naive estimate on the higher $C^s$ norms under the assumption that $\varepsilon_1$ is bounded by a constant $\epsilon_1>0$. To begin, by combining equation \eqref{eq:lower_spectrum_regularity_estimate} and Proposition \ref{prop:tame_coboundaries} we see that there exists $\alpha>0$ such that for each $s$ there exists $D_{3,s}$ such that $\|V\|_{C^{s}}\le  D_{3,s}\lambda^{\alpha}\varepsilon_{s}$. 
Hence by Lemma \ref{lem:ck_inverse_est2}, both  $d_{C^{s}}(\psi_V,\Id)$ and $d_{C^{s}}(\psi_V^{-1},\Id)$ are bounded by $D_{4,s}\lambda^{\alpha}\varepsilon_{s}$. This establishes equation \eqref{eqn:size_of_conj_estimates}.

Now applying the composition estimate from Lemma \ref{lem:C^k_composition_estimate}, we find that  assuming $\lambda \ge 1$:
\begin{align*}
d_{C^s}(f\circ \psi_V^{-1},R)&\le C_{11,s}(d_{C^s}(f,R)+d_{C^s}(\psi_V^{-1},\Id))\\
&\le C_{12,s}(\varepsilon_s+\lambda^{\alpha}\varepsilon_s)\\
&\le C_{13,s}(\lambda^{\alpha}\varepsilon_s).
\end{align*}
We then apply the other estimate in Lemma \ref{lem:C^k_composition_estimate}, to find:
\begin{align*}
d_{C^s}(\psi_V\circ f\circ \psi_V^{-1},R)&\le C_{11,s}(d_{C^s}(\psi_V,\Id)+d_{C^s}(f\circ \psi_V^{-1},R))\\
&\le C_{14,s}(\lambda^{\alpha}\varepsilon_s+\lambda^{\alpha}\varepsilon_s)\\
&\le C_{15,s}\lambda^{\alpha}\varepsilon_s.
\end{align*}
Hence under an assumption of the type in equation \eqref{eq:KAM_step_assumption_2}, namely $\varepsilon_1<\epsilon_1$, we may conclude
\begin{equation}\label{eqn:naive_est_wtf_R}
d_{C^s}(\wt{f}_i,R)\le C_{15,s}\lambda^{\alpha}\varepsilon_s,
\end{equation}
which establishes equation \eqref{eqn:high_norm_KAM_estimate}.

We now apply Proposition \ref{lem:H_0_nearby_isometry} to this system. Let $k_{\sigma}$ and $r_{\sigma}$ be the $k$ and $r$ in Proposition \ref{lem:H_0_nearby_isometry} for a given choice of $\sigma$ and our fixed $\ell$.  In preparation for the application of the lemma, we record some basic estimates:
\begin{enumerate}
\item
By combining equation \eqref{eq:lower_spectrum_regularity_estimate} and Proposition \ref{prop:tame_coboundaries2} as before, we see that there exists $\ell_6$  such that
\begin{equation}
d_{C^2}(\wt{f}_i,R_i)\le \varepsilon_{\ell_6}.
\end{equation}
\item
From the previous discussion we also have
\[\|\wt{f}_i^*g-g\|_{H^0}\le C_{10,s}(\lambda^{k_3/2-s/4}\varepsilon_s+\varepsilon_{\ell_5}^{3/2}).
\]
\item
We also need the $C^{k_{\sigma}}$ estimate
\[
d_{C^{k_{\sigma}}}(\wt{f}_i,R)\le C_{15,k_{\sigma}}\lambda^{\alpha}\varepsilon_{k_{\sigma}}.
\]

\end{enumerate}
Hence if
\begin{equation}
C_{15,k_{\sigma}}\lambda^{\alpha}\varepsilon_{k_{\sigma}}<r_{\sigma}
\end{equation}
and 
\begin{equation}
C_{10,s}(\lambda^{k_3/2-s/4}\varepsilon_s+\varepsilon_{\ell_5}^{3/2})\le r_{\sigma},
\end{equation}
then by Proposition \ref{lem:H_0_nearby_isometry} and the previous estimates
there exist $C_6$ and isometries $R_i'$ such that
\begin{equation}\label{eqn:C_ell_est_56}
d_{C^{\ell}}(\wt{f}_i,R_i')\le C_{16,s}(\lambda^{k_3/2-s/4}\varepsilon_s+\varepsilon_{\ell_5}^{3/2})^{1/2-\sigma}\varepsilon_{\ell_6}^{1/2-\sigma}
\end{equation}
and
\begin{equation}\label{eqn:distance_to_new_rotations_2}
d_{C^0}(R_i',R_i)<C_{17,s}(\varepsilon_{\ell_6}+(\lambda^{k_3/2-s/4}\varepsilon_s+\varepsilon_{\ell_5}^{3/2})^{1-\sigma}).
\end{equation}
Let $\ell_7=\max\{\ell_5,\ell_6\}$. If $s>\ell_7$, then equation \eqref{eqn:C_ell_est_56} implies
\[
d_{C^{\ell}}(\wt{f}_i,R_i')\le C_{16,s}(\lambda^{k_4-s/9}\varepsilon_s^{1-2\sigma}+\varepsilon_{\ell_7}^{5/4-(5/2)\sigma}),
\]
which yields equation \eqref{eqn:low_norm_KAM_estimate} under the assumption that $\sigma>0$ is sufficiently small.
Note that equation \eqref{eqn:distance_to_new_rotations_2} establishes equation \eqref{eqn:distance_to_new_rotations}.  Thus we are done as we have established these estimates assuming only bounds of the type appearing in equations \eqref{eq:KAM_step_assumption_2} and \eqref{eq:KAM_step_assumption_1}.
\end{proof}

\begin{rem}
In the above lemma, we could instead have assumed that there exist stationary measures for which both the top exponent and the sum of all the exponents were arbitrarily small and concluded the same result. The reason being if we had considered $\Lambda_1-\Lambda_d$ in equation \eqref{eqn:lyap_est_1}, the coefficients of the strain terms would still have the same sign and so we could conclude the same result. By related modifications, one can produce many other formulations of the main result in \cite{dolgopyat2007simultaneous} that require other hypotheses on the Lyapunov exponents.
\end{rem}

\subsection{Convergence of the KAM scheme}

In this section we prove the main linearization theorem.  It is helpful to note that the approach to this theorem is somewhat different from the classical approach to KAM type results. In a classical argument, one might typically linearize the problem at a target isometric system and then find a solution to the linearized problem. In our case, while we are able to linearize the problem, the resulting linearized problem does not obviously have any solution. Consequently we must give dynamical and geometric arguments that show that a related type of averaged linearized problem can be solved and that solving this averaged problem is indeed helpful. This then allows us to make progress in the KAM scheme by conjugating the system closer to an isometric one. In particular, note that in our case we do not know from the outset which isometric system our random system will ultimately be conjugate to.

\newtheorem*{thm:mainthm}{\bf Theorem \ref{thm:KAM_converges}}
\begin{thm:mainthm}
\emph{Let $M^d$ be a closed isotropic Riemannian manifold other than $S^1$. There exists $k_0$ such that if $(R_1,...,R_m)$ is a tuple of isometries of $M$ such that the subgroup of $\Isom(M)$ generated by this tuple contains $\Isom(M)^{\circ}$, then there exists $\epsilon_{k_0}>0$ such that the following holds. Let $(f_1,...,f_m)$ be a tuple of $C^{\infty}$ diffeomorphisms satisfying $\max_{i} d_{C^{k_0}}(f_i,R_i)<\epsilon_{k_0}$. Suppose that there exists a sequence of ergodic stationary measures $\mu_n$ for the random dynamical system generated by $(f_1,...,f_m)$ such that $\abs{\lambda_d(\mu_n)}\to 0$, then there exists $\psi\in \Diff^{\infty}(M)$ such that for each $i$ the map $\psi f_i\psi^{-1}$ is an isometry of $M$ and lies in the subgroup of $\Isom(M)$ generated by $(R_1,\ldots,R_m)$.}
\end{thm:mainthm}

Before giving the proof, we sketch briefly the argument, which is typical of arguments establishing the convergence of a KAM scheme. In a KAM scheme where one wishes to show that some sequence of objects $h_n$ converges there are often two parts.  The first part of the proof is an inductive argument obtaining a sequence of estimates by the repeated application of  the KAM step, which in our case is Lemma \ref{lem:KAM_step}. The second half of the proof checks that the repeated application of the KAM step is valid by showing that we never leave the neighborhood of its validity and then checks that the procedure is converging in $C^{\infty}$.

In the first part, one inductively produces a sequence of estimates by iterating a KAM step. The estimates produced usually come in two forms: a single good estimate in a low norm and bad estimates in high norms. The low regularity estimate probably looks like $\|h_n\|_{C^0}\le N^{-(1+\tau)^n}$ where $\tau>0$, while for every $s$ one has a high regularity estimate like $\|h_n\|_{C^s}\le N^{(1+\tau)^n}$. A priori, the $h_n$ become superexponentially $C^0$ small, yet might be diverging in higher $C^s$ norms. To remedy this situation one then interpolates between the low and high norms by using an equality derived from Lemma \ref{lem:ck_interpolation_inequality}. In this case such an inequality for the objects $h_n$ might assert something like
\[
\|h_n\|_{C^{\lambda\cdot 0+(1-\lambda)s}}\le C_s\|f\|_{C^0}^{\lambda}\|f\|_{C^{s}}^{1-\lambda}.
\]
If $\lambda$ is sufficiently close to $1$ and $s$ is sufficiently large, a brief calculation then implies that the $C^{(1-\lambda)s}$ norm is also super exponentially small. By changing $s$ and $\lambda$ one then obtains convergence in $C^{\infty}$.

\begin{proof}[Proof of Theorem \ref{thm:KAM_converges}.]
The proof is by a KAM convergence scheme. To begin we introduce the Diophantine condition we will use. By Proposition \ref{prop:diophantine_equivalence2}, $(R_1,...,R_m)$ is $(C',\alpha',n')$-Diophantine for some $C',\alpha'>0$ and is stably so. By stability, there exist $(C,\alpha,n)$ and a $C^0$ neighborhood $\mc{U}$ of 
$(R_1,...,R_m)$ such that any tuple in $\mc{U}$ is also $(C,\alpha,n)$-Diophantine.  Hence if $(R_1',...,R_m')\in \mc{U}$, then the coefficients $C_{i,s}$ appearing in Lemma \ref{lem:KAM_step} are uniform over all of these tuples. Assuming we do not leave the set $\mc{U}$, the constants appearing in Lemma \ref{lem:KAM_step} will be uniform. We check this at the end of the proof in the discussion surrounding equation \eqref{eqn:rotation_distance_estimate}.

We now show that there exists a sequence of cutoffs $\lambda_n$ so that if we repeatedly apply Lemma \ref{lem:KAM_step} with the cutoff $\lambda_n$ on the $n$th time we apply the Lemma, then the resulting sequence of conjugates converges and the hypotheses of Lemma \ref{lem:KAM_step} remain satisfied. Given such a sequence $\lambda_n$ the convergence scheme is run as follows. Let $(f_{1,1},\ldots,f_{m,1})=(f_1,\ldots,f_m)$ and let $(R_{1,1},\ldots,R_{m,1})=(R_1,\ldots,R_m)$. Given $(f_{1,n-1},\ldots,f_{m,n-1})$ and $(R_{1,n-1},\ldots,R_{m,n-1})$ we apply Lemma \ref{lem:KAM_step} with cutoff $\lambda=\lambda_n$ to produce a diffeomorphism $\phi_n$ and a tuple of isometries that we denote by $(R_{1,n},\ldots,R_{m,n})$. We set $f_{i,n}=\phi_nf_{i,n-1} \phi_n^{-1}$ to obtain a new tuple of diffeomorphisms $(f_{1,n},\ldots,f_{m,n})$. We write $\psi_n$ for $\phi_n\circ \phi_{n-1}\circ \cdots \circ \phi_1$, so that $f_{i,n}=\psi_n\circ f_i\circ\psi_n^{-1}$.
Let $\varepsilon_{k,n}=\max_i d_{C^k}(f_{i,n},R_{i,n})$.

We now show that such a sequence of cutoffs $\lambda_n$ exist. Let $\sigma$ be a small positive number and let $l_0$ and $\epsilon_{l_0}$ be as in Lemma \ref{lem:KAM_step}. Let $k_0,k_1,k_2, r_{\ell},C_{s,\ell},r_{s,\ell}$ be as in Lemma \ref{lem:KAM_step} as well.  
To show that such a sequence of cutoffs $\lambda_n$ exists we must also provide a fixed choice of $s,\ell$ for the application of Lemma \ref{lem:KAM_step}. We will first show that the scheme converges in the $C^{l_0}$ norm and then bootstrap to get $C^{\infty}$ convergence. Fix some arbitrary $\ell>l_0$. The choice of $\ell$ does not matter in the sequel because we only will consider estimates on the $l_0$ norm.
We will choose $s$ such that
\begin{equation}\label{eqn:s_lower_bound}
s>\ell.
\end{equation}
Further, if $s$ is sufficiently large and $\tau$ is sufficiently small, then we can pick $\alpha$ such that 
\begin{equation}\label{eqn:defn_of_alpha}
\frac{2+\tau}{s/4-k_1}<\alpha<\min\{1/k_0,\tau/k_2\}
\end{equation}
So, we increase $s$ if needed and choose such a $\tau$ satisfying 
\begin{equation}\label{eqn:tau_bound}
1/8>\tau>0.
\end{equation}
Pick $s,\alpha,\tau$ so that each of equations \eqref{eqn:s_lower_bound}, \eqref{eqn:defn_of_alpha}, \eqref{eqn:tau_bound} is satisfied.

Let $\lambda_n=N^{\alpha(1+\tau)^n}$ for some $N$ we choose later. We will show that with this choice of cutoff at the $n$th step that the KAM scheme converges. In order to show this, we show the following two estimates hold inductively given a choice of  sufficiently large $N$:
\begin{align}
\varepsilon_{l_0,n}&\le N^{-(1+\tau)^n}\tag{H1}\label{eqn:est_epsilonl0}\\
\varepsilon_{s,n}&\le N^{(1+\tau)^n}\tag{H2}\label{eqn:est_epsilon_s}\\
\max_i\{d_{C^0}(R_{i,n},R_{i,1})\}&\le \sum_{i=1}^{n}N^{-\frac{1}{2}(1+\tau)^i}\tag{H3}\label{eqn:R_dist}.
\end{align}
This involves two arguments. The first argument shows that there is a sufficiently large $N$ such that if we have these estimates for $n$, then the hypotheses of Lemma \ref{lem:KAM_step} are satisfied. The second argument is the actual induction, which checks that if equations \eqref{eqn:est_epsilonl0} and \eqref{eqn:est_epsilon_s} hold for $n$ then they also hold for $n+1$, i.e. we apply Lemma \ref{lem:KAM_step} and then deduce \eqref{eqn:est_epsilonl0} and \eqref{eqn:est_epsilon_s} for $n+1$ from this.

We begin by checking that for all sufficiently large $N>0$ and any $n\in \N$ if \eqref{eqn:est_epsilonl0}, \eqref{eqn:est_epsilon_s}, and \eqref{eqn:R_dist} are satisfied, then the hypotheses of Lemma \ref{lem:KAM_step} are satisfied as well. To begin, as the summation in \eqref{eqn:R_dist} is summable, for all sufficiently large $N$, we are assured that $(R_{1,n},\ldots,R_{m,n})$ lies in $\mc{U}$. The first numbered hypothesis of Lemma \ref{lem:KAM_step} is equation \eqref{eq:KAM_step_assumption_2}:
\[
\lambda_n^{k_0}\varepsilon_{l_0,n}\le r_\ell.
\]
Given the choice of $\lambda_n$, if equations \eqref{eqn:est_epsilonl0} and \eqref{eqn:est_epsilon_s} hold it suffices to have
\[
N^{\alpha k_0(1+\tau)^n}N^{-(1+\tau)^n}<r_\ell,
\]
which holds for $N$ sufficiently large and all $n$ by our choice of $\alpha$. The other hypothesis of Lemma \ref{lem:KAM_step}, equation \eqref{eq:KAM_step_assumption_1}, requires
that 
\[
\lambda_n^{k_1-s/4}\varepsilon_{s,n}+\varepsilon_{l_0,n}^{3/2}<r_{s,\ell}.
\]
Given equations \eqref{eqn:est_epsilonl0} and \eqref{eqn:est_epsilon_s} and our choice of $\lambda_n$ it suffices to have
\[
N^{\alpha(k_1-s/4)(1+\tau)^n}N^{(1+\tau)^n}+N^{-\frac{3}{2}(1+\tau)^n}<r_{s,\ell}.
\]
Our choice of $s$ and $\alpha$ implies that $\alpha(k_1-s/4)<-1$, hence the above inequality holds for sufficiently large $N$. Thus the two hypotheses of Lemma \ref{lem:KAM_step} follow from equations \eqref{eqn:est_epsilonl0} and \eqref{eqn:est_epsilon_s}. Thus we may apply Lemma \ref{lem:KAM_step} given \eqref{eqn:est_epsilonl0}, \eqref{eqn:est_epsilon_s}, \eqref{eqn:R_dist}, and our choice of $N$. 

We now proceed to the inductive argument. What we will show is that for all $N$ sufficiently large, if we now require that our perturbation is small enough that \eqref{eqn:est_epsilonl0} and \eqref{eqn:est_epsilon_s} hold for $n=1$ and our choice of $N$ we check that we may continue applying Lemma \ref{lem:KAM_step} and that these estimates as well as \eqref{eqn:R_dist} continue to hold. Note that \eqref{eqn:R_dist} is trivial when $n=1$. We must then check that equations \eqref{eqn:est_epsilonl0}, \eqref{eqn:est_epsilon_s}, and \eqref{eqn:R_dist}  are satisfied for $n+1$ given they hold for $n$. 
 By the previous paragraph, we are free to apply the estimates from Lemma \ref{lem:KAM_step} as long as $N$ is sufficiently large.

We now check that equation \eqref{eqn:est_epsilonl0} holds for $n+1$.
By equation \eqref{eqn:low_norm_KAM_estimate}, we obtain that 
\[
\varepsilon_{l_0,n+1}\le C_{s,\ell}(\lambda_n^{k_1-s/10}\varepsilon_{s,n}^{1-\sigma}+\varepsilon_{l_0,n}^{9/8}).
\]
By applying equations \eqref{eqn:est_epsilonl0} and \eqref{eqn:est_epsilon_s} to 
 each term on the right it suffices to show
\begin{equation}\label{eqn:low_reg_evaluated_est}
C_{s,\ell}(N^{\alpha(k_1-s/10)(1+\tau)^n}N^{(1-\sigma)(1+\tau)^n}+N^{-9/8(1+\tau)^n})<N^{-(1+\tau)^{n+1}}.
\end{equation}
By our choice of $s$, $\alpha$, and $\tau$, the lower bound in equation \eqref{eqn:defn_of_alpha} implies that \[\alpha(k_1-s/10)+(1-\sigma)<-(1+\tau).\] In addition, by equation \eqref{eqn:tau_bound}, $-9/8<-(1+\tau)$. Thus for sufficiently large $N$ the left hand side of equation \eqref{eqn:low_reg_evaluated_est} is bounded above by $N^{-(1+\tau)^{n+1}}$.

Next we check equation \eqref{eqn:est_epsilon_s} holds for $n+1$.  By equation \eqref{eqn:high_norm_KAM_estimate},
\[
\varepsilon_{s,n+1}\le C_{s,\ell}\lambda_n^{k_2}\varepsilon_{s,n}.
\]
Hence, 
\[
\varepsilon_s\le C_{s,\ell}N^{k_2\alpha(1+\tau)^n}N^{(1+\tau)^n},
\]
By equation \eqref{eqn:defn_of_alpha}, $1+k_2\alpha<1+\tau$ and hence, assuming $N$ is sufficiently large, the right hand side is bounded by $N^{(1+\tau)^{n+1}}$, which shows equation \eqref{eqn:est_epsilon_s} is satisfied.

We now check that \eqref{eqn:R_dist}. This follows easily by the application of equation \eqref{eqn:distance_to_new_rotations},  which gives
\begin{equation}\label{eqn:rotation_distance_estimate}
d_{C^0}(R_{i,n},R_{i,n+1})\le C_{s,\ell}(\varepsilon_{l_0,n}+(\lambda_n^{k_1-s/4}\varepsilon_{s,n}+\varepsilon_{l_0,n}^{3/2})^{1-\sigma})
\end{equation}
Applying \eqref{eqn:est_epsilonl0} and \eqref{eqn:est_epsilon_s} and the definition of $\lambda_n$ to estimate the right hand side of equation \eqref{eqn:rotation_distance_estimate}, we find that for the $\gamma$ given in \eqref{eqn:R_dist} and $N$ sufficiently large that
\begin{equation}\label{eqn:isom_seq_cauchy}
d_{C^0}(R_{i,n},R_{i,n+1})\le N^{-\frac{1}{2}(1+\tau)^n},
\end{equation}
and \eqref{eqn:R_dist} holds for $n+1$.

We have now finished the induction but not the proof. We have shown that there exists a sequence $\lambda_n$ and a choice $s,\alpha,\ell,\tau,N$, so that if the initial conditions of the scheme are satisfied then we may iterate indefinitely and be assured of the estimates in equations \eqref{eqn:est_epsilonl0}, \eqref{eqn:est_epsilon_s}, \eqref{eqn:R_dist} at each step. 
We must now check that the conjugacies $\psi_n$ are converging in $C^{\infty}$ and that the tuples $(R_{1,n},\ldots, R_{m,n})$ are converging.  The latter is immediate because by \eqref{eqn:isom_seq_cauchy} this is a Cauchy sequence. In fact, we chose $N$ large enough that we never leave $\mc{U}$, hence the limit is in $\mc{U}$.
As the group of isometries of $M$ is $C^0$ closed and the distance of the tuples $(f_{1,n},\ldots,f_{m,n})$ from a tuple of isometries is converging to $0$, it follows that $(f_{1,n},\ldots,f_{m,n})$ is converging to a tuple of isometries.
To show that the $\psi_n$ converge in $C^{\infty}$, we obtain for every $s$ an estimate on $d_{C^s}(\phi_n,\Id)$. By a similar induction to that just performed, the estimate \eqref{eqn:size_of_conj_estimates} implies
\[
d_{C^s}(\phi_n,\Id)\le C_sN^{(1+\tau)^n}.
\]
Let $j>0$ be  an integer. By Lemma \ref{lem:ck_interpolation_inequality_diffeos}, interpolating with $\lambda=1-1/10$ between the $C^{l_0}$ distance and the $C^{jl_0}$ distance of $\phi_n$ to the identity gives
\[
d_{C^{.9l_0+(j/10)l_0}}(\phi_n,\Id)\le C_{j}N^{-.9(1+\tau)^n}N^{.1 (1+\tau)^n}=C_{j}N^{-.8(1+\tau)^n}.
\]
Thus by increasing $j$, we see that there exists $\tau'>0$ such that for each $C^s$ norm
\[
d_{C^{s}}(\phi_n,\Id)< C_s'N^{-(1+\tau')^n}.
\]
The previous line is summable in $n$. Hence we can apply Lemma \ref{lem:C_k_composition_convergence} to obtain convergence of sequence of the $\psi_n=\phi_n\circ \cdots \circ \phi_1$ in the $C^s$ norm for each $s$ and thus $C^{\infty}$ convergence.

Thus we see that we have simultaneously conjugated each $f_i$ into $\Isom(M)$. In order to obtain the full theorem, we must be assured that $\psi^{-1}f_i\psi$ lies in the subgroup of $\Isom(M)$ generated by $(R_1,\ldots,R_m)$. Note that $\Isom(M)/\Isom(M)^{\circ}$ is a finite group and that $\psi$ is homotopic to the identity by construction. Thus we see that the image of the group generated by $(\psi^{-1}f_1\psi,\ldots,\psi^{-1}f_m\psi)$ in $\Isom(M)/\Isom(M)^{\circ}$ is the same as the image of the group generated by $(R_1,\ldots,R_m)$. By our choice of $N$, $(\psi^{-1}f_1\psi,\ldots,\psi^{-1}f_m\psi)$ is in $\mc{U}$ and thus generates $\Isom(M)^{\circ}$. Thus the original tuple and the new one generate the same subgroup of $\Isom(M)$ and we are done.
\end{proof}

\subsection{Taylor expansion of Lyapunov exponents}

In order to recover Dolgopyat and Krikorian's Taylor expansion in the setting of isotropic manifolds, we would need to apply Proposition \ref{prop:taylor_expansion_lambda_k} for each $0\le r\le \dim M$. However, one of the hypotheses of Proposition \ref{prop:taylor_expansion_lambda_k} is that $\Isom(M)^{\circ}$ acts transitively on $\Gr_r(M)$. In Proposition \ref{prop:isom_not_transitive}, we see that unless $M$ is $S^n$ or $\RP^n$, $\Isom(M)$ does not act transitively on $\Gr_r(M)$ for $r\neq 1$ or $d-1$. Despite Proposition \ref{prop:isom_not_transitive}, we are able to obtain a partial result: the greatest and least Lyapunov exponents are symmetric about the ``average" Lyapunov exponent $\frac{1}{d}\Lambda_d(\mu)$.

\begin{thm}\label{thm:taylor_expansion_of_top_and_bottom}
Suppose that $M^d$ is a closed isotropic manifold other than $S^1$ and that $(R_1,...,R_m)$ is a subset of $\Isom(M)$ that generates a subgroup of $\Isom(M)$ containing $\Isom(M)^{\circ}$. Suppose that $(f_1,...,f_n)$ is a collection of $C^{\infty}$ diffeomorphisms of $M$. Then there exists $k_0$ such that if $\mu$ is an ergodic stationary measure of the random dynamical system generated by the $(f_1,...,f_m)$, then
\begin{equation}\label{eqn:taylor_expansion1d}
\abs{\lambda_1(\mu)-(-\lambda_d(\mu)+\frac{2}{d}\Lambda_d(\mu))}\le o(1)\abs{\lambda_d(\mu)}.
\end{equation}
where the $o(1)$ term goes to $0$ as $\max_{i} d_{C^{k_0}}(f_i,R_i)\to 0$. The $o(1)$ term depends only on $(R_1,...,R_m)$.
\end{thm}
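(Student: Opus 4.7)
The plan is to apply Proposition \ref{prop:taylor_expansion_lambda_k} at the three Grassmannian levels $r=1$, $r=d-1$, and $r=d$. These are the values of $r$ for which $\Isom(M)$ acts transitively on $\Gr_r(M)$ when $M$ is merely isotropic: isotropy handles $r=1$; the orthogonal-complement map $E\mapsto E^{\perp}$ is $\Isom(M)$-equivariant and transports transitivity from $\Gr_1(M)$ to $\Gr_{d-1}(M)$; and $\Gr_d(M)=M$ is trivially homogeneous. Writing $\lambda_d=\Lambda_d-\Lambda_{d-1}$ and combining the three expansions, the coefficients of the conformal strain $\sum_i\int\|E_C^{f_i}\|^2$ contribute $-\frac{1}{2dm}-\frac{1}{2dm}+\frac{2}{d}\cdot\frac{1}{2m}=0$, while those of the non-conformal strain contribute $\frac{1}{(d+2)m}-\frac{1}{(d+2)m}=0$, so all strain terms cancel identically and
\[
\lambda_1+\lambda_d-\frac{2}{d}\Lambda_d=\int_{\Gr_1}\mc{U}(\psi_1)\,d\vol-\int_{\Gr_{d-1}}\mc{U}(\psi_{d-1})\,d\vol+\frac{d-2}{d}\int_M\mc{U}(\psi_d)\,d\vol+O(\varepsilon^3).
\]

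A priori each $\mc{U}$-integral is of order $\varepsilon^2$, no better than $|\lambda_d|$, so the decisive step is to shrink them by a KAM conjugation. I would set $V=-(I-\mc{L})^{-1}(m^{-1}\sum_i\mc{T}_\lambda Y_i)$ and replace each $f_i$ by $\wt f_i=\psi_V f_i\psi_V^{-1}$, exactly as in the proof of Lemma \ref{lem:KAM_step}. Lyapunov exponents are invariant under $C^1$ conjugacy, so the displayed identity continues to hold with $f_i$ replaced by $\wt f_i$. The cohomological choice of $V$ annihilates the linear part of $m^{-1}\sum_i \wt Y_i$, leaving only the high-frequency tail and a quadratic remainder; the argument around equation \eqref{eqn:U_estimate} then gives $|\int\mc{U}(\psi_r)|\le C\varepsilon(\lambda^{-s/2}\varepsilon_s+\varepsilon^2)$ for each $r\in\{1,d-1,d\}$ in the conjugated system. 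Choosing $\lambda$ so that $\lambda^{-s/2}\varepsilon_s\le\varepsilon^2$ produces the absolute bound $|\lambda_1+\lambda_d-\tfrac{2}{d}\Lambda_d|\le C\varepsilon^3$.

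The hardest step will be upgrading this absolute bound to the relative bound $o(1)|\lambda_d|$. Here I would exploit the footnote in the proof of Lemma \ref{lem:KAM_step}: the manipulation of equation \eqref{eqn:lyap_est_1} does not require $\lambda_d(\mu_n)\to 0$, and applied to $\mu$ and the conjugated tuple it gives $|\lambda_d|=(\text{nonnegative strain terms})+O(\varepsilon^3)$. I would then split into two regimes. If $|\lambda_d|\ge\varepsilon^{5/2}$ then $C\varepsilon^3/|\lambda_d|\le C\varepsilon^{1/2}=o(1)$ and the desired inequality is immediate. If $|\lambda_d|<\varepsilon^{5/2}$ the conjugated strain is also $O(\varepsilon^{5/2})$, so Proposition \ref{lem:H_0_nearby_isometry} produces an isometric approximation of $\wt f_i$ that shrinks the effective perturbation size by a definite power of $\varepsilon$, and I would apply the KAM step again to the refined system. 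Either this iteration terminates in the first regime after finitely many steps, or it converges as in Theorem \ref{thm:KAM_converges} to a smooth conjugacy taking each $f_i$ to an isometry, forcing all Lyapunov exponents of $\mu$ to vanish and making the inequality hold trivially. The main technical subtlety is preserving uniform Diophantine constants for the updated isometries across the iteration, handled by the stability results of Section \ref{sec:diophantine}.
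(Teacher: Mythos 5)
Your proposal is correct and follows essentially the same approach as the paper: apply Proposition \ref{prop:taylor_expansion_lambda_k} at $r=1,d-1,d$ so that the conformal and non-conformal strain terms cancel, use the coboundary conjugation to make the $\mc{U}$-integrals $O(\varepsilon^3)$, and iterate the KAM step until $|\lambda_d|$ dominates the error scale, at which point the absolute bound becomes $o(1)|\lambda_d|$. Your explicit regime split at $\varepsilon^{5/2}$ is a slightly different (and in fact more transparent) device for extracting the $o(1)$ than the paper's tuning of the constant $C_{9,s}$ in its failure inequality \eqref{eqn:lambdad_condition}, but the two mechanisms implement the same idea.
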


\begin{proof}
By Theorem \ref{thm:KAM_converges}, there are two cases: either $(f_1,\ldots,f_m)$ is conjugate to isometries or it is not. In the isometric case equation \eqref{eqn:taylor_expansion1d} is immediate,  so we may assume that there there is an ergodic stationary measure $\mu$ with $\lambda_d(\mu)$ non-zero. The proof that follows is then essentially an observation about what happens when the KAM scheme is run on a system that has a measure with such a non-zero Lyapunov exponent.  If we run the KAM scheme without assuming that $(f_1,...,f_m)$ has a measure with zero exponents, we can keep running the scheme until the non-trivial exponents prevent us from continuing. At a certain point in the procedure, the non-trivial exponents cause a certain inequality fail. Using the failed inequality then gives the result.

We now give the details. Fix an ergodic stationary measure $\mu$ and consider equation \eqref{eqn:lyap_est_1} appearing in the KAM step:

\begin{align}
\begin{split}
\lambda_d(\mu)=&\frac{-1}{2dm}\sum_{i=1}^{m} \int_M \|E_C^{\wt{f}_i}\|^2\,d\vol+\frac{-(d-1)}{(d+2)(d-1)m}\sum_{i=1}^m \int_M \|E_{NC}^{\wt{f}_i}\|^2\,d\vol \\ &-\int_{\Gr_{d-1}(M)} \mc{U}(\psi_{d-1})\,d\vol+\int_{\Gr_d(M)} \mc{U}(\psi_d)\,d\vol+O(\|\wt{Y}\|^3_{C^{k_{1}}}).
\end{split}
\end{align}

The above equation allows us to use that the exponent $\lambda_d$ is small in magnitude. In the KAM step, we proceed from this estimate by estimating the $\|\wt{Y}\|^3_{C^{k_1}}$ term as well as the $\mc{U}$ terms. Equation \eqref{eqn:U_estimate} and the choice of $\ell_5$ imply that these terms satisfy:
\begin{equation}\label{eqn:U_y_estimate}\abs{\int_{\Gr_{d-1}(M)} \mc{U}(\psi_{d-1})\,d\vol-\int_{\Gr_d(M)} \mc{U}(\psi_d)\,d\vol+O(\|\wt{Y}\|^3_{C^{k_{1}}})}
\le C_{8,s}\varepsilon_{\ell_5}(\lambda^{k_3-s/2}\varepsilon_s+\varepsilon_{\ell_5}^2).
\end{equation}
Hence as long as 
\begin{equation}\label{eqn:lambdad_condition}
\abs{\lambda_d(\mu)}<(C_{9,s}-C_{8,s})(\varepsilon_{\ell_5}(\lambda^{k_3-s/2}\varepsilon_s+\varepsilon_{\ell_5}^2))
\end{equation}
the proof of Lemma \ref{lem:KAM_step} may proceed to equation \eqref{eqn:intermediate_nonconformality_estimate} even if there is not a sequence of measures $\mu_n$ such that $\abs{\lambda_d(\mu_n)}\to 0$. Hence we may continue running the KAM scheme 
until equation \eqref{eqn:lambdad_condition} fails to hold.

Suppose that we iterate the KAM scheme until equation \eqref{eqn:lambdad_condition} fails.  We consider the estimates available in the KAM scheme at the step of failure. By applying Proposition \ref{prop:taylor_expansion_lambda_k} with $r$ equal to $1$, $d$, and $d-1$, we obtain:

\begin{align}
\begin{split}
\Lambda_1(\mu)=&\frac{-1}{2dm}\sum_{i=1}^{m} \int_M \|E_C^{\wt{f}_i}\|^2\,d\vol+\frac{(d-1)}{(d+2)(d-1)m}\sum_{i=1}^m \int_M \|E_{NC}^{\wt{f}_i}\|^2\,d\vol+\int_{G_1(M)} \mc{U}(\psi_1)\,d\vol+\\
&O(\|\wt{Y}\|^3_{C^{k_1}})\\
\Lambda_{d-1}(\mu)=&\frac{-(d-1)}{2dm}\sum_{i=1}^{m} \int_M \|E_C^{\wt{f}_i}\|^2\,d\vol+\frac{(d-1)}{(d+2)(d-1)m}\sum_{i=1}^m \int_M \|E_{NC}^{\wt{f}_i}\|^2\,d\vol+\int_{G_{d-1}(M)} \mc{U}(\psi_{d-1})\,d\vol
+\\
&O(\|\wt{Y}\|^3_{C^{k_1}})\\
\Lambda_{d}(\mu)=&\frac{-d}{2dm}\sum_{i=1}^{m} \int_M \|E_C^{\wt{f}_i}\|^2\,d\vol+\int_{G_d(M)} \mc{U}(\psi_d)\,d\vol+O(\|\wt{Y}\|^3_{C^{k_1}})
\end{split}
\end{align}

Write $\mc{U}_i$ as shorthand for the term $\int_{\Gr_i(M)}\mc{U}(\psi_i)\,d\vol$. Then,

\begin{align}
\lambda_1(\mu)-(-\lambda_d(\mu)+\frac{2}{d}\Lambda_d(\mu))&=\Lambda_1(\mu)-\Lambda_{d-1}(\mu)+\frac{(d-2)}{d}\Lambda_d(\mu)\\
&=\mc{U}_1+\mc{U}_{d-1}+\mc{U}_{d}+O(\|\wt{Y}\|^3_{C^{k_1}}).\label{eqn:sum_of_Us}
\end{align}
Using equations \eqref{eqn:U_estimate}, \eqref{eq:wt_f_R_error_Ck}, and that $\ell_5>k_1+\alpha$, we bound the right hand side of equation \eqref{eqn:sum_of_Us} to find
\[
\abs{\lambda_1(\mu)-(-\lambda_d(\mu)+\frac{2}{d}\Lambda_d(\mu))}\le 4C_{8,s}(\lambda_n^{k_3-s/2}\varepsilon_s\varepsilon_{\ell_5}+\varepsilon_{\ell_5}^3). 
\]
But by the failure of estimate \eqref{eqn:lambdad_condition}, we may bound the right hand side of the previous line to obtain:
\begin{equation}\label{eqn:lambda1d_difference}
\abs{\lambda_1(\mu)-(-\lambda_d(\mu)+\frac{2}{d}\Lambda_d(\mu))}\le \frac{4}{C_{9,s}-C_{8,s}}\abs{\lambda_d(\mu)}.
\end{equation}

Note in the above equation that the larger $C_{9,s}$ is the smaller the left hand side of the equation is. We can take $C_{9,s}$ as large as we like and still run the KAM scheme. Running the KAM scheme while having a larger constant $C_{9,s}$ only requires that we assume our initial perturbation is closer to the original system of rotations in the $C^{k_0}$ norm. Hence by assuming that the initial distance is arbitrarily small in the $C^{k_0}$ norm, we may take $C_{9,s}$ as large as we like.  Thus equation \eqref{eqn:taylor_expansion1d} follows from equation \eqref{eqn:lambda1d_difference}.
\end{proof}

We now check the claim about isotropic manifolds.

\begin{prop}\label{prop:isom_not_transitive}
Suppose that $M$ is a closed isotropic manifold other than $\RP^n$ or $S^n$. Then $\Isom(M)$ does not act transitively on $\Gr_k(M)$ except if $k$ equals $=0,1,\dim M-1$ or $\dim M$.
\end{prop}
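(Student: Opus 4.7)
I would reduce to the isotropy representation at a point and then combine sectional-curvature invariants, explicit geometric constructions, and a dimension count. By the classification in subsection \ref{subsec:isotropic_manifolds}, the three remaining manifolds are $\CP^n$, $\HP^n$, and the Cayley projective plane. Fix $p \in M$ and let $K = \stab_{\Isom(M)}(p)$. Since $\Isom(M)$ acts transitively on $M$, transitivity of $\Isom(M)$ on $\Gr_k(M)$ is equivalent to transitivity of the isotropy representation of $K$ on $\Gr_k(T_pM)$. Furthermore, $V \mapsto V^\perp$ gives a $K$-equivariant diffeomorphism $\Gr_k(T_pM) \to \Gr_{d-k}(T_pM)$, so it suffices to prove non-transitivity for $2 \le k \le d/2$.

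For $k = 2$, since $K$ preserves the curvature tensor at $p$, sectional curvature $\kappa \colon \Gr_2(T_pM) \to \R$ is a $K$-invariant function. Each of $\CP^n$, $\HP^n$, and the Cayley plane has non-constant sectional curvature (attaining values in $[1,4]$ after normalization), so there are at least two $K$-orbits on $\Gr_2(T_pM)$. For $2 < k \le d/2$ I would analyze each manifold separately, using the $K$-invariant
\[
\sigma_{\max}(V) := \max\{\kappa(P) : P \in \Gr_2(V)\}.
\]
For $M = \CP^n$: any totally real $k$-plane $V_1$ (one with $J V_1 \perp V_1$, which exists precisely when $k \le n$) satisfies $\sigma_{\max}(V_1) = 1$, while any $k$-plane $V_2$ containing a complex line satisfies $\sigma_{\max}(V_2) = 4$, placing $V_1, V_2$ in distinct orbits. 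For $M = \HP^n$ with $2 \le k \le n$ the analogous construction (totally real versus containing a 2-plane inside a quaternion line) works. For $n < k \le 2n$ (with $n \ge 2$, since $\HP^1 = S^4$ is excluded) I would instead invoke the dimension count
\[
\dim K = 2n^2 + n + 3 < k(4n-k) = \dim \Gr_k(T_pM),
\]
which holds throughout this range since the quadratic $k(4n-k)$ is minimized at $k = n+1$ with value $3n^2 + 2n - 1 > 2n^2 + n + 3$ for $n \ge 2$. For the Cayley plane, $\dim K = \dim \Spin(9) = 36$ while $\dim \Gr_k(T_pM) = k(16-k) \ge 39$ for $3 \le k \le 13$, so non-transitivity is immediate, leaving only $k = 2$ (already handled by sectional curvature).

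The main obstacle is that no single uniform argument works. The dimension count fails entirely for $\CP^n$, where $(n-k)^2 \ge 0$ forces $\dim U(n) = n^2 \ge k(2n-k)$ for every $k$, so the geometric totally real construction is genuinely necessary. Since that construction requires $k \le n$, the duality $V \mapsto V^\perp$ in the first step is likewise essential to cover $d/2 < k \le d-2$. A minor technical point to verify carefully is that a totally real $k$-plane in $\HP^n$ really forces $\sigma_{\max} = 1$: using the quaternionic sectional-curvature formula $\kappa(v,w) = 1 + 3\sum_{i=1}^3 \langle I_i v, w\rangle^2$, the totally real condition $V \perp I_i V$ for $i=1,2,3$ makes each inner product vanish.
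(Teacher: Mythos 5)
Your proposal is correct and gives a rigorous proof, but it follows a genuinely different route than the paper. The paper's argument for $\CP^n$ and $\HP^n$ identifies the isotropy group (as $\mathrm{U}(n)$ resp.\ $\Sp(n)\times\Sp(1)$) and observes that it preserves the set of complex (resp.\ quaternionic) subspaces, so the orbit of a complex $k$-plane cannot contain a non-complex one; for the Cayley plane it uses the same $\dim F_4 = 52 < \dim\Gr_k$ count you give, plus the constant-curvature observation for $k=2$. Your version replaces the ``preserves complex subspaces'' invariant with the scalar $K$-invariant $\sigma_{\max}(V) = \max_{P\subset V}\kappa(P)$, separates orbits via totally real $k$-planes ($\sigma_{\max}=1$) versus $k$-planes containing a complex (or quaternionic) line ($\sigma_{\max}=4$), and exploits the $V\mapsto V^\perp$ duality to halve the range of $k$ before falling back to a dimension count on $\HP^n$ for $n<k\le 2n$. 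This buys you two things: a single uniform invariant that works for all three families (the curvature argument for $k=2$ is just the simplest case of it), and a proof that visibly covers odd $k$ — note that as written, the paper's phrasing ``preserves complex subspaces'' is vacuous when $k$ is odd (there are no complex $k$-planes then), so the paper's argument is implicitly using something closer to your finer invariant, e.g.\ $\dim(V\cap JV)$ or $\sigma_{\max}$. Your observation that the dimension count \emph{cannot} work for $\CP^n$ because $n^2 \ge k(2n-k)$ always, and hence a geometric invariant is unavoidable there, is a nice remark that the paper does not make explicitly. The only stylistic gap is that you should verify (or cite) that $\CP^n$, $\HP^n$, and the Cayley plane all have the $[1,4]$-pinched curvature formulas you invoke; these are standard facts about compact rank-one symmetric spaces, but they carry the whole argument.
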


\begin{proof}
From subsection \ref{subsec:isotropic_manifolds},
we have a list of all the closed isotropic manifolds, so we may give an argument for each of the families, $\CP^n$, $\HP^n$, and $F_4/\Spin(9)$.

The isometry group of $\CP^n$ is $\PSU(n+1)$. If we fix a point $p$ in $\CP^n$, then the isotropy group is naturally identified with $\SU(n)$. It is then immediate that the action of the isotropy group preserves complex subspaces of $\Gr_k(\CP^n)$. Consequently $\Isom(\CP^n)$ does not act transitively on $\Gr_k(\CP^n)$ as $\CP^n$ has subspaces that are not complex.  In the case of $\HP^n$, which is constructed similarly to $\CP^n$, a similar argument works where we use instead that the isotropy group is $\Sp(k)$, the compact symplectic group.

We now turn to the Cayley plane, for which we give a dimension counting argument. The dimension of $F_4$ is $52$   while  $\dim F_4/\Spin(9)=16$.  Recall that if $M$ is a manifold and $\dim M=d$ then $\dim \Gr_k(M)=(k+1)d+\frac{k(k+1)}{2}$. Hence $\dim \Gr_3(F_4/\Spin(9))>52$.  If $\Isom(M)$ acts transitively on $2$-planes then $M$ must have constant sectional curvature and hence is a sphere. The Cayley plane does not have constant sectional curvature hence $k=2$ is ruled out. Similarly, a dimension count excludes the possibility that $F_4$ acts transitively on $\Gr_k(F_4/\Spin(0))$ when $k\neq 0,1,15,16$.
\end{proof}

\appendix

\section{$C^k$ Estimates}\label{sec:ck_calculus}

In this section of the appendix, we collect some basic results concerning the calculus of $C^k$ functions. Most of the estimates stated here are used to compare constructions coming from Riemannian geometry and constructions coming from a chart.

Most of the estimates we prove below involve the following definition, which is an appropriate form for a second order term in the $C^k$ setting.

\begin{defn}\label{defn:quadratic}
Suppose that $X,Y,Z$ are all vector fields and that $Z=Z(X,Y)$ is a function of $X$ and $Y$. We say that $Z$ is \emph{quadratic} in $X$ and $Y$ if  there exists a fixed $\ell$ such that for each $k$ there is a constant $C_k$ depending only on $Z$ such that:
\begin{equation}\label{eqn:defn_quadratic}
\|Z\|_{C^k}\le C_k(\|X\|_{C^{k+\ell}}^2+\|Y\|_{C^{k+\ell}}^2).
\end{equation}
In addition to quadratic, we may also refer to $Z$ as being \emph{second order} in $X$ and $Y$. In the case that $Z$ depends only on $X$ the definition is analogous. 
\end{defn}

One thinks of equation \eqref{eqn:defn_quadratic} as a quadratic tameness estimate. Our main use of this notion is the following proposition, which allows us compose diffeomorphisms up to a quadratic error. As before, if $Y$ is a vector field on $M$, we write $\psi_Y$ for the map of $M$ that sends $x\mapsto \exp_x(Y(x))$. To emphasize that $\psi$ depends on a metric $g$, we may write $\psi_Y^g$.

The main result from this section is the following, which is used in the KAM scheme to see how the linearized error between $f_i$ and $R_i$ changes when $f_i$ is conjugated by a diffeomorphism $\psi$.

\begin{prop}\label{prop:conj_error_field}\cite[Eq. (8)]{dolgopyat2007simultaneous}
Suppose that $(M,g)$ is a closed Riemannian manifold and that $R$ is an isometry of $M$. Suppose that $f$ is a diffeomorphism of $M$ that is $C^1$ close to $R$. Let $Y(x)=\exp^{-1}_{R(x)} f(x)$. If $C$ is a $C^1$ small vector field on $M$, then the error field $\exp^{-1}_{R(x)}\psi_C f\psi_C^{-1}$ is equal to 
\[
Y+C-R_*C+Q(C,Y),
\]
where $Q$ is quadratic in $C$ and $Y$.
\end{prop}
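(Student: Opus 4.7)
The plan is to carry out a second-order Taylor expansion of $\psi_C f \psi_C^{-1}(x)$ in the pair $(C, Y)$, working in normal coordinates at $R(x)$ so that $\exp_{R(x)}^{-1}$ becomes the identity on a neighborhood of $R(x)$ in the chart and vectors can be added freely.

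First I would invert $\psi_C$: since $C$ is $C^1$-small, $\psi_C$ is a diffeomorphism $C^1$-close to the identity, and the implicit function theorem together with the composition estimates from Appendix \ref{sec:ck_calculus} gives $u := \psi_C^{-1}(x) = x - C(x) + R_1$ in the chart, with $R_1$ quadratic in $C$. Then Taylor-expanding $f$ around $x$ yields $f(u) = f(x) - D_x f \cdot C(x) + R_2$ where $R_2$ is quadratic in $C$. Since $f$ is $C^1$-close to the isometry $R$, one has $D_x f = D_x R + A$ with $\|A\|$ controlled by $\|Y\|_{C^1}$, so $D_x f \cdot C(x) = D_x R \cdot C(x) + Q_1(C, Y)$ for a term $Q_1$ quadratic in $(C, Y)$.

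Applying $\psi_C$ next gives $\psi_C(f(u)) = \exp_{f(u)} C(f(u))$, which in the normal chart equals $f(u) + C(f(u)) + R_3$ with $R_3$ quadratic in $C$; Taylor-expanding the vector field $C$ around $R(x)$ (using that $f(x)$ differs from $R(x)$ by $Y(x)$ in the chart) yields $C(f(u)) = C(R(x)) + Q_2(C, Y)$. Collecting everything,
\[
\exp_{R(x)}^{-1} \psi_C f \psi_C^{-1}(x) = Y(x) + C(R(x)) - D_x R \cdot C(x) + Q(C, Y),
\]
and the identity $(R_* C)(R(x)) = D_x R \cdot C(x)$ recovers the claimed expression $Y + C - R_* C + Q(C, Y)$, with all terms interpreted as elements of $T_{R(x)} M$.

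The main obstacle is establishing the tameness bound \eqref{eqn:defn_quadratic} for the combined remainder $Q$: each constituent remainder is a composition or pointwise product involving derivatives of $C$, $Y$, and $f$ (the latter controlled via $R$ and $Y$), so one must carefully apply the $C^k$ chain-rule and composition estimates from Appendix \ref{sec:ck_calculus}, and use compactness of $M$ to obtain constants independent of the basepoint when working in normal coordinates.
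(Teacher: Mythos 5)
Your proposal is correct in outline, but it takes a more hands-on route than the paper. The paper's proof is a short algebraic computation in the ``$\psi$-calculus'': one first records that the definition of $Y$ is equivalent to the factorization $f = \psi_Y R$, and then applies Proposition~\ref{thm:composition_quadratic_estimate} ($\psi_X \psi_Y = \psi_{X+Y+Z}$ with $Z$ quadratic), Corollary~\ref{cor:inverses_quadratic_estimate} ($\psi_X^{-1} = \psi_{-X+Z}$), and the isometry identity $R\,\psi_W = \psi_{R_*W}\,R$, obtaining
\[
\psi_C f\psi_C^{-1} = \psi_C\psi_Y R\psi_C^{-1} = \psi_{C+Y+Z}R\psi_{-C+Z} = \psi_{C+Y+Z+R_*(-C+Z)}R = \psi_{C+Y-R_*C+Z}R,
\]
from which the claimed form of the error field is immediate. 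You instead carry out the Taylor expansions of $\psi_C^{-1}$, $f$, and $\psi_C$ directly in exponential charts, which in effect re-derives the content of those lemmas inline for this particular composition. Both routes work, since the lemmas themselves are proved by exactly the chart expansions you describe. What the paper's route buys is modularity and brevity: once $f = \psi_Y R$ is noticed, the remainder tracking is purely symbolic. Your approach loses that observation and so has to recover $Y$ ``by hand'' from $f(x) - R(x)$ inside the chart.

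Two points deserve more care in your write-up. First, the statement ``working in normal coordinates at $R(x)$'' glosses over the fact that $x$ and $R(x)$ need not be close when $R$ is a general isometry; the computation genuinely requires two charts (one near $x$ for $\psi_C^{-1}$, one near $R(x)$ for everything downstream of $f$), with the transition across $f$ handled by a Taylor expansion of $\exp_{R(x)}^{-1} \circ f \circ \exp_x$. This is not a gap in the idea, but as phrased it would be wrong, and a referee would flag it. Second, and more substantively, the genuine technical content of the proposition is the tame quadratic bound \eqref{eqn:defn_quadratic} on the remainder $Q$ in every $C^k$ norm, uniformly over $M$. Your pointwise expansion at a fixed $x$ gives only the $C^0$ estimate; extending it to all $C^k$ norms means differentiating the remainders $R_1,\ldots,R_3,Q_1,Q_2$ in the basepoint, which is exactly where the H\"ormander composition estimates (Lemmas~\ref{lem:hormander_product_estimates} and~\ref{lem:hormander_composition_estimate}) and Propositions~\ref{prop:exp_comparison1}, \ref{thm:composition_quadratic_estimate} come in. You acknowledge this at the end, but since the paper has packaged that work into Proposition~\ref{thm:composition_quadratic_estimate} and Corollary~\ref{cor:inverses_quadratic_estimate}, it is cleaner --- and safer --- to cite them directly, as the paper does, rather than redo the expansion by hand.
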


The proof of Proposition \ref{prop:conj_error_field} is straightforward. It particularly relies on the following proposition, which simplifies working with diffeomorphisms of the form $\psi_X$.

\begin{prop}\label{thm:composition_quadratic_estimate}
Let $M$ be a compact Riemannian manifold. If $X,Y\in \Vect^{\infty}(M)$ are sufficiently $C^1$ small and we define $Z$ by
\[
\psi_Y\circ \psi_X=\psi_{X+Y+Z},
\]
then there exists a fixed $\ell$ such that for each $k$ there exists $C_k$ such that 
\[
\|Z\|_{C^k}\le C_k(\|X\|_{C^{k+\ell}}^2+\|Y\|_{C^{k+\ell}}^2),
\]
i.e. $Z$ is quadratic in $X$ and $Y$.
\end{prop}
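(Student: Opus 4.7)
The plan is to compute $Z$ explicitly in a way that makes its quadratic structure in $(X,Y)$ manifest, and then bound it term by term using standard $C^k$ composition and product estimates. First I would cover $M$ by finitely many normal coordinate charts and reduce the global statement to a local one, so that $\exp$ and $\exp^{-1}$ may be treated as smooth maps $F(x,u)=\exp_x u$ and $F^{-1}(x,y)=\exp_x^{-1}y$ defined in a neighborhood of the zero section (resp.\ diagonal). All the $C^k$ norms involved can be compared, up to a multiplicative constant depending only on the atlas and the metric, between the Riemannian formulation and a coordinate one.

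By definition $Z(x) = \exp_x^{-1}(\psi_Y\psi_X(x)) - X(x) - Y(x)$. Write
\[
G(x,u,v) := \exp_x^{-1}\bigl(\exp_{\exp_x u}(v)\bigr), \qquad G(x,0,0)=0,
\]
viewed as a smooth map into $T_xM$. Two special cases are immediate: $G(x,u,0)=u$ and $G(x,0,v)=v$. Hence the Taylor remainder $Q(x,u,v):=G(x,u,v)-u-v$ is a smooth function vanishing identically on $\{u=0\}$ and on $\{v=0\}$. By two applications of the fundamental theorem of calculus one may write
\[
Q(x,u,v)=\int_0^1\!\int_0^1 \partial_u\partial_v G(x,tu,sv)\,dt\,ds\;(u,v),
\]
so $Q$ has the form of a smooth bilinear-valued function applied to $(u,v)$. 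With $u=X(x)$, $v=Y(\psi_X(x))$, this yields the decomposition
\[
Z(x) \;=\; \underbrace{\bigl[Y(\psi_X(x))-Y(x)\bigr]}_{A(x)} \;+\; \underbrace{Q\bigl(x,X(x),Y(\psi_X(x))\bigr)}_{B(x)}.
\]

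Next I would bound $A$ and $B$ in $C^k$. For $A$, write $A(x)=\int_0^1 \tfrac{d}{dt}Y(\exp_x tX(x))\,dt$; in a chart this is bilinear in $X$ and $DY$ up to a smooth remainder whose coefficients depend on the chart data. Using the composition estimate (Lemma \ref{lem:C^k_composition_estimate}) applied to $Y\circ \psi_X$ and the fact that $\psi_X$ is $C^1$-close to the identity, together with the Leibniz product rule $\|fg\|_{C^k}\le C_k\sum_{i=0}^k \|f\|_{C^i}\|g\|_{C^{k-i}}$, one obtains a bound of the shape
\[
\|A\|_{C^k}\le C_k\sum_{i+j\le k}\|X\|_{C^{i+\ell}}\,\|Y\|_{C^{j+\ell+1}}
\]
for some fixed $\ell$. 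Because every summand is a product of a $C^{\le k+\ell+1}$ norm of $X$ with such a norm of $Y$, the AM--GM inequality $ab\le \tfrac12(a^2+b^2)$ absorbs each product into $\|X\|_{C^{k+\ell'}}^2+\|Y\|_{C^{k+\ell'}}^2$ with $\ell'=\ell+1$. The term $B$ is treated identically: by the bilinear representation of $Q$, $B(x)=\widetilde Q(x,X(x),Y(\psi_X(x)))(X(x),Y(\psi_X(x)))$ with $\widetilde Q$ smooth, and the same composition/product toolkit converts this into a finite sum of products of $C^{\le k+\ell'}$ norms of $X$ and of $Y$, again absorbable via AM--GM.

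The main obstacle is bookkeeping: the composition $Y\circ \psi_X$ forces one to lose derivatives on $Y$, and the coefficient function $\widetilde Q$ depends on $(X,Y)$ through the composition as well, so one must choose $\ell$ uniformly to accommodate the largest number of derivatives lost in any term. This is a standard feature of tame $C^k$ calculus and is handled cleanly by the composition estimate in the appendix, provided $X$ and $Y$ are $C^1$-small so that $\psi_X$ has uniformly bounded $C^1$ norm and stays in a chart neighborhood. Collecting everything yields $\|Z\|_{C^k}\le C_k(\|X\|_{C^{k+\ell}}^2+\|Y\|_{C^{k+\ell}}^2)$, which is precisely the claimed quadratic estimate.
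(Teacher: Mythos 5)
Your proof is correct and takes a genuinely different route from the paper's. The paper proceeds by first establishing a coordinate expansion for a single exponential-type map: Proposition \ref{prop:exp_comparison1} writes $\psi_Y(x) = x + Y(x) + Z_Y(x)$ with $Z_Y$ quadratic (via an argument involving the geodesic flow and the geodesic equation), and then the composition proposition is obtained by subtracting $\psi_{X+Y}$ from $\psi_Y\circ\psi_X$, expanding all three via that proposition, and bounding the surviving terms $Z_X - Z_{X+Y}$, $Y(\psi_X(x))-Y(x)$, and $Z_Y\circ\psi_X$. You instead introduce the single smooth map $G(x,u,v)=\exp_x^{-1}(\exp_{\exp_x u}v)$ and observe the two identities $G(x,u,0)=u$ and $G(x,0,v)=v$; the double-integral Taylor remainder then exhibits $Q:=G-u-v$ as a smooth bilinear-form-valued function applied to $(u,v)$, so the quadratic structure is built in from the start rather than reassembled from three separate remainders. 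Your decomposition $Z = [Y\circ\psi_X - Y] + Q(\cdot, X, Y\circ\psi_X)$ is an exact identity, and the first bracket is the same composition term the paper also has to bound, so at that point the two proofs converge on the same tame $C^k$ toolkit (the H\"ormander product and composition estimates together with an integral representation of the increment of $Y$). Your route has the advantage of bypassing Proposition \ref{prop:exp_comparison1} entirely and making the bilinearity of the remainder transparent; the paper's route has the advantage of isolating a reusable lemma (the single-map expansion) that it also applies elsewhere (e.g.\ in Corollary \ref{cor:inverses_quadratic_estimate}). One small point of care you should make explicit when writing the full argument: the coefficient function $\widetilde Q(x,X(x),Y(\psi_X(x)))$ is only $O(1)$, not small, so the derivative loss $\ell$ must be taken large enough to absorb both the loss from composing with $\psi_X$ inside $Y$ and inside $\widetilde Q$; you flag this correctly as the bookkeeping obstacle, and the $C^1$-smallness hypothesis is exactly what keeps the composition estimate from blowing up.
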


The proof of Proposition \ref{thm:composition_quadratic_estimate} uses the following two lemmas concerning maps of $\R^n$.

\begin{lem}\label{lem:hormander_product_estimates}
\cite[Thm. A.7]{hormander1976boundary} Suppose that $B$ is a compact convex domain in $\R^n$ with interior points. Then for $k\ge 0$, there exists $C$ such if $f,g$ are $C^k$ maps from $B$ to $\R$, then
\[
\|fg\|_{C^k}\le C_k(\|f\|_{C^k}\|g\|_{C^0}+\|f\|_{C^0}\|g\|_{C^k}).
\]
\end{lem}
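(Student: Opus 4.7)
The plan is to reduce the claim to the Leibniz rule plus a one-parameter interpolation inequality between $\|\cdot\|_{C^0}$ and $\|\cdot\|_{C^k}$. First I would bound $\|fg\|_{C^k}$ by controlling each mixed derivative $\partial^{\alpha}(fg)$ with $|\alpha|\le k$. By Leibniz,
\[
\partial^{\alpha}(fg)=\sum_{\beta\le\alpha}\binom{\alpha}{\beta}\partial^{\beta}f\,\partial^{\alpha-\beta}g,
\]
so it suffices to show that for every pair $(\beta,\gamma)$ with $|\beta|+|\gamma|\le k$,
\[
\|\partial^{\beta}f\cdot\partial^{\gamma}g\|_{C^0}\le C_k\bigl(\|f\|_{C^k}\|g\|_{C^0}+\|f\|_{C^0}\|g\|_{C^k}\bigr).
\]

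Next I would apply the standard Gagliardo–Nirenberg/Kolmogorov-type interpolation inequality on a compact convex domain $B\subset\mathbb{R}^n$ with interior points, which gives constants $C_{j,k}$ such that for any $C^k$ function $u\colon B\to\mathbb{R}$ and $0\le j\le k$,
\[
\|u\|_{C^j}\le C_{j,k}\,\|u\|_{C^0}^{\,1-j/k}\|u\|_{C^k}^{\,j/k}.
\]
(This is the content of the surrounding interpolation appendix; convexity with nonempty interior lets us Taylor-expand along segments, which is where the hypothesis on $B$ enters.) Applying this with $j=|\beta|$ to $f$ and with $j=|\gamma|$ to $g$ gives, setting $s=|\beta|/k$ and $t=|\gamma|/k$ with $s+t\le 1$,
\[
\|\partial^{\beta}f\cdot\partial^{\gamma}g\|_{C^0}
\le C\,\|f\|_{C^0}^{1-s}\|f\|_{C^k}^{\,s}\,\|g\|_{C^0}^{1-t}\|g\|_{C^k}^{\,t}.
\]

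To conclude, I would apply Young's inequality $ab\le \tfrac{a^p}{p}+\tfrac{b^q}{q}$ with suitably chosen conjugate exponents so that the resulting product collapses into a sum of the two target expressions $\|f\|_{C^k}\|g\|_{C^0}$ and $\|f\|_{C^0}\|g\|_{C^k}$. Concretely, writing $X=\|f\|_{C^0}^{1-s}\|g\|_{C^k}^{\,t}$ and $Y=\|f\|_{C^k}^{\,s}\|g\|_{C^0}^{1-t}$, choose $p=1/t$ and $q=1/(1-t)$ (or $p=1/s$ and $q=1/(1-s)$ in the symmetric case $s+t=1$; when $s+t<1$ an extra bounded factor $\|f\|_{C^0}^{1-s-\text{stuff}}\|g\|_{C^0}^{\cdots}$ is absorbed into the constant since norms are controlled by $\|\cdot\|_{C^k}$ on a compact domain). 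Summing the resulting bounds over the finitely many pairs $(\beta,\gamma)$ arising from Leibniz, and over $|\alpha|\le k$, produces the claimed estimate with a constant $C_k$ depending only on $k$, $n$, and $B$.

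The main obstacle is the interpolation inequality: on a general open set it can fail, which is why $B$ is assumed compact, convex, and with interior points. I would either cite the interpolation result already compiled in the paper's Appendix on interpolation, or, if that appendix states it only for closed manifolds, prove the Euclidean version directly by fixing a direction $v$ in $B$, applying a one-dimensional Kolmogorov–Landau inequality along segments in $B$ of uniformly bounded length (using convexity to guarantee such segments exist through every point and in every direction), and summing over coordinate directions to control each partial derivative. Once that interpolation inequality is in hand, the rest is a bookkeeping argument using Leibniz and Young.
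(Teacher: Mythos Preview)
The paper does not give its own proof of this lemma; it simply cites \cite[Thm.~A.7]{hormander1976boundary}. Your approach---Leibniz rule, the interpolation inequality $\|u\|_{C^j}\le C\|u\|_{C^0}^{1-j/k}\|u\|_{C^k}^{j/k}$ (which is \cite[Thm.~A.5]{hormander1976boundary}), and then Young's inequality---is exactly H\"ormander's proof, and it is correct.

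One small cleanup: your handling of the case $s+t<1$ is slightly muddled. You do not absorb anything ``into the constant''; rather, since $\|f\|_{C^0}\le\|f\|_{C^k}$, the quantity $\|f\|_{C^0}^{1-s}\|f\|_{C^k}^{s}$ is monotone increasing in $s$, so you may simply replace $s$ by $1-t$ to reduce to the case $s+t=1$. Then
\[
\|f\|_{C^0}^{1-s}\|f\|_{C^k}^{s}\|g\|_{C^0}^{1-t}\|g\|_{C^k}^{t}
\le \bigl(\|f\|_{C^k}\|g\|_{C^0}\bigr)^{1-t}\bigl(\|f\|_{C^0}\|g\|_{C^k}\bigr)^{t}
\le \|f\|_{C^k}\|g\|_{C^0}+\|f\|_{C^0}\|g\|_{C^k}
\]
by the weighted AM--GM inequality, with no need to split into $X$ and $Y$ or choose conjugate exponents separately.
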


\begin{lem}\label{lem:hormander_composition_estimate}
\cite[Thm. A.8]{hormander1976boundary} For $i\in \{1,2,3\}$, let $B_i$ be a fixed compact convex domain in $\R^{n_i}$ with interior points. Let $k\ge 1$. There exists $C_k>0$ such that  if $f\colon B_1\to B_2$ and $g\colon B_2\to B_3$ are both $C^k$, then $f\circ g$ is $C^k$ and 
\[
\|f\circ g\|_{C^k}\le C_k(\|f\|_{C^k}\|g\|_{C^1}^k+\|f\|_{C^1}\|g\|_{C^k}+\|f\circ g\|_{C^0}).
\]
\end{lem}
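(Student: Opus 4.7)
The plan is to prove the composition estimate via Faà di Bruno's formula combined with a convexity (interpolation) inequality for the $C^k$ norms. First I would fix a multi-index $\alpha$ with $|\alpha|\le k$ and expand $D^\alpha(f\circ g)$ using Faà di Bruno. Schematically, at each point this has the form
\[
D^k(f\circ g) \;=\; \sum_{\pi} (D^{|\pi|}f)\circ g \;\cdot\; \prod_{B\in\pi} D^{|B|}g,
\]
where the sum runs over the finitely many set partitions $\pi$ of $\{1,\ldots,k\}$ and $|B|$ is the size of a block. Taking sup norms, each term is bounded by $\|f\|_{C^{|\pi|}}\prod_{B\in\pi}\|g\|_{C^{|B|}}$. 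For $k=0$, we trivially use $\|f\circ g\|_{C^0}$ on its own; this accounts for the third term on the right of the claimed inequality and leaves us to handle the partitions with $1\le|\pi|\le k$.

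Next I would invoke the standard convexity inequality for $C^k$ norms on a compact convex domain: for $1\le j\le k$,
\[
\|f\|_{C^j}\;\le\;C\,\|f\|_{C^1}^{\,(k-j)/(k-1)}\,\|f\|_{C^k}^{\,(j-1)/(k-1)},
\]
and analogously, for each block size $1\le b\le k$,
\[
\|g\|_{C^b}\;\le\;C\,\|g\|_{C^1}^{\,(k-b)/(k-1)}\,\|g\|_{C^k}^{\,(b-1)/(k-1)}.
\]
(These follow from the standard Gagliardo–Nirenberg / log-convexity inequality on a compact convex domain, which one proves by a Taylor-remainder / covering argument on the domain $B_i$; this is the standard content one would cite, since the domains are fixed.) Applied to a partition $\pi$ with $|\pi|=j$ and block sizes $b_1,\ldots,b_j$ summing to $k$, the exponent of $\|g\|_{C^k}$ in $\prod_B\|g\|_{C^{|B|}}$ becomes $\sum_i(b_i-1)/(k-1)=(k-j)/(k-1)$, while the exponent of $\|g\|_{C^1}$ becomes $j-(k-j)/(k-1)=k(j-1)/(k-1)$. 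Setting $\alpha=(j-1)/(k-1)$ and $\beta=(k-j)/(k-1)$ (so $\alpha+\beta=1$), the bound on each Faà di Bruno term reads
\[
\bigl(\|f\|_{C^k}\|g\|_{C^1}^{k}\bigr)^{\alpha}\bigl(\|f\|_{C^1}\|g\|_{C^k}\bigr)^{\beta}.
\]

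Finally, I would apply Young's inequality $a^\alpha b^\beta\le \alpha a+\beta b$ to each term, giving a bound by $\|f\|_{C^k}\|g\|_{C^1}^k+\|f\|_{C^1}\|g\|_{C^k}$. Summing over the finitely many partitions absorbs a $k$-dependent combinatorial constant into $C_k$, and including the $C^0$ term handles $|\alpha|=0$. The main obstacle is purely bookkeeping: making sure the Faà di Bruno exponents and the interpolation weights balance exactly so that every intermediate derivative of $f$ and $g$ can be absorbed into the two extreme products $\|f\|_{C^k}\|g\|_{C^1}^k$ and $\|f\|_{C^1}\|g\|_{C^k}$. Once the exponent accounting $\alpha+\beta=1$ is verified, Young's inequality closes the argument, and the constant $C_k$ depends only on $k$ and the fixed domains $B_1,B_2,B_3$.
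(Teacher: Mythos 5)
The paper does not supply a proof of this lemma; it is cited directly from Hörmander \cite{hormander1976boundary}, so there is no in-paper argument to compare your proposal against. Your proposed route --- Faà di Bruno, then $C^1$-to-$C^k$ interpolation on each factor, then weighted AM-GM --- is the standard one and the top-order exponent accounting you gave is correct.

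There is, however, a genuine gap. Your exponent computation
$\sum_i(b_i-1)/(k-1)=(k-j)/(k-1)$ and the companion computation for $\|g\|_{C^1}$ are verified only when the block sizes sum to $k$, i.e.\ only for the top derivative order. For a multi-index $\alpha$ with $n=|\alpha|<k$, the blocks sum to $n$; carrying through the same interpolation gives exponent $(j-1)/(k-1)$ on $\|f\|_{C^k}\|g\|_{C^1}^k$ and $(n-j)/(k-1)$ on $\|f\|_{C^1}\|g\|_{C^k}$, which now sum only to $(n-1)/(k-1)<1$, and there is a leftover factor $\bigl(\|f\|_{C^1}\|g\|_{C^1}\bigr)^{(k-n)/(k-1)}$ unaccounted for. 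Young's inequality in the form you stated requires the exponents to sum to $1$, so this does not yet close. The fix is easy --- absorb the leftover via the monotonicity $\|f\|_{C^1}\|g\|_{C^1}\le\|f\|_{C^1}\|g\|_{C^k}$, after which the two exponents again sum to $1$ --- but it needs to be said. You should also treat $k=1$ separately (the interpolation weights you wrote have a $k-1$ in the denominator), though that case is immediate from the chain rule; and you should note that the $C^1$-to-$C^k$ interpolation you invoke on a compact convex domain is precisely the content of the Hörmander interpolation lemma (Lemma~\ref{lem:ck_interpolation_inequality} in the paper), rather than leaving it at the level of a sketch, so that no circularity is introduced.
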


Using the previous two lemmas, we prove the following.

\begin{prop}\label{prop:exp_comparison1}
Suppose that $g$ is a metric on $\R^n$.  For a smooth vector field $Y$ such that $\|Y\|_{C^1}<1$, define
\[
Z(x)=\psi^g_{Y}(x)-Y(x)-x.
\]
Let $B$ be a compact convex domain in $\R^n$ with interior points. Then $Z\vert_B$ is quadratic in $Y$. In fact, for each $k$ there exists $C_k$ such that 
\[
\|Z\vert_B\|_{C^k}\le C_k\|Y\|_{C^k}^2.
\]
\end{prop}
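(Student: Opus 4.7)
The plan is to reduce the claim to a Taylor expansion of the exponential map. For any smooth metric $g$ on $\R^n$, the exponential $\exp^g_x(v)$ satisfies $\exp^g_x(0)=x$ and $D_0\exp^g_x=\mathrm{Id}$, so on a neighborhood $B\times U$ of $B\times\{0\}$ in $\R^n\times\R^n$ Taylor's theorem (with integral remainder) yields smooth functions $E_{ij}(x,v)$ such that
\[
\exp^g_x(v)=x+v+\sum_{i,j} v_i v_j\, E_{ij}(x,v).
\]
Substituting $v=Y(x)$, which lies in $U$ for all $x\in B$ by the smallness assumption on $Y$, we obtain the pointwise identity
\[
Z(x)=\sum_{i,j} Y_i(x)\,Y_j(x)\,E_{ij}(x,Y(x)).
\]
So the entire problem is to $C^k$-estimate each summand on the right.

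The next step is to apply the standard $C^k$ calculus already recorded. First, by the composition estimate (Lemma \ref{lem:hormander_composition_estimate}) applied to $E_{ij}$ and the map $x\mapsto(x,Y(x))$, together with the hypothesis $\|Y\|_{C^1}<1$, we get $\|E_{ij}(\cdot,Y(\cdot))\|_{C^k(B)}\le C_k(1+\|Y\|_{C^k})$. Second, iterating the product estimate (Lemma \ref{lem:hormander_product_estimates}) on the triple product $Y_i\cdot Y_j\cdot E_{ij}(\cdot,Y(\cdot))$ produces a bound in which at most one factor is a $C^k$ norm and the others are $C^0$ norms; using $\|Y\|_{C^0}\le \|Y\|_{C^1}$ (after absorbing boundary values on the compact domain $B$), each summand is bounded by $C_k\|Y\|_{C^0}\|Y\|_{C^k}$. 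Summing over $i,j$ and absorbing the $C^0$ factor into the constant (since $\|Y\|_{C^0}$ is bounded by the hypothesis) gives the stated bound $\|Z|_B\|_{C^k}\le C_k\|Y\|_{C^k}$, and in fact the stronger quadratic bound $\|Z|_B\|_{C^k}\le C_k\|Y\|_{C^0}\|Y\|_{C^k}$, which is what is needed to certify $Z$ as quadratic in the sense of Definition \ref{defn:quadratic}.

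The main obstacle is essentially bookkeeping: after applying Lemma \ref{lem:hormander_product_estimates} twice one must verify that each term of the resulting sum contains at least two factors of $Y$ (with at most one counted in $C^k$ norm and the rest in $C^0$ norm), so that the quadratic character of the Taylor remainder is preserved. Since the $E_{ij}$ are $C^\infty$ on the compact set $B\times\overline{U}$ and hence have all $C^k$ norms uniformly bounded, no new ingredients beyond the $C^k$ product and composition estimates are required.
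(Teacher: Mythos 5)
Your proof is correct, and it takes a somewhat different route from the paper's. The paper writes $Z$ as a double integral of the second time-derivative $\ddot\gamma$ along the geodesic starting at $x$ in direction $Y(x)$, and then uses the geodesic equation $\ddot\gamma^\lambda = -\Gamma^\lambda_{\mu\nu}\dot\gamma^\mu\dot\gamma^\nu$ to exhibit $\ddot\gamma$ as quadratic in $\dot\gamma$; the $C^k$ bound on $\dot\gamma$ is then obtained via a composition with the geodesic flow. You instead apply Taylor's theorem (with integral remainder) directly to $v\mapsto\exp^g_x(v)$, using $\exp^g_x(0)=x$ and $D_0\exp^g_x=\Id$ to write $\exp^g_x(v)=x+v+\sum_{ij}v_iv_jE_{ij}(x,v)$ with smooth $E_{ij}$, then substitute $v=Y(x)$. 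Both approaches reduce the work to the Hörmander product and composition estimates (Lemmas~\ref{lem:hormander_product_estimates} and~\ref{lem:hormander_composition_estimate}), but yours avoids any explicit appeal to the geodesic equation or the geodesic flow and produces the quadratic factorization $Y_iY_j$ in front immediately, which makes the final bound $\|Z\|_{C^k}\le C_k\|Y\|_{C^0}\|Y\|_{C^k}\le C_k\|Y\|_{C^k}^2$ transparent. One small point worth making explicit: you should say in the Taylor step that the $E_{ij}$ are defined and have uniformly bounded $C^k$ norms on the compact set $B\times\overline{U}$ (with $U$ chosen to contain the values of $Y$, which is guaranteed by $\|Y\|_{C^0}\le\|Y\|_{C^1}<1$); this is implicitly what licenses the subsequent application of the composition estimate, and it is the analogue of the paper's uniform bound on $\|\phi^t|_{TB(r)}\|_{C^k}$.
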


\begin{proof}

Let $B$ be as in the statement of the proposition. Define $\gamma(Y(x),t)$ to be the map that sends $x\mapsto \exp_xtY(x)-x$, so that $\gamma(Y(x),1)+x=\psi^g_Y$ and $\gamma(Y(x),0)=0$. We rewrite $Z$.
\begin{align*}
Z=\psi_Y^g(x)-x-Y(x)&=\gamma(Y(x),1)-Y(x)\\
&=\int_0^1 \dot{\gamma}(Y(x),t)-Y(x)\,dt\\
&=\int_0^1 \dot{\gamma}(Y(x),t)-\dot{\gamma}(Y(x),0)\,dt\\
&=\int_0^1 \int_0^1t\ddot{\gamma}(Y(x),st)\,ds\,dt\\
&=\int_0^1 t\int_0^1 \ddot{\gamma}(Y(x),st)\,ds\,dt.
\end{align*}
By differentiating under the integral, we see that the $n$th derivatives of $Z$ are controlled by the maximum of the $n$th derivatives of $\ddot{\gamma}(Y(x),t)$ for each fixed $t$. Hence it suffices to show for each $t\in [0,1]$ that $\ddot{\gamma}(Y(x),t)$ is second order in $Y$.

Dropping the explicit dependence on $x$, we recall the coordinate expression of the geodesic equation. For a coordinate frame $[e_1,\ldots,e_n]$ and indices $1\le \mu,\nu,\lambda\le n$, we define the Christoffel symbols $\Gamma_{\mu\nu}^{\lambda}$ by $\langle \nabla_{e_\mu}e_{\nu},e_{\lambda}\rangle.$ In addition, we write $\dot{\gamma}^{\nu}$ for $\langle \dot{\gamma},e_\nu\rangle$ and similarly for $\ddot{\gamma}$. The coordinate expression for the geodesic equation is then
\[
\ddot{\gamma}^{\lambda}=-\Gamma^{\lambda}_{\mu\nu}\dot{\gamma}^\mu\dot{\gamma}^\nu.
\]

We estimate the $C^k$ norm of the right hand side. Write $\phi^t$ for the geodesic flow on $TB$. For fixed $r>0$ in $TB$, let $TB(r)$ be the set of vectors $v\in TB$ such that $\|v\|<r$. Note that the restriction $\|\phi^t\vert_{TB(t)}\|_{C^k}$ is bounded. Let $\pi$ be the projection from a tangent vector in $T\R^n$ to its basepoint in $\R^n$. Then
\[
\gamma(x,t)=\pi\circ \phi^t\circ Y(x).
\]
Hence, writing $\dot{\phi}$ for the geodesic spray,
\begin{equation}\label{eqn:gamma_composed}
\dot{\gamma}(x,t)=D\pi\circ \dot{\phi}\mid_{\phi^t(Y(x))}.
\end{equation}
$D\pi\circ \dot{\phi}^t\mid_{TB(r)}$  has its $C^k$ norm uniformly bounded in $t$ by some $D_k$. By Lemma \ref{lem:hormander_composition_estimate} because $\|Y\|_{C^1}<1$ it follows that $\|\phi^t(Y(x),t)\|_{C^k}\le C_k\|Y\|_{C^k}$.

Hence by applying Lemma \ref{lem:hormander_composition_estimate} to \eqref{eqn:gamma_composed}, and similarly using that $\|Y\|_{C^1}<1$ and $D\pi\circ \dot{\phi}$ is uniformly bounded we find
\[
\|(D\pi\circ \dot{\phi}^t)\circ Y\|_{C^k}\le C_k'(D_k \|Y\|_{C^1}+D_1\|Y\|_{C^k} +\|Y\|_{C^0}).
\]

Hence
\[
\|\dot{\gamma}(x,t)\|_{C^k}=\|D\pi\circ \dot{\phi}\vert_{\phi^t(Y(x))}\|_{C^k}\le C_k\|Y\|_{C^k}.
\]
The geodesic equation shows that at each point the coordinates of $\ddot{\gamma}$ are a quadratic polynomial in the coordinates of $\dot{\gamma}$. Hence by Lemma \ref{lem:hormander_product_estimates} 
\[
\|\ddot{\gamma}(x,t)\|_{C^k}\le C_k''\|Y\|_{C^k}^2
\]
for all $t\in [0,1]$. Thus we obtain a uniform estimate on $Z$.
\end{proof}

\begin{proof}[Proof of Proposition \ref{thm:composition_quadratic_estimate}.]
As before, it suffices to prove the estimate in a chart. So, we are reduced to working in a neighborhood of $0\in \R^n$. Fix some $k$, then by Proposition \ref{prop:exp_comparison1} we may write
\[
\psi_Y(x)=x+Y(x)+Z_Y(x),
\]
 where $Z_Y(x)$ is quadratic in $Y$. Similarly define $Z_X(x)$ and $Z_{X+Y}(x)$. Then

\begin{align*}
\psi_Y\circ \psi_X&=\psi_Y(x+X(x)+Z_X(x))\\
&=x+X(x)+Z_X(x)+Y(x+X(x)+Z_X(x))+Z_Y(x+X(x)+Z_X(x)).
\end{align*}
To prove this proposition, we compare the previous line with
\[
\psi_{X+Y}=x+X(x)+Y(x)+Z_{X+Y}(x).
\]
The difference is 
\[
\psi_Y\circ\psi_X-\psi_{X+Y}=Z_X(x)-Z_{X+Y}(x)+Y(x+X(x)+Z_X(x))-Y(x)+Z_Y(x+X(x)+Z_X(x))
\]
The first and second terms satisfy the appropriate quadratic $C^k$ estimate already. For the last term, we apply Lemma \ref{lem:hormander_composition_estimate}. Hence by assuming that $\|X\|_{C^1}$ is sufficiently small, we conclude that the $Z_Y$ term is quadratic. We now turn to the $Y$ terms:
\[
Y(x+X(x)+Z_X(x))-Y(x).
\]
For this we apply the same trick as before. Write
\[
Y(x+X(x)+Z_X(x))-Y(x)=
\int_0^1 Y'(x+t(X(x)+Z_X(x)))\|X(x)+Z_X(x)\|\,dt.
\]
By differentiating under the integral, it suffices to show that the integrand is quadratic in $X$ and $Y$. By Lemma \ref{lem:hormander_product_estimates}, the integrand will be quadratic if there exists $\ell$ such that for each $k$ there is a constant $C_k$ such that both of $\|Y'(x+t(X(x)+Z_X(x)))\|_{C^k}$ and $\|X(x)+Z_X(x)\|_{C^k}$ are bounded by $C_k(\|X\|_{C^{k+{\ell}}}+\|Y\|_{C^{k+{\ell}}})$. This follows for both terms by the application of Lemma \ref{lem:hormander_composition_estimate}, so we are done.
\end{proof}

We now show another basic fact: near to the identity map a diffeomorphism and its inverse have comparable size.

\begin{lem}\label{lem:ck_inverse_est2}
Suppose that $M$ is a closed Riemannian manifold. Then there exists $\epsilon>0$ such that for all $k\ge 0$ then there exists $C_k$, such that if $f\in \Diff^k(M)$ and $d_{C^1}(f,\Id)<\epsilon$ then 
\[
d_{C^k}(f^{-1},\Id)\le C_k d_{C^k}(f,\Id).
\]
\end{lem}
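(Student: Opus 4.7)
The plan is to reduce the estimate to a statement in coordinate charts and then argue by induction on $k$ using the Hörmander composition estimate, Lemma \ref{lem:hormander_composition_estimate}. Fix a finite atlas of charts covering $M$. Provided $\epsilon$ is small enough that both $f$ and $f^{-1}$ displace points by less than a Lebesgue number of this cover, each chart is mapped into a slightly enlarged chart by $f$ and by $f^{-1}$; moreover, the Riemannian $C^k$-distance from a diffeomorphism to the identity is bi-Lipschitz comparable to the maximum over charts of the flat $C^k$-norm of the coordinate representation minus the flat identity. It therefore suffices to prove the analogous inequality in flat coordinates on a bounded convex Euclidean domain.

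Working in such a chart, write $f = \Id + u$ and $f^{-1} = \Id + v$. The identity $f \circ f^{-1} = \Id$ yields the fundamental relation
\[
v = -u \circ f^{-1}.
\]
For $k = 0$ this gives $\|v\|_{C^0} = \|u\|_{C^0}$ immediately. For $k = 1$, differentiation yields $Df^{-1}(y) = (I + Du(f^{-1}(y)))^{-1}$; provided $\|Du\|_{C^0} < 1/2$, the Neumann series expansion gives $\|Dv\|_{C^0} \le 2\|Du\|_{C^0}$ and hence $\|v\|_{C^1} \le 3\|u\|_{C^1}$.

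For the inductive step at level $k \ge 2$, assume the estimate at all levels below $k$. Apply Lemma \ref{lem:hormander_composition_estimate} to the composition $v = -u \circ f^{-1}$ to obtain
\[
\|v\|_{C^k} \le C^{H}_k \left( \|u\|_{C^k}\,\|f^{-1}\|_{C^1}^{\,k} + \|u\|_{C^1}\,\|f^{-1}\|_{C^k} + \|u\|_{C^0} \right).
\]
By the base case $\|f^{-1}\|_{C^1} \le 2$; and $\|f^{-1}\|_{C^k} \le \|\Id\|_{C^k} + \|v\|_{C^k}$, where $\|\Id\|_{C^k}$ is a chart-dependent constant. Substituting and rearranging,
\[
\bigl(1 - C_k^H \|u\|_{C^1}\bigr)\,\|v\|_{C^k} \le C_k' \, \|u\|_{C^k}.
\]
Choosing $\epsilon$ so small that $C_k^H \epsilon \le 1/2$ (shrinking $\epsilon$ with $k$ if necessary), the prefactor on the left is at least $1/2$ and we conclude $\|v\|_{C^k} \le 2 C_k'\,\|u\|_{C^k}$, closing the induction.

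The main technical point is the absorption step, whose smallness requirement on $\|u\|_{C^1}$ grows with $k$ through the Hörmander constant $C_k^H$; in any application within the paper only finitely many $k$ are needed at once, so one can shrink $\epsilon$ to satisfy all relevant thresholds simultaneously.
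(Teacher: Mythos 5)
Your argument proves a strictly weaker statement than the one claimed. The lemma has the quantifier order ``there exists $\epsilon>0$ such that for all $k\ge 0$'': a single threshold works for every $k$. Your absorption step requires $C_k^H\|u\|_{C^1}\le 1/2$ with $C_k^H$ the Hörmander constant, which degrades as $k$ grows; the resulting statement is ``for all $k$ there exists $\epsilon_k>0$,'' which is not what the lemma asserts. You flag this at the end of your write-up but dismiss it as harmless; it is not, since the lemma is stated and used as a black-box tame estimate, and the paper proves the stated (uniform-$\epsilon$) form.

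The structural reason the paper obtains the uniform $\epsilon$ is a different organization of the same chain-rule computation. You apply Lemma \ref{lem:hormander_composition_estimate} to $v=-u\circ f^{-1}$, which leaves a $\|u\|_{C^1}\|f^{-1}\|_{C^k}$ term on the right requiring absorption. The paper instead differentiates $f\circ g=\Id$ and isolates the $l=1$ term of the Faà di Bruno expansion, giving
\[
D^{k}_xg=-(D_{g(x)}f)^{-1}\sum_{l=2}^{k}\sum_{j_1+\cdots+j_l=k}C_{l,j_1,\ldots,j_l}\,D^l_{g(x)}f\{D^{j_1}_xg,\ldots,D^{j_l}_xg\}.
\]
Because the sum runs over $l\ge 2$ and the $j_i$ are positive with $\sum j_i=k$, every $j_i$ is at most $k-1$: no derivative of $g$ of order $k$ appears on the right. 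The top-order term $D_{g(x)}f\cdot D^k_xg$ is handled by multiplying through with the bounded factor $(D_{g(x)}f)^{-1}$, not by absorption, so the only smallness requirement is the $k$-independent condition that $Df$ be uniformly invertible. The paper then applies the interpolation inequality (Lemma \ref{lem:ck_interpolation_inequality}) between $C^1$ and $C^{k}$ norms to turn the resulting polynomial products of lower-order norms, together with the inductive hypothesis, into a single $\|X\|_{C^k}$-linear bound. If you want your argument to yield the uniform $\epsilon$, replace the Hörmander estimate applied to $u\circ f^{-1}$ with this chain-rule decomposition, so that the highest-order derivative of $f^{-1}$ appears with the bounded coefficient $(Df)^{-1}$ rather than with the small coefficient $Du$.
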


\begin{proof}
This proof follows the outline of the similar estimate in \cite[Lem. 2.3.6]{hamilton1982inverse}.
For convenience, write $g=f^{-1}$. In a chart, we write $f(x)=x+X(x)$ where the $C^k$ norm of $X$ is bounded by $d_{C^k}(f,\Id)$. Similarly write $g(x)=x+Y(x)$. We now apply the chainrule to differentiate $g\circ f$. The case where $n=1$ is immediate by differentiating $g\circ f=x+X(x)+Y(x+X(x))$, which gives that 
\[
DX+DY(\Id+DX)=0.
\]
Hence
\[
DY=-DX(\Id+DX)^{-1},
\]
which is uniformly comparable to $\|DX\|$ because $d_{C^1}(f,\Id)$ is uniformly bounded.

For $k>1$, we must estimate the higher order derivatives of $Y$. Note that for $k>1$ that $D^kg=D^kY$ and $D^kf=D^kX$.

Applying the chain rule to $f\circ g=\Id$ to calculate the $k$th derivative gives:
\[
0=\sum_{l=1}^k \sum_{j_1+\cdots+j_l=k} C_{l,j_1,...,j_l}D_{g(x)}^lf\{D^{j_1}_xg,\ldots,D_x^{j_l}g\},
\]
and hence 
\begin{equation}\label{eqn:Ck_inverse_lemma}
D^{k}_xg=-(D_{g(x)}f)^{-1}\sum_{l=2}^{k}\sum_{j_1+\cdots+j_l=k}C_{l,j_1,\ldots,j_l}D^l_{g(x)}f\{D^{j_1}_xg(x),\ldots,D^{j_l}_xg(x)\}.
\end{equation}
As $(Df)^{-1}$ has uniformly bounded norm, it suffices to show that the each term in the sum has norm bounded by $\|X\|_{C^n}$.

We use the interpolation estimate in Lemma \ref{lem:ck_interpolation_inequality}. If $j>1$, then 
\[
\|D^{j}g\|=\|D^jY\|,
\]
By interpolation between the $C^1$ and $C^{n-1}$ norms, for $1\le j\le n-1$, 
\[
\|Y\|_{C^j}\le C_{1,n-1} \|Y\|_{C^1}^{\frac{n-j-1}{n-2}}\|Y\|_{C^{n-1}}^{\frac{j-1}{n-2}}.
\]
By interpolation between the $C^1$ and $C^n$ norms, for $1\le j\le n$,
\[
\|X\|_{C^j}\le C_{1,n}\|X\|_{C^1}^{\frac{n-j}{n-1}}\|X\|_{C^{n}}^{\frac{j-1}{n-1}}.
\]

We now estimate the terms in the right hand side of equation \eqref{eqn:Ck_inverse_lemma}. In the case that some $j_i=1$, then $D^{j_i}g=\Id+DY$.  Hence the right hand side of equation \eqref{eqn:Ck_inverse_lemma}, may be rewritten as the sum of terms of the form
\[
D^l_{g(x)}X\{A_1,...,A_l\},
\]
where each $A_i$ is either equal to $\Id$ or $D^{j_i}Y$ and the sum of the $j_i$ is less than or equal to $k$. If $\|Y\|_{C^{k-1}}\le 1$, then we are immediately done as the norm of this expression is at most $\|D^k f\|$. Otherwise, we may suppose that $\|Y\|_{C^{k-1}}\ge 1$. The $C^1$ norms of $X$ and $Y$ are uniformly bounded. Hence by interpolating between the $C^1$ and $C^{k}$ norm to estimate the $D^lX$ term and the $C^1$ and the $C^{k-1}$ norm to estimate the $A_i$ terms, we find that
\[
\|D^k_{g(x)}X\{A_1,...,A_k\}\|\le C'\|X\|_{C^k}^{\frac{l-1}{k-1}}\|Y\|_{C^{k-1}}^{\frac{k-r}{n-2}},
\]
where $r\ge l$. But as $\|Y\|_{C^{k-1}}>1$, this bounded above by 
\[
C'\|X\|_{C^k}^{\frac{l-1}{k-1}}\|Y\|_{C^{k-1}}^{\frac{k-l}{k-2}}.
\]

Thus 
\[
\|D^kY\|_{C^0}\le C''\sum_{l=2}^k \|X\|_{C^k}^{\frac{l-1}{k-1}}\|Y\|_{C^{k-1}}^{\frac{k-l}{k-2}}.
\]
We may now proceed by induction on $k$. We already established the theorem for $k=1$. Now, given that $\|Y\|_{C^{k-1}}\le C_{k-1}\|X\|_{C^{k-1}}$, it follows that
\[
\|D^kY\|_{C^0}\le C'''\sum_{l=2}^k \|X\|_{C^k}^{\frac{l-1}{k-1}}\|X\|_{C^{k-1}}^{\frac{k-l}{k-2}}.
\]
By interpolation between the $1$ and $k$ norms, the uniform bound on the $C^1$ norm, we find that $\|X\|_{C^{k-1}}\le D_k\|X\|_{C^k}^{\frac{k-2}{k-1}}$. This yields
\[
\|D^kY\|_{C^0}\le D'\sum_{l=2}^k \|X\|_{C^k}^{\frac{l-1}{k-1}}\|X\|_{C^{k}}^{\frac{k-l}{k-1}}\le D''\|X\|_{C^k},
\]
which is the desired result.
\end{proof}

We now obtain the following corollary.

\begin{cor}\label{cor:inverses_quadratic_estimate}
Suppose that $M$ is a closed Riemannian manifold. For smooth $C^1$ small vector fields $X$ on $M$, we may write
\[
\psi_X^{-1}=\psi_{-X+Z},
\]
where $Z$ is quadratic in $X$.
\end{cor}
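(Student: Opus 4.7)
The plan is to define $W$ implicitly by the equation $\psi_W = \psi_X^{-1}$, and then apply Proposition \ref{thm:composition_quadratic_estimate} to the identity $\psi_X \circ \psi_W = \Id$ to compare $W$ with $-X$. The resulting remainder will be the quadratic term $Z$.

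First I would verify that $\psi_X^{-1}$ can indeed be written as $\psi_W$ for a unique $W$ when $X$ is sufficiently $C^1$-small. Since by Proposition \ref{prop:exp_comparison1} we have $\psi_X(x) = x + X(x) + Z_X(x)$ with $Z_X$ quadratic in $X$, the map $\psi_X$ is $C^1$-close to the identity, and hence by Lemma \ref{lem:ck_inverse_est2} so is $\psi_X^{-1}$. In particular $\psi_X^{-1}(x)$ lies within the injectivity radius of $x$, so we may set $W(x) := \exp_x^{-1}(\psi_X^{-1}(x))$ choosing the shortest preimage. I would then prove the tame bound: there exists a fixed $\ell$ such that for each $k$,
\[
\|W\|_{C^k} \le C_k\,\|X\|_{C^{k+\ell}}.
\]
This follows by combining Lemma \ref{lem:ck_inverse_est2} (which gives $d_{C^k}(\psi_X^{-1},\Id) \le C_k\, d_{C^k}(\psi_X,\Id)$), the expansion of Proposition \ref{prop:exp_comparison1} applied to $\psi_X$, and a chain-rule estimate (Lemma \ref{lem:hormander_composition_estimate}) for the smooth map $(x,y)\mapsto \exp_x^{-1}(y)$ composed with $(\Id,\psi_X^{-1})$.

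Next, with $\psi_W = \psi_X^{-1}$ in hand, I would apply Proposition \ref{thm:composition_quadratic_estimate} to get
\[
\psi_0 \;=\; \Id \;=\; \psi_X \circ \psi_W \;=\; \psi_{X + W + Q(X,W)},
\]
where $Q$ is quadratic in $X$ and $W$. The assignment $Y\mapsto \psi_Y$ is injective on pointwise-small vector fields because $\exp_x$ is a diffeomorphism on a ball of radius less than the injectivity radius, so we conclude $W = -X - Q(X,W)$. Setting $Z := -Q(X,W) = W + X$, it remains to upgrade the quadratic estimate for $Q$ in $(X,W)$ to one in $X$ alone. Quadraticity means $\|Z\|_{C^k} \le C_k(\|X\|_{C^{k+\ell_1}}^2 + \|W\|_{C^{k+\ell_1}}^2)$ for a fixed $\ell_1$, and substituting the tame bound $\|W\|_{C^{k+\ell_1}} \le C\,\|X\|_{C^{k+\ell_1+\ell}}$ from the previous step yields $\|Z\|_{C^k} \le C_k'\,\|X\|_{C^{k+\ell_1+\ell}}^2$, as required by Definition \ref{defn:quadratic}.

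The step I expect to be the main obstacle is the tame estimate controlling $\|W\|_{C^k}$ by $\|X\|_{C^{k+\ell}}$. Lemma \ref{lem:ck_inverse_est2} only provides control on the diffeomorphism distance $d_{C^k}(\psi_X^{-1},\Id)$, and one must then pass back from this diffeomorphism to the underlying vector field $W$. This passage would naively be circular (since $\|W\|_{C^k}$ controls $d_{C^k}(\psi_W,\Id)$ only up to a quadratic $Z_W$ term that is controlled by $\|W\|_{C^{k+\ell}}^2$), and must instead be carried out by working in charts and using the smoothness of $\exp_x^{-1}$ together with the composition estimate of Lemma \ref{lem:hormander_composition_estimate}. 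Once this is established, the rest of the argument is direct.
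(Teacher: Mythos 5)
Your argument is sound and reaches the same conclusion, but by a genuinely different route than the paper. The paper avoids ever inverting $\psi_X$ directly: it first writes $\psi_X\circ\psi_{-X}=\psi_Z$ with $Z$ already quadratic (Proposition \ref{thm:composition_quadratic_estimate}), then $\psi_X^{-1}=\psi_{-X}\circ\psi_Z^{-1}$, uses Lemma \ref{lem:ck_inverse_est2} to pass to $\psi_Z^{-1}=\psi_{Z'}$ with $Z'$ still quadratic in $X$, and finishes with one more composition. Because $Z$ is already quadratically small going into the inversion, any multiplicative constants lost in going between the diffeomorphism $\psi_Z^{-1}$ and the field $Z'$ are harmless, and no \emph{first-order} tame bound on the inversion is ever needed. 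Your proof instead sets $\psi_W=\psi_X^{-1}$ outright and solves $\Id=\psi_X\circ\psi_W=\psi_{X+W+Q(X,W)}$ via injectivity of $Y\mapsto\psi_Y$ on pointwise-small fields; this forces you to prove the first-order tame estimate $\|W\|_{C^k}\le C_k\|X\|_{C^{k+\ell}}$ in order to upgrade quadraticity in $(X,W)$ to quadraticity in $X$ alone, and you correctly flag this as the load-bearing step. One caveat on how you propose to get it: a bare application of Lemma \ref{lem:hormander_composition_estimate} to $W=\Phi\circ(\Id,\psi_X^{-1})$ with $\Phi(x,y)=\exp_x^{-1}(y)$ will not by itself yield a homogeneous bound, since the composition estimate does not exploit the cancellation $\Phi(x,x)=0$ and instead produces an additive constant coming from the lower-order terms. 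To make your sketch rigorous one should write $W(x)=A(x)+R(x,A(x))$ where $\psi_X^{-1}(x)=x+A(x)$ in a chart and $R(x,a)=\exp_x^{-1}(x+a)-a$ vanishes to second order in $a$, and then run the fundamental-theorem-of-calculus argument from the proof of Proposition \ref{prop:exp_comparison1} on $R$. With that adjustment your proof is complete; it makes explicit a tame-inversion lemma that the paper's pre-composition trick with $\psi_{-X}$ is designed precisely to avoid.
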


\begin{proof}
To begin we know by Proposition \ref{thm:composition_quadratic_estimate} that  
\[
\psi_{X}\circ \psi_{-X}=\psi_{Z},
\]
where $Z$ is quadratic in $X$. Note that $\psi_{-X}\circ \psi_{Z}^{-1}=\psi_X^{-1}$. By Lemma \ref{lem:ck_inverse_est2}, $\psi_Z^{-1}=\psi_{Z'}$ where $Z'$ is quadratic in $X$. Hence $\psi_{X}^{-1}=\psi_{-X}\circ \psi_{Z'}$. By Proposition \ref{thm:composition_quadratic_estimate}, this gives that $\psi_X^{-1}=\psi_{-X+Z'+Q}$, where $Q$ is quadratic in $X$ and $Z'$. Hence as $Z'$ is quadratic in $X$ and the corollary follows.
\end{proof}

We can now complete the proof of the estimate on the error field of the conjugated system.

\begin{proof}[Proof of Prop. \ref{prop:conj_error_field}.]
To show this, we repeatedly apply Proposition \ref{thm:composition_quadratic_estimate} and Corollary \ref{cor:inverses_quadratic_estimate}. Writing $Z$ for anything second order in $C$ and $Y$, we find:
\begin{align*}
\psi_Cf\psi_C^{-1}&=\psi_C\psi_YR\psi_C^{-1}\\
&=\psi_{C+Y+Z}R\psi_C^{-1}\\
&=\psi_{C+Y+Z}R\psi_{-C+Z}\\
&=\psi_{C+Y+Z+R_*(-C+Z)}R\\
&=\psi_{C+Y-R_*C+Z}R.
\end{align*}
\end{proof}

We now show two additional lemmas that we use in the KAM scheme.

\begin{lem}\label{lem:C^k_composition_estimate}
Let $M$ be a closed Riemannian manifold. Fix $k\ge 1$. There exist $C,\epsilon>0$ such that if $R\in \Isom(M)$ and $f,g\in \Diff^k(M)$ satisfy $d_{C^1}(f,R)<\epsilon$, and $d_{C^1}(g,\Id)<\epsilon$, then 
\[
d_{C^k}(f\circ g,R)\le C_{k}(d_{C^k}(f,R)+d_{C^k}(g,\Id)),
\]
and
\[
d_{C^k}(g\circ f,R)\le  C_{k}(d_{C^k}(f,R)+d_{C^k}(g,\Id)).
\]
\end{lem}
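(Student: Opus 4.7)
The plan is to reduce to a chart computation and invoke the H\"ormander composition estimate (Lemma~\ref{lem:hormander_composition_estimate}) together with the product estimate (Lemma~\ref{lem:hormander_product_estimates}). First I would cover $M$ by a fixed finite atlas $\{(U_\alpha,\varphi_\alpha)\}$ with a refinement $\{V_\alpha\}$ satisfying $\overline{V_\alpha}\subset U_\alpha$. For $\epsilon$ sufficiently small (depending on the atlas and $R$), any $f$ with $d_{C^0}(f,R)<\epsilon$ sends each $\overline{V_\alpha}$ into a single $U_\beta$, and likewise for $g$ near $\Id$. Since the manifold $C^k$ distance is equivalent (up to a fixed multiplicative constant) to the maximum of the chart-wise $C^k$ distances on the $V_\alpha$'s, it suffices to prove the analogous bound in Euclidean charts, where $R$, $f$, $g$ appear as smooth maps between convex domains in $\R^n$.

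In a fixed chart with $\phi\coloneqq f-R$ and $\psi\coloneqq g-\Id$, split
\[
f\circ g - R \;=\; (R\circ g - R)\; +\; \phi\circ g.
\]
For the first summand, use the identity $R\circ g(x)-R(x)=\int_0^1 DR(x+t\psi(x))\,\psi(x)\,dt$, and apply Lemmas~\ref{lem:hormander_composition_estimate} and~\ref{lem:hormander_product_estimates} to the integrand (with $\|R\|_{C^{k+1}}$ a uniform constant on the chart); this yields $\|R\circ g - R\|_{C^k}\le C_k\|\psi\|_{C^k}$. For the second summand, Lemma~\ref{lem:hormander_composition_estimate} gives
\[
\|\phi\circ g\|_{C^k} \le C_k\big(\|\phi\|_{C^k}\|g\|_{C^1}^k + \|\phi\|_{C^1}\|g\|_{C^k} + \|\phi\circ g\|_{C^0}\big),
\]
and I would use $\|\phi\circ g\|_{C^0}\le\|\phi\|_{C^0}<\epsilon$ together with $\|g\|_{C^k}\le 1+\|\psi\|_{C^k}$. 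The case of $g\circ f$ is treated symmetrically by writing $g\circ f-R=\psi\circ f+(f-R)$ and applying Lemma~\ref{lem:hormander_composition_estimate} to $\psi\circ f$, using that $\|f\|_{C^1}$ is uniformly bounded near $R$.

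The main obstacle is converting the product-form right-hand side produced by H\"ormander's estimate (in particular the cross terms $\|\phi\|_{C^1}\|g\|_{C^k}$ and $\|\phi\|_{C^k}\|g\|_{C^1}^k$) into the advertised linear sum $C_k(\|\phi\|_{C^k}+\|\psi\|_{C^k})$. For this I would combine the $C^0$ smallness hypothesis with the interpolation inequality (Lemma~\ref{lem:ck_interpolation_inequality}) in the form $\|\phi\|_{C^1}\le C\|\phi\|_{C^0}^{1-1/k}\|\phi\|_{C^k}^{1/k}$ and analogously for $\psi$, so that $\|g\|_{C^1}^k$ stays close to $1$ and the factor $\|\phi\|_{C^1}$ carries a positive power of $\epsilon$. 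After shrinking $\epsilon$ (depending on $k$) and applying Young's inequality to absorb the remaining mixed powers, the cross products collapse into the desired linear bound. This interpolation-plus-smallness manipulation, which is the only place where the $C^0$ hypothesis is used essentially, is the delicate technical step of the argument.
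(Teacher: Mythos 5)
The paper does not actually give a proof of this lemma; it is one of the technical statements left unproved in Appendix~\ref{sec:ck_calculus}. Your overall route — localize to charts, split $f\circ g - R = (R\circ g - R) + \phi\circ g$, and invoke Lemmas~\ref{lem:hormander_composition_estimate} and~\ref{lem:hormander_product_estimates} — is the natural one, and the treatment of the first summand is fine. But there is a genuine gap exactly where you flag the ``delicate technical step,'' and the interpolation-plus-Young manipulation cannot deliver what you claim from the stated $C^0$ hypothesis alone.

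Here is the obstruction. Interpolation gives $\|\phi\|_{C^1}\le C\,\epsilon^{1-1/k}\|\phi\|_{C^k}^{1/k}$ and similarly for $\psi$. The H\"ormander cross term $\|\phi\|_{C^1}\|g\|_{C^k}$ is therefore of order $\epsilon^{1-1/k}\|\phi\|_{C^k}^{1/k}\bigl(1+\|\psi\|_{C^k}\bigr)$. Take $\|\phi\|_{C^k}=\|\psi\|_{C^k}=T$; this is of order $\epsilon^{1-1/k}\,T^{1+1/k}$, which cannot be bounded by $C_k\cdot 2T$ uniformly in $T$ no matter how small $\epsilon$ is: the ratio $\epsilon^{1-1/k}T^{1/k}$ tends to infinity as $T\to\infty$. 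Applying Young's inequality with conjugate exponents $k$ and $k/(k-1)$ produces a term $\|\psi\|_{C^k}^{k/(k-1)}$ with a strictly superlinear exponent, which is not absorbed. The same pathology appears in the other factor: $\|g\|_{C^1}^k$ is close to $1$ only when $\|\psi\|_{C^k}\lesssim\epsilon^{-(k-1)}$, which is not hypothesized. So ``shrinking $\epsilon$ depending on $k$'' does not rescue the argument, because the failure is asymptotic in the size of the $C^k$ norms, not in $\epsilon$.

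What the argument actually needs — and what the paper uses silently at the only place the lemma is invoked (the phrase ``assuming that $\varepsilon_1$ is bounded by a constant $\epsilon_1>0$'' preceding the application in the proof of Lemma~\ref{lem:KAM_step}) — is an a priori bound on $d_{C^1}(f,R)$ and $d_{C^1}(g,\Id)$, not merely $d_{C^0}$. Once $\|\phi\|_{C^1}$ and $\|\psi\|_{C^1}$ are uniformly bounded, $\|g\|_{C^1}^k$ is bounded, the cross term becomes $\|\phi\|_{C^1}\|g\|_{C^k}\lesssim \|\psi\|_{C^k}+1$ with the additive constant killed because $\|\phi\|_{C^1}$ is small, and the estimate closes. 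In short: right framework, but the stated $C^0$ hypothesis is not enough for your interpolation step, and you should either assume $C^1$ control or note that the application always supplies it.
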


\begin{proof}
We begin with a proof for the first inequality. In coordinates write $f(x)=R(x)+Y(x)$ and $g(x)=x+X(x)$. Then we just need to estimate 
\[
f\circ g(x)-R(x)=R(x+X(x))-R(x)+Y(x+X(x)).
\]
The last term is controlled by $d_{C^k}(f,R)+d_{C^k}(g,\Id)$ by Lemma \ref{lem:hormander_composition_estimate}. So, it suffices to estimate the first term. The $k$th derivative of $R(x+X(x))-R(x)$ is then
\[
\sum_{l=1}^k \sum_{j_1+\cdots+j_l=k} C_{l,j_1,...,j_l}D_{x+X(x)}^lR\{D^{j_1}_xg,\ldots,D_x^{j_l}g\}-D_x^lR.
\]
For all the terms with $l<k$, the same interpolation approach as in Lemma \ref{lem:ck_inverse_est2} gives the appropriate estimate, i.e. they are bounded by
\[
C\sum_{l=1}^{k-1} \|X\|_{C^k}^{\frac{l-1}{k-1}}\|X\|_{C^{k-1}}^{\frac{k-l}{k-2}}.
\] 
There are two remaining terms which are unaccounted for: $D^kR_{x+X(x)}-D^kR_x$. This is bounded by a constant time $\|X\|_{C^0}$ and the result follows.

We now consider the second inequality. As before we must estimate
\[
g\circ f(x)-R(x)=X(x)+Y(R(x)+X(x)).
\]
The important term is the second one. A similar argument to before then gives the result as all derviatives of $R$ are uniformly bounded independent of $R$.
\end{proof}

\begin{lem}\label{lem:C_k_composition_convergence}
Let $M$ be a closed Riemannian manifold and $k\ge 0$. If $g_n\in \Diff^k(M)$ is a sequence of diffeomorphisms and $\sum_n d_{C^k}(g_n,\Id)<\infty$, then the sequence of compositions of diffeomorphisms $h_n=g_ng_{n-1}\cdots g_2g_1$ converges in $C^k$ to a diffeomorphism.
\end{lem}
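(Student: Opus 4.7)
The plan is to first establish that the sequence $\{h_n\}$ is Cauchy in $C^k$, and then argue that its limit $h_\infty$ is a $C^k$ diffeomorphism. Set $d_n = d_{C^k}(g_n,\Id)$, so by hypothesis $\sum d_n < \infty$; in particular $d_n \to 0$. Working in a fixed finite atlas of $M$, I will identify $C^k$ distances of maps with $C^k$ norms of their coordinate expressions, so all estimates reduce to standard estimates on $\R^n$-valued functions.

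The crux of the argument will be to show that the $C^k$ norms $\|h_n\|_{C^k}$ stay uniformly bounded in $n$. From the chain rule $Dh_{n+1} = (Dg_{n+1} \circ h_n) \cdot Dh_n$ one immediately gets $\|Dh_n\|_{C^0} \le \prod_{j \le n}(1 + d_j)$, which is finite because $\sum d_n < \infty$; this yields a uniform $C^1$ bound. For higher derivatives I would write $h_{n+1} = h_n + (g_{n+1} - \Id)\circ h_n$ in coordinates and apply Hörmander's composition estimate (Lemma \ref{lem:hormander_composition_estimate}) to the second summand, using the uniform $C^1$ bound on $h_n$, to obtain a recursion of the form
\[
\|h_{n+1}\|_{C^k} \le (1 + C\, d_{n+1})\|h_n\|_{C^k} + C\, d_{n+1}.
\]
Dividing by $P_{n+1} := \prod_{j=1}^{n+1}(1+Cd_j)$ and telescoping, the convergence of the infinite product $\prod_n(1+Cd_n)$ (itself a consequence of $\sum d_n<\infty$) yields a uniform bound $A := \sup_n \|h_n\|_{C^k} < \infty$.

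With this uniform bound, Cauchyness follows quickly: another application of Hörmander's estimate to $h_{n+1} - h_n = (g_{n+1} - \Id)\circ h_n$ and the $C^1$ and $C^k$ bounds on $h_n$ produces $\|h_{n+1} - h_n\|_{C^k} \le C'\, d_{n+1}$, which is summable by hypothesis. Hence the $h_n$ converge in $C^k$ to some $h_\infty \in C^k(M,M)$. To see that $h_\infty$ is a diffeomorphism, note that the chain rule gives $|\det Dh_n(x)| = \prod_{j \le n} |\det Dg_j(h_{j-1}(x))| \ge \prod_{j \le n}(1 - C\, d_j)$, and since $d_j \to 0$ and $\sum d_j < \infty$, this infinite product converges to a strictly positive number. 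So $Dh_\infty$ is everywhere invertible, and $h_\infty$ is a $C^1$ local diffeomorphism of the closed connected manifold $M$, hence a covering map. Being the $C^0$ limit of degree $\pm 1$ diffeomorphisms, $h_\infty$ has degree $\pm 1$, so the cover is trivial and $h_\infty$ is a bijection, hence a $C^k$ diffeomorphism.

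The main obstacle will be the recursive $C^k$ bound in the second paragraph: any naive iterated application of a composition estimate of the form $\|f\circ g\|_{C^k}\le C_k(\|f\|_{C^k}+\|g\|_{C^k})$ (as in Lemma \ref{lem:C^k_composition_estimate}) immediately produces exponential blowup $C_k^n$ and fails. It is specifically the multiplicative-additive structure of Hörmander's estimate combined with the summability of $d_n$ that rescues the argument, via the convergent infinite product $\prod_n(1+Cd_n)$. Once the uniform $C^k$ bound is in hand, both the Cauchy property and the identification of the limit as a diffeomorphism are routine.
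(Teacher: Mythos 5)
The paper states this lemma without proof, so there is no reference argument to compare against; I'll evaluate the proposal on its own terms. Your strategy is sound, and you have correctly identified the technical crux: a naive iteration of Lemma \ref{lem:C^k_composition_estimate} produces exponential blowup $C_k^n$, and the remedy is the multiplicative-plus-additive form of H\"ormander's estimate (Lemma \ref{lem:hormander_composition_estimate}), which combined with the uniform $C^1$ bound $\|Dh_n\|_{C^0}\le\prod_j(1+d_j)$ yields the recursion $\|h_{n+1}\|_{C^k}\le(1+Cd_{n+1})\|h_n\|_{C^k}+Cd_{n+1}$; dividing by $P_{n+1}=\prod_{j\le n+1}(1+Cd_j)$ and telescoping gives the uniform $C^k$ bound, and one more application of H\"ormander gives the summable increments $\|h_{n+1}-h_n\|_{C^k}\le C'd_{n+1}$. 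This part of the argument is correct.

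The one soft spot is the concluding degree argument. On a non-orientable $M$ degree is only $\mathbb{Z}/2$-valued, and a self-covering with any \emph{odd} number of sheets has degree $1\bmod 2$, so ``degree $\pm1$'' by itself does not force the cover to be trivial; this matters even in the paper's own setting since $\RP^{2k}$ is among the closed isotropic manifolds (although there it happens to have no nontrivial self-covers). Two easy repairs. You could note that $h_\infty$ is $C^0$-homotopic to the diffeomorphism $h_n$ for large $n$, hence is a homotopy equivalence, and a finite covering map that is a homotopy equivalence is necessarily one-sheeted (its image in $\pi_1$ has index equal to the sheet number, and a homotopy equivalence induces an isomorphism on $\pi_1$). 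Cleaner still, you can avoid topology entirely by observing that the inverse compositions $h_n^{-1}=g_1^{-1}\cdots g_n^{-1}$ are also $C^0$-Cauchy: $d_{C^0}(g_j^{-1},\Id)=d_{C^0}(g_j,\Id)$ is summable and $\Lip(h_n^{-1})\le\prod_j\Lip(g_j^{-1})$ is uniformly bounded, so $\|h_{n+1}^{-1}-h_n^{-1}\|_{C^0}\le\Lip(h_n^{-1})\,d_{C^0}(g_{n+1}^{-1},\Id)$ is summable. Their $C^0$ limit is a two-sided continuous inverse of $h_\infty$, giving bijectivity without any orientability assumption, and this also handles the case $k=0$ where the $\det Dh_n$ computation is unavailable. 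For $k\ge1$, once $h_\infty$ is a bijective $C^k$ map with everywhere invertible differential, the inverse function theorem shows it is a $C^k$ diffeomorphism.
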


\begin{proof}
As before, we check in charts. 
Having fixed a chart, write $g_n(x)=x+X_n(x)$. Write $h_n(x)=1+Y_n(x)$. 
Let $a_n=\|X_n\|_{C^k}$ and let $b_n=\|Y_n\|_{C^k}$. Note that
\begin{equation}\label{eqn:coord_expr_hn}
h_{n}(x)=x+Y_{n-1}(x)+X_n(x+Y_n(x)).
\end{equation}
Suppose for the moment that $\|Y_{n-1}\|_{C^k}\le 1$.
Using Lemma \ref{lem:hormander_composition_estimate}  and that $\|Y_{n}\|_{C^k}\le 1$,
\begin{align}
\|X_n(x+Y_{n-1})\|_{C^k}&\le C_k(\|X_n\|_{C^k}\|x+Y_{n-1}\|_{C^1}^k+\|X_n\|_{C^1}\|x+Y_{n-1}\|_{C^k}+\|X\|_{C^0})\\
&\le C_k'(a_n+a_nb_{n-1})\label{eqn:est_on_bn}
\end{align}
Hence it follows from equation \eqref{eqn:coord_expr_hn} that there exists $D_k$ such that if $b_{n-1}\le 1$ then
\[
b_n \le b_{n-1}+D_ka_n(1+b_{n-1}).\\
\]
By induction, under the same assumption that $\|Y_j\|_{C^k}\le 1$ for $j< n$, it follows that 
\[
b_n\le -1+\prod_{i=1}^n(1+D_ka_i).
\]
By noting that $\prod_{i=1}^{\infty} (1+x_n)\le \exp\left(\sum_{i=1}^{\infty} x_n\right)$ for $x_n\ge 0$, we can conclude that a tail of the sequence converges. This follows because as $\sum_n a_n$ converges we can inductively check that these inequalities hold starting the argument from an index $N$ satisfying $\exp(\sum_{i=N}^{\infty} D_k a_i) -1<1$. Hence as a tail of the infinite composition converges so does the whole composition.
\end{proof}

\section{Interpolation Inequalities}\label{sec:interpolation}

There is a basic $C^k$ interpolation inequality, which may be found in the appendix of \cite[Thm A.5]{hormander1976boundary}. It states that:
\begin{lem}\label{lem:ck_interpolation_inequality}
Suppose that $M$ is a closed Riemannian manifold. For $0\le a\le b<\infty$ and $0<\lambda<1$ there exists a constant $C(a,b,\lambda)$ such that for any real valued $C^b$ function $f$ defined on $M$,
\[
\|f\|_{C^{\lambda a+(1-\lambda)b}}\le C\|f\|_{C^a}^{\lambda}\|f\|_{C^{b}}^{1-\lambda}.
\]
\end{lem}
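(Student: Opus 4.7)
The plan is to reduce the statement on $M$ to an analogous inequality on a fixed compact convex domain in $\R^n$, and then to establish the Euclidean inequality by the classical Landau--Kolmogorov trick. Since $M$ is closed, I would cover it by finitely many coordinate charts $(U_i,\varphi_i)$ together with a subordinate smooth partition of unity $\{\chi_i\}$, with each $\varphi_i(U_i)$ containing a common compact convex set $B \subset \R^n$. Because the transition maps and the functions $\chi_i$ are fixed and smooth, a chain-rule computation shows that the $C^k$ norm on $M$ is uniformly bi-Lipschitz equivalent to $\sum_i \|(\chi_i f)\circ \varphi_i^{-1}\|_{C^k(B)}$ for every integer $k$ (and similarly for Hölder intermediate orders). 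Thus the inequality on $M$ reduces, up to a larger constant depending only on the charts and the partition of unity, to the same inequality for real-valued functions on $B$.

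For the Euclidean statement, I would first prove the one-dimensional base case
\[
\|g'\|_{C^0} \le C\,\|g\|_{C^0}^{1/2}\|g''\|_{C^0}^{1/2}.
\]
This is the standard Taylor-with-remainder argument: for any $h>0$ with $[x-h,x+h]\subset B$,
\[
g'(x) \;=\; \frac{g(x+h)-g(x-h)}{2h} \;-\; \frac{1}{2h}\int_{-h}^{h}(h-|t|)\,g''(x+t)\,dt,
\]
so $|g'(x)| \le h^{-1}\|g\|_{C^0} + (h/2)\|g''\|_{C^0}$, and optimizing in $h$ gives the desired bound. Points near $\partial B$ are handled by taking $h$ smaller, using the convexity of $B$. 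The analogous estimate on $B\subset \R^n$ follows by applying the one-dimensional inequality along coordinate directions.

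Iterating this base case between consecutive integer orders proves the inequality whenever $a$, $b$, and $\lambda a+(1-\lambda)b$ are non-negative integers, by induction on $b-a$. To extend to the Hölder range — which is needed since the paper uses intermediate exponents such as $1+\alpha$, $2+\alpha$, etc. — one proves the analogous inequality between $C^n$ and $C^{n+1}$ with a $C^{n+\theta}$ Hölder norm in the middle; this is again the same Taylor-remainder argument applied to a fixed derivative of order $n$. Combining the integer case with the Hölder case gives the full statement for all $0\le a\le b$ and $0<\lambda<1$.

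The main obstacle is purely bookkeeping: keeping track of constants near $\partial B$ and ensuring the chart reduction does not blow up the constant in a way that depends on $f$. Both of these are routine, and the detailed computation is carried out in \cite[Thm.~A.5]{hormander1976boundary}; the content of the proof is simply to observe that this classical Euclidean statement transfers unchanged to a closed manifold via a finite atlas.
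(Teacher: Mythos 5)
Your argument is correct and ends in essentially the same place as the paper, which gives no proof at all and simply cites \cite[Thm.~A.5]{hormander1976boundary} --- the same reference you invoke. Your added sketch of the finite-atlas reduction and the Landau--Kolmogorov mechanism accurately describes what lies behind that citation (the explicit Taylor-remainder kernel should carry a factor $\mathrm{sgn}(t)$, i.e.\ $\mathrm{sgn}(t)(h-|t|)$, though the resulting estimate is unaffected).
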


The following is an immediate consequence of Lemma \ref{lem:ck_interpolation_inequality}.

\begin{lem}\label{lem:ck_interpolation_inequality_diffeos}
Suppose that $M$ is a closed Riemannian manifold. There exists $\epsilon>0$ such that for $0\le a\le b<\infty$ and $0<\lambda<1$ there exists a constant $C(a,b,\lambda)$ such that for any $f\in \Diff^{\infty}(M)$ such that $d_{C^0}(f,\Id)<\epsilon$, then
\[
d_{C^{\lambda a+(1-\lambda)b}}(f,\Id)\le Cd_{C^a}(f,\Id)^{\lambda}d_{C^{b}}(f,\Id)^{1-\lambda}.
\]
\end{lem}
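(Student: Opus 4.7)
The plan is to reduce the diffeomorphism statement to the scalar statement Lemma \ref{lem:ck_interpolation_inequality} by expressing $f$ as the flow-like map $\psi_X$ of a vector field $X$ and transferring the interpolation inequality from $X$ back to $f$. First I will choose $\epsilon>0$ smaller than half the injectivity radius of $M$. For any $f\in\Diff^\infty(M)$ with $d_{C^0}(f,\Id)<\epsilon$, define the vector field $X(x)=\exp_x^{-1}f(x)$, so that $f=\psi_X$ in the notation of equation \eqref{eq:psi_defn}. Since $\epsilon<\inj(M)/2$, the field $X$ is uniquely defined and smooth, and $X$ depends smoothly on $f$.

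Next I will show that for each real $k\ge 0$ there exist constants $c_k,C_k>0$ such that
\[
c_k\|X\|_{C^k}\le d_{C^k}(f,\Id)\le C_k\|X\|_{C^k},
\]
uniformly over $f$ with $d_{C^0}(f,\Id)<\epsilon$. Working in a finite atlas of charts where $\psi_X(x)=x+X(x)+Z(x)$ with $Z$ quadratic in $X$ (this is Proposition \ref{prop:exp_comparison1}), the $C^k$ norm of $\psi_X-\Id$ in each chart differs from $\|X\|_{C^k}$ by terms quadratic in lower $C^k$ norms of $X$, so shrinking $\epsilon$ if necessary these are absorbed into a two-sided bound. The same estimates give the equivalence for non-integer $k$ (Hölder spaces) by interpolating the integer case, or directly by noting that the change-of-variable maps in a smooth atlas are bi-Lipschitz in $C^{k}$ norms on bounded sets.

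With this equivalence in hand, apply Lemma \ref{lem:ck_interpolation_inequality} componentwise. Using a finite atlas $\{(U_i,\varphi_i)\}$ of $M$ and a trivialization of $TM$ over each $U_i$, the vector field $X$ is represented in each chart by a tuple of real-valued functions on a compact convex domain in $\R^d$. Applying Lemma \ref{lem:ck_interpolation_inequality} to each component and summing over charts gives
\[
\|X\|_{C^{\lambda a+(1-\lambda)b}}\le C'(a,b,\lambda)\|X\|_{C^a}^{\lambda}\|X\|_{C^b}^{1-\lambda},
\]
with $C'$ depending only on the atlas, $a$, $b$, and $\lambda$. Combining this with the two-sided bound between $\|X\|_{C^k}$ and $d_{C^k}(f,\Id)$ proves the claim with $C(a,b,\lambda)=C_{\lambda a+(1-\lambda)b}\,C'(a,b,\lambda)\,c_a^{-\lambda}c_b^{-(1-\lambda)}$.

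The only real obstacle is verifying the chart-uniform equivalence of $\|X\|_{C^k}$ and $d_{C^k}(f,\Id)$ for non-integer $k$; this is essentially bookkeeping with the bi-Lipschitz behavior of smooth coordinate changes on precompact domains, together with Proposition \ref{prop:exp_comparison1}. Once this is granted, the scalar interpolation inequality is applied directly and no further analysis is required.
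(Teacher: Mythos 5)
Your overall strategy — reduce to the scalar Lemma \ref{lem:ck_interpolation_inequality} via coordinate representations — is the right one; the paper gives no proof of this lemma (it is stated as an immediate consequence of the scalar case), so you are filling in a genuine gap. One remark worth making precise is the step you flag as the ``only real obstacle,'' the two-sided comparison $c_k\|X\|_{C^k}\le d_{C^k}(f,\Id)\le C_k\|X\|_{C^k}$. Your suggested justification via ``shrinking $\epsilon$ if necessary'' is not quite right as stated: the error $Z=\psi_X-\Id-X$ satisfies $\|Z\|_{C^k}\le C_k\|X\|_{C^k}\|X\|_{C^0}$ (this is what the proof of Proposition \ref{prop:exp_comparison1} actually yields), and absorbing requires $C_k\epsilon<1$; but $C_k$ grows with $k$ while the lemma's quantifier order fixes a single $\epsilon$ before $a,b,\lambda$ are chosen, so one cannot shrink $\epsilon$ with $k$. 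The correct justification of the comparison is a Fa\`a di Bruno composition estimate applied to $X = \Psi(\cdot, f-\Id)$ and $f-\Id = G(\cdot, X)$ for smooth $\Psi, G$ vanishing on the zero section, together with the scalar interpolation Lemma \ref{lem:ck_interpolation_inequality} itself to dominate the products of intermediate derivatives (e.g.\ $\|X\|_{C^1}^k\lesssim\|X\|_{C^0}^{k-1}\|X\|_{C^k}$) — this yields $k$-dependent constants with $\epsilon$ fixed and no absorption needed. Finally, note that passing through the vector field $X$ is unnecessary: the distance $d_{C^k}(f,\Id)$ is, up to $k$-dependent constants, the maximum over a fixed finite atlas of the $C^k$ norms of the coordinate representations $\varphi_i\circ f\circ\varphi_i^{-1}-\operatorname{id}$, and applying Lemma \ref{lem:ck_interpolation_inequality} componentwise to these gives the result directly.
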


\begin{lem}
Consider the space $C^{\infty}(M,N)$ where $M$ and $N$ are Riemannian manifolds and $M$ and $N$ are closed.
For all $j,\sigma>0$, there exists a natural number $k$ and a number $\epsilon_0>0$ such that if $f,g\in C^{\infty}(M,N)$, $\|f-g\|_{H^j} <\epsilon_0 <1$, and $\|f-g\|_{C^k}\le 1/2$ then $\|f-g\|_{C^j}\le \|f-g\|_{H^j}^{1-\sigma}$.
\end{lem}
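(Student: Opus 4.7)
The plan is to combine Sobolev embedding with the standard interpolation inequality for Sobolev norms, which together yield a tame inequality of the form $\|u\|_{C^j} \lesssim \|u\|_{H^j}^{1-\theta} \|u\|_{C^k}^{\theta}$ with $\theta$ arbitrarily small for $k$ large. After reducing to the case of vector-valued functions, the required estimate will drop out of elementary arithmetic with exponents.

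First, I would reduce to the Euclidean setting. Fix a finite atlas $\{(U_\alpha,\varphi_\alpha)\}$ of $M$ together with a subordinate partition of unity. Because $\|f-g\|_{C^k}\le 1/2$, a suitable choice of chart on $N$ (for instance, the exponential chart centred at $g(x)$, trivialized over each $U_\alpha$) represents the local difference $f-g$ as a bounded vector-valued function $u_\alpha$ on $\varphi_\alpha(U_\alpha)\subset\R^{\dim M}$, with $\|u_\alpha\|_{C^s}$ and $\|u_\alpha\|_{H^s}$ comparable up to universal constants to $\|f-g\|_{C^s}$ and $\|f-g\|_{H^s}$ respectively. Thus it suffices to prove the inequality for a vector-valued function $u$ on a fixed bounded domain in $\R^{\dim M}$.

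Next, I would apply two standard tools. Let $d=\dim M$ and put $m=\lceil d/2\rceil+1$, so that one has a Sobolev embedding $H^{j+m}\hookrightarrow C^j$. For $k>j+m$, the standard Sobolev interpolation inequality gives, with $\theta=m/(k-j)$,
\[
\|u\|_{H^{j+m}}\le C\,\|u\|_{H^j}^{1-\theta}\,\|u\|_{H^k}^{\theta}.
\]
Combining with the embedding and the trivial bound $\|u\|_{H^k}\le C'\|u\|_{C^k}$ on a bounded domain yields
\[
\|u\|_{C^j}\le C''\,\|u\|_{H^j}^{1-\theta}\,\|u\|_{C^k}^{\theta}.
\]
Now I choose $k$ large enough that $\theta=m/(k-j)<\sigma$. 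Since $\|u\|_{C^k}\le 1/2<1$ by hypothesis, the factor $\|u\|_{C^k}^\theta$ is at most $1$, so
\[
\|u\|_{C^j}\le C''\,\|u\|_{H^j}^{1-\theta}=C''\,\|u\|_{H^j}^{1-\sigma}\cdot\|u\|_{H^j}^{\sigma-\theta}.
\]

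Finally, I would absorb the constant $C''$ into the small-norm hypothesis. Choose $\epsilon_0\in(0,1)$ small enough that $C''\,\epsilon_0^{\sigma-\theta}\le 1$; this is possible because $\sigma-\theta>0$. Then whenever $\|u\|_{H^j}<\epsilon_0$, the factor $C''\|u\|_{H^j}^{\sigma-\theta}\le 1$, giving the desired inequality $\|u\|_{C^j}\le\|u\|_{H^j}^{1-\sigma}$. The only nontrivial step is the reduction to the Euclidean setting: one must check that the chart representations do not distort the norms in a way that depends on $u$ itself, which is where the bound $\|f-g\|_{C^k}\le 1/2$ is used, since it keeps the graph of $f-g$ inside a fixed compact region on which all chart-dependent constants are uniform.
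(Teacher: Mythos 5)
Your proof is correct, but it takes a genuinely different route from the paper's. The paper gives an elementary, hands-on argument: pick a point $x$ where $|f-g|$ attains its $C^0$ maximum $\alpha$, use a $k$th-order Taylor bound to show the function cannot drop below $\alpha/2$ on a ball of radius $\gamma\sim\alpha^{1/k}$ about $x$, integrate $|f-g|^2$ over that ball to get a lower bound $\|f-g\|_{H^0}^2\gtrsim\alpha^{2+d/k}$, and then solve for $\alpha$. (The exponent $\frac{2}{2+d/k}=1-\frac{d}{2k+d}$ they obtain is exactly the $1-\theta$ you get after interpolation, as $k\to\infty$.) Your proof instead invokes the standard machinery: Sobolev embedding $H^{j+m}\hookrightarrow C^j$ for $m>d/2$, the Sobolev interpolation inequality $\|u\|_{H^{j+m}}\le C\|u\|_{H^j}^{1-\theta}\|u\|_{H^k}^\theta$ with $\theta=m/(k-j)$, and the trivial bound $\|u\|_{H^k}\lesssim\|u\|_{C^k}$, after which the exponent arithmetic and absorption of the constant into $\epsilon_0$ proceed exactly as you wrote. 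Both proofs are valid; yours is shorter at the price of citing standard theorems, while the paper's is self-contained at the price of a lengthier ball-integration argument. One small caution for your version: when you reduce to charts you should note explicitly that the interpolation inequality holds on the closed manifold $M$ itself (via the spectral decomposition of the Laplacian, as the paper uses elsewhere), or on $\R^d$ after a compactly supported cutoff, so that you do not need an extension operator for a domain with boundary; your partition-of-unity reduction implicitly requires this but it is worth a sentence. Also note that the exponential chart on $N$ is centered at the moving point $g(x)$, so the comparability of the $H^s$ and $C^s$ norms of the local representative $u_\alpha$ to those of $f-g$ picks up constants depending on $g$, which is harmless here but should be acknowledged.
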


\begin{proof}
The proof is a relatively straightforward application of the Sobolev embedding theorem and interpolation inequalities.
First, we recall an interpolation inequality for Sobolev norms, see \cite[Thm. 6.5.4]{bergh1976interpolation}. For each $0<\theta<1$, $s_0,s_1$, there exists a constant $C$ such that if we let $s=(1-\theta)s_0+\theta s_1$, then we have 
\[
\|f-g\|_{H^s}\le C\|f-g\|_{H^{s_0}}^{1-\theta}\|f-g\|_{H^{s_1}}^{\theta}.
\]

To begin the proof, note that it suffices to estimate $\|f-g\|_{C^{j+1}}$.  Fix $\ell$ large enough that $H^{\ell}$ embeds compactly in $C^{j+1}$ by a Sobolev embedding. Then pick $k$ large enough that 
\[
\|f-g\|_{H^{\ell}}\le C_{\lambda,\ell}\|f-g\|_{H^{j}}^{1-\theta}\|f-g\|_{H^k}^{\theta},
\]
where $0<\theta<\sigma$. The term $\|f-g\|_{H^k}^{\theta}$ is uniformly bounded by $C_k\|f-g\|_{C^k}^{\theta}$. Hence as $H^{\ell}$ compactly embeds in $C^{j+1}$, there exists $C'>0$ such that
\[
\|f-g\|_{C^{j+1}}\le C'\|f-g\|_{H^{j}}^{1-\theta}=C'\|f-g\|^{\sigma-\theta}_{H^j}\|f-g\|_{H^{j}}^{1-\sigma}.
\]
If we choose $\epsilon_0$ sufficiently small that $C'\|f-g\|^{\sigma-\theta}_{H^j}\le 1$, then the result follows.
\end{proof}

A similar argument shows the following:

\begin{lem}\label{lem:bundle_section_interpolation}
Suppose that $E$ is a smooth Riemannian vector bundle over a closed Riemannian manifold $M$. For all choices $j,\ell, \sigma, D>0$ there exist $k,\epsilon_0$ such that if $f$ is a smooth section of $E$ and $\|f\|_{H^j}\le \epsilon_0<1$ and $\|f\|_{C^k}\le D$ then $\|f\|_{C^\ell}\le \|f\|_{H^j}^{1-\sigma}$.
\end{lem}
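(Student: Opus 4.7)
The plan is to bootstrap the scalar $\mathbb{T}^n$-version of the previous lemma to sections of $E$, and then to use the $C^k$ interpolation inequality of Lemma \ref{lem:ck_interpolation_inequality} to trade a $C^0$ bound for a $C^\ell$ bound while losing only a tiny amount in the exponent of $\|f\|_{H^j}$.

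First I would pass from sections of $E$ to functions. Choose a finite atlas $\{U_\alpha\}$ on $M$ over which $E$ is trivialized by isometries on fibers, and a subordinate partition of unity $\{\chi_\alpha\}$. Writing $f = \sum_\alpha \chi_\alpha f$ in coordinates, the $C^k$ and $H^j$ norms of $f$ are each equivalent, up to constants depending only on the atlas, partition of unity, and Riemannian data, to the sums of the corresponding norms of the component scalar functions on $\mathbb{R}^n$ (which one can further cut off and periodize to reduce to $\mathbb{T}^n$). So it suffices to prove the statement for real-valued functions on $\mathbb{T}^n$.

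Next I would upgrade the previous lemma to allow an arbitrary $C^k$ bound and an $H^j$ hypothesis in place of an $H^0$ hypothesis. Given $f$ with $\|f\|_{H^j}\le \epsilon_0$ and $\|f\|_{C^k}\le D$, set $g=f/(2D)$. Then $\|g\|_{C^k}\le 1/2$ and $\|g\|_{H^0}\le \|g\|_{H^j}\le \epsilon_0/(2D)$, which is smaller than any prescribed threshold once $\epsilon_0$ is small. Applying the previous lemma to $g$ (for some $\sigma'>0$ and a sufficiently large $k'$, with a correspondingly small threshold $\epsilon_0'$) yields
\[
\|g\|_{C^0}\le \|g\|_{H^0}^{1-\sigma'},
\]
which, after unscaling and using $\|f\|_{H^0}\le \|f\|_{H^j}$, gives a constant $C_1=C_1(D,\sigma')$ such that
\[
\|f\|_{C^0}\le C_1\,\|f\|_{H^j}^{1-\sigma'}.
\]

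Now I would interpolate. By Lemma \ref{lem:ck_interpolation_inequality}, for any integer $k\ge k'$ one has
\[
\|f\|_{C^{\ell_0}} \le C_2\,\|f\|_{C^0}^{1-\ell_0/k}\,\|f\|_{C^k}^{\ell_0/k}
\le C_2 D^{\ell_0/k} C_1^{1-\ell_0/k}\,\|f\|_{H^j}^{(1-\sigma')(1-\ell_0/k)}.
\]
Choose $\sigma'>0$ and then $k$ large enough that $(1-\sigma')(1-\ell_0/k) \ge 1 - \sigma/2$. The multiplicative constant $C_3 \coloneqq C_2 D^{\ell_0/k}C_1^{1-\ell_0/k}$ is then a fixed number depending only on $j,\ell_0,\sigma,D$. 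Finally, shrink $\epsilon_0$ so that whenever $\|f\|_{H^j}\le \epsilon_0$ one has $C_3\,\|f\|_{H^j}^{-\sigma/2}\le 1$, so that
\[
\|f\|_{C^{\ell_0}} \le C_3\,\|f\|_{H^j}^{1-\sigma/2} \le \|f\|_{H^j}^{1-\sigma},
\]
as required.

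The only real content is the previous lemma, whose proof already appears; here the main bookkeeping point is coordinating the choice of $\sigma'$, $k$, and $\epsilon_0$ so that the multiplicative constants coming from the bundle trivialization, the rescaling by $2D$, and the interpolation inequality can all be absorbed into an arbitrarily small loss in the exponent. There is no genuine obstacle, only the need to chain the estimates in the right order.
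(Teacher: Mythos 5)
Your proof is correct in substance and is exactly the reduction the paper has in mind when it calls the lemma an easy consequence of the preceding one: localize to charts with a partition of unity, rescale by $2D$ to meet the $\tfrac{1}{2}$ threshold, invoke the scalar $H^0/C^0$ estimate (using $\|g\|_{H^0}\le\|g\|_{H^j}$), and then interpolate the resulting $C^0$ bound up to $C^{\ell_0}$ via Lemma~\ref{lem:ck_interpolation_inequality}, absorbing the multiplicative constant by a further shrinking of $\epsilon_0$. The one slip is a sign in the final step: since $\|f\|_{H^j}<1$, the absorbing condition should read $C_3\,\|f\|_{H^j}^{\sigma/2}\le 1$ rather than $C_3\,\|f\|_{H^j}^{-\sigma/2}\le 1$ (as written, the left side grows as $\epsilon_0$ shrinks); with that exponent corrected, the chain $\|f\|_{C^{\ell_0}}\le C_3\|f\|_{H^j}^{1-\sigma/2}\le\|f\|_{H^j}^{1-\sigma}$ goes through.
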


\section{Estimate on Lifted Error Fields}

The goal of this section is to prove a technical estimate on the error fields of a lifted system. The proof is a computation in charts.

\begin{lem}\label{lem:error_on_lifts}
Suppose that $M$ is a closed Riemannian manifold. Fix numbers $m,k\ge 0$ and $d$ such that $0\le d\le \dim M$. There exists a constant $C$ such that the following holds. For any tuple $(f_1,...,f_m)$ of diffeomorphisms of $M$ and $(r_1,...,r_m)$ a $C^1$ close tuple of isometries of $M$, let $Y_i$ be the shortest vector field such that $\exp_{r_i(x)}Y_i(x)=f_i(x)$. Let $F_i$ be the lift of $f_i$ to $\Gr_d(M)$ and $R_i$ be the lift of $r_i$ to $\Gr_d(M)$. Let $\wt{Y}_i$ be the shortest vector field on $\Gr_d(M)$ such that $\exp_{R_i(x)}\wt{Y}_i(x)=F_i(x)$. If $\|\sum_i Y_i\|_{C^k}=\epsilon$ and $\max_i \|Y_i\|_{C^k}=\eta$, then
\[
\left\|\sum_{i=1}^m\wt{Y}_i\right\|_{C^{k-1}}\le C(\epsilon+\eta^2).
\]
\end{lem}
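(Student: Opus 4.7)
The plan is to reduce the problem to estimating a single nonlinear operator applied to $Y_i$ that does not depend on $r_i$. First, observe that $f_i=\psi_{Y_i}\circ r_i$, and since the lift to $\Gr_d(M)$ is functorial (the lift of a composition is the composition of lifts), we have $F_i=\Psi_{Y_i}\circ R_i$, where $\Psi_Y\colon \Gr_d(M)\to \Gr_d(M)$ sends $(y,E)$ to $(\psi_Y(y),\,D_y\psi_Y\,E)$. This map depends only on the vector field $Y$ and the Riemannian geometry of $M$, not on the isometry $r_i$. Substituting into the defining equation $\exp_{R_i(z)}\wt Y_i(z)=F_i(z)$ and setting $w=R_i(z)$ yields $\wt Y_i(w)=\exp_w^{-1}\Psi_{Y_i}(w)$. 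Introducing the nonlinear operator $\Phi(Y)(w)\coloneqq \exp_w^{-1}\Psi_Y(w)$, we simply have $\wt Y_i=\Phi(Y_i)$, with $\Phi$ independent of $i$.

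Second, I would expand $\Phi$ as linear plus quadratic. Working in a local chart on $\Gr_d(M)$ using graph coordinates for the Grassmannian fiber, one computes in the spirit of Proposition \ref{prop:exp_comparison1} that $\Phi(Y)=LY+N(Y)$, where the first-order linear differential operator $L$ sends a vector field $Y$ on $M$ to its natural geometric lift as a vector field on $\Gr_d(M)$ — the horizontal component being $Y$ itself and the vertical component being the projection of $D_yY|_E$ onto the normal of $E$ in the Grassmannian fiber — and the remainder $N$ satisfies the tame quadratic bound $\|N(Y)\|_{C^{k-1}}\le C_k\|Y\|_{C^k}^2$ for $Y$ sufficiently $C^1$ small. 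This estimate, together with the $C^k\to C^{k-1}$ continuity of $L$, follows from the Hörmander composition and product inequalities (Lemma \ref{lem:hormander_composition_estimate} and Lemma \ref{lem:hormander_product_estimates}) exactly as in the proof of Proposition \ref{prop:exp_comparison1}; the one derivative loss from $C^k$ to $C^{k-1}$ is forced by the $DY$ appearing in the vertical component of $LY$.

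Finally, assemble the estimate. By linearity, $\sum_i\wt Y_i=L\bigl(\sum_i Y_i\bigr)+\sum_i N(Y_i)$. The first term is bounded in $C^{k-1}$ by $C\bigl\|\sum_i Y_i\bigr\|_{C^k}=C\epsilon$, and each $N(Y_i)$ is bounded by $C_k\|Y_i\|_{C^k}^2\le C_k\eta^2$, so the $m$-fold sum is at most $mC_k\eta^2$. Absorbing the fixed $m$ into the constant yields the desired bound $\bigl\|\sum_i\wt Y_i\bigr\|_{C^{k-1}}\le C(\epsilon+\eta^2)$. The main obstacle is carrying out the linear-plus-quadratic expansion cleanly: verifying the quadratic remainder estimate in graph coordinates on $\Gr_d(M)$ requires careful bookkeeping of the derivatives of the fiber coordinate change $E\mapsto D_y\psi_Y\,E$, but this reduces to the same kind of local chart calculation done for $\psi_Y$ itself in Appendix \ref{sec:ck_calculus}.
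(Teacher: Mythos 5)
Your proof is correct and, in my view, cleaner than the paper's on the key point. The paper carries out the graph-coordinate computation only in the special case $r_i=\Id$, and then asserts (without details) that the general case follows by tracking ``eight terms instead of two'' that ``also depend on $r_i$,'' with cancellations ``emerging when one keeps in mind the base points.'' Your observation that $\wt Y_i=\Phi(Y_i)$ for the single nonlinear operator $\Phi(Y)(w)=\exp_w^{-1}\bigl(\Psi_Y(w)\bigr)$, where $\Psi_Y(y,E)=(\psi_Y(y),D_y\psi_Y\,E)$ depends only on $Y$ and the geometry of $M$, removes $r_i$ from the problem entirely: the lifts satisfy $F_i=\Psi_{Y_i}\circ R_i$ and $F_i=\psi^{\Gr}_{\wt Y_i}\circ R_i$, and composing with $R_i^{-1}$ on the right kills the dependence. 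After that, the graph-coordinate calculation you invoke is exactly the one the paper performs in the $r_i=\Id$ case, and the $L$-plus-$N$ decomposition (with $L$ a first-order operator contributing the $C^k\to C^{k-1}$ loss and $N$ tamely quadratic) reproduces the paper's ``$H(A,DX)$ linear plus $O(DX^2)$.'' The final summation and estimate are then immediate. What your route buys is a transparent explanation of why the constant is uniform over the tuple of isometries $(r_1,\dots,r_m)$, which the paper leaves implicit; what it costs is nothing, since the coordinate computation you defer to is the same one the paper already does.

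One minor remark for completeness: when you pass from the chart displacement of $\Psi_Y$ to the geodesic displacement $\Phi(Y)=\exp^{-1}\Psi_Y$ on $\Gr_d(M)$, you need the analogue of Proposition \ref{prop:exp_comparison1} on the total space $\Gr_d(M)$ rather than on $M$. This is harmless (that proposition is stated for an arbitrary metric on a chart domain in $\R^n$), but it is worth flagging since $\Gr_d(M)$ carries a metric of its own, and the paper itself quietly uses the same fact when it says ``it now suffices to prove the corresponding estimate on the field $X_i$.''
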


\begin{proof}
The proof is straightforward but tedious. We give the proof in the case that each $R_i$ is the identity. Removing this assumption both complicates the argument in purely technical ways and substantially obscures why the lemma is true. At the end of the argument, we indicate the modifications needed for the general proof.

For readability we redevelop some of the basic notions concerning Grassmannians. First we recall the charts on $\Gr_d(V)$, the Grassmannian of $d$-planes in a vector space $V$. Recall that given a vector space $V$ and a pair of complementary subspaces $P$ and $Q$ of $V$ that if $\dim P=d$ we obtain a chart on $\Gr_d(V)$ in the following manner. Let $L(P,Q)$ denote the space of linear maps from $P$ to $Q$. For $A\in L(P,Q)$, we send $A$ to the subspace $\{x+Ax\mid x\in P\}\in \Gr_d(V)$. This gives a smooth parametrization of a subset of $\Gr_d(V)$. Having fixed a complementary pair of subspaces $P$ and $Q$, let $\pi_P$ denote the projection to $P$ along $Q$.

Suppose that $U$ is a chart on $M$ and let $\partial_1,...,\partial_n$ denote the coordinate vector fields. We use the usual coordinate framing of $TU$ to give coordinates on the Grassmannian bundle $\Gr_d(M)$. 
The tangent bundle to $U$ naturally splits into sub-bundles spanned by $\{\partial_1,...,\partial_{d}\}$ and $\{\partial_{d+1},\ldots,\partial_n\}$. Call these sub-bundles $P$ and $Q$, respectively. Let $\End(P,Q)$ denote the bundle of maps from $P$ to $Q$.
We obtain a coordinate chart via associating an element of $A\in \End(P,Q)$ and a point $x\in U$ with the graph of $A$ in the tangent space over $x$. 

As we have assumed that each $r_i$ is the identity, in charts we write $f_i(x)=x+X_i(x)$. As the $f_i$ are $C^1$ small, we work in a single chart. It now suffices to prove the corresponding estimate on the field $X_i$ because $X_i$ and $Y_i$ are equal up to an error that is quadratic in the sense of Definition \ref{defn:quadratic}. We now calculate the action of $f$ on $\Gr_d(U)$. Suppose that $A\in \End(P,Q)$. Then we have that $\{Df(v+Av)\}$ is a subspace of $T_{f(x)}M$. We must find the map $A'$ whose graph gives the same subspace. Let $I_A$ be the $n\times d$ matrix with top block $I$ and bottom block $A$. Then the action of $Df$ sends $A$ to $A'$ which is equal to
\[
 A'=Df I_A(\pi_P Df I_A)^{-1}-\Id.
\]
To see that this is true, we must check that $A'V\subseteq Q$ and that $\{Dfv+DfAv\mid v\in V\}$ is the same as $\{v+A'v\}$. The second condition is evident from the definition of $A'$.
If $v\in P$, then $(\pi_P Df I_A)^{-1}v=w$ is an element of $P$ satisfying $\pi_P DfI_Aw=v$. Thus $A'v=DfI_A(\pi_PDfI_A)^{-1}v-v\in Q$ and hence $A'V\subseteq Q$. 
Write $F$ for the induced map on $\Gr_d(U)$. In coordinates $F$ is the map that sends
\begin{equation}
(x,A)\mapsto(x,Df I_A(\pi_P Df I_A)^{-1}-\Id).
\end{equation}
Write $I_d$ for the $d$ by $d$ identity matrix. Let $\widehat{DX_i}$ be the matrix comprised of the first $d$ rows of the matrix $DX_i$. In the estimates below, we will assume that the size of $A$ is uniformly bounded. This does not restrict the generality as any subspace may be represented by such a uniformly bounded $A$.  Then note that 
\begin{align*}
(\pi_PD_f\left[\frac{I_d}{A}\right])^{-1}&=(I_d+\widehat{DX}\left[\frac{I_d}{A}\right])^{-1}\\
&=I_d-\widehat{DX}\left[\frac{I_d}{A}\right]+O(DX^2),
\end{align*}
where the $O(DX^2)$ is quadratic in the sense of Definition \ref{defn:quadratic}.
Write $X_A$ for the second term above.

We then have that 
\begin{align*}
Df I_A(\pi_P Df I_A)^{-1}-\Id&=(\Id+DX)\left[\frac{I_d}{A}\right](I_d-X_A)-\left[\frac{I_d}{0}\right]+O(DX^2)\\
&=\left[\frac{I_d}{A}\right]-\left[\frac{I_d}{A}\right]X_A+DX\left[\frac{I_d}{A}\right]+DX\left[\frac{I_d}{A}\right]X_A-\left[\frac{I_d}{0}\right]+O(DX^2)\\
&=\left[\frac{0}{A}\right]-\left[\frac{I_d}{A}\right]X_A+DX\left[\frac{I_d}{A}\right]+O(DX^2)\\
&=\left[\frac{0}{A}\right]+H(A,DX)+O(DX^2),
\end{align*}
where $H(A,DX)$ is the sum of the second and third terms two lines above. Note that $H$ is linear in $DX$ and that $\|H(A,DX)\le C\|DX\|$ given our uniform boundedness assumption on $A$.

Thus we see that in this chart on $\Gr_d(U)$ that 
\begin{equation}
F(x,A)-(x,A)=(f(x)-x,H(A,DX)+O(DX^2)).
\end{equation}

In this chart, $\|\sum_i f_i(x)-x\|_{C^k}\le \epsilon$. Hence writing $f_i(x)=x+X_i(x)$ as before, $\|\sum_i DX_i(x)\|_{C^{k-1}}\le \epsilon$. Thus 
\begin{align*}
\left\|\sum_i F_i(x,A)-(x,A)\right\|_{C^{k-1}}&=\left\|\sum_i (f_i(x)-x,H(A,DX_i)+O(DX^2))\right\|_{C^{k-1}}\\
&\le C\pez{\left\|\sum_i X_i\right\|_{C^k}+\max_i \|X_i\|_{C^{k}}^2}
\end{align*}
by the linearity of $H$. This completes the proof in the special case where $r_i=\Id$ for each $i$.

In the general setting one follows the same sequence of steps. One writes $f_i(x)=r_i(x)+X_i(r_i(x))$. One then does the same computation to determine the action on the Grassmannian bundle. This is complicated by additional terms related to $R$. Having finished this computation, one finds a natural analog of $H(A,DX)$, which now comprises eight terms instead of two, and also depends on $r_i$. Recognizing the cancellation is then somewhat complicated because of the dependence on $r_i$. However, this dependence does not cause an issue because the terms that would potentially cause trouble satisfy some useful relations. These relations emerge when one keeps in mind the base points, which is crucial when the isometries are non-trivial.
\end{proof}

\section{Determinants}\label{sec:determinants}

Suppose that $V$ and $W$ are finite dimensional inner product spaces. Consider a linear map $L\colon V\to W$. The determinant of the map $L$ is defined as follows. If $\{v_i\}$ is an orthonormal basis for $V$, one may measure the size of the tensor $Lv_1\wedge \cdots \wedge Lv_n$ with respect to the norm on tensors induced by the metric on $W$.   If $\{v_1,...,v_n\}$ is a basis for $V$, then we define
\[
\det(L,g_1,g_2)\coloneqq \sqrt{\frac{\Det\left( \langle Lv_i,Lv_j\rangle_{g_2}\right) }{\Det \left(\langle v_i,v_j\rangle_{g_1}\right)}},
\]
where $\Det$ is the usual determinant of a square matrix. Sometimes we have a map $L\colon V\to W$ and a subspace $E\subset V$. We then define 
\begin{equation}
\det(L,g_1,g_2\mid E)=\det(L\vert_E,g_1\vert_E,g_2).
\end{equation}
When the spaces $V$ and $W$ are understood, we may write $\det(L\mid E)$.

There are some properties of $\det$ that we will record for later use.
\begin{lem}
Fix a basis and suppose that $V=W$. Working with respect to this basis, the determinant has the following properties:
\begin{align}
\det(L,g_1,g_2)&=\det(\Id,g_1,L^*g_2),\\
\det(\Id,\Id,A)&=\sqrt{\det(A,\Id,\Id)}=\sqrt{\abs{\Det(A)}}.
\end{align}
\end{lem}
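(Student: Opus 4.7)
The proof is a direct unwinding of the definition of $\det(L,g_1,g_2)$, with nothing more than basic linear algebra.

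For the first identity, I would simply substitute into the definition. By the definition of the pullback metric, $\langle v_i,v_j\rangle_{L^*g_2} = \langle Lv_i,Lv_j\rangle_{g_2}$. Therefore
\[
\det(\Id,g_1,L^*g_2) = \sqrt{\frac{\Det\left(\langle v_i,v_j\rangle_{L^*g_2}\right)}{\Det\left(\langle v_i,v_j\rangle_{g_1}\right)}} = \sqrt{\frac{\Det\left(\langle Lv_i,Lv_j\rangle_{g_2}\right)}{\Det\left(\langle v_i,v_j\rangle_{g_1}\right)}} = \det(L,g_1,g_2),
\]
which is precisely the claimed equation.

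For the second identity, I would work with respect to the fixed basis $\{v_i\}$. Treating $A$ as an inner product in the first expression, its Gram matrix is exactly the matrix $A$, while $\langle v_i,v_j\rangle_{\Id} = \delta_{ij}$. Thus
\[
\det(\Id,\Id,A) = \sqrt{\Det(A)}.
\]
Treating $A$ as a linear map in the second expression, $\langle A v_i, A v_j\rangle_{\Id} = (A^TA)_{ij}$, so
\[
\det(A,\Id,\Id) = \sqrt{\Det(A^T A)} = \abs{\Det(A)}.
\]
Since $A$ is a (positive semidefinite) inner product in the first instance, $\Det(A)\ge 0$, so $\sqrt{\Det(A)} = \sqrt{\abs{\Det(A)}}$, which gives the chain of equalities.

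The main obstacle is essentially notational rather than mathematical: one must be careful that the same symbol $A$ plays the role of a metric on the left-hand side and of a linear map on the right-hand side of the second identity, but once the definitions are unpacked both sides reduce to the same numerical quantity.
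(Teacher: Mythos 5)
Your proof is correct and follows essentially the same route as the paper's: both unwind the definition directly, use the pullback identity $\langle v_i,v_j\rangle_{L^*g_2}=\langle Lv_i,Lv_j\rangle_{g_2}$ for the first equation, and compute the two sides of the second equation in the fixed basis to obtain $\sqrt{\Det A}$ and $\sqrt{\Det(A^TA)}=\abs{\Det A}$, respectively.
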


\begin{proof}
For the first equality, let $\{v_i\}$ be a basis of $(V,g_1)$, then
\[
\det(L,g_1,g_2)=\sqrt{\frac{\Det \langle Lv_i,Lv_j\rangle_{g_2} }{\Det \langle v_i,v_j\rangle_{g_1}}}
\]
But, $\langle v_i,v_j\rangle_{L^*g_2}=\langle Lv_i,Lv_j\rangle_{g_2}$, so, this is equal to
\[
\sqrt{\frac{\Det \langle v_i,v_j\rangle_{L^*g_2} }{\Det \langle v_i,v_j\rangle_{g_1}}},
\]
which is the definition of $\det(\Id,g_1,L^*g_2)$.

For the second equality, fix an orthonormal basis $\{e_i\}$, then
\[
\det(\Id,\Id,A)=\sqrt{\Det\langle e_i,e_j\rangle_{A}}=\sqrt{\Det A_{ij}}
\]
whereas,
\[
\det(A,\Id,\Id)=\sqrt{\Det\langle Ae_i,Ae_j\rangle_{\Id}}=\sqrt{\Det A^TA}=\sqrt{\abs{\Det A}^2}=\abs{\Det A}.
\]

\end{proof}

We record the following estimate which is used in the proof.

\begin{lem}\label{lem:log_det_on_grassmannian}
Let $M$ be a closed manifold and let $0\le r\le \dim M$. If $g$ is an isometry of $M$, then $\ln \det(Df\vert E_x)$, which is defined on $\Gr_r(M)$, satisfies the following estimate:
\[
\left\| \ln\det(Df\mid E_x)\right\|_{C^k}=O(d_{C^{k+1}}(f,g)),
\]
as $f\to g$ in $C^{k+1}$.
The big-O is uniform over all isometries $g$.
\end{lem}

\begin{proof}
It suffices to show that this estimate holds in charts. So, fix a pair of charts $U$ and $V$ on $M$ such that $f(U)$ has compact closure inside of $V$. We define a map $H\colon \Gr_d(U)\times U\times V\times \R^{n^2}\to \R$ by sending the point $(E,x,y,A)$ to the $\ln \det(A, g_x,g_y\vert E)$, where $g_x$ and $g_y$ denote the pullback metric from $M$. Using $f$ we define a map $\wt{f}\colon \Gr_d\times U\to \Gr_d(U)\times U\times V\times \R^{n^2}$ by 
\[
(E,x)\mapsto (E,x,f(x),Df),
\]
where we are using the coordinates to express $Df$ as a matrix. Then the quantity we wish to estimate the $C^k$ norm of is $H\circ \wt{f}$. If we analogously define $\wt{g}$, then note that $H\circ \wt{g}\equiv 0$ because $g$ is an isometry. By writing out the derivatives using the chain rule and using that $f$ is uniformly close to $g$, one sees that $\|H\circ \wt{g}-H\circ \wt{f}\|_{C^k}=O(d_{C^{k+1}}(f,g))$, and the result follows.
\end{proof}

\section{Taylor Expansions}\label{sec:taylor_expansions}

\subsection{Taylor expansion of the log Jacobian}
\begin{prop}\label{prop:taylor_expansion_of_log_jacobian}
For $C^1$ small vector fields $Y$ on a Riemannian manifold $M$, the following approximation holds
\[
\int_{\Gr_r(M)} \ln\det(D_x\psi_Y,\Id,g_{\psi_{Y(x)}}\mid E_x)\,d\vol=
-\frac{r}{2d} \int_M \|E_C\|^2\,d\vol +\frac{r(d-r)}{(d+2)(d-1)}\int_M\|E_{NC}\|^2\,d\vol+O(\|Y\|_{C^1}^3),
\]
where $E_C$ and $E_{NC}$ are the conformal and non-conformal strain tensors associated to $\psi_Y$ as defined in subsection \ref{subsec:strain}. In addition, $\det$ is defined in Appendix \ref{sec:determinants} and $\psi_Y$ is defined in equation \eqref{eq:psi_defn}.
\end{prop}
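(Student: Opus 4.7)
The plan is to Taylor-expand the integrand pointwise and then perform the integration in two stages: first along the Grassmannian fiber $\Gr_r(T_xM)$, then over the base $M$. From Appendix~\ref{sec:determinants} we have
\[
\det(D_x\psi_Y,\Id,g_{\psi_Y(x)}\mid E_x)=\sqrt{\Det\bigl((\psi_Y^*g)\vert_{E_x}\bigr)},
\]
computed in a $g$-orthonormal basis of $E_x$. Writing the Lagrangian strain $E=E^{\psi_Y}=\tfrac12(\psi_Y^*g-g)$, we get $(\psi_Y^*g)\vert_{E_x}=I+2E\vert_{E_x}$, and the standard expansion $\tfrac12\ln\Det(I+2A)=\Tr A-\Tr A^2+O(\|A\|^3)$ yields
\[
\ln\det(D_x\psi_Y,\Id,g_{\psi_Y(x)}\mid E_x)=\Tr(E\vert_{E_x})-\Tr\bigl((E\vert_{E_x})^2\bigr)+O(\|Y\|_{C^1}^3),
\]
since $\|E\|_{C^0}=O(\|Y\|_{C^1})$.

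Next I integrate along the Grassmannian fiber. The fiber measure on $\Gr_r(T_xM)$ is $O(d)$-invariant, so the integrals depend only on the $O(d)$-invariants $\Tr E$ and $\|E\|^2$ of the symmetric tensor $E$. Parametrizing a random $r$-plane by an orthonormal frame $u_1,\dots,u_r$ and expanding $\Tr(E\vert_{E_x})=\sum_j \langle E^\#u_j,u_j\rangle$ and $\Tr(E\vert_{E_x}^2)=\sum_{j,k}\langle E^\#u_j,u_k\rangle^2$ in an eigenbasis of $E^\#$, the expectations reduce to moments of the uniform measure on the unit sphere. Using $\E[v_i^4]=3/(d(d+2))$ and $\E[v_i^2v_j^2]=1/(d(d+2))$ for $i\neq j$, together with the projection identity $\sum_{i,i'}\E[R_{ii'}^2]=\Tr\pi_{E_x}=r$, I obtain
\[
\E_{E_x}\Tr(E\vert_{E_x})=\tfrac{r}{d}\Tr E,\qquad \E_{E_x}\Tr\bigl((E\vert_{E_x})^2\bigr)=\alpha\|E\|^2+\beta(\Tr E)^2,
\]
with $\alpha=\tfrac{r(r+2)}{d(d+2)}-\beta$ and $\beta=\tfrac{r(d-r)}{d(d+2)(d-1)}$.

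Finally I integrate over $M$. For the linear-in-$E$ piece, I use $\Tr E=\divergence Y+\tfrac12|\nabla Y|^2+O(\|Y\|^3)$: the divergence term integrates to zero by Stokes, and the remaining quadratic contribution is absorbed into the $O(Y^2)$ accounting. Combining the fiber-averaged quadratic terms with this contribution and regrouping via the decomposition of $E$ into conformal and non-conformal parts from subsection~\ref{subsec:strain}, the identities $(\Tr E)^2\propto\|E_C\|^2$ and $\|E\|^2$ written as a sum of $\|E_C\|^2$ and $\|E_{NC}\|^2$ collapse everything to the claimed coefficients $-r/(2d)$ and $r(d-r)/((d+2)(d-1))$.

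The main obstacle is this final bookkeeping step: the pointwise Taylor expansion and the Grassmannian fiber integration are mechanical, but one has to carefully match coefficients when re-expressing $\alpha\|E\|^2+\beta(\Tr E)^2$ plus the residual second-order piece of $\Tr E$ (arising from integration-by-parts) in terms of $\|E_C\|^2$ and $\|E_{NC}\|^2$ with the specific normalization chosen in subsection~\ref{subsec:strain}. All the geometric content lies in the fiber calculation; the rest is algebra.
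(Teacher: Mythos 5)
Your setup and fiber integration are correct and are essentially the paper's argument in a slightly cleaner parametrization: you work directly with the Lagrangian strain $E$ rather than the pair $(\hat{\psi},\hat{g})$ in an exponential chart, and you compute the fiber averages on $\Gr_r(T_xM)$ by reducing to sphere moments, whereas the paper's Proposition~\ref{prop:det_integral_expansion} uses an $O(d)$-invariance plus diagonalization argument. Both routes give the same quadratic form $\alpha\Tr(E^2)+\beta(\Tr E)^2$ and either is fine.

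The genuine gap is in the final step, where you dispose of the linear-in-$E$ contribution $\tfrac{r}{d}\int_M\Tr E\,d\vol$. The expansion you invoke, $\Tr E=\divergence Y+\tfrac12|\nabla Y|^2+O(\|Y\|_{C^1}^3)$, is not correct: expanding $D_x\psi_Y X$ as the value $J(1)$ of a Jacobi field with $J(0)=X$, $\tfrac{D}{dt}J(0)=\nabla_X Y$ along $t\mapsto\exp_x(tY)$ produces a second-order curvature correction, and one finds
\[
\Tr E=\divergence Y+\tfrac12|\nabla Y|^2-\tfrac12\operatorname{Ric}(Y,Y)+O(\|Y\|_{C^1}^3).
\]
Dropping the Ricci term changes $\int_M\Tr E\,d\vol$ at the order you need, so the ``algebra'' in your final bookkeeping cannot come out to the stated coefficients. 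Worse, even with the corrected formula, the residual $\tfrac12\int|\nabla Y|^2-\tfrac12\int\operatorname{Ric}(Y,Y)$ is not directly a linear combination of $\int\|E_C\|^2$ and $\int\|E_{NC}\|^2$; converting it requires an integration-by-parts identity that reintroduces the curvature you want to cancel. The paper sidesteps all of this with one observation you did not use: since $\psi_Y$ is a diffeomorphism of the closed manifold $M$, its Jacobian integrates to $\vol(M)$, i.e.\ $\int_M\sqrt{\Det(\Id+2E^\#)}\,d\vol=\vol(M)$. Expanding this gives
\[
\int_M\Tr E\,d\vol=-\int_M\Bigl(\tfrac12(\Tr E)^2-\Tr(E^2)\Bigr)\,d\vol+O(\|Y\|_{C^1}^3),
\]
an identity that is already stated purely in terms of $E$, involves no curvature, and plugs directly into the $(\Tr E)^2$, $\Tr(E^2)$ decomposition into $\|E_C\|^2$, $\|E_{NC}\|^2$. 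You should replace your pointwise expansion of $\Tr E$ in terms of $Y$ with this global volume-preservation constraint; then your fiber computation finishes the proof.
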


The proof of this proposition is a lengthy computation with several subordinate lemmas.

\begin{proof}

In order to estimate the integral over $M$, we will first obtain a pointwise estimate on:
\[
\int_{\Gr_r(T_xM)} \ln\det (D_x\psi_Y\mid E)\,dE.
\]
To estimate this we work in an exponential chart on $M$ centered at $x$. In this chart, $x$ is $0$ and $\psi_{Y}(0)=Y(0)$. Then
\[
\int_{\Gr_r(T_xM)} \ln\det (D_x\psi_Y\mid E)\,dE=\int_{\Gr_r(T_xM)} \ln\det(D_0\psi_Y, \Id, g_{Y(0)}\mid E)\,dE.
\]
We now rewrite the above line so that we can apply the Taylor approximation 
 in Proposition \ref{prop:det_integral_expansion}.

Write the metric as $\Id+\hat{g}$. As we are in an exponential chart, $\|\hat{g}_{Y(0)}\|=O(\|Y\|^2_{C^0})$. Write $D\psi_Y=\Id+\hat{\psi}$. The integral we are calculating only involves $\hat{\psi}_0$ and $\hat{g}_{Y(0)}$, so below we drop the subscripts. Then
\[
\int_{\Gr_r(T_xM)} \ln\det (D_x\psi_Y\mid E)\,dE=
\int_{\Gr_r(T_xM)} \ln\det(\Id+\hat{\psi},\Id,\Id+\hat{g}\mid E)\,dE.
\]
Now applying the Taylor expansions in Propositions \ref{prop:det_integral_expansion} and \ref{prop:det_taylor_expansion2}, we obtain the following expansion. For convenience let 
\begin{equation}\label{eqn:defn_of_K}
K=(\hat{\psi}+\hat{\psi}^T)/2-\frac{\Tr \hat{\psi}}{d}\Id.
\end{equation}
Then
\begin{align}\label{align:log_det_1}
\int_{\Gr_r(T_xM)} \ln\det(D\psi_Y,\Id,g_{Y(0)}\mid E)\,dE=\\
\frac{r}{d}\Tr(\hat{\psi})+\left[-\frac{r}{2d}\Tr(\hat{\psi}^2)+\frac{r(d-r)}{(d+2)(d-1)}\Tr(K^2)\right]+O(\|\hat{\psi}^3\|)+\frac{r}{2d}\Tr(\hat{g})+O(\|\hat{g}\|^2)
\end{align}

Note that $\|\hat{\psi}\|=O(\|Y\|_{C^1})$ and $\|\hat{g}\|=O(\|Y\|^2_{C^0})$, hence the fourth and sixth terms in the above expression are each $O(\|Y\|^3_{C^1})$.

 We now eliminate the two trace terms that are not quadratic in their arguments.
For this, we use a Taylor expansion of the determinant.\footnote{Recall the usual Taylor expansion $\Det(\Id+A)=\Id+\Tr(A)+\frac{(\Tr(A))^2-\Tr(A^2)}{2}+O(\|A\|^3)$. We combine this with the first order Taylor expansion 
\[
\det(\Id,\Id,\Id+G)=\sqrt{\Det(1+G)}=\sqrt{1+\Tr(G)+O(\|G\|^2)}=1+\frac{\Tr(G)}{2}+O(\|G\|^2).
\]
}
Thus 
\[
\det(D\psi,\Id,g_{Y(0)})=1+\Tr\hat{\psi} +\frac{(\Tr(\hat{\psi}))^2-\Tr(\hat{\psi}^2)}{2}+\frac{\Tr(\hat{g})}{2}+O(\|Y\|^3_{C^1})
\]

The integral of the Jacobian is $1$, so integrating the previous line over $M$ against volume we obtain
\[
1=1+\int_M \Tr\hat{\psi} +\frac{(\Tr(\hat{\psi}))^2-\Tr(\hat{\psi}^2)}{2}+\frac{\Tr(\hat{g})}{2}\,d\vol+O(\|Y\|^3_{C^1}).
\]
Thus
\[
\int_M \Tr(\hat{\psi})+\frac{\Tr(\hat{g})}{2}-\frac{\Tr(\hat{\psi}^2)}{2}\,d\vol=-\int_M \frac{(\Tr(\hat{\psi}))^2}{2}\,d\vol+O(\|Y\|^3_{C^1}).
\]
Now, we integrate equation \eqref{align:log_det_1} over $M$ and apply the previous line to eliminate the non-quadratic terms. This gives
\begin{align}\label{align:log_det_2}
\int_{\Gr_r(M)} \ln\det(D_x\psi_Y,\Id,g_{\psi_{Y(x)}}\mid E_x)\,dE_x=
\int_M -\frac{r}{2d} (\Tr(\hat{\psi}_x))^2 +\frac{r(d-r)}{(d+2)(d-1)}\Tr(K_x^2)\,d\vol+O(\|Y\|^3_{C^1}),
\end{align}
where we have written $\hat{\psi}_x$ and $K_x$ to emphasize the basepoint.
The formula above is not yet very usable as both $K_x$ and $\hat{\psi}_x$ are defined in terms of exponential charts. We now obtain an intrinsic expression for these terms. Recall that pointwise we use the $L^2$ norm on tensors. Below we suppress the $x$ in $\|E_C(x)\|$ and $\hat{\psi}_x$.

\begin{lem}\label{lem:taylor_expansion_trace_psi}
Let $E_C$ be the conformal strain tensor associated to $\psi_Y$. Then
\[
\int_M (\Tr(\hat{\psi}_x))^2\,d\vol=\int \|E_C\|^2\,d\vol+O(\|Y\|^3_{C^1}).
\]
\end{lem}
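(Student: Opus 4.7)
The plan is to prove the identity pointwise in an exponential chart at $x$ and then integrate. In such a chart, the metric at $\psi_Y(x) = Y(x)$ differs from the identity by $\hat g = g_{Y(x)} - \Id$ of size $O(|Y(x)|^2) = O(\|Y\|_{C^0}^2)$ (by the standard expansion of the metric in normal coordinates, in which the first derivatives of $g$ at the origin vanish), while $D_x\psi_Y = \Id + \hat\psi_x$ with $\hat\psi_x$ of size $O(\|Y\|_{C^1})$.

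The pointwise expansion is a short matrix computation. At the origin of the chart,
\[
\psi_Y^*g - g \;=\; (\Id+\hat\psi_x)^T(\Id+\hat g)(\Id+\hat\psi_x) - \Id \;=\; \hat\psi_x + \hat\psi_x^T + \hat g + \hat\psi_x^T\hat\psi_x + R,
\]
where the remainder $R$ collects mixed terms like $\hat g\hat\psi_x$ and $\hat\psi_x^T\hat g\hat\psi_x$, all of which are $O(\|Y\|_{C^1}^3)$. Taking the trace yields
\[
\Tr(\psi_Y^*g - g) \;=\; 2\Tr(\hat\psi_x) + \Tr(\hat g) + \Tr(\hat\psi_x^T\hat\psi_x) + O(\|Y\|_{C^1}^3).
\]

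Now $E_C^{\psi_Y}$ is by construction a scalar multiple of the metric whose coefficient is (up to the normalization fixed by the definition in subsection \ref{subsec:strain}) proportional to $\Tr(\psi_Y^*g-g)$, so that pointwise $\|E_C^{\psi_Y}\|^2$ is a constant multiple of $\bigl(\Tr(\psi_Y^*g-g)\bigr)^2$; the normalization is precisely the one that makes $\|E_C^{\psi_Y}\|^2 = (\Tr\hat\psi_x)^2$ to leading order. Squaring the previous display and using $\Tr(\hat\psi_x) = O(\|Y\|_{C^1})$ to absorb every cross-term of the form $\Tr(\hat\psi_x)\cdot O(\|Y\|_{C^1}^2)$ into the $O(\|Y\|_{C^1}^3)$ remainder, one gets the pointwise identity
\[
\|E_C^{\psi_Y}\|^2(x) \;=\; (\Tr\hat\psi_x)^2 + O(\|Y\|_{C^1}^3),
\]
and integrating over $M$ (the error is uniform in $x$, since the constants in the chart expansion are controlled by the geometry of $M$) gives the lemma.

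The main subtlety, and the only place where one needs to be careful, is the computation of $D_x\psi_Y = \Id + \hat\psi_x$: because the exponential chart is centered at $x$ but $\psi_Y$ sends nearby points $x'$ to $\exp_{x'}Y(x')$ with a basepoint-dependent exponential, $\hat\psi_x$ is not simply $D_xY$ but differs from it by a correction built from the first derivatives of $Y$ and the Christoffel symbols at $x$ (the latter vanishing at the origin in normal coordinates). This is a routine Riemannian calculation, and the $O(\|Y\|_{C^1})$ estimate on $\hat\psi_x$ required above is unaffected.
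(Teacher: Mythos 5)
Your proposal is correct and takes essentially the same approach as the paper's proof: work at the centre of an exponential chart, expand $\psi_Y^*g-g=\hat\psi_x+\hat\psi_x^T+O(\|Y\|_{C^1}^2)$, take the trace to get $2\Tr\hat\psi_x+O(\|Y\|_{C^1}^2)$, and square, absorbing the cross-terms into the cubic error. Your closing remark — that $\hat\psi_x=D_x\psi_Y-\Id$ is not simply $D_xY$ but picks up a correction from the basepoint-dependence of $\exp$, and that this correction is harmless in normal coordinates because the Christoffel symbols vanish at the origin — is a useful clarification the paper does not make explicit.
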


\begin{proof}
We use an exponential chart and compute a coordinate expression for $\|E_C\|^2$ in the center of this chart. As before, write $D\psi_Y=\Id+\hat{\psi}$, where $\hat{\psi}=O(\|Y\|_{C^1})$. Then working in exponential coordinates, 
\begin{align*}
\Tr(\psi^*_Yg-g)&=\Tr((\Id+\hat{\psi})^T(\Id+O(\|Y\|^2_{C^0}))(\Id+\hat{\psi})-\Id)\\
&=\Tr(\Id+\hat{\psi}^T+\hat{\psi}-\Id+O(\|Y\|^2_{C^1})\\
&=2\Tr(\hat{\psi})+O(\|Y\|^2_{C^1}).
\end{align*}
Thus since $\hat{\psi}=O(\|Y\|_{C^1})$, by definition of $E_C$, we have
\begin{align*}
\|E_C\|^2&=\left\|\frac{\Tr(\psi_Y^*g-g)}{2d}\Id\right\|^2\\
&=\left\|\frac{2\Tr(\hat{\psi})}{2d}\Id\right\|^2\\
&=\frac{\Tr(\hat{\psi})}{d}\|\Id\|^2\\
&=\Tr(\hat{\psi}).
\end{align*}
Integrating over $M$, we obtain the result.
\end{proof}

\begin{lem}\label{lem:taylor_expansion_trace_K}
Let $E_{NC}$ be the non-conformal strain tensor associated to $\psi_Y$ and let $K_x$ be as in equation \eqref{eqn:defn_of_K}, then
\[
\int_M \Tr\pez{K^2_x}\,d\vol=\int_M\|E_{NC}\|^2\,d\vol+O(\|Y\|^3_{C^1}).
\]
\end{lem}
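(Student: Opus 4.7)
The plan is to mirror the computation used in the proof of Lemma \ref{lem:taylor_expansion_trace_psi}: work in an exponential chart centered at a point $x\in M$, expand $\psi^*g-g$ through second order in $\|Y\|_{C^1}$, identify $E_{NC}(x)$ with $K$ up to a second-order correction, and then observe that squaring kills the correction.

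First, I would fix $x\in M$ and pass to a normal chart centered at $x$, so that at the origin the metric is the identity and $g_y = \Id + \hat g_y$ with $\hat g_y = O(|y|^2)$; in particular $\hat g_{\psi(0)}=\hat g_{Y(0)} = O(\|Y\|_{C^1}^2)$. Writing $D_0\psi = \Id + \hat\psi$ with $\hat\psi = O(\|Y\|_{C^1})$, a direct matrix expansion gives
\[
\psi^*g - g \;=\; \hat\psi + \hat\psi^T \;+\; \hat\psi^T\hat\psi \;+\; \hat g_{Y(0)} \;+\; O(\|Y\|_{C^1}^3),
\]
since the mixed terms $\hat g_{Y(0)}\hat\psi$, $\hat\psi^T\hat g_{Y(0)}$, and $\hat\psi^T\hat g_{Y(0)}\hat\psi$ are all cubic or higher. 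Taking the trace and dividing by $2d$ gives $\tfrac{\Tr(\psi^*g-g)}{2d} = \tfrac{\Tr\hat\psi}{d} + O(\|Y\|_{C^1}^2)$, so at the origin (where $g=\Id$) the conformal strain is $E_C(x) = \tfrac{\Tr\hat\psi}{d}\Id + O(\|Y\|_{C^1}^2)$.

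Subtracting, the non-conformal strain at $x$ is
\[
E_{NC}(x) \;=\; \frac{\hat\psi+\hat\psi^T}{2} - \frac{\Tr\hat\psi}{d}\Id \;+\; O(\|Y\|_{C^1}^2) \;=\; K \;+\; O(\|Y\|_{C^1}^2).
\]
Since $K$ itself is linear in $\hat\psi$, hence $O(\|Y\|_{C^1})$, squaring gives $\|E_{NC}(x)\|^2 = \|K\|^2 + 2\langle K, O(\|Y\|_{C^1}^2)\rangle + O(\|Y\|_{C^1}^4) = \|K\|^2 + O(\|Y\|_{C^1}^3)$. Because $K$ is symmetric we have $\|K\|^2 = \Tr(K^TK) = \Tr(K^2)$, giving $\|E_{NC}(x)\|^2 = \Tr(K_x^2) + O(\|Y\|_{C^1}^3)$ with a constant uniform in $x$. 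Integrating over $M$ against $d\vol$ yields the stated identity.

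The only part that needs any care is tracking the error terms: since $\hat g$ contributes only at second order and $\hat\psi^T\hat\psi$ is also second order, they are both absorbed into the $O(\|Y\|_{C^1}^2)$ correction on $E_{NC}$, and then the linearity of $K$ in $\hat\psi$ promotes this to a cubic error after squaring. This is the same mechanism as in Lemma \ref{lem:taylor_expansion_trace_psi}, so I do not expect any genuine obstacle beyond the bookkeeping of these quadratic remainders.
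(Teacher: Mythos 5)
Your argument is correct and follows essentially the same route as the paper's proof: pass to a normal chart, expand $\psi^*g - g = \hat\psi + \hat\psi^T + O(\|Y\|_{C^1}^2)$, identify $E_{NC}$ with $K$ up to a second-order correction, and observe that squaring promotes the error to cubic. The bookkeeping of the $\hat\psi^T\hat\psi$ and $\hat g_{Y(0)}$ remainders is handled cleanly and the identification $\|K\|^2 = \Tr(K^2)$ via symmetry of $K$ is the same step the paper uses.
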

\begin{proof}
As before, we first compute a local expression for the integrand and check that this expression is comparable to the local expression for the non-conformal strain tensor. We compute at the center of an exponential chart. As before, write $D\psi_Y=\Id+\hat{\psi}$ where $\hat{\psi}=O(\|Y\|_{C^1})$. In this case
\[
\psi^*_Yg=(\Id+\hat{\psi})^T(\Id+O(\|Y\|^2_{C^0}))(\Id+\hat{\psi})=\Id+\hat{\psi}^T+\hat{\psi}+O(\|Y\|^2_{C^1}).
\]
Using the above line and the definition of $E_{NC}$ we then compute:
\begin{align*}
\|E_{NC}\|^2&=\left\|\frac{1}{2}\pez{\psi^*_Yg-g-\frac{\Tr(\psi^*_Yg-g)}{d}g}\right\|^2\\
&=\frac{1}{4}\|(\Id+\hat{\psi})^T(\Id+O(\|Y\|^2_{C^0})(\Id+\hat{\psi})-\Id-2\frac{\Tr\hat{\psi}}{d}\Id+O(\|Y\|^2_{C^1})\|^2\\
&=\frac{1}{4}\|\hat{\psi}^T+\hat{\psi}-2\frac{\Tr\hat{\psi}}{d}\Id+O(\|Y\|^2_{C^1})\|^2\\
&=\frac{1}{4}\Tr\pez{\pez{\hat{\psi}^T+\hat{\psi}-2\frac{\Tr\hat{\psi}}{d}\Id+O(\|Y\|^2_{C^1})}^2}\\
&=\Tr\pez{\pez{\frac{\hat{\psi}^T+\hat{\psi}}{2}-\frac{\Tr\hat{\psi}}{d}\Id}^2}+O(\|Y\|^3_{C^1})\\
&=\Tr(K^2)+O(\|Y\|^3_{C^1}).
\end{align*}
By integrating the above equality over $M$, the result follows.
\end{proof}

Finally, the proof of Proposition \ref{prop:taylor_expansion_of_log_jacobian} follows by applying Lemma \ref{lem:taylor_expansion_trace_psi} and Lemma \ref{lem:taylor_expansion_trace_K} to equation \eqref{align:log_det_2}, which gives

\begin{align}
\int_{\Gr_r(M)} \ln\det(D_x\psi_Y,\Id,g_{\psi_{Y(x)}}\mid E_x)\,dE_x=
-\frac{r}{2d} \int_M \|E_C\|^2\,d\vol +\frac{r(d-r)}{(d+2)(d-1)}\int_M \|E_{NC}\|^2\,d\vol+O(\|Y\|_{C^1}^3).
\end{align}

\end{proof}

\subsection{Approximation of integrals over Grassmanians}
Let $\mb{G}_{r,d}$ be the Grassmanian of $r$-planes in $\R^d$. In this subsection, we prove the following simple estimate.

\begin{prop}\label{prop:det_integral_expansion}
For $1\le r\le d$, let $\Lambda_r\colon \End(\R^d)\rightarrow \R$ be defined by 
\[
\Lambda_r(L):=\int_{\mb{G}_{r,d}} \ln \det (\Id+L,\Id,\Id\mid E)\,dE,
\]
where $dE$ denotes the Haar measure on $\mb{G}_{r,d}$. Then the second order Taylor approximation for $\Lambda_r$ at $0$ is 
\[
\Lambda_r(L)=\frac{r}{d}\Tr L+\left[-\frac{r}{2d}\Tr(L^2)+\frac{r(d-r)}{(d+2)(d-1)}\Tr(K^2)\right]+O(\|L\|^3),
\]
where 
\[
K=\frac{L+L^T}{2}-\frac{\Tr L}{d}\Id.
\]

Let $\lambda_r(L)=\Lambda_{r}(L)-\Lambda_{r-1}(L)$. Then the above expansion implies
\[
\lambda_r(L)=\frac{1}{d}\Tr L+\left[-\frac{1}{2d}\Tr(L^2)+\frac{d-2r+1}{(d+2)(d-1)}\Tr(K^2) \right]+O(\|L\|^3).
\]

\end{prop}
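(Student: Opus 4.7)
The plan is to reduce everything to two integrals on the Grassmannian, compute those via $O(d)$-invariant theory, and then unwind to recover the claimed form in terms of $K$.

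First I would rewrite the integrand. By definition of $\det$ in Appendix \ref{sec:determinants}, for any orthonormal basis $\{e_1,\ldots,e_r\}$ of $E \subset \R^d$,
\[
\det(\mathrm{Id}+L,\mathrm{Id},\mathrm{Id}\mid E)^2 = \Det_{i,j}\langle(\mathrm{Id}+L)e_i,(\mathrm{Id}+L)e_j\rangle = \Det_E(P_E M P_E\mid E),
\]
where $M = (\mathrm{Id}+L)^T(\mathrm{Id}+L) = \mathrm{Id} + A$ with $A = L + L^T + L^T L$ and $P_E$ is orthogonal projection onto $E$. Applying the standard expansion $\ln\Det(\mathrm{Id}+X) = \Tr X - \tfrac{1}{2}\Tr X^2 + O(\|X\|^3)$ to $X = P_E A P_E$ as an endomorphism of $E$, and using cyclicity of trace to pass from operators on $E$ to operators on $\R^d$, gives
\[
2\ln\det(\mathrm{Id}+L\mid E) = \Tr(A P_E) - \tfrac{1}{2}\Tr(A_0 P_E A_0 P_E) + O(\|L\|^3),
\]
where $A_0 = L+L^T$ suffices in the quadratic term (the $L^TL$ contribution is already $O(\|L\|^4)$ there).

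Next I would compute the two Haar moments of $P_E$ on $\mathbb{G}_{r,d}$. By $O(d)$-invariance of Haar measure, $T_1 := \int P_E\,dE$ commutes with $O(d)$, so Schur's lemma forces $T_1 = c\,\mathrm{Id}$; taking trace yields $c = r/d$. For the second moment, $O(d)$-invariance together with $P_E = P_E^T$ forces
\[
\int (P_E)_{ij}(P_E)_{kl}\,dE = \alpha\,\delta_{ij}\delta_{kl} + \beta(\delta_{ik}\delta_{jl}+\delta_{il}\delta_{jk})
\]
for some constants $\alpha,\beta$. The contractions $\sum_{i,l}(P_E)_{ii}(P_E)_{ll} = r^2$ and $\sum_{i,j}(P_E)_{ij}^2 = r$ give the linear system $\alpha d^2 + 2\beta d = r^2$ and $\alpha d + \beta d(d+1) = r$, whose solution is $\beta = \tfrac{r(d-r)}{d(d-1)(d+2)}$ and $\alpha+\beta = \tfrac{r[(r+1)d-2]}{d(d-1)(d+2)}$. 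Contracting this tensor identity against any two symmetric matrices $A_0$ yields the useful formula $\int\Tr(A_0 P_E A_0 P_E)\,dE = (\alpha+\beta)\Tr(A_0^2) + \beta(\Tr A_0)^2$.

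Finally I assemble: integrating the displayed Taylor expansion over $\mathbb{G}_{r,d}$ and substituting $\Tr A = 2\Tr L + \Tr(L^TL)$, $\Tr(A_0^2) = 2\Tr(L^2) + 2\Tr(L^TL)$, and $\Tr A_0 = 2\Tr L$ gives
\[
\Lambda_r(L) = \tfrac{r}{d}\Tr L + \tfrac{r}{2d}\Tr(L^TL) - (\alpha+\beta)\bigl[\Tr(L^2)+\Tr(L^TL)\bigr] - 2\beta(\Tr L)^2 + O(\|L\|^3).
\]
The identity $\Tr K^2 = \tfrac{1}{2}\Tr(L^2) + \tfrac{1}{2}\Tr(L^TL) - (\Tr L)^2/d$, together with the explicit values of $\alpha,\beta$, lets one regroup: the coefficient of $\Tr(L^2)$ collapses to $-r/(2d)$, the coefficient of the pair $\{\Tr(L^TL) - 2(\Tr L)^2/d\}$ becomes $\tfrac{r(d-r)}{(d+2)(d-1)}$ after using $\alpha+\beta = r/d - \tfrac{r(d-r)}{(d-1)(d+2)}$, and the whole thing reorganizes into the claimed expression involving $\Tr(K^2)$. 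The formula for $\lambda_r$ is then immediate from $\Lambda_r - \Lambda_{r-1}$ and the arithmetic identity $r(d-r) - (r-1)(d-r+1) = d - 2r + 1$. The main obstacle here is not any single step but the algebraic bookkeeping in this last regrouping: pointing out in advance that $K$ is exactly the traceless symmetric part of $L$ clarifies why $\Tr(K^2)$ is the only second invariant that can appear after the Schur-type constraints reduce everything to the four scalars $\Tr L,\Tr(L^2),\Tr(L^TL),(\Tr L)^2$.
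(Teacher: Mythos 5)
Your approach is genuinely different from the paper's and, modulo one arithmetic slip, it works. The paper proceeds term by term in the Taylor expansion: it uses the conjugation-invariance of $\Lambda_r$ to reduce each Taylor coefficient to a function of symmetric matrices (handling the skew part via the clever $e^{-J}$ rescaling), then to diagonal matrices, identifies the space of permutation-invariant linear/quadratic forms, and pins down the coefficients by evaluating at the two test matrices $\Id$ and a rank-one projection $P$, with the latter requiring explicit sphere-moment integrals of $\cos^2\theta$ and $\cos^4\theta$. You instead Taylor-expand $\ln\det$ directly in terms of $P_E = $ projection onto $E$, and then compute the first and second Haar moments of $P_E$ as $O(d)$-invariant tensors pinned down by Schur's lemma and two contraction identities. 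This is more systematic: it avoids the separate symmetric/skew-symmetric bookkeeping entirely, does not require choosing special test matrices, and replaces the sphere integrals by elementary trace contractions. The paper's method, for what it is worth, is what a reader would find easier to check line by line with only calculus and no invariant theory.

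One caveat: the displayed intermediate expression for $\Lambda_r(L)$ after integration has the linear-in-$L$ terms halved (as they should be, since you started from an identity for $2\ln\det$) but the quadratic terms not halved; the correct coefficients are $-\tfrac{\alpha+\beta}{2}$ and $-\beta$, not $-(\alpha+\beta)$ and $-2\beta$. With that correction, the coefficients do regroup exactly as you assert (coefficient of $\Tr(L^2)$ is $-\tfrac{\alpha+\beta}{2} = -\tfrac{r}{2d} + \tfrac{r(d-r)}{2(d+2)(d-1)}$, etc.) and the final formula follows; without it, the quadratic part is off by a factor of two. The $\lambda_r$ formula via $r(d-r)-(r-1)(d-r+1)=d-2r+1$ is correct as written.
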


\begin{proof}
Before beginning, note from the definition of $\Lambda_r$ that if $U$ is an orthogonal transformation, $\Lambda_r(U^TLU)=\Lambda_r(L)$. Consequently, if $\alpha_i$ is the $i$th term in the Taylor expansion of $\Lambda_r$, then $\alpha_i$ is invariant under conjugation by isometries. 

The map $\Lambda_r$ is smooth, so it admits a Taylor expansion:
\[
\Lambda_r(L)=\alpha_1(L)+\alpha_2(L)+O(\|L\|^3),
\]
where $\alpha_1$ is linear in $L$ and $\alpha_2$ is quadratic in $L$. The rest of the proof is a calculation of $\alpha_1$ and $\alpha_2$. Before we begin this calculation we describe the approach. In each case, we reduce to the case of a symmetric matrix $L$. Then restricted to symmetric matrices, we diagonalize. There are few linear or quadratic maps from $\End(\R^n)$ to $\R$ that are invariant under conjugation by an orthogonal matrix. We then write $\alpha_i$ as a linear combination of such invariant maps from $\End(\R^n)$ to $\R$ and then solve for the coefficients of this linear combination.

We begin by calculating $\alpha_1$.
\begin{claim}
With notation as above, 
\[
\alpha_1(L)=\frac{r}{d}\Tr L.
\]
\end{claim}

\begin{proof}

Let $\wt{\Lambda}_r(\Id+L)=\Lambda_r(L)$. Then from the definition, note that if $U$ is an isometry then $\wt{\Lambda}_r(U(\Id+L))=\wt{\Lambda}_r((\Id+L)U))=\Lambda_r(L)$. Suppose that $O_t$ is some path tangent to $O(n)\subset \End(\R^n)$ such that $O_0=\Id$. Then $\wt{\Lambda}_r(O_t)=0$. Write $O_t=\Id+tS+O(t^2)$ where $S$ is skew symmetric. Then we see that 
\[
\wt{\Lambda}_r(\Id+tS+O(t^2))=O(t^2),
\]
So, $\Lambda_r(tS)=O(t^2)$. Hence $\alpha_1$ vanishes on skew symmetric matrices. 

Thus it suffices to evaluate $\alpha_1$ restricted to symmetric matrices.
Suppose that $A$ is a symmetric matrix, then there exists an orthogonal matrix $U$ so that $U^TAU$ is diagonal. Restricted to the space of diagonal matrices, which we identify with $\R^d$ in the natural way, observe that $\alpha_1\colon \R^d \to \R$ is invariant under permutation of the coordinates in $\R^d$ because it is invariant under conjugation by isometries. There is a one dimensional space of maps having this property, and it is spanned by the trace, $\Tr$. So, $\alpha_1(A)=\alpha_1(U^TAU)=a_1\Tr(A)$ for some constant $C$. To compute the constant $c$ it suffices to consider a specific matrix, e.g. $A=\Id$. 
\begin{align*}
\alpha_1(\Id)&= \frac{d}{d\epsilon} \int \ln\det(\Id+\epsilon \Id\mid E)\,dE\\
&=\frac{d}{d\epsilon}\int \ln (\Id+\epsilon)^r\,dE\\
&=\frac{d}{d\epsilon} r\ln(1+\epsilon)\\
&=r.
\end{align*}
So, $a_1=r/d$. Thus for $L\in \End(\R^d)$, $\alpha_1(L)=\frac{r}{d}\Tr((L+L^T)/2)=\frac{r}{d}\Tr(L)$.
\end{proof}

We now compute $\alpha_2$.

\begin{claim}
With notation as in the statement of Proposition \ref{prop:det_integral_expansion},
\[
\alpha_2(L)=-\frac{r}{2d}\Tr(L^2)+\frac{r(d-r)}{(d+2)(d-1)}\Tr(K^2).
\]
\end{claim}

\begin{proof}
Let $\wt{\Lambda}_r(\Id+L)=\Lambda_r(L)$. From the definition, note that for an isometry $U$, that $\wt{\Lambda}_r((\Id+L)U)=\wt{\Lambda}_r(U)$.
Fix $L$ and let $J=(L-L^T)/2$. Observe that 
\[
(\Id+L)e^{-J}=\Id+(L-J)+(J^2/2-LJ)+O(\abs{L}^3).
\]
Thus we see that 
\begin{align*}
\Lambda_r(L)&=\wt{\Lambda}_r(\Id+L)\\
&=\wt{\Lambda}_r((L+\Id)e^{-J})\\
&=\wt{\Lambda}_r(\Id+(L-J)+(J^2/2-LJ)+O(\abs{L}^3))\\
&=\Lambda_r((L-J)+(J^2/2-LJ))+O(\abs{L}^3).
\end{align*}
Now comparing the two Taylor expansions of $\wt{\Lambda}_r(\Id+L)$, we find:
\[
\alpha_2(L)=\alpha_2(L-J)+\alpha_1(J^2/2-LJ).
\]
Thus  as we have already determined $\alpha_1$:
\[
\alpha_2(L)=\alpha_2((L+L^T)/2)+\frac{r}{d}\Tr(J^2/2-LJ).
\]

So, we are again reduced to the case of a symmetric matrix $S$. In fact, by invariance of $\alpha_2$ under conjugation by isometries, we are reduced to determining $\alpha_2$ on the space of diagonal matrices. Identify $\R^d$ with diagonal matrices as before. We see that $\alpha_2$ is a symmetric polynomial of degree $2$ in $d$ variables. The space of such polynomials is spanned by $\sum x_i^2$  and $\sum_{i,j} x_ix_j$. It is convenient to observe that for a diagonal matrix, $D$, $\Tr(D^2)$ and $\Tr(D)^2$ span this space as well. Hence 
\[
\alpha_2(S)=b_1\Tr(S)^2+b_2\Tr(S^2)
\]

Now in order to calculate $b_1$ and $b_2$ we will explicitly calculate $\alpha_2(\Id)$ and $\alpha_2(P)$, where $P$ is the orthogonal projection onto a coordinate axis.

In the first case,

\begin{align*}
2\alpha_2(\Id)=\frac{d}{d\epsilon_1}\frac{d}{d\epsilon_2}\int_{\mathbb{G}_{r,d}} \ln \det((\Id+\epsilon_1+\epsilon_2)\mid E)\,dE\mid_{\epsilon_1=0,\epsilon_2=0}
&=\frac{d^2}{d\epsilon}\ln (1+\epsilon)^r\mid_0=-r.
\end{align*}
So, $\alpha_2(\Id)=-r/2$.

Next suppose that $P$ is projection onto a fixed vector $e$. Suppose that $\angle(e,E)=\theta$. We now compute $\ln\det(\Id+\epsilon P\mid E)$. We fix a useful basis of $E$. Let $v$ be a unit vector making angle $\angle(e,E)$ with $e$. Then let $e_2,...,e_r$ be unit vectors in $E$ that are orthogonal to $e$ and $v$.
Then using the basis $v,e_2,...,e_r$, we see that 
\[
\det(\Id+\epsilon P\mid E)=\frac{\|(\Id+\epsilon P)v\wedge (\Id+\epsilon P)e_2\wedge \cdots\wedge (\Id+\epsilon P)e_r\| }{\|v\wedge\cdots\wedge e_r\|}=\sqrt{\langle (\Id+\epsilon P)v,(\Id+\epsilon P)v\rangle},
\]
by considering the determinant defining the wedge product. But then as $Pv=\cos(\theta)e$,
\[
\sqrt{\langle v+\epsilon\cos(\theta)e,v+\epsilon\cos(\theta)e\rangle}=\sqrt{\langle v,v\rangle+2\epsilon\cos\theta\langle v,e\rangle +\epsilon^2\langle Pv,Pv\rangle}=\sqrt{1+2\epsilon \cos^2\theta+\epsilon^2\cos^2\theta}.
\]

Now, the Taylor approximation for $\ln\sqrt{1+x}$ at $x=0$ is $x/2-x^2/4+O(x^3)$, so
\[
\ln \det (\Id+\epsilon P\mid E)=\epsilon \cos\angle (E,e)+\epsilon^2\left[\frac{\cos\angle(E,e)}{2}-\cos^4\angle(E,e)\right]+O(\epsilon^3).
\]
Hence, as this estimate is uniform over $E$, by integrating,
\[
\int_{\mb{G}_{r,d}} \ln \det(\Id+\epsilon P\mid E)\,dE=\epsilon \int_{\mb{G}_{r,d}} \cos^2\angle(E,e)\,dE+\epsilon^2\int_{\mb{G}_{r,d}}\left[\frac{\cos^2\angle(E,e)}{2}-\cos^4\angle(E,e)\right]\,dE+O(\epsilon^3).
\]

So, we are reduced to calculating the coefficient of $\epsilon^2$ in the above expression. One may rewrite the above integrals in the following manner, by definition of the Haar measure as $\mb{G}_{r,d}$ is a homogeneous space of $\SO(d)$. Write $x_1,...,x_d$ for the restriction of the Euclidean coordinates to the sphere. By fixing the coordinate plane $E_0=\langle e_1,...,e_r\rangle$, and letting $\theta=\angle((x_1,...,x_d),E)$ we then have that $\cos(\theta)=\sqrt{\sum_{i=1}^r x_i^2}$. Thus
\begin{align*}
\int_{\mb{G}_{r,d}} \cos^2\angle (E,e)\,dE&=\int_{\SO_d}\cos^2\angle(gE_0,e)\,dg\\
&=\int_{\SO_d}\cos^2\angle(E_0,ge)\,dg\\
&=\int_{S^{d-1}}\cos^2\angle (E_0,x)\,dx\\
&=\int_{S^{d-1}}\sum_{i=1}^r x_i^2\,dx,
\end{align*}

Similarly, fixing the plane $E_0=\langle e_1,...,e_r\rangle$, we see that as $\cos^4\angle (E_0,x)=\pez{\sum_{i=1}^r x_i^2}^2$
\[
\int_{\mb{G}_{r,d}} \cos^4\angle (E,e)=\int_{S^{d-1}}\pez{\sum_{i=1}^r x_i^2}^2\,dx.
\]

The evaluation of these integrals is immediate by using the following standard formulas:
\[
\int_{S^{d-1}} x_1^2\,dx=\frac{1}{d},\quad \int_{S^{d-1}} x_1^4\,dx=\frac{3}{d(d+2)},\quad \int_{S^{d-1}} x_1^2x_2^2\,dx=\frac{1}{d(d+2)}.
\]
Thus we see that 
\[
\int_{\mb{G}_{r,d}} \frac{\cos^2\angle(E,e)}{2}-\cos^4\angle(E,e)\,dE=\frac{r}{2d}-\frac{r(r+2)}{d(d+2)}.
\]
Thus
\[
\alpha_2(P)=\frac{r}{2d}-\frac{r(r+2)}{d(d+2)}.
\]

Returning to $b_1,b_2$, the coefficients of $(\Tr(S))^2$ and $\Tr(S^2)$, respectively, combining the cases of $\Id$ and $P$ gives
\[
-\frac{r}{2}=b_1d^2+b_2d.
\]
and
\[
\frac{r}{2d}-\frac{r(r+2)}{d(d+2)}=b_1+b_2.
\]
We can now solve for $b_1$ and $b_2$ with respect to this basis of the space of conjugation invariant quadratic functionals. However, the computation will be more direct if instead we we use a different basis and write write $\alpha_2(S)$ as
\[
b_1(\Tr(S))^2+b_2\Tr(\pez{S-\frac{\Tr S}{d}}^2),
\]
so that the second term is trace $0$. Our computations from before now show that:
\[
-\frac{r}{2}=b_1d^2+0,
\]
and
\[
\frac{r}{2d}-\frac{r(r+2)}{d(d+2)}=b_1+\frac{d-1}{d}b_2\left(=b_1(\Tr(P))^2+b_2\Tr\pez{(P-\frac{\Tr P}{d}\Id)^2}\right).
\]

The first equation implies that 
\[
b_1=-\frac{r}{2d^2},
\]
The left hand side of the second equation of the pair is equal to
\[
\frac{r(d-r)}{d(d+2)}-\frac{r}{2d}.
\]
This gives
\[
b_2=\frac{r(d-r)}{(d-1)(d+2)}-\frac{r}{2d}.
\]
So, for symmetric $L$, we have
\begin{equation}
\alpha_2(S)=\frac{-r}{2d^2}(\Tr(S))^2+\pez{\frac{r(d-r)}{(d-1)(d+2)}-\frac{r}{2d}}\Tr((S-\frac{\Tr S}{d}\Id)^2).
\end{equation}
Recall that we specialized to the case of a symmetric matrix, and that for a non-symmetric matrix there is another term. For $L\in \End{\R^d}$, setting $J=(L-L^T)/2$, as before,
\[
\alpha_2(L)=\alpha_2\pez{\frac{L+L^T}{2}}+\frac{r}{d}\Tr\pez{\frac{J^2}{2}-LJ}.
\]
To simplify this we compute that:
\begin{align*}
\Tr\pez{\frac{J^2}{2}-LJ}&=\Tr\pez{\frac{L^2-LL^T-L^TL+(L^T)^2}{8}-L\frac{L-L^T}{2}}\\
&=\Tr\pez{\frac{LL^T-L^2}{4}}.
\end{align*}
Write
\[
S=\frac{L+L^T}{2}.
\]
Observe that for an arbitrary matrix $X$, $\Tr((X-(\Tr X)/d\Id)^2)=\Tr(X^2)-(\Tr(X))^2/d$. Thus
\begin{align*}
&-\frac{r}{2d^2}\pez{\Tr(S)}^2-\frac{r}{2d}\Tr(\pez{S-(\Tr S)/d\Id)^2}+\frac{r}{d}\Tr(\frac{LL^T-L^2}{4})\\
&=-\frac{r}{2d^2}\pez{\Tr(S)}^2-\frac{r}{2d}\pez{\Tr(S^2)}-\frac{-r}{2d^2}(\Tr(S))^2+\frac{r}{d}\pez{\Tr\pez{\frac{LL^T-L^2}{4}}}\\
&=-\frac{r}{2d}\pez{\Tr(S^2)}+\frac{r}{d}\pez{\Tr\pez{\frac{LL^T-L^2}{4}}}\\
&=\frac{r}{d}\left[\frac{-1}{2}\Tr(((L+L^T)/2)^2)+\Tr(\frac{LL^T-L^2}{4})\right]\\
&=\frac{r}{d}\left[\frac{-1}{2}(\Tr(\frac{L^2+(L^T)^2+2LL^T}{4}))+\Tr(\frac{LL^T-L^2}{4})\right]\\
&=-\frac{r}{2d}\Tr(L^2).
\end{align*}
From before, we have that 
\[
\alpha_2(L)=-\frac{r}{2d^2}(\Tr(S))^2+\pez{\frac{r(d-r)}{(d-1)(d-2)}-\frac{r}{2d}}\Tr((S-\frac{\Tr S}{d}\Id)^2)+\frac{r}{d}\Tr(\frac{LL^T-L^2}{4}).
\]
So substituting the previous calculation we obtain:
\[
\alpha_2(L)=-\frac{r}{2d}\Tr(L^2)+\pez{\frac{r(d-r)}{(d-1)(d-2)}}\Tr\pez{\pez{\frac{L+L^T}{2}-\frac{\Tr L}{d}\Id}^2},
\]
which is the desired formula.
\end{proof}
We have now calculated $\alpha_1$ and $\alpha_2$. This concludes the proof of Proposition \ref{prop:det_integral_expansion}.
\end{proof}

We will also use a first order Taylor expansion as well with respect to the metric.

\begin{prop}\label{prop:det_taylor_expansion2}

Let $\Lambda_r(G)$ be defined for symmetric matrices $G$ by
\[
\Lambda_r(G)\coloneqq \int_{\mb{G}_{r,d}} \ln \det(\Id,\Id,\Id+G\mid E)\,dE.
\]
Then $\Lambda_r(G)$ admits the following Taylor development:
\[
\Lambda_r(G)=\frac{r}{2d}\Tr G+O(\|G\|^2).
\]
\end{prop}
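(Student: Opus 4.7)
The plan is to unpack the definition of $\det(\Id,\Id,\Id+G\mid E)$ directly, Taylor expand the logarithm in $G$, and then exploit $O(d)$-invariance of the uniform measure on $\mathbb{G}_{r,d}$ to evaluate the resulting integral. First I would fix a subspace $E \in \mathbb{G}_{r,d}$ and choose an orthonormal basis $\{v_1,\dots,v_r\}$ of $E$ with respect to the standard inner product $\Id$ on $\mathbb{R}^d$. Writing $P_E$ for the $d\times r$ matrix whose columns are the $v_i$, the definition of $\det$ from Appendix \ref{sec:determinants} gives
\[
\det(\Id,\Id,\Id+G\mid E) \;=\; \sqrt{\Det\!\bigl(\langle v_i,v_j\rangle_{\Id+G}\bigr)} \;=\; \sqrt{\Det\!\bigl(\Id_r + P_E^T G P_E\bigr)},
\]
since the $v_i$ are orthonormal with respect to $\Id$.

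Next I would take logarithms and use the first-order expansion $\ln\Det(\Id_r+A) = \Tr(A) + O(\|A\|^2)$ to obtain
\[
\ln\det(\Id,\Id,\Id+G\mid E) \;=\; \tfrac{1}{2}\Tr\bigl(P_E^T G P_E\bigr) + O(\|G\|^2),
\]
with a constant in the big-$O$ that is uniform in $E$ (since $\|P_E^T G P_E\| \le \|G\|$). Integrating over $\mathbb{G}_{r,d}$ therefore reduces the problem to evaluating the linear functional
\[
\Phi(G) \;\coloneqq\; \int_{\mathbb{G}_{r,d}} \Tr\bigl(P_E^T G P_E\bigr)\,dE
\]
on the space of symmetric matrices.

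The key point is that $\Phi$ is invariant under conjugation by $O(d)$: if $U \in O(d)$, changing variables $E \mapsto UE$ preserves Haar measure on $\mathbb{G}_{r,d}$, and $P_{UE} = UP_E$ (up to an orthogonal change of basis of $E$, which does not affect $P_E^T G P_E$), so $\Phi(U^T G U) = \Phi(G)$. The only $O(d)$-invariant linear functional on symmetric matrices is a scalar multiple of the trace, so $\Phi(G) = c\,\Tr(G)$ for some constant $c$. To pin down $c$, plug in $G = \Id$: then $P_E^T \Id\, P_E = \Id_r$ so $\Tr(P_E^T G P_E) = r$ for every $E$, giving $\Phi(\Id) = r$ while $\Tr(\Id) = d$, hence $c = r/d$. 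Combining gives
\[
\Lambda_r(G) \;=\; \tfrac{1}{2}\Phi(G) + O(\|G\|^2) \;=\; \tfrac{r}{2d}\Tr(G) + O(\|G\|^2),
\]
as claimed.

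No step should present a serious obstacle — the only thing to be a little careful about is confirming that the $O(\|G\|^2)$ remainder from the pointwise Taylor expansion is uniform in $E$ (so it survives the integration), which follows immediately from the bound $\|P_E^T G P_E\|\le \|G\|$ together with standard estimates on $\ln\Det(\Id_r + \cdot\,)$ on a neighborhood of $0$.
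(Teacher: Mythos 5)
Your proof is correct and follows essentially the same route as the paper: both exploit conjugation invariance to force the first-order term to be a multiple of $\Tr$, then pin down the constant by evaluating at $G=\Id$. The only stylistic difference is that you first unpack the determinant explicitly via the matrix $P_E$ and get a concrete integral formula $\frac{1}{2}\Phi(G)$ for the linear term, whereas the paper treats $\alpha_1$ abstractly as the derivative of $\Lambda_r$ and reduces to diagonal matrices before invoking permutation invariance; both implementations are valid and your bound $\|P_E^T G P_E\|\le\|G\|$ correctly handles the uniformity of the remainder.
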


\begin{proof}
The proof of this proposition is substantially similar to that of the previous proposition. Let $\alpha_1$ denote the first term in the Taylor expansion. Note that if $U$ is an isometry that $\Lambda_r(U^TGU)=\Lambda_r(G)$. Thus $\alpha_1$ is invariant under conjugation by isometries. Thus by conjugating by an orthogonal matrix, we are reduced to the case of $G$ and diagonal matrix. As before, we see that $\alpha_1(D)$ is a multiple of $\Tr(D)$ as $\Tr$ spans the linear forms on $\R^d$ that are invariant under permutation of coordinates.

Thus it suffices to calculate the derivative in the case of $D=\Id$. So, we see that 
\[
\alpha_1(\Id)=\frac{d}{d\epsilon}\int_E \ln\det(\Id,\Id,\Id+\epsilon \Id\mid E)\,dE.
\]
Thus the integral is equal to $\ln \sqrt{(1+\epsilon)^r}$ on every plane $E$. Thus the derivative is $r/2$ and so
\[
\alpha_1(\Id)=\frac{r}{2}=\frac{r}{2d}\Tr(\Id).
\]
And so the result follows.
\end{proof}

\providecommand{\bysame}{\leavevmode\hbox to3em{\hrulefill}\thinspace}
\providecommand{\MR}{\relax\ifhmode\unskip\space\fi MR }
\providecommand{\MRhref}[2]{%
  \href{http://www.ams.org/mathscinet-getitem?mr=#1}{#2}
}
\providecommand{\href}[2]{#2}


\begin{thebibliography}{GRH19}

\bibitem[Arn13]{arnold2013random}
Ludwig Arnold, \emph{Random dynamical systems}, Springer, 2013.

\bibitem[BL76]{bergh1976interpolation}
J{\"o}ran Bergh and J{\"o}rgen L{\"o}fstr{\"o}m, Interpolation spaces: an introduction, Springer, 1976.

\bibitem[BO18]{bouland2018trading}
Adam Bouland and Maris Ozols, \emph{Trading inverses for an irrep in the
  {S}olovay-{K}itaev theorem}, 13th Conference on the Theory of Quantum
  Computation, Communication and Cryptography, 2018.

\bibitem[But17]{butler2017characterizing}
Clark Butler, \emph{Characterizing symmetric spaces by their {L}yapunov
  spectra}, arXiv preprint arXiv:1709.08066 (2017).

\bibitem[CE75]{cheeger1975comparison}
Jeff Cheeger and David~G. Ebin, \emph{Comparison theorems in {R}iemannian
  geometry}, American Mathematical Society (1975).

\bibitem[DF19]{damjanovic2019local}
Danijela Damjanovi{\'c} and Bassam Fayad, \emph{On local rigidity of partially hyperbolic affine $\Z^k$ actions}, Journal f{\"u}r die reine und angewandte Mathematik (2019), no.~751, 1--26.

\bibitem[DN06]{dawson2006solovay}
Christopher Dawson and Michael Nielsen, \emph{The {S}olovay-{K}itaev
  algorithm}, Quantum Information \& Computation \textbf{6} (2006), no.~1,
  81--95.

\bibitem[DeW19]{dewitt2019local}
Jonathan DeWitt, \emph{Local {L}yapunov spectrum rigidity of nilmanifold
  automorphisms}, arXiv preprint arXiv:1911.07717 (2019).


\bibitem[Dol02]{dolgopyat2002mixing}
Dmitry Dolgopyat, \emph{On mixing properties of compact group extensions of
  hyperbolic systems}, Israel Journal of Mathematics \textbf{130} (2002),
  no.~1, 157--205.

\bibitem[DK07]{dolgopyat2007simultaneous}
Dmitry Dolgopyat and Rapha{\"e}l Krikorian, \emph{On simultaneous linearization
  of diffeomorphisms of the sphere}, Duke Mathematical Journal \textbf{136}
  (2007), no.~3, 475--505.


\bibitem[Fie99]{field1999generating}
Michael Field, \emph{Generating sets for compact semisimple {L}ie groups},
  Proceedings of the American Mathematical Society \textbf{127} (1999), no.~11,
  3361--3365.

\bibitem[FK09]{fayad2009smooth}
Bassam Fayad and Kostantin Khanin, \emph{Smooth linearization of commuting
  circle diffeomorphisms}, Annals of Mathematics (2009), 961--980.

\bibitem[GKS18]{gogolev2018local}
Andrey Gogolev, Boris Kalinin, and Victoria Sadovskaya, \emph{Local rigidity of
  {L}yapunov spectrum for toral automorphisms}, Israel J. Math (2018).

\bibitem[Gog19]{gogolev2019rigidity}
Andrey Gogolev, \emph{Rigidity lecture notes},
  {\url{https://people.math.osu.edu/gogolyev.1/index_files/CIRM_notes_all.pdf}},
  2019.

\bibitem[GRH19]{gogolev2019smooth}
Andrey Gogolev and Federico Rodriguez~Hertz, \emph{Smooth rigidity for very
  non-algebraic expanding maps}, arXiv preprint arXiv:1911.07751 (2019).

\bibitem[Ham82]{hamilton1982inverse}
Richard Hamilton, \emph{The inverse function theorem of {N}ash and {M}oser},
  Bulletin of the American Mathematical Society \textbf{7} (1982), no.~1,
  65--222.

\bibitem[Hel01]{helgason2001differential}
Sigurdur Helgason, \emph{Differential geometry, {L}ie groups, and symmetric
  spaces}, American Mathematical Society, 2001.

\bibitem[H{\"o}r76]{hormander1976boundary}
Lars H{\"o}rmander, \emph{The boundary problems of physical geodesy}, Archive
  for Rational Mechanics and Analysis \textbf{62} (1976), no.~1, 1--52.

\bibitem[KH97]{katok1997introduction}
Anatole Katok and Boris Hasselblatt, \emph{Introduction to the modern theory of
  dynamical systems}, vol.~54, Cambridge University Press, 1997.

\bibitem[Kif86]{kifer1986ergodic}
Yuri Kifer, \emph{Ergodic theory of random transformations}, Birkh\"auser,
  1986.

\bibitem[LRK09]{lai2009introduction}
Michael Lai, David Rubin, and Erhard Krempl, \emph{Introduction to continuum
  mechanics}, Butterworth-Heinemann, 2009.

\bibitem[Lee18]{lee2018introduction}
John Lee, \emph{Introduction to Riemannian geometry} (2nd ed.), Springer, 2018.

\bibitem[Mal12]{malicet2012simultaneous}
Dominique Malicet, \emph{On simultaneous linearization of diffeomorphisms of
  {$\mathbb{T}^2$}}.

\bibitem[Mal20]{malicet2020lyapunov}
\bysame, \emph{{L}yapunov exponent of random dynamical systems on the circle},
  arXiv preprint arXiv:2006.15397 (2020).

\bibitem[Mos90]{moser1990commuting}
J{\"u}rgen Moser, \emph{On commuting circle mappings and simultaneous
  diophantine approximations}, Mathematische Zeitschrift \textbf{205} (1990),
  no.~1, 105--121.

\bibitem[Pet21]{petkovic2021classification}
Boris Petkovi\'c, \emph{Classification of perturbations of Diophantine $\mathbb{Z}^m$ actions on tori of arbitrary dimension}, Regular and Chaotic Dynamics \textbf{26} (2021), no.~6, 700--716.

\bibitem[Sha01]{shankar2001isometry}
Krishnan Shankar, \emph{Isometry groups of homogeneous spaces with positive
  sectional curvature}, Differential Geometry and its Applications \textbf{14}
  (2001), no.~1, 57--78.

\bibitem[SY19]{saghin2019lyapunov}
Radu Saghin and Jiagang Yang, \emph{{L}yapunov exponents and rigidity of
  {A}nosov automorphisms and skew products}, Advances in Mathematics
  \textbf{355} (2019), 106764.

\bibitem[Wal18]{wallach2018harmonic}
Nolan~R. Wallach, \emph{Harmonic analysis on homogeneous spaces}, Dover, 2018.

\bibitem[Wol72]{wolf1972spaces}
Joseph Wolf, \emph{Spaces of constant curvature}, American
  Mathematical Society, 1972.

\end{thebibliography}
\end{document}